\theoremstyle{plain}
\newtheorem{theorem}[equation]{Theorem}
\newtheorem{lemma}[equation]{Lemma}
\newtheorem{corollary}[equation]{Corollary}
\newtheorem{proposition}[equation]{Proposition}
\theoremstyle{definition}
\newtheorem{definition}[equation]{Definition}
\theoremstyle{remark}
\newtheorem{remark}[equation]{Remark}
\newtheorem*{acknowledgments*}{Acknowledgments}
\DeclareMathOperator{\Lip}{Lip}
\providecommand{\set}[1]{\{#1\}}
\providecommand{\Set}[1]{\left\{#1\right\}}
\providecommand{\abs}[1]{\lvert#1\rvert}
\providecommand{\Abs}[1]{\left\lvert#1\right\rvert}
\providecommand{\bigabs}[1]{\bigl\lvert#1\bigr\rvert}
\providecommand{\ip}[1]{\langle#1\rangle}
\providecommand{\norm}[1]{\lVert#1\rVert}
\providecommand{\Norm}[1]{\left\lVert#1\right\rVert}
\providecommand{\bignorm}[1]{\bigl\lVert#1\bigr\rVert}
\providecommand{\Bignorm}[1]{\Bigl\lVert#1\Bigr\rVert}
\newcommand{\bR}{\mathbb{R}}
\DeclareMathOperator{\dv}{div}
\newcommand*{\tran}{^{\mkern-1.5mu\mathsf{T}}}
\renewcommand{\vec}[1]{\boldsymbol{#1}}
\numberwithin{equation}{section}
\begin{document}
\title[Green's function]{Green's function for second order elliptic equations with singular lower order coefficients}
\author[S. Kim]{Seick Kim}
\address[S. Kim]{Department of Mathematics, Yonsei University, 50 Yonsei-ro, Seodaemun-gu, Seoul 03722, Republic of Korea}
\email{kimseick@yonsei.ac.kr}
\thanks{S. Kim is partially supported by NRF Grant No. NRF-2016R1D1A1B03931680 and No. NRF-20151009350.}

\author[G. Sakellaris]{Georgios Sakellaris}
\address[G. Sakellaris]{Department of Mathematics, Universitat Aut{\`o}noma de Barcelona, Bellaterra 08193, Barcelona, Spain}
\email{gsakellaris@mat.uab.cat}
\thanks{G. Sakellaris has received funding from the European Union's Horizon 2020 research and innovation programme under Marie Sk{\l}odowska-Curie grant agreement No 665919, and is partially supported by MTM-2016-77635-P (MICINN, Spain) and 2017 SGR 395 (Generalitat de Catalunya).}

\subjclass[2010]{35A08, 35J08}

\keywords{Green's function; singular lower order coefficients; Dini mean oscillation; pointwise bounds; Lorentz bounds}

\begin{abstract}
We construct  Green's function for second order elliptic operators of the form
$Lu=-\nabla \cdot (\mathbf{A} \nabla u + \boldsymbol{b} u)+ \boldsymbol c \cdot \nabla u+ du$ in a  domain and obtain pointwise bounds, as well as Lorentz space bounds.
We assume that the matrix of principal coefficients $\mathbf A$ is uniformly elliptic and bounded and the lower order coefficients $\boldsymbol{b}$, $\boldsymbol{c}$, and $d$ belong to certain Lebesgue classes and satisfy the condition $d - \nabla \cdot  \boldsymbol{b} \ge 0$.
In particular, we allow the lower order coefficients to be singular.
We also obtain the global pointwise bounds for the gradient of Green's function  in the case when the mean oscillations of the coefficients $\mathbf A$ and $\boldsymbol{b}$ satisfy the Dini conditions and the domain is $C^{1, \rm{Dini}}$.
\end{abstract}

\maketitle

\section{Introduction}
Let $\Omega$ be a domain (i.e., an open connected set) in $\bR^n$ with $n \ge 3$.
We consider second order elliptic operators in divergence form
\[
L u= -\sum_{i,j=1}^n D_i(a^{ij}(x)D_ju+b^i(x)u)+ \sum_{i=1}^n c^i(x) D_i u + d(x)u,
\]
which hereafter shall be abbreviated as
\[
Lu=-\dv (\mathbf{A} \nabla u + \vec b u)+ \vec c \cdot \nabla u+ du.
\]
We assume that the principal coefficients $\mathbf{A}=(a^{ij})$ are measurable $n\times n$ matrices that are bounded and uniformly elliptic; i.e. there is a constant $\lambda >0$ such that
\begin{equation}			\label{ellipticity}
\lambda \abs{\vec \xi}^2 \le \mathbf{A}(x) \vec \xi \cdot \vec \xi = \sum_{i,j=1}^n a^{ij}(x) \xi^i \xi^j,\quad \forall x \in \Omega, \;\; \forall \vec  \xi \in \bR^n.
\end{equation}
We also assume that the lower order coefficients $\vec b=(b^1, \ldots, b^n)$, $\vec c=(c^1,\ldots, c^n)$, and $d$ are such that
\begin{equation}				\label{eq1547sat}
\vec b \in L^q(\Omega), \quad \vec c \in L^r(\Omega), \quad d \in L^s(\Omega)\quad\text{for some }\; q, r \ge n,\; s \ge n/2.
\end{equation}
It should be noted that the lower order coefficients are not assumed to be ''small" in some norm, and they are allowed to possess singularities.
Finally, we assume that
\begin{equation}				\label{eq1602sat}
d -\dv \vec b \ge 0
\end{equation}
in the sense of distributions.
We introduce the condition \eqref{eq1602sat} to make the weak maximum principle hold.
We remark that there is an example showing that the uniqueness for Dirichlet problem is violated if  the condition \eqref{eq1547sat} does not hold (even if \eqref{eq1602sat} is satisfied); see Ladyzhenskaya and Ural'tseva \cite[p. 13]{LU68}.

In this article, we are concerned with Green's function for the operator $L$ introduced above; the precise definition of Green's function is given in Definition~\ref{GreenDefinition}.
We shall show that if  $\abs{\Omega} < +\infty$, that is $\Omega$ has finite measure,  and $\vec b -\vec c \in L^p$ for $p>n$, then there exists a Green's function $G(x,y)$ that enjoys the pointwise bound
\begin{equation}					\label{eq2109sat}
\abs{G (x,y)} \le  C\abs{x-y}^{2-n},
\end{equation}
where $C$ is a constant that depends only on $n$, $p$, $\lambda$, $\norm{\mathbf A}_{\infty}$, $\norm{\vec b-\vec c}_p$, and $\abs{\Omega}\,$; see Theorem~\ref{GreenConstruction}.
We emphasize that, in this case, we require that $\vec b-\vec c$, not both $\vec b$ and $\vec c$, to be in $L^p$ for some $p>n$.
In fact, we also construct Green's function $g(x,y)$ for the adjoint operator and show the symmetry relation $G(x,y)=g(y,x)$; see Theorem \ref{GreenConstructionAdjoint} and Proposition \ref{Symmetry}. Moreover, for the gradient of $G$, in Theorem~\ref{GreenConstruction} we show a Lorentz type bound of the form
\begin{equation}					\label{2ndInIntroduction}
\norm{\nabla G(\cdot,y)}_{L^{\frac{n}{n-1},\infty}}\leq C,
\end{equation}
where $C$ depends only on $n$, $p$, $\lambda$, $\norm{\mathbf A}_{\infty}$, $\norm{\vec b-\vec c}_p$, and $\abs{\Omega}\,$.

In the critical case when $\vec b$, $\vec c \in L^n$, in Theorem~\ref{GoodLorentzForGreenAdoint} we obtain Lorentz type bounds on Green's function $g$ for the adjoint equation, as well as its gradient $\nabla g$; that is,
\begin{equation}					\label{3ndInIntroduction}
\norm{g(\cdot,x)}_{L^{\frac{n}{n-2},\infty}}\leq C,\quad \norm{\nabla g(\cdot,x)}_{L^{\frac{n}{n-1},\infty}}\leq C.
\end{equation}
Moreover, in order to deduce the bounds \eqref{eq2109sat} and \eqref{2ndInIntroduction} in the critical case, we shall also require, in addition to \eqref{eq1602sat}, that
\begin{equation}					\label{eq2118sat}
d-\dv \vec c \ge 0
\end{equation}
in the sense of distributions, which will make the weak maximum principle available for solutions to the adjoint equation as well; see Theorem~\ref{GreenConstructionAdjointCritical}.
It should be noted that the condition \eqref{eq2118sat} is not just a technical one because otherwise the bound \eqref{eq2109sat} may not hold; see Proposition~\ref{Counterexample}. The fact that \eqref{3ndInIntroduction} holds without the assumption \eqref{eq2118sat}, while \eqref{3ndInIntroduction} does not necessarily hold for $G$ without \eqref{eq2118sat}, exhibits the different nature of $G,g$ in the critical case.

In this article, we also obtain the gradient bound for Green's function when the mean oscillations of the coefficients $\mathbf{A}$ and $\vec b$ satisfy the  Dini condition and $\vec c$, $d \in L^p$ with $p>n$.
We show that if $\Omega$ is a bounded $C^{1,\rm{Dini}}$ domain, then  Green's function $G(x,y)$ satisfies the gradient bound
\begin{equation}				\label{eq1055am}
\abs{\nabla _x G(x,y)} \le  C\abs{x-y}^{1-n};
\end{equation}
see Theorem~\ref{GradientBounds}. In particular, this reproduces the gradient bound in Gr\"uter and Widman \cite{Gruter}, where it is assumed that $\vec b=0$, $\vec c=0$, $d=0$, and $\mathbf{A}$ is Dini continuous; if $\mathbf{A}$ is Dini continuous, then its mean oscillation satisfies the Dini condition, but not the other way around.
We believe that our condition is the minimal one available in the literature to have the pointwise gradient bound \eqref{eq1055am}, and it is one of the novelties in our paper.

A few historical remarks are in order.
Green's functions for second order elliptic operators $L$ of the form $Lu=-\dv (\mathbf{A} \nabla u)$ with bounded measurable coefficients $\mathbf{A}$ were studied by Littman, Stampacchia, and  Weinberger \cite{Littman} and later also by Gr\"uter and Widman \cite{Gruter}.
In Littman et al. \cite{Littman}, the matrix $\mathbf{A}$ is assumed to be symmetric so that $L$ becomes self-adjoint, while $\mathbf{A}$ is allowed to be non-symmetric in \cite{Gruter}.
In both \cite{Littman} and \cite{Gruter}, the domain $\Omega$ is assumed to be bounded. More recently, Hofmann and Lewis \cite[Chapter III, Lemma 4.3]{HofmannLewis} also constructed Green's function in a different setting, where they assume the so-called Bourgain condition on the harmonic measure (estimate (4.2), Chapter III in \cite{HofmannLewis}). Moreover, Hofmann and the first named author \cite{HK07} considered the case when the domain $\Omega$ is not necessarily bounded (including $\Omega = \bR^n$) and constructed Green's functions.
Although the method used in \cite{HK07} is applicable to strongly elliptic systems, the key for obtaining the pointwise bound \eqref{eq2109sat} is De Giorgi-Moser-Nash type estimates, which is not always available for the elliptic systems; it should be also noted that lower order coefficients are not present in \cite{HK07}.  

In a very recent article \cite{MayborodaGreen}, Davey, Hill, and Mayboroda considered elliptic systems with lower order coefficients and, under a series of assumptions, constructed Green's matrices with the same pointwise bounds as in \eqref{eq2109sat}. Even though we consider only the scalar case, there are some differences between our results and the scalar counterparts considered in \cite{MayborodaGreen}; the novelty in our paper is as follows:
\begin{enumerate}[i)]
\item
We treat the critical case when $\vec b \in L^n$, $\vec c \in L^n$, and $d \in L^{n/2}$.
\item
We do not require $d- \dv \vec c \ge 0$ if $\vec b - \vec c \in L^p$ for $p>n$.
\end{enumerate}

Finally, we remark that the results presented here are generalizations of some results in the second author's Ph.D. thesis \cite{SakellarisThesis}, in which bounded drifts $\vec b$ were considered, and $\vec c=\vec 0,d=0$.

The organization of the paper is as follows.
In Section~\ref{sec2}, we introduce some notations and preliminary lemmas.
In Section~\ref{sec3}, we state and prove lemmas concerning a priori estimates including a (local) maximum principle for (sub)solutions.
In Section~\ref{sec4}, we first establish existence and uniqueness of weak solutions to $Lu=F$ and $L\tran u=F$ for $F\in W^{-1,2}$, and then show boundedness properties of the solution operators $T=L^{-1}$ and $S=(L\tran)^{-1}$.
In Section \ref{sec5}, we introduce the precise definition of Green's function and provide its construction via a sequence of approximate Green's functions, where Dirac delta functions are replaced by approximate identities.
We also establish uniform $L^{\frac{n}{n-2},\infty}$ estimates for approximate Green's functions, which are used in subsequent Sections~\ref{sec6} and \ref{sec7} to obtain the pointwise and Lorentz bounds for Green's functions; in Section~\ref{sec6}, we treat the subcritical case (when $\vec b -\vec c \in L^p$ for $p>n$) and in Section~\ref{sec7}, we consider the critical case.
Finally, Section~\ref{sec8} is devoted to a study of pointwise gradient bounds for Green's function when the coefficients $\mathbf A$ and $\vec b$ are of Dini mean oscillation. 

\begin{acknowledgments*}
We would like to thank the anonymous referee for his/her helpful comments and suggestions that improved the exposition of this paper.
We would also like to thank Steve Hofmann for his interest in this paper.
\end{acknowledgments*}

\section{Preliminaries}				\label{sec2}
\subsection{Definitions}
For $p\in[1,\infty)$, $W^{1,p}(\Omega)$ will denote the usual Sobolev space of functions $f\in L^p(\Omega)$, such that their distributional derivatives belong to $L^p(\Omega)$, with norm
\[
\norm{u}_{W^{1,p}(\Omega)}=\norm{u}_{L^p(\Omega)}+ \norm{\nabla u}_{L^p(\Omega)}.
\]
We shall often write $\norm{u}_p$ for $\norm{u}_{L^p(\Omega)}$ for the sake of simplicity.
We shall denote by $\Lip(\Omega)$ the set of all Lipschitz continuous functions in $\Omega$ and by $C_c^{\infty}(\Omega)$ the set of all indefinitely differentiable functions that are compactly supported in $\Omega$.
The space $W_0^{1,p}(\Omega)$ is the closure of $C_c^{\infty}(\Omega)$ in $W^{1,p}(\Omega)$.
Finally, $W^{-1,p'}(\Omega)$ is the dual space to $W_0^{1,p}(\Omega)$, where $p'$ is the conjugate exponent to $p$; i.e. $\frac{1}{p}+\frac{1}{p'}=1$.

For $F\in W^{-1,2}(\Omega)$, we say that $u\in W_0^{1,2}(\Omega)$ is a (weak) solution to the equation $Lu=F$ in $\Omega$, if for all $\phi \in C^\infty_c(\Omega)$, we have
\[
\int_{\Omega}\mathbf{A}\nabla u\cdot\nabla\phi+\vec b\cdot \nabla\phi \,  u+\vec c \cdot \nabla u\, \phi+du\phi=\ip{F,\phi}.
\]
From the Sobolev embedding $W_0^{1,2}(\Omega)\hookrightarrow L^{\frac{2n}{n-2}}(\Omega)$, this is equivalent to the same equality holding for all $\phi\in W_0^{1,2}(\Omega)$.
Hereafter, we shall denote by $C_n$ the constant appearing in the Sobolev inequality
\[
\norm{u}_{\frac{2n}{n-2}} \le C_n \norm{\nabla u}_{2},\quad \forall u \in W^{1,2}_0(\Omega).
\]
We say that $u\in W_0^{1,2}(\Omega)$ is a subsolution to the equation $Lu=F$ in $\Omega$ if
\[
\int_{\Omega}\mathbf{A}\nabla u \cdot \nabla\phi+\vec b \cdot \nabla\phi \, u+\vec c \cdot \nabla u \, \phi+du\phi\le \ip{F,\phi},
\]
for all $\phi\in C_c^{\infty}(\Omega)$ with $\phi\ge 0$.
The distributional definition for Green's function $G(x,y)$ would be that $G(\cdot, y)$ satisfies the identity
\[
\int_{\Omega}\mathbf{A}\nabla G(\cdot, y) \cdot \nabla\phi+\vec b \cdot \nabla\phi \,G(\cdot, y)+\vec c \cdot \nabla G(\cdot, y) \,\phi+dG(\cdot, y)\phi=\phi(y)
\]
for all $\phi\in C_c^{\infty}(\Omega)$.
However, there is an issue of integrability of the term $d G(\cdot,y) \phi$ in the above because $G(\cdot,y)$ is not expected to be a member of $L^{\frac{n}{n-2}}(\Omega)$; it only belongs to weak $L^{\frac{n}{n-2}}(\Omega)$, or the Lorentz space $L^{\frac{n}{n-2},\infty}(\Omega)$.
For this reason, we will defer the definition of Green's function (see Definition \ref{GreenDefinition})  after we have established further properties of solutions of $Lu=F$ in Section~\ref{sec3}.

\subsection{Two quantities}
For $f \in L^n(\Omega)$ and given $t>0$, it will be useful to split
\begin{equation}			\label{eq:ft}
\abs{f}=\abs{f}_{(t)}+\abs{f}^{(t)},
\end{equation}
where $\abs{f}_{(t)}=\abs{f}$ when $\abs{f}<t$ and $\abs{f}_{(t)}=0$ otherwise.
We also define, for a function $f\in L^n(\Omega)$ and $t$, $\varepsilon>0$,
\begin{equation}\label{eq:R_f}
R_f(t)=\norm{\,\abs{f}^{(t)}}_n\quad \text{and}\quad  r_f(\varepsilon)=\inf \,\set{t>0 : R_f(t)<\varepsilon}.
\end{equation}
It is clear that $R_f(t)\to 0$ as $t\to +\infty$, for any $f\in L^n(\Omega)$; therefore, $r_f$ is well defined.
We then show the next lemma.

\begin{lemma}\label{UniformSplit}
Let $f\in L^n(\Omega)$, and $\set{f_m}$ be a sequence in $L^n(\Omega)$ that converges to $f$ in $L^n$.
For any $\varepsilon>0$, there exists a subsequence $\set{f_{k_m}}$ such that, for all $m\in\mathbb N$,
\[
r_{f_{k_m}}(\varepsilon)\le 2r_f\left(\frac{\varepsilon}{3}\right).
\]
\end{lemma}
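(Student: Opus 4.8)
The plan is to reduce the lemma to a quantitative stability statement: there is $\delta=\delta(\varepsilon)>0$ such that every $g\in L^n(\Omega)$ with $\norm{g-f}_n\le\delta$ satisfies $r_g(\varepsilon)\le 2r_f(\varepsilon/3)$. Granting this, the lemma follows at once: since $f_m\to f$ in $L^n$, choose $M$ with $\norm{f_m-f}_n\le\delta$ for all $m\ge M$, and let $\{f_{k_m}\}$ be the subsequence consisting of the terms with index $\ge M$ (for instance $k_m=m+M$); each of these then obeys the asserted bound for every $m$.

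To set up the quantitative step, I would first record the elementary monotonicity of $R_f$. Since the superlevel sets $\{\abs{f}\ge t\}$ decrease as $t$ grows, $R_f$ is non-increasing, and the inequality $R_f(t')<\varepsilon/3$ propagates to all $t\ge t'$. Hence $\{t>0:R_f(t)<\varepsilon/3\}$ is an interval, unbounded above, whose left endpoint is $t_0:=r_f(\varepsilon/3)$; in particular $R_f(s)<\varepsilon/3$ for every $s>t_0$. (The strictness of this inequality is the one point where a little care with the infimum is needed.)

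The heart of the matter is the estimate of $R_g(t)$ for $t>2t_0$. Put $s=t/2>t_0$. The key pointwise inclusion is
\[
\{\abs{g}\ge t\}\subseteq\{\abs{f}\ge s\}\cup\{\abs{g-f}\ge s\},
\]
which holds because $\abs{g}\ge 2s$ together with $\abs{f}<s$ forces $\abs{g-f}>s$. I would then split the integral defining $R_g(t)^n$ over the two disjoint pieces $\{\abs{g}\ge t\}\cap\{\abs{f}\ge s\}$ and $\{\abs{g}\ge t\}\cap\{\abs{f}<s\}$. On the first piece one bounds $\norm{g}_{L^n(\{\abs{f}\ge s\})}\le\norm{f}_{L^n(\{\abs{f}\ge s\})}+\norm{g-f}_n=R_f(s)+\norm{g-f}_n$ by the triangle inequality; on the second piece $\abs{g}\le\abs{f}+\abs{g-f}<2\abs{g-f}$, so its contribution is at most $2\norm{g-f}_n$. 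Using subadditivity of $x\mapsto x^{1/n}$ to combine the two pieces, this yields
\[
R_g(t)\le R_f(s)+3\norm{g-f}_n<\frac{\varepsilon}{3}+3\norm{g-f}_n.
\]
Taking $\delta=2\varepsilon/9$ makes the right-hand side strictly less than $\varepsilon$ for all $t>2t_0$, and therefore $r_g(\varepsilon)=\inf\{t:R_g(t)<\varepsilon\}\le 2t_0=2r_f(\varepsilon/3)$, which is exactly the quantitative claim.

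I do not expect a genuine obstacle here, as the whole argument is elementary real analysis; the two places that require attention are the bookkeeping of constants (so that the factor $2$ and the argument $\varepsilon/3$ in the statement come out correctly) and the treatment of the region where $\abs{f}$ is small, where the contribution of $\abs{g}$ must be absorbed into $\norm{g-f}_n$ rather than into $R_f$.
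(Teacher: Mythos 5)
Your proof is correct, and it takes a genuinely different and in fact cleaner route than the paper's. The paper's argument is qualitative: it first extracts a subsequence converging a.e., then applies the dominated convergence theorem to $\int_{\{\abs{f}\le t\}}\abs{f}^n\chi_{\{\abs{f_{k_m}}>2t\}}$ together with the $L^1$-convergence $\int_\Omega\bigl|\,\abs{f_{k_m}}^n-\abs{f}^n\,\bigr|\to 0$ to show that for each $t$ with $R_f(t)<\varepsilon/3$, eventually $R_{f_{k_m}}(2t)<\varepsilon$. Your argument instead proves the stronger, purely quantitative stability bound $R_g(t)\le R_f(t/2)+3\norm{g-f}_n$ via the pointwise inclusion $\{\abs{g}\ge t\}\subseteq\{\abs{f}\ge t/2\}\cup\{\abs{g-f}\ge t/2\}$, the triangle inequality on the first piece, the bound $\abs{g}<2\abs{g-f}$ on the second, and subadditivity of $x\mapsto x^{1/n}$; this yields an explicit threshold $\delta=2\varepsilon/9$ below which $\norm{g-f}_n\le\delta$ forces $r_g(\varepsilon)\le 2r_f(\varepsilon/3)$. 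What your approach buys: no need for a.e.-convergent subsequences or dominated convergence, the conclusion holds along an entire tail of the original sequence rather than a subsequence, and the dependence on $\norm{g-f}_n$ is made explicit (which in principle is more robust if one later wants uniformity over families of coefficients). The paper's route is the one a reader would more readily reach for if measure-theoretic convergence tools were already in scope, but yours is shorter and self-contained.
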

\begin{proof}
Since $f_m\to f$ in $L^n$, a subsequence $f_{k_m}$ converges to $f$ a.e.
Let $t>0$ be such that $R_f(t)<\varepsilon/3$. 
Then, for almost every $x\in\Omega$ with $\abs{f(x)} \le t$,  $\abs{f_{k_m}(x)} \to \abs{f(x)}<2t$, hence $x\in \Set{\abs{f_{k_m}}<2t}$ eventually.
Therefore, for a.e. $x$ with $\abs{f(x)} \le  t$, we have $\chi_{\set{\abs{f_{k_m}}>2t}} \to 0$ pointwise. 
Hence, from the dominated convergence theorem, we obtain that
\[
\lim_{m\to \infty} \int_{ \abs{f} \le t} \abs{f}^n \,\chi_{\set{ \abs{f_{k_m}}>2t}} = 0.
\]
Hence,
\begin{align*}
\limsup_{m\to\infty}\int_{\abs{f_{k_m}}>2t} \abs{f}^n &\le \limsup_{m\to\infty}\left(\int_{\abs{f}\le t} \abs{f}^n\chi_{\set{\abs{f_{k_m}}>2t}}+\int_{\abs{f}>t} \abs{f}^n\chi_{\set{\abs{f_{k_m}}>2t}}\right)\\
&\le\limsup_{m\to\infty}\left(\int_{ \abs{f}\le t} \abs{f}^n\chi_{\set{\abs{f_{k_m}}>2t}}\right)+\int_{\abs{f}>t} \abs{f}^n\\
&=(R_f(t))^n<\frac{\varepsilon^n}{3^n},
\end{align*}
since $t$ is such that $R_f(t)<\varepsilon/3$.
Now note that
\[
\lim_{m\to \infty} \int_{\Omega}\Abs{\,\abs{f}^n- \abs{f_{k_m}}^n}=0,
\]
and thus there exists $m_0\in\mathbb N$ such that, for all $m\ge m_0$,
\[
\int_{\abs{f_{k_m}}>2t} \abs{f}^n<\limsup_{m\to\infty}\int_{\abs{f_{k_m}}>2t} \abs{f}^n+\frac{\varepsilon^n}{3^n}<\frac{2\varepsilon^n}{3^n},
\qquad\int_{\Omega} \Abs{\, \abs{f}^n-\abs{f_{k_m}}^n}<\frac{\varepsilon^n}{3^n}.
\]
Therefore, for any $m\ge m_0$,
\begin{align*}
\bignorm{\,\abs{f_{k_m}}^{(2t)}}_n^n&=\int_{ \abs{f_{k_m}}>2t} \abs{f_{k_m}}^n=\int_{\abs{f_{k_m}}>2t} \abs{f}^n+\int_{\abs{f_{k_m}}>2t}\left( \,\abs{f_{k_m}}^n-\abs{f}^n\right)\\
&\le\int_{\abs{f_{k_m}}>2t} \abs{f}^n+\int_{\Omega} \Abs{\,\abs{f}^n-\abs{f_{k_m}}^n} \le\frac{2\varepsilon^n}{3^n}+\frac{\varepsilon^n}{3^n}<\varepsilon^n.
\end{align*}
Hence, for any $t$ with $R_f(t)<\varepsilon/3$ and any $m\ge m_0$, we obtain that
\[
R_{f_{k_m}}(2t)<\varepsilon,
\]
and we have
\[
\set{t>0:R_f(t)<\varepsilon/3} \subseteq \set{t>0:R_{f_{k_m}}(2t)<\varepsilon}=\frac{1}{2}\, \set{2t>0:R_{f_{k_m}}(2t)<\varepsilon}.
\]
The last inclusion shows that
\[
\frac{1}{2}r_{f_{k_m}}(\varepsilon)\le r_f\left(\frac{\varepsilon}{3}\right),
\]
for all $m\ge m_0$, which completes the proof after relabeling the $f_{k_m}$.
\end{proof}

In the case that $f$ belongs to $L^p(\Omega)$, for $p>n$, we compute
\[
\Abs{\,\abs{f}>t}=\frac{1}{t^p}\int_{\abs{f}>t} t^p\,dx\le\frac{1}{t^p}\int_{\Omega} \abs{f}^p=\frac{\norm{f}_p^p}{t^p},
\]
therefore
\[
\int_{\Omega}\left(\abs{f}^{(t)}\right)^n=\int_{\abs{f}>t} \abs{f}^n\le\left(\int_{\Omega} \abs{f}^p\right)^{n/p} \Abs{\,\set{\abs{f}>t}}^{1-n/p}\le \norm{f}_p^n\, \norm{f}_p^{p-n}\,t^{n-p},
\]
which implies that
\begin{equation}\label{eq:ftBound}
\bignorm{\,\abs{f}^{(t)}}_n\le \norm{f}_p^{p/n}\,t^{1-p/n}.
\end{equation}
Hence, we are led to the next lemma.

\begin{lemma}\label{RfBound}
Let $f\in L^p(\Omega)$ for some $p>n$.
Then, for every $\varepsilon>0$, we have
\[
r_f(\varepsilon)\le \norm{f}_p^{\frac{p}{p-n}}\varepsilon^{-\frac{n}{p-n}}.
\]
\end{lemma}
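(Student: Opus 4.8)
The plan is to read the bound off directly from the estimate \eqref{eq:ftBound} established just above, namely
\[
R_f(t) = \bignorm{\,\abs{f}^{(t)}}_n \le \norm{f}_p^{p/n}\, t^{1-p/n},\qquad t>0.
\]
Since $p>n$, the exponent $1-p/n$ is strictly negative, so the right-hand side is strictly decreasing in $t$ and tends to $0$ as $t\to+\infty$; all that remains is to pin down the threshold past which it lies below $\varepsilon$.

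If $\norm{f}_p=0$ then $f=0$ a.e., so $R_f(t)=0<\varepsilon$ for every $t>0$ and hence $r_f(\varepsilon)=0$, which agrees with the asserted inequality; so I may assume $\norm{f}_p>0$. Put
\[
t_0 = \norm{f}_p^{\frac{p}{p-n}}\,\varepsilon^{-\frac{n}{p-n}}.
\]
For any $t>t_0$ I would verify the chain of equivalences: $\norm{f}_p^{p/n}t^{1-p/n}<\varepsilon$ is the same as $t^{\frac{p-n}{n}}>\norm{f}_p^{p/n}\varepsilon^{-1}$, which upon raising both sides to the positive power $\frac{n}{p-n}$ becomes precisely $t>t_0$ (the point to watch is that $1-p/n<0$, so one must first rewrite $t^{1-p/n}=t^{-\frac{p-n}{n}}$ before raising to a positive exponent, and the inequality reverses accordingly). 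Combining this with the displayed estimate gives $R_f(t)<\varepsilon$ for every $t>t_0$.

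Therefore the set $\set{t>0:R_f(t)<\varepsilon}$ contains the interval $(t_0,+\infty)$, and so by definition $r_f(\varepsilon)\le t_0=\norm{f}_p^{\frac{p}{p-n}}\varepsilon^{-\frac{n}{p-n}}$, which is the claim. There is no real obstacle here: the content is entirely contained in \eqref{eq:ftBound} (itself a consequence of Hölder's inequality on $\set{\abs{f}>t}$ together with Chebyshev's inequality for $\abs{\set{\abs{f}>t}}$), and the remaining work is just bookkeeping with the exponents, where the only subtlety is the sign reversal when manipulating powers of $t$ carrying the negative exponent $1-p/n$.
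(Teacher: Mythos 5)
Your argument is correct and is essentially the same as the paper's: both read the bound off directly from \eqref{eq:ftBound} and observe that any $t$ strictly above the stated threshold lies in $\set{t>0: R_f(t)<\varepsilon}$, whence $r_f(\varepsilon)$ is at most that threshold. Your version is just slightly more explicit about the exponent algebra and the degenerate case $\norm{f}_p=0$.
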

\begin{proof}
Note that, if $s> \norm{f}_p^{\frac{p}{p-n}}\varepsilon^{-\frac{n}{p-n}}$, then \eqref{eq:ftBound} shows that
\[
R_f(s)=\norm{\,\abs{f}^{(s)}}_n\le \norm{f}_p^{p/n}s^{1-p/n}<\varepsilon,
\]
which implies that $s\in\set{t>0 : R_f(t)<\varepsilon}$.
This completes the proof.
\end{proof}

For $f\in L^n(\Omega)$ and $t$, $\varepsilon >0$, we also set
\begin{equation}		\label{eq:R_f2}
\tilde{R}_f(t)=\sup \,\Set{\norm{f}_{L^n(A)} : A\subset \Omega,\; \abs{A}<t},\qquad
\tilde{r}_f(\varepsilon)=\sup \Set{t>0: \tilde{R}_f(t)<\varepsilon}.
\end{equation}
Since $\tilde{R}_f(t)\to 0$ as $t\to 0^+$, the quantity $\tilde{r}_f$ is well defined for any $f\in L^n(\Omega)$.
The next lemma is the analog of Lemma~\ref{UniformSplit}, but for the quantity $\tilde{r}_f$.

\begin{lemma}\label{UniformSplit2}
Let $f\in L^n(\Omega)$, and $\set{f_m}$ be a sequence in $L^n(\Omega)$ that converges to $f$ in $L^n$.
For any $\varepsilon>0$, there exists a subsequence $\set{f_{k_m}}$ such that, for all $m\in\mathbb N$,
\[
\tilde{r}_{f_{k_m}}(\varepsilon)\ge\tilde{r}_f\left(\frac{\varepsilon}{3}\right).
\]
\end{lemma}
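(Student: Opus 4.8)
The plan is to mimic the proof of Lemma~\ref{UniformSplit}, but the argument is in fact simpler: no truncation is involved here, so we need neither the dominated convergence theorem nor a.e.\ convergence. The triangle inequality in $L^n$ does everything, and crucially it is automatically uniform over subsets $A\subset\Omega$.

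First I would record the elementary monotonicity of $\tilde R_f$: the map $t\mapsto\tilde R_f(t)$ is non-decreasing, since enlarging $t$ only enlarges the family $\set{A\subset\Omega : \abs A<t}$ over which the supremum defining $\tilde R_f(t)$ is taken. Consequently $\set{t>0:\tilde R_f(t)<\delta}$ is an interval of the form $(0,\tilde r_f(\delta))$ or $(0,\tilde r_f(\delta)]$, and in particular, for every $t<\tilde r_f(\varepsilon/3)$ there is $s>t$ with $\tilde R_f(s)<\varepsilon/3$, whence $\tilde R_f(t)\le\tilde R_f(s)<\varepsilon/3$; that is, $\norm{f}_{L^n(A)}<\varepsilon/3$ for every $A\subset\Omega$ with $\abs A<t$.

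Next, since $f_m\to f$ in $L^n$, pick $m_0\in\mathbb N$ with $\norm{f_m-f}_{L^n(\Omega)}<\varepsilon/3$ for all $m\ge m_0$. Then, fixing any $t<\tilde r_f(\varepsilon/3)$ and any $A\subset\Omega$ with $\abs A<t$, the triangle inequality gives, for all $m\ge m_0$,
\[
\norm{f_m}_{L^n(A)}\le\norm{f}_{L^n(A)}+\norm{f_m-f}_{L^n(A)}\le\norm{f}_{L^n(A)}+\norm{f_m-f}_{L^n(\Omega)}<\frac{\varepsilon}{3}+\frac{\varepsilon}{3}<\varepsilon.
\]
Taking the supremum over all such $A$ yields $\tilde R_{f_m}(t)\le 2\varepsilon/3<\varepsilon$, so $t\in\set{s>0:\tilde R_{f_m}(s)<\varepsilon}$ and hence $\tilde r_{f_m}(\varepsilon)\ge t$. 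Letting $t\uparrow\tilde r_f(\varepsilon/3)$ gives $\tilde r_{f_m}(\varepsilon)\ge\tilde r_f(\varepsilon/3)$ for every $m\ge m_0$; note this argument is uniform and also handles the degenerate case $\tilde r_f(\varepsilon/3)=+\infty$, which then forces $\tilde r_{f_m}(\varepsilon)=+\infty$ as well.

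Finally, exactly as in the proof of Lemma~\ref{UniformSplit}, I would pass to the tail subsequence $f_{k_m}:=f_{m+m_0-1}$ (a genuine subsequence) and relabel, so that $\tilde r_{f_{k_m}}(\varepsilon)\ge\tilde r_f(\varepsilon/3)$ holds for all $m\in\mathbb N$. I do not anticipate any real obstacle; the single point worth isolating is that an $L^n$-small perturbation changes $\norm{\cdot}_{L^n(A)}$ by at most $\norm{f_m-f}_{L^n(\Omega)}$ \emph{uniformly} in $A$, which is precisely why one loses a factor only in the level $\varepsilon$ and not in the measure threshold $t$.
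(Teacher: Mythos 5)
Your proof is correct, and the overall structure is the same as the paper's: fix $t<\tilde r_f(\varepsilon/3)$, bound $\norm{f_m}_{L^n(A)}$ for $|A|<t$ and $m$ large, conclude $\tilde R_{f_m}(t)<\varepsilon$, take the supremum over $t$, and pass to a tail subsequence. The one genuine (if minor) difference is in the key estimate: the paper works with $n$-th powers, writing $\int_A|f_m|^n\le\int_A|f|^n+\int_\Omega\bigabs{\,|f_m|^n-|f|^n}$ and then invoking (implicitly) that $L^n$ convergence of $f_m$ gives $L^1$ convergence of $|f_m|^n$, which itself requires a small auxiliary argument; you instead apply the Minkowski inequality in $L^n$ directly, $\norm{f_m}_{L^n(A)}\le\norm{f}_{L^n(A)}+\norm{f_m-f}_{L^n(\Omega)}$, which is more elementary and makes the uniformity in $A$ transparent at a glance. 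Your observation that this lemma, unlike Lemma~\ref{UniformSplit}, needs no pointwise or dominated-convergence argument (since the definition of $\tilde R_f$ involves restriction to subsets rather than truncation in value) is exactly the right thing to flag, and your handling of the monotonicity of $\tilde R_f$ and of the degenerate case $\tilde r_f(\varepsilon/3)=+\infty$ is careful and correct.
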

\begin{proof}
Let $t>0$ such that $\tilde{R}_f(t)<\varepsilon/3$, and let $A\subseteq\Omega$ be such that $\abs{A}<t$.
Then, we compute
\begin{align*}
\int_A \abs{f_m}^n&\le\int_A \abs{f}^n+\int_A \left( \abs{f_m}^n-\abs{f}^n\right)\le\int_A \abs{f}^n+\int_{\Omega} \Abs{\,\abs{f_m}^n-\abs{f}^n}\\
&\le\left(\tilde{R}_f(t)\right)^n+\int_{\Omega} \Abs{\,\abs{f_m}^n-\abs{f}^n} \le\frac{\varepsilon^n}{3^n}+\int_{\Omega} \Abs{\,\abs{f_m}^n-\abs{f}^n}.
\end{align*}
Since $f_m\to f$ in $L^n$, there exists $m_0\in\mathbb N$ such that, for all $m\ge m_0$,
\[
\int_{\Omega}\Abs{\,\abs{f_m}^n-\abs{f}^n}<\frac{\varepsilon^n}{3^n}.
\]
Hence, for all $m\ge m_0$, and all $A\subseteq\Omega$ with $\abs{A}<t$,
\[
\int_A \abs{f_m}^n\le\frac{\varepsilon^n}{3^n}+\int_{\Omega}\Abs{\,\abs{f_m}^n-\abs{f}^n}<\varepsilon^n.
\]
This shows that, if $\tilde{R}_f(t)<\varepsilon/3$, then, for all $m\ge m_0$, $\tilde{R}_{f_m}(t)<\varepsilon$; hence,
\[
\Set{t>0 : \tilde{R}_f(t)<\varepsilon/3}\subseteq \Set{t>0: \tilde{R}_{f_m}(t)<\varepsilon},
\]
which completes the proof after relabeling the sequence $\set{f_m}_{m\ge m_0}$.
\end{proof}

Moreover, similarly to Lemma~\ref{RfBound}, we can show the next lemma.

\begin{lemma}\label{RfBound2}
Let $f\in L^p(\Omega)$ for some $p>n$. Then, for every $\varepsilon>0$, we have
\[
\tilde{r}_f(\varepsilon)\ge\left(\varepsilon \norm{f}_p^{-1}\right)^{\frac{np}{p-n}}.
\]
\end{lemma}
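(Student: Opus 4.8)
The plan is to mimic the computation preceding Lemma~\ref{RfBound}, but now estimating $\norm{f}_{L^n(A)}$ over sets $A$ of small measure rather than the tail $\abs{f}^{(t)}$. First I would fix $A\subset\Omega$ with $\abs{A}<t$ and apply Hölder's inequality with exponents $\frac{p}{n}$ and $\frac{p}{p-n}$ to obtain
\[
\int_A\abs{f}^n\le\left(\int_A\abs{f}^p\right)^{n/p}\abs{A}^{1-n/p}\le\norm{f}_p^n\,t^{\frac{p-n}{p}},
\]
where in the last step we used that $s\mapsto s^{1-n/p}$ is increasing. Taking $n$-th roots gives $\norm{f}_{L^n(A)}\le\norm{f}_p\,t^{\frac{p-n}{np}}$, and taking the supremum over all admissible $A$ yields the pointwise bound $\tilde R_f(t)\le\norm{f}_p\,t^{\frac{p-n}{np}}$ valid for every $t>0$.

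Next I would determine the range of $t$ for which this right-hand side lies below $\varepsilon$. Assuming $f\not\equiv 0$ (otherwise $\tilde R_f\equiv 0$, so $\tilde r_f(\varepsilon)=+\infty$ and the claim is trivial), the inequality $\norm{f}_p\,t^{\frac{p-n}{np}}<\varepsilon$ is equivalent to $t<\bigl(\varepsilon\norm{f}_p^{-1}\bigr)^{\frac{np}{p-n}}$. For any such $t$ we then have $\tilde R_f(t)<\varepsilon$, so $t\in\set{t>0:\tilde R_f(t)<\varepsilon}$; taking the supremum over these $t$ gives
\[
\tilde r_f(\varepsilon)\ge\left(\varepsilon\norm{f}_p^{-1}\right)^{\frac{np}{p-n}},
\]
which is the asserted bound.

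There is essentially no obstacle here; the only point requiring a word of care is that the estimate $\tilde R_f(t)\le\norm{f}_p\,t^{\frac{p-n}{np}}$ need not be strict, so one cannot simply substitute $t=\bigl(\varepsilon\norm{f}_p^{-1}\bigr)^{\frac{np}{p-n}}$, but this is immaterial since $\tilde r_f(\varepsilon)$ is a supremum and the strict inequality $\tilde R_f(t)<\varepsilon$ holds for every $t$ strictly below that threshold.
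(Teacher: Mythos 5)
Your proof is correct and takes essentially the same route as the paper: Hölder's inequality gives $\norm{f}_{L^n(A)}\le\norm{f}_p\abs{A}^{\frac{1}{n}-\frac{1}{p}}$, which one turns into a bound on $\tilde R_f(t)$ and hence a lower bound on $\tilde r_f(\varepsilon)$. If anything, your final remark is slightly more careful than the paper's phrasing, which asserts $\tilde R_f$ evaluated at the threshold is $<\varepsilon$ even though each individual $\norm{f}_{L^n(A)}$ is only bounded by quantities approaching $\varepsilon$; your version, working with $t$ strictly below the threshold and invoking the supremum in the definition of $\tilde r_f$, sidesteps that.
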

\begin{proof}
Note that, from H{\"o}lder's inequality,
\[
\norm{f}_{L^n(A)} \le \norm{f}_{L^p(A)} \,\abs{A}^{\frac{1}{n}-\frac{1}{p}}\le \norm{f}_p \, \abs{A}^{\frac{1}{n}-\frac{1}{p}}.
\]
Hence, if $\abs{A}<\left(\varepsilon\norm{f}_p^{-1}\right)^{\frac{np}{p-n}}$, then $\norm{f}_{L^n(A)}\le \varepsilon$, which shows that
\[
\tilde{R}_f\left(\left(\varepsilon \norm{f}_p^{-1}\right)^{\frac{np}{p-n}}\right)<\varepsilon.
\]
This completes the proof.
\end{proof}

\section{A Priori estimates}			\label{sec3}

\subsection{Global Boundedness}
Our first step is to show a maximum principle for the inhomogeneous equation $Lu=F$.
For this reason, we first show the next lemma.

\begin{lemma}				\label{Pointwise}
Let $\Omega\subset \mathbb R^n$ be a domain with $\abs{\Omega}\le 1$.
Let $\mathbf{A}$ be bounded and satisfy the uniform ellipticity condition \eqref{ellipticity} and $\vec b$, $\vec c\in L^n(\Omega)$, $d\in L^{n/2}(\Omega)$, and  $d \ge \dv \vec b$.
Let also $\vec{f}\in L^q(\Omega)$, $g\in L^{q/2}(\Omega)$ for some $q>n$.
Then if $u$ is a $W^{1,2}(\Omega)$ subsolution of 
\[
Lu=-\dv(\mathbf{A}\nabla u+\vec bu)+\vec c \cdot \nabla u+du= g+\dv\vec{f} \quad\text{in} \quad \Omega
\]
satisfying $u \le 0$ on $\partial\Omega$, we have
\[
\sup_{\Omega} u\le C\left( \norm{u^+}_2+\norm{\vec f}_q+\norm{g}_{q/2}\right),
\]
where $C$ depends only on $n$, $\lambda$, $q$, and $r_{\vec b-\vec c}\left(\frac{\lambda}{3C_n}\right)$.
\end{lemma}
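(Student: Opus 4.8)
The strategy is the classical Stampacchia/De Giorgi truncation method, adapted so that the lower order coefficients $\vec b$ and $\vec c$ enter only through the single combination $\vec b - \vec c$ (after an integration by parts that uses $d \ge \dv \vec b$). First I would fix $k \ge 0$ and test the subsolution inequality with $\phi = (u-k)^+ \in W_0^{1,2}(\Omega)$, which is a legitimate nonnegative test function since $u \le 0 \le k$ on $\partial\Omega$. Writing $v = (u-k)^+$ and $A_k = \{u > k\}$, the principal term gives $\int \mathbf A \nabla v \cdot \nabla v \ge \lambda \int |\nabla v|^2$ by ellipticity. For the lower order terms, I would group them: the terms $\int \vec b \cdot \nabla\phi\, u$, $\int \vec c \cdot \nabla u\,\phi$, and $\int d u\phi$. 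Using the distributional inequality $d \ge \dv \vec b$ tested against $u\phi \ge 0$ (after a standard approximation argument, since $u\phi$ has the right regularity and sign), one converts $\int d u \phi$ together with part of the $\vec b$-term so that effectively only $\int (\vec c - \vec b)\cdot \nabla u\, \phi = \int (\vec c - \vec b) \cdot \nabla v\, v$ survives, plus harmless terms of the right sign or involving $k$. Concretely, I expect to reach an inequality of the shape
\[
\lambda \int_{A_k} |\nabla v|^2 \le \int_{A_k} |\vec b - \vec c|\, |\nabla v|\, v + \int_{A_k} |\vec f|\,|\nabla v| + \int_{A_k} |g|\, v.
\]

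**Absorbing the critical drift term.** The key point is handling $\int_{A_k}|\vec b - \vec c|\,|\nabla v|\,v$, where $\vec b - \vec c \in L^n$ is only critical, not subcritical. Here I would invoke the splitting from Section~\ref{sec2}: write $|\vec b - \vec c| = |\vec b - \vec c|_{(t)} + |\vec b - \vec c|^{(t)}$ with $t = r_{\vec b - \vec c}(\lambda/(3C_n))$, so that $\bignorm{|\vec b-\vec c|^{(t)}}_n < \lambda/(3C_n)$. For the tail part, Hölder followed by the Sobolev inequality $\norm{v}_{2n/(n-2)} \le C_n \norm{\nabla v}_2$ gives $\int |\vec b-\vec c|^{(t)} |\nabla v| v \le \bignorm{|\vec b-\vec c|^{(t)}}_n \norm{\nabla v}_2 \norm{v}_{2n/(n-2)} \le \tfrac{\lambda}{3} \norm{\nabla v}_2^2$, which is absorbed into the left side. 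For the bounded part, $\int |\vec b-\vec c|_{(t)} |\nabla v| v \le t \norm{\nabla v}_2 \norm{v}_{L^2(A_k)} \le \tfrac{\lambda}{6}\norm{\nabla v}_2^2 + \tfrac{3t^2}{2\lambda}\norm{v}_{L^2(A_k)}^2$ by Young's inequality. After absorbing, I am left with
\[
\tfrac{\lambda}{2}\norm{\nabla v}_2^2 \le C(t,\lambda)\norm{v}_{L^2(A_k)}^2 + \norm{\vec f}_q \norm{\nabla v}_2 + \norm{g}_{q/2}\norm{v}_{L^{(q/2)'}(A_k)},
\]
and the $\vec f, g$ terms are likewise controlled by $\tfrac{\lambda}{4}\norm{\nabla v}_2^2$ plus $C(\norm{\vec f}_q + \norm{g}_{q/2})$ times a power of $|A_k|$, using Sobolev and Hölder (here $q > n$ is what makes the exponents on $|A_k|$ strictly favorable). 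Since $|\Omega| \le 1$, the $\norm{v}_{L^2(A_k)}^2$ term can be bounded by $|A_k|^{2/n}\norm{v}_{2n/(n-2)}^2$ — but $|A_k|^{2/n}$ is not small a priori for small $k$, so instead I keep it and feed it into the iteration.

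**From the energy estimate to the sup bound.** With $\norm{\nabla v}_2^2 \lesssim \norm{v}_{L^2(A_k)}^2 + (\norm{\vec f}_q+\norm{g}_{q/2})^2$ in hand (absorbing all gradient terms on the right), Sobolev gives $\norm{(u-k)^+}_{2n/(n-2)}^2 \lesssim \norm{(u-k)^+}_2^2 + M^2$ where $M = \norm{\vec f}_q + \norm{g}_{q/2}$. Then the standard Stampacchia lemma: setting $\varphi(k) = \int_{A_k}(u-k)^2$ or working with the De Giorgi quantities, for $h > k$ one has $(h-k)^2 |A_h| \le \int_{A_k}(u-k)^2$ and, via the higher integrability, $\int_{A_k}(u-k)^2 \le \big(\int_{A_k}(u-k)^{2n/(n-2)}\big)^{(n-2)/n}|A_k|^{2/n} \lesssim (\varphi(k)+M^2)|A_k|^{2/n}$. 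Combined with $|A_h| \le (h-k)^{-2}\varphi(k)$, this yields a recursive inequality of the form $\psi(h) \le C(h-k)^{-\beta}\psi(k)^{1+\delta}$ for $\psi(k) = \varphi(k) + M^2$ with $\delta = 2/n > 0$ and $\beta > 0$, which by the classical lemma (Stampacchia) implies $\psi$ vanishes past some level $k_0 \lesssim \psi(0)^{\delta/\beta}\cdot(\text{const})$, hence $\sup_\Omega u \le k_0 \le C(\norm{u^+}_2 + M)$. Tracking constants, all of them depend only on $n$, $\lambda$, $q$, and $t = r_{\vec b-\vec c}(\lambda/(3C_n))$, which is exactly the claimed dependence.

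**Main obstacle.** The routine parts are the Young/Hölder/Sobolev juggling and the final Stampacchia iteration. The genuinely delicate step is the integration by parts that lets the zeroth-order coefficient $d$ and the drift $\vec b$ collapse into $\vec b - \vec c$: one must justify testing the distributional inequality $d - \dv \vec b \ge 0$ against $u(u-k)^+$, which is not smooth and not compactly supported, so this requires an approximation/density argument exploiting that $u \in W^{1,2}$, that $u(u-k)^+ \ge 0$, and that $\vec b \in L^n$, $d \in L^{n/2}$ give the needed integrability via Sobolev embedding. Getting this to go through cleanly — and ensuring the resulting constant depends on $\vec b, \vec c$ only through $r_{\vec b - \vec c}(\lambda/(3C_n))$ and not separately on $\norm{\vec b}_n$ or $\norm{d}_{n/2}$ — is the crux.
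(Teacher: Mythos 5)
Your overall strategy is sound and parallel in spirit to the paper's, but you take a genuinely different route: the paper follows Gilbarg--Trudinger Theorem~8.15 via Moser iteration (test functions $v = G(w)$ with $w = u^+ + k$ and the truncated power $H$), whereas you propose De Giorgi/Stampacchia truncation with $\phi = (u-k)^+$. Your derivation of the energy inequality is correct and matches the paper's key move: testing $d - \dv\vec b \ge 0$ against $u\phi \ge 0$ to collapse the $\vec b$, $d$ terms into $\int(\vec b - \vec c)\cdot\nabla v\, v$, and then splitting $\abs{\vec b - \vec c}$ at height $t = r_{\vec b - \vec c}(\lambda/(3C_n))$ to absorb the tail into $\lambda\norm{\nabla v}_2^2$. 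That part is in perfect agreement with the paper.

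The gap is in the iteration. You set $\psi(k) = \varphi(k) + M^2$ and claim a recursion $\psi(h) \le C(h-k)^{-\beta}\psi(k)^{1+\delta}$ forcing $\psi$ to vanish at a finite level. But $\psi(k) \ge M^2 > 0$ for all $k$, so $\psi$ cannot vanish, and the additive $M^2$ on the right is a constant that does not decay, so Stampacchia's lemma does not apply. Tracing back, the problem is the $\norm{v}_{L^2(A_k)}^2$ term in the energy estimate: unlike the $\vec f$ and $g$ terms, which produce a factor $\abs{A_k}^{\theta}$ with $\theta = 1 - 2/q > 1 - 2/n$ (this is exactly where $q > n$ is used), the $L^2$ term carries no smallness in $\abs{A_k}$ and cannot simply be ``fed into the iteration.'' The standard repair within the De Giorgi framework is to start the iteration only at a level $k_0 \gtrsim \norm{u^+}_2$ chosen so that $\abs{A_{k_0}}$ is small; then $\norm{v}_{L^2(A_k)}^2 \le C_n^2\abs{A_k}^{2/n}\norm{\nabla v}_2^2$ can be absorbed into the left-hand side, the energy estimate reduces to $\norm{\nabla v}_2^2 \lesssim M^2\abs{A_k}^{1-2/q}$, and the recursion for $a(k) = \abs{A_k}$ closes with exponent $2/n + (1 - 2/q) > 1$. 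The Moser route the paper takes sidesteps this issue because the analogous $\norm{H(w)}_2^2$ term is immediately controlled by $\norm{wH'(w)}_{\hat q}^2$ via H\"older and $\abs{\Omega}\le 1$, feeding directly into the iterated quantity.

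Your flag of the integration-by-parts justification (testing the distributional inequality against $u(u-k)^+$, which is not compactly supported smooth) is the right thing to worry about; the paper glosses over it too, and it is a density argument that uses $u\phi \in W_0^{1,1}(\Omega)\cap L^{n/(n-2)}(\Omega)$ together with the integrability $\vec b\in L^n$, $d\in L^{n/2}$.
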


\begin{proof}
Set $k=\norm{g}_{q/2}+\norm{\vec f}_q$.
We use the same test functions as in the proof of \cite[Theorem 8.15]{Gilbarg}.
For $\beta \ge 1$ and $N>k$, let us define
\[
H(z)=\left\{
\begin{array}{c l}
z^{\beta}-k^{\beta}, & k\le z\le N\\ \beta N^{\beta-1}z+(1-\beta)N^{\beta}-k^{\beta}, & z>N.
\end{array}\right.
\]
Note that $H\in C^1[k,\infty)$ and $H'$ is bounded.
We also set
\[
G(s)=\int_k^s \abs{H'(z)}^2\,dz \quad\text{for }\; s\ge k.
\]
Simple computations will show that
\begin{equation}\label{eq:G'}
G(s)\le H(s)H'(s)\quad\text{and}\quad H(s)\le sH'(s)\quad\text{for }\; s\ge k.
\end{equation}
We now define $w=u^{+} +k$ and $v=G(w)$.
Then, $\nabla v=G'(w)\nabla w=\abs{H'(w)}^2\,\nabla w$.
By using $v$ as a test function, we obtain
\[
\int_{\Omega}\mathbf{A}\nabla u \cdot \nabla v+\vec b \cdot \nabla v\, u+\vec c \cdot \nabla u\, v+duv \le \int_{\Omega}gv-\vec{f}\cdot\nabla v.
\] 
Note that we have $uv\ge 0$ from the definition of $v$.
Hence, since $d\ge\dv\vec b$, we write
\[
\int_{\Omega}\vec b \cdot \nabla v \, u+duv=\int_{\Omega}\vec b \cdot\nabla(uv)-\vec b\cdot\nabla u \, v+duv\ge-\int_{\Omega}\vec b \cdot\nabla u \, v,
\]
which implies that
\begin{equation}\label{eq:MainInLemma}
\int_{\Omega}\mathbf{A}\nabla u \cdot \nabla v\le\int_{\Omega} gv- \vec f \cdot \nabla v+(\vec b-\vec c) \cdot\nabla u  \,v.
\end{equation}
We now compute
\begin{equation}\label{eq:ANablaH}
\int_{\Omega}\mathbf{A}\nabla u \cdot\nabla v=\int_{\Omega}\mathbf{A}\nabla w \cdot\nabla v=\int_{\Omega}\mathbf{A}\nabla w\cdot H'(w)^2\,\nabla w=\int_{\Omega}\mathbf{A}\nabla H(w) \cdot \nabla H(w).
\end{equation}
Set $\hat q=\frac{2q}{q-2}$, and note that $\frac{1}{2}+\frac{1}{q}+\frac{1}{\hat q}=1$. Hence, using the generalized H{\"o}lder's inequality and Cauchy's inequality, we compute
\begin{align*}
\int_{\Omega}\abs{\vec f \cdot\nabla v} &\le \frac{1}{k} \int_{\Omega}\abs{\vec f}\, \abs{w\nabla v}= \frac{1}{k} \int_{\Omega} \abs{\vec f}\, \abs{H'(w)w\, H'(w)\nabla w}\\
&\le\ \frac{1}{k} \left(\int_{\Omega} \abs{\vec f}^q\right)^{1/q}\left(\int_{\Omega} \abs{wH'(w)}^{\hat q}\right)^{1/{\hat q}}\left(\int_{\Omega} \abs{\nabla H(w)}^2\right)^{1/2}\\
&\le\frac{1}{4\delta}\left(\int_{\Omega} \abs{wH'(w)}^{\hat q}\right)^{2/{\hat q}}+\delta\int_{\Omega} \abs{\nabla H(w)}^2.
\end{align*}
Therefore, if we choose $\delta=\frac{\lambda}{4}$, we obtain that
\begin{equation}\label{eq:FV}
\int_{\Omega} \abs{\vec{f}\cdot\nabla v} \le\frac{1}{\lambda}\left(\int_{\Omega} \abs{wH'(w)}^{\hat q}\right)^{2/{\hat q}}+\frac{\lambda}{4}\int_{\Omega} \abs{\nabla H(w)}^2.
\end{equation}
Using \eqref{eq:G'}, we get $G(s) \le s G'(s)$ and thus, since $\frac{2}{\hat{q}}+\frac{2}{q}=1$, we have
\begin{align}\label{eq:fv}
\nonumber
\int_{\Omega} \abs{gv}& \le \frac{1}{k} \int_{\Omega} \abs{g} \,wG(w)\le \frac{1}{k} \int_{\Omega} \abs{g} \, w^2G'(w)= \frac{1}{k} \int_{\Omega}\abs{g}\, \abs{H'(w)w}^2\\
&\le \frac{1}{k} \left(\int_{\Omega}\abs{g}^{\frac{q}{2}}\right)^{\frac{2}{q}}\left(\int_{\Omega} \abs{wH'(w)}^{\hat q}\right)^{\frac{2}{\hat q}} \le\left(\int_{\Omega} \abs{wH'(w)}^{\hat q} \right)^{\frac{2}{\hat q}}.
\end{align}
We now turn to bounding the term involving $\vec b-\vec c$.
For this reason, we split
\[
\abs{\vec b - \vec c}=\abs{\vec b- \vec c}_{(t)}+ \abs{\vec b - \vec c}^{(t)}
\]
as in \eqref{eq:ft}.
From the definition of $v$, we have $\nabla u\,v \equiv \nabla w\,G(w)$.
By using \eqref{eq:G'} and Cauchy's inequality with $\delta>0$, we then obtain
\begin{align}
						\nonumber
\int_{\Omega}(\vec b-\vec c) \cdot\nabla u \, v&=\int_{\Omega}(\vec b - \vec c) \cdot \nabla w \,G(w)\le\int_{\Omega} \left( \abs{\vec b-\vec c}_{(t)}+\abs{\vec b-\vec c}^{(t)} \right) \abs{H'(w)\nabla w} \, \abs{H(w)}\\
						\nonumber
&\le \norm{\,\abs{\vec b-\vec c}^{(t)}}_n \norm{\nabla H(w)}_2 \norm{H(w)}_{2^*}+t \norm{H(w)}_2 \norm{\nabla H(w)}_2\\
						\nonumber
&\le  C_nR_{\vec b-\vec c}(t) \,\norm{\nabla H(w)}_2^2+ t\delta \norm{\nabla H(w)}_2^2+\frac{t}{4\delta} \norm{H(w)}_2^2\\
						\label{eq:b-cBound}
&\le \left(C_nR_{\vec b-\vec c}(t)+t\delta\right)\int_{\Omega} \abs{\nabla H(w)}^2+\frac{t}{4\delta}\int_{\Omega} \abs{H(w)}^2.
\end{align}
We choose $t$ and $\delta$ so that $R_{\vec b-\vec c}(t)<\frac{\lambda}{3 C_n}$ and $\delta=\frac{\lambda}{6t}$.
Then, by using \eqref{eq:G'} and H\"older's inequality (since $q^*>2$), we obtain that (recall $\abs{\Omega} \le 1$)
\begin{align*}
\int_{\Omega}(\vec b-\vec c)\cdot \nabla u \,v&\le\frac{\lambda}{2}\int_{\Omega} \abs{\nabla H(w)}^2+\frac{3t^2}{2\lambda}\int_{\Omega} \abs{H(w)}^2\\
&\le\frac{\lambda}{2}\int_{\Omega} \abs{\nabla H(w)}^2+\frac{3t^2}{2\lambda}\left(\int_{\Omega} \abs{wH'(w)}^{\hat q}\right)^{2/{\hat q}}. 
\end{align*}
Since this inequality holds for all $t$ with $R_{\vec b-\vec c}(t)<\frac{\lambda}{3C_n}$, it holds for $t=r_{\vec b-\vec c}\left(\frac{\lambda}{3C_n}\right)$ as well.
Hence, 
\begin{equation} \label{eq:Forb-c}
\int_{\Omega}(\vec b-\vec c) \cdot \nabla u \, v\le\frac{\lambda}{2}\int_{\Omega} \abs{\nabla H(w)}^2+\frac{3 r_{\vec b-\vec c}\left(\frac{\lambda}{3C_n}\right)^2}{2\lambda} \left(\int_{\Omega} \abs{wH'(w)}^{\hat q}\right)^{2/{\hat q}}. 
\end{equation}

We now substitute \eqref{eq:ANablaH}, \eqref{eq:FV}, \eqref{eq:fv}, and \eqref{eq:Forb-c} in \eqref{eq:MainInLemma} and then use the ellipticity condition \eqref{ellipticity} and the Sobolev inequality to obtain that
\[
\norm{H(w)}_{2^*} \le C \norm{w H'(w)}_{\hat q},
\]
where $C$ depends on $n$, $\lambda$, and $r_{\vec b-\vec c}\left(\frac{\lambda}{3C_n}\right)$.
The above estimate corresponds to \cite[(8.36), p. 190]{Gilbarg}.
Continuing as in \cite{Gilbarg}, we obtain
\[
\sup_{\Omega}u\le\sup_{\Omega}u^+\le\sup_{\Omega}w \le C \norm{w}_2  \le C  \left(\norm{u^+}_2+k \right),
\]
where we used that $\abs{\Omega} \le 1$ in the last step.
This completes the proof.
\end{proof}

\begin{remark}				\label{rmk1344sat}
If $u$ is  a (sub)solution of the equation
\[
-\dv (\mathbf{A} \nabla u+ \vec b u)+\vec c \cdot \nabla u + d u =f + \dv\vec g\quad\text{in}\quad\Omega, 
\]
then for any $s>0$, the function $u_s(x)=u(sx)$ is a (sub)solution of 
\[
-\dv (\mathbf{A}_s \nabla u_s+ s\vec b_s u_s)+s\vec c_s \cdot \nabla u_s + s^2 d_s u_s =s^2f_s + \dv s\vec g_s\quad\text{in}\quad \Omega_s, 
\]
where $\Omega_s=\frac{1}{s}\Omega$ and $\mathbf{A}_s(x)=\mathbf{A}(sx)$, etc.
Note that $d \ge \dv \vec b$ in $\Omega$ if and only if $s^2d_s \ge s \dv \vec b_s$ in $\Omega_s$.
Also, from \eqref{eq:R_f}, we find $R_{f_s}(t)=s^{-1} R_f(t)$, $R_{sf}(t)=sR_f(t/s)$, and thus, 
\[r_{sf_s}(\varepsilon)=s r_{f}(\varepsilon).\]
Therefore, the conclusion of Lemma~\ref{Pointwise} remains the same for any domain $\Omega$ with $\abs{\Omega}<+\infty$ except that the constant $C$ now depends additionally on $\abs{\Omega}$.
The same comment applies to Proposition~\ref{BoundForCriticalSubsolutions} below.
\end{remark}

We now show the maximum principle.

\begin{proposition}\label{BoundForCriticalSubsolutions}
Let $\Omega\subset \mathbb R^n$ be a domain with $\abs{\Omega}\le 1$.
Let $\mathbf{A}$ be bounded and satisfy the uniform ellipticity condition \eqref{ellipticity} and $\vec b$, $\vec c \in L^n(\Omega)$, $d\in L^{n/2}(\Omega)$, and $d\ge\dv\vec b$.
Let also $\vec f\in L^q(\Omega)$, $g\in L^{q/2}(\Omega)$ for $q>n$.
Then if  $u$ is a  $W^{1,2}(\Omega)$ subsolution of 
\[
Lu=-\dv(\mathbf{A}\nabla u+\vec bu)+\vec c\cdot \nabla u+du = g+\dv\vec{f} \quad\text{in}\quad \Omega,
\]
we have
\[
\sup_{\Omega}u\le \sup_{\partial\Omega} u^{+} +C\left( \norm{\vec f}_q+\norm{g}_{q/2} \right),
\]
where $C$ depends only on $n$, $\lambda$, $q$, and $r_{\vec b-\vec c}\left(\frac{\lambda}{3C_n}\right)$.
\end{proposition}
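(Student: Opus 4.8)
The plan is to subtract a constant so as to land in the hypotheses of Lemma~\ref{Pointwise}, and then to control the $L^2$-term that Lemma~\ref{Pointwise} leaves behind by a Caccioppoli-type energy estimate, using the quantities $r_{\vec b-\vec c}$ and $\tilde r_{\vec b-\vec c}$ to absorb the critical drift. Set $M:=\sup_{\partial\Omega}u^+$, understood as $M=\inf\set{k\ge 0:(u-k)^+\in W_0^{1,2}(\Omega)}$, and assume $M<\infty$ (otherwise there is nothing to prove). Fix $\varepsilon>0$ and put $v:=u-M-\varepsilon$, so that $v^+\in W_0^{1,2}(\Omega)$. Since $L$ applied to the nonnegative constant $M+\varepsilon$ equals $(M+\varepsilon)(d-\dv\vec b)\ge 0$ in the sense of distributions, testing against nonnegative $\phi\in C_c^\infty(\Omega)$ shows that $v$ is again a subsolution of $Lv=g+\dv\vec f$ in $\Omega$. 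By Lemma~\ref{Pointwise} (and Remark~\ref{rmk1344sat}, which covers $\abs\Omega\le 1$),
\[
\sup_\Omega v\le C\bigl(\norm{v^+}_{L^2(\Omega)}+\norm{\vec f}_q+\norm{g}_{q/2}\bigr)
\]
with $C$ of the required form; letting $\varepsilon\to0$, the proposition is reduced to establishing a bound $\norm{(u-\tau)^+}_{L^2(\Omega)}\le Ck$, where $k:=\norm{\vec f}_q+\norm{g}_{q/2}$, for $\tau=M+\varepsilon$.

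For such an $L^2$-bound I would test the subsolution inequality for $u-\tau$ with $w:=(u-\tau)^+\in W_0^{1,2}(\Omega)$. Using $\nabla(u-\tau)\cdot\nabla w=\abs{\nabla w}^2$, $(u-\tau)\nabla w=w\nabla w$, and the inequality $\int d\,w^2\ge\int(\dv\vec b)w^2=-\int\vec b\cdot\nabla(w^2)$ (i.e. $d-\dv\vec b\ge 0$ paired with the nonnegative function $w^2\in W_0^{1,1}(\Omega)$), the zeroth-order term and one copy of the $\vec b$-term cancel, leaving
\[
\int_\Omega\mathbf A\nabla w\cdot\nabla w\le\int_\Omega g\,w-\vec f\cdot\nabla w+\int_\Omega(\vec b-\vec c)\cdot\nabla w\;w .
\]
Since $q>n$ gives $(q/2)'<2^*$ and $q'<2$, the Sobolev inequality and $\abs\Omega\le 1$ bound the first two terms by $C_n k\norm{\nabla w}_2$. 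Splitting $\vec b-\vec c=(\vec b-\vec c)_{(t)}+(\vec b-\vec c)^{(t)}$ at $t=r_{\vec b-\vec c}(\lambda/3C_n)$ exactly as in the proof of Lemma~\ref{Pointwise}, the large part contributes at most $\tfrac\lambda3\norm{\nabla w}_2^2$ and the small part at most $\tfrac\lambda6\norm{\nabla w}_2^2+\tfrac{3t^2}{2\lambda}\norm w_2^2$; with ellipticity and absorption this yields $\norm{\nabla w}_2^2\le\tfrac{6t^2}{\lambda^2}\norm w_2^2+\tfrac{C}{\lambda^2}k^2$.

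The leftover term $t^2\norm w_2^2$ cannot be absorbed on all of $\Omega$, and this is where $\tilde r_{\vec b-\vec c}$ enters: I would run the above estimate on the superlevel set $\Omega_\tau:=\Omega\cap\set{u>\tau}$, noting that $\set{w>0}=\Omega_\tau$, so that $\norm w_{L^2(\Omega)}\le C_n\abs{\Omega_\tau}^{1/n}\norm{\nabla w}_{L^2(\Omega)}$ by Sobolev. Once $\abs{\Omega_\tau}$ drops below the threshold $\mu_*:=(\lambda^2/12C_n^2t^2)^{n/2}$ — which happens for $\tau$ large, since $w\in L^2$ forces $\abs{\set{u>\tau}}\to0$ — the factor $\tfrac{6t^2}{\lambda^2}C_n^2\abs{\Omega_\tau}^{2/n}$ is $<\tfrac12$, the estimate closes, and $\norm{(u-\tau)^+}_{L^2(\Omega)}\le Ck$; inserting this into Lemma~\ref{Pointwise} applied to the subsolution $u-\tau$ on $\Omega$ gives $\sup_\Omega u\le\tau+Ck$ for every admissible $\tau$. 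Keeping track of the powers of $\abs{\Omega_\tau}$ that the forcing terms acquire (again because $q>n$), the same computation in fact yields a Stampacchia inequality $\abs{\set{u>h}}\le c_0(h-\tau)^{-2n/(n-2)}\abs{\set{u>\tau}}^{\beta}$ for $h>\tau$ whenever $\abs{\set{u>\tau}}<\mu_*$, with $\beta=\tfrac{2n}{n-2}\min(\tfrac12-\tfrac2q+\tfrac1n,\tfrac12-\tfrac1q)>1$, the strict inequality $\beta>1$ being precisely where $q>n$ is used.

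Stampacchia's iteration lemma then shows $\abs{\set{u>\tau_0+d}}=0$ for some $d\le Ck$, where $\tau_0$ is the least level with $\abs{\set{u>\tau_0}}<\mu_*$, so that $\sup_\Omega u\le\tau_0+Ck$ with a constant of the asserted shape (depending only on $n$, $\lambda$, $q$, $r_{\vec b-\vec c}(\lambda/3C_n)$, together with the extra dependence on $\abs\Omega$ of Remark~\ref{rmk1344sat} in the finite-measure case). The step I expect to be genuinely delicate is the remaining one: bounding $\tau_0-M$ by $Ck$, i.e. showing that the superlevel sets of $u$ cannot remain of measure $\ge\mu_*$ far above the boundary value $M$; I would try to get this either by covering $\Omega$ into finitely (or countably) many pieces of measure below $\tilde r_{\vec b-\vec c}(\lambda/3C_n)$, on which the energy estimate already closes without any leftover, and propagating the resulting bound through Lemma~\ref{Pointwise}, or by iterating the energy estimate between consecutive superlevel sets and combining with the measure decay above. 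Everything else — the reduction, the cancellation from $d-\dv\vec b\ge 0$, and the two absorptions — is routine once Lemma~\ref{Pointwise} and the splitting device are in hand.
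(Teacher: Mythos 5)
Your proposal is a genuinely different route from the paper, and it has a gap that you yourself flag as "genuinely delicate" — unfortunately that gap is exactly the content of the proposition, and the toolkit you describe does not close it.

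The paper's proof follows Gilbarg–Trudinger Theorem 8.16: normalize $\sup_{\partial\Omega}u^{+}=0$, set $M=\sup_{\Omega}u^{+}$ (finite by Lemma~\ref{Pointwise}), and introduce the logarithmic change of variable $w=\ln\bigl((M+k)/(M+k-u^{+})\bigr)$. The decisive observation is that testing the subsolution inequality for $u$ with $w$-type test functions yields an \emph{unconditional} $L^2$ bound $\norm{w}_2\le C(n,\lambda,r_{\vec b-\vec c}(\lambda/3C_n))$, independent of $M$ and $k$. One then checks that $w$ is itself a subsolution of a drift equation with data controlled by $|g|/k$, $|\vec f|^2/k^2$, $\vec f/(M+k-u^{+})$, and applies Lemma~\ref{Pointwise} to $w$. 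This gives $\sup_\Omega w\le C$, i.e. $\ln\bigl((M+k)/k\bigr)\le C$, i.e. $M\le Ck$. The log substitution is precisely what turns a bound on $M$ into a bound on the logarithm of $M$, which \emph{is} unconditionally available.

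Your proposed route — test with $(u-\tau)^{+}$, absorb the critical drift once the superlevel set $\Omega_\tau=\{u>\tau\}$ has measure below a fixed threshold $\mu_*$, run Stampacchia iteration from there, and then separately bound $\tau_0-M$ where $\tau_0$ is the first level with $\abs{\Omega_{\tau_0}}<\mu_*$ — does not close. The point is that $w\in L^2(\Omega)$ only tells you $\abs{\Omega_\tau}\to 0$ as $\tau\to\infty$, with no rate: there is nothing in your energy estimate (which is valid only \emph{after} the level set is already small) that controls \emph{when} the measure drops below $\mu_*$. So $\tau_0-M$ is not bounded by $Ck$, and the two fall-back ideas you suggest don't supply the missing ingredient either: decomposing $\Omega$ into pieces of small measure is not compatible with a global subsolution inequality (a subsolution on $\Omega$ is not a subsolution on an arbitrary subdomain without extra boundary information), and "iterating between consecutive superlevel sets" re-encounters the same unabsorbable critical term above the threshold. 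You also invoke $\tilde r_{\vec b-\vec c}$, which does not appear in the stated constant and signals that the argument, even if it were to close, would be quantitatively weaker.

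In short: the reduction to $\sup_{\partial\Omega}u^{+}=0$, the use of $d-\dv\vec b\ge 0$ to kill the zeroth-order term, and the split of $\vec b-\vec c$ are all sound and in the spirit of the paper, but the proof hinges on the logarithmic substitution of Gilbarg–Trudinger Theorem 8.16, which your outline is missing. Once you replace the Caccioppoli/Stampacchia loop by the log trick, you land on the paper's argument.
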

\begin{proof}
We follow the proof of \cite[Theorem 8.16]{Gilbarg}.
We assume without loss of generality that $\sup_{\partial \Omega} u^{+} =0$. 
Set $k=\norm{\vec f}_q+\norm{g}_{q/2}$ and assume that $k>0$.
First, note that from the proof of  Lemma~\ref{Pointwise},  we have $M:=\sup_{\Omega}u^+<+\infty$. 
We now follow the proof of \cite[Theorem 8.16]{Gilbarg} and using $\abs{\Omega} \le 1$ to obtain
\[
\int_{\Omega} \abs{\nabla w}^2\le
C \left( \int_{\Omega} \abs{\vec b-\vec c}^2+1 \right),
\quad\text{where }\; w:=\ln\dfrac{M+k}{M+k-u^+}\;\text{ and }\; C=C(\lambda).
\]
Splitting $\vec b-\vec c$ as in Lemma~\ref{Pointwise}, we obtain that, for any $t>0$,
\[
\norm{\vec b-\vec c}_2 \le \norm{\,\abs{\vec b-\vec c}_{(t)}}_2+\norm{\,\abs{\vec b-\vec c}^{(t)}}_2 \le t+\norm{\,\abs{\vec b -\vec c}^{(t)}}_n \le t+R_{\vec b -\vec c}(t).
\]
Therefore, letting $t\to r_{\vec b -\vec c} \left(\frac{\lambda}{3 Cn}\right)$ from above, we obtain that
\[
\int_{\Omega} \abs{\vec b-\vec c}^2\le \textstyle \left(r_{\vec b-\vec c}\left(\frac{\lambda}{3C_n}\right)+\frac{\lambda}{3C_n} \right)^2,
\]
and thus by the Sobolev inequality, we have (recall $\abs{\Omega} \le 1$)
\begin{equation}\label{eq:Normw2}
\norm{w}_2\le C_n \norm{\nabla w}_2 \le \textstyle C(n, \lambda) \left( r_{\vec b-\vec c}\left(\frac{\lambda}{3C_n}\right)+\frac{\lambda}{3C_n} +1 \right).
\end{equation}
Continuing as in \cite[Theorem 8.16]{Gilbarg}, we obtain that $w$ is a subsolution satisfying
\[
-\dv(\mathbf{A}\nabla w)+(\vec c-\vec b)\cdot\nabla w\le\frac{\abs{g}}{k}+\frac{\abs{\vec f}^2}{2\lambda k^2}+\dv\left(\frac{\vec{f}}{M+k-u^+}\right).
\]
Therefore, by Lemma~\ref{Pointwise} and \eqref{eq:Normw2}, we have
\[
\sup_{\Omega}w\le C\left(\norm{w}_2+\Norm{\frac{\abs{g}}{k}+\frac{\abs{\vec f}^2}{2\lambda k^2}}_{\frac{q}{2}}+\Norm{\frac{\vec f}{M+k-u^+}}_q\right)\le C \norm{w}_2 + C \le C,
\]
where $C$ depends only on $n$, $\lambda$, $q$, and $r_{\vec b-\vec c}\left(\frac{\lambda}{3C_n}\right)$.
The last inequality then shows that $\ln\frac{M+k}{k}\le C$ and the proof is complete.
\end{proof}

\subsection{Local Boundedness}
In the following, we will need a local analog of Lemma~\ref{Pointwise} for solutions to the equation $L\tran u=0$. We treat the subcritical case first.

\begin{lemma}\label{LocalBoundForp}
Let $\Omega \subset \bR^n$ be a domain with $\abs{\Omega} \le 1$.
Let $\mathbf{A}$ be bounded and satisfy the uniform ellipticity condition \eqref{ellipticity} and $\vec b$, $\vec c\in L^n(\Omega)$, $\vec b-\vec c \in L^p(\Omega)$ for $p>n$, $d\in L^{n/2}(\Omega)$, and $d\ge\dv\vec b$.
Suppose that $u\in W^{1,2}(\Omega)$ is a nonnegative solution to the equation
\[
L\tran u=-\dv(\mathbf{A}\tran\nabla u+\vec cu)+\vec b\cdot\nabla u+du=0 \quad\text{in}\quad \Omega.
\]
Then, for any ball $B_r \subset \subset \Omega$, we have
\[
\sup_{B_{r/2}}u\le C\fint_{B_r}u,
\]
where $C$ depends on $n$, $\lambda$, $p$, $\norm{\mathbf A}_{\infty}$, and $\norm{\vec b-\vec c}_p$.
\end{lemma}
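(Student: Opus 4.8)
The plan is to exploit the structure hypothesis $d\ge\dv\vec b$ to absorb the zeroth-order term and the non-divergence first-order term in the equation satisfied by $u$, thereby reducing the statement to the classical local maximum principle for a nonnegative subsolution of a uniformly elliptic divergence-form equation whose only remaining lower-order coefficient lies in $L^p$ with $p>n$. Note at the outset that $\mathbf A\tran$ satisfies \eqref{ellipticity} with the same $\lambda$ and $\norm{\mathbf A\tran}_\infty=\norm{\mathbf A}_\infty$, and that, since $B_r\subset\Omega$ and $\abs{\Omega}\le 1$, the radius $r$ is bounded by a dimensional constant --- this is what will keep the final constant scale-invariant.

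\textbf{Step 1: reduction.} Fix $\eta\in C_c^\infty(B_r)$ with $0\le\eta\le 1$ and $\eta\equiv 1$ on $B_{r/2}$, and let $v$ be a bounded nonnegative Lipschitz function of $u$ (a truncated power, of the kind of the function $H$ in the proof of Lemma~\ref{Pointwise}). Using $\phi=\eta^2 v\ge 0$ as a test function in $L\tran u=0$ gives
\[
\int_\Omega \mathbf A\tran\nabla u\cdot\nabla\phi+\int_\Omega(\vec c\,u)\cdot\nabla\phi+\int_\Omega(\vec b\cdot\nabla u)\,\phi+\int_\Omega d\,u\,\phi=0 .
\]
Writing $\vec c=\vec b-(\vec b-\vec c)$ in the second integral and using $(\vec b\,u)\cdot\nabla\phi+(\vec b\cdot\nabla u)\,\phi=\vec b\cdot\nabla(u\phi)$ and $d\,u\,\phi=d\,(u\phi)$, this becomes
\[
\int_\Omega \mathbf A\tran\nabla u\cdot\nabla\phi-\int_\Omega\bigl((\vec b-\vec c)\,u\bigr)\cdot\nabla\phi=-\left(\int_\Omega\vec b\cdot\nabla(u\phi)+\int_\Omega d\,(u\phi)\right).
\]
Since $u\ge 0$ and $\phi\ge 0$, the function $u\phi=\eta^2 u\,v$ is nonnegative, belongs to $W_0^{1,2}(\Omega)$, and is compactly supported; approximating it in $W^{1,2}$ by nonnegative functions in $C_c^\infty(\Omega)$ (the integrals on the right being well defined because $\vec b\in L^n$ and $d\in L^{n/2}$), the hypothesis $d-\dv\vec b\ge 0$ yields $\int_\Omega\vec b\cdot\nabla(u\phi)+\int_\Omega d\,(u\phi)\ge 0$. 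Hence, for every such $\phi$,
\[
\int_\Omega \mathbf A\tran\nabla u\cdot\nabla\phi\le\int_\Omega\bigl((\vec b-\vec c)\,u\bigr)\cdot\nabla\phi ,
\]
i.e. $u$ is a nonnegative $W^{1,2}(\Omega)$ subsolution of $-\dv\bigl(\mathbf A\tran\nabla u+(\vec c-\vec b)\,u\bigr)\le 0$ in $\Omega$ (the inequality holding against the class of nonnegative test functions needed below), whose only lower-order coefficient is $\vec c-\vec b\in L^p(\Omega)$, $p>n$, with $\norm{\vec c-\vec b}_p=\norm{\vec b-\vec c}_p$.

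\textbf{Step 2: local maximum principle.} For a nonnegative subsolution of such an equation the bound $\sup_{B_{r/2}}u\le C\fint_{B_r}u$ is classical; it follows from \cite[Theorem 8.17]{Gilbarg}, applied with principal part $\mathbf A\tran$, divergence-form first-order coefficient $\vec c-\vec b$, and all other lower-order coefficients and the right-hand side equal to zero, taking the averaging exponent there to be $1$ and noting that the factor $r^{1-n/p}$ produced by H\"older's inequality is bounded because $\abs{\Omega}\le 1$. Equivalently, one runs Moser's iteration directly: test the subsolution inequality with $\eta^2$ times a truncated power of $u$, use \eqref{ellipticity} and Young's inequality to absorb the gradient term, control $\int(\vec c-\vec b)\,u\cdot\nabla\phi$ by H\"older's and the Sobolev inequalities, deduce a reverse-H\"older inequality on a nested family of balls between $B_{r/2}$ and $B_r$, iterate, and let the truncation level tend to infinity; the resulting constant depends only on $n$, $\lambda$, $p$, $\norm{\mathbf A}_\infty$, and $\norm{\vec b-\vec c}_p$, as asserted. (The hypotheses $\vec b,\vec c\in L^n$ and $d\in L^{n/2}$ are used only to make the weak formulation and the approximation in Step~1 legitimate, and do not enter the constant.)

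\textbf{Main obstacle.} The only step that requires genuine care is the reduction: the hypothesis $d\ge\dv\vec b$ controls the pair $(\vec b,d)$ only through the single distributional inequality against nonnegative functions, so one has to regroup the three lower-order terms precisely as above --- using crucially that it is $\vec b-\vec c$, and not $\vec b$ and $\vec c$ separately, that must be placed on the right --- in order to produce the nonnegative combination $\int\vec b\cdot\nabla(u\phi)+\int d\,(u\phi)$, and then to justify that the merely $W_0^{1,2}$ function $u\phi$ is admissible in that inequality. Once this is done, the remainder is routine Moser iteration with a genuinely subcritical perturbation; the borderline case $\vec b,\vec c\in L^n$, treated separately, is where the truncations $\abs{\vec b-\vec c}_{(t)}$, $\abs{\vec b-\vec c}^{(t)}$ and the quantity $r_{\vec b-\vec c}$ from Section~\ref{sec2} must instead be used.
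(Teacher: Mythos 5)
Your proof follows essentially the same approach as the paper's: you observe that, since $u\ge 0$, $u\phi\ge 0$, and $d\ge\dv\vec b$, the terms $\int_\Omega\vec b\cdot\nabla(u\phi)+\int_\Omega d\,u\phi\ge 0$ can be dropped, so that $u$ is a nonnegative subsolution of $-\dv(\mathbf A\tran\nabla u+(\vec c-\vec b)u)=0$ with the single lower-order coefficient $\vec c-\vec b\in L^p$, $p>n$, and then invoke the local maximum principle. The only small discrepancy is that \cite[Theorem~8.17]{Gilbarg} as stated gives the bound with an $L^p$ average for $p>1$ (the paper uses $p=2$) and the passage to the $L^1$ average is done by the standard iteration of \cite[pp.~80--82]{Gi93}, rather than by taking ``averaging exponent $1$'' directly in Theorem~8.17; your alternative of running Moser iteration from scratch also covers this, so the gap is cosmetic.
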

\begin{proof}
Note first that, for any $\phi\in C_c^{\infty}(\Omega)$ with $\phi\ge 0$,
\[
\int_{\Omega}\mathbf{A}\tran\nabla u \cdot\nabla\phi+(\vec c-\vec b) \cdot \nabla\phi\, u\le\int_{\Omega}\mathbf{A}\tran\nabla u \cdot\nabla\phi+\vec c \cdot\nabla\phi\, u+\vec b\cdot \nabla u \, \phi+du\phi=0,
\]
since $u\phi\ge 0$ and $d\ge\dv\vec b$ in $\Omega$.
Therefore, $u$ is a subsolution of
\[
-\dv(\mathbf{A}\tran\nabla u+(\vec c-\vec b)u)= 0 \quad\text{in}\quad\Omega.
\]
Since $\abs{\Omega} \le 1$, we must have $r \lesssim 1$.
Then, we follow the proof of in \cite[Theorem 8.17]{Gilbarg} (see also \cite[p. 199, Theorem 13.1]{LU68}) to get
\[
\sup_{B_{r/2}}u\le C\left(\fint_{B_r} \abs{u}^2\right)^{\frac12}
\]
for some constant $C$ that depends only on $n$, $\lambda$, $p$, $\norm{\mathbf A}_\infty$, and $\norm{\vec b-\vec c}_p$.
To obtain the desired $L^1$ bound, we apply a standard procedure as in \cite[pp. 80--82]{Gi93}.
\end{proof}

As we shall show in Proposition~\ref{ToCounterexample}, the previous lemma does not hold when $\vec b - \vec c \in L^n(\Omega)$, even in the case $d=0$, $\vec b=0$, and $\mathbf{A}=\mathbf{I}$.
In order to obtain an analogue of Lemma~\ref{LocalBoundForp} for the case when $\vec b -\vec c \in L^n$, we shall assume that $d\ge \dv \vec c$.

\begin{lemma}\label{LocalBoundForn}
Let $\Omega \subset \bR^n$ be a domain with $\abs{\Omega} \le 1$.
Let $\mathbf{A}$ be bounded and satisfy the uniform ellipticity condition \eqref{ellipticity} and $\vec b,\vec c\in L^n(\Omega)$, $d\in L^{n/2}(\Omega)$, and $d\ge\dv\vec c$.
Suppose also that $u\in W^{1,2}(\Omega)$ is a nonnegative solution to the equation
\[
L\tran u=-\dv(\mathbf{A}\tran\nabla u+\vec cu)+\vec b\cdot \nabla u+du=0\quad\text{in}\quad \Omega.
\]
Then, for any ball $B_r$ that is compactly supported in $\Omega$, we have
\[
\sup_{B_{r/2}}u\le C\fint_{B_r}u,
\]
where $C$ depends on $n$, $\lambda$, $\norm{\mathbf A}_{\infty}$, and $r_{\vec b-\vec c}\left(\frac{\lambda}{3C_n}\right)$.
\end{lemma}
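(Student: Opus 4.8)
The plan is to mimic the proof of Lemma~\ref{LocalBoundForp}, but since $\vec b-\vec c$ is now only in $L^n(\Omega)$ I would handle the resulting drift term by the truncation device of Lemma~\ref{Pointwise} (splitting $\vec b-\vec c=(\vec b-\vec c)_{(t)}+(\vec b-\vec c)^{(t)}$ and absorbing the small-in-$L^n$ piece into the ellipticity), rather than by invoking a De Giorgi--Nash--Moser estimate for $L^p$ ($p>n$) coefficients. The first step is a reduction: I claim $u$ is a subsolution of $-\dv(\mathbf A\tran\nabla u)+(\vec b-\vec c)\cdot\nabla u=0$ in $\Omega$. Indeed, fix $\phi\in C_c^\infty(\Omega)$ with $\phi\ge0$; since $u\ge0$, the function $u\phi$ is a nonnegative test function, admissible for the distributional inequality $d\ge\dv\vec c$ after a routine density argument (using $d\in L^{n/2}$, $\vec c\in L^n$, and $u\in W^{1,2}(\Omega)$), so that
\[
\int_\Omega d\,u\phi+\vec c\cdot\nabla u\,\phi+u\,\vec c\cdot\nabla\phi=\int_\Omega d(u\phi)+\vec c\cdot\nabla(u\phi)\ge0.
\]
Subtracting this inequality from the weak formulation $\int_\Omega\mathbf A\tran\nabla u\cdot\nabla\phi+u\,\vec c\cdot\nabla\phi+\vec b\cdot\nabla u\,\phi+du\phi=0$ of $L\tran u=0$ yields $\int_\Omega\mathbf A\tran\nabla u\cdot\nabla\phi+(\vec b-\vec c)\cdot\nabla u\,\phi\le0$, which is the claim; note $\vec b-\vec c\in L^n(\Omega)$, and since $\abs\Omega\le1$ we have $r\lesssim1$.

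Next I would run a Moser iteration on this reduced inequality. For $\beta\ge1$ and a cutoff $\eta$, test with $\phi=\eta^2 u^{2\beta-1}$ (suitably truncated at a level $N$ to make it admissible, then letting $N\to\infty$, as usual). With $v=u^\beta$, the ellipticity bounds the principal term from below by $c\lambda\beta^{-1}\norm{\eta\nabla v}_2^2$, the cross term involving $\nabla\eta$ is absorbed by Cauchy's inequality, and for the drift term I would split $\vec b-\vec c=(\vec b-\vec c)_{(t)}+(\vec b-\vec c)^{(t)}$ as in \eqref{eq:ft}: the first piece is $\le t$ pointwise and is controlled by Cauchy's inequality, while the second is estimated by $R_{\vec b-\vec c}(t)\,\norm{\eta\nabla v}_2\,\norm{\eta v}_{\frac{2n}{n-2}}$ and then, via the Sobolev inequality, by $C_nR_{\vec b-\vec c}(t)\,\norm{\eta\nabla v}_2(\norm{\eta\nabla v}_2+\norm{v\nabla\eta}_2)$. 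Choosing $t$ with $C_nR_{\vec b-\vec c}(t)<\lambda/3$ and letting $t\to r_{\vec b-\vec c}(\lambda/(3C_n))$ from above, exactly as in the proof of Lemma~\ref{Pointwise}, one may absorb the $\norm{\eta\nabla v}_2^2$ contributions on the right into the left-hand side; this is the only place the constant acquires a dependence on $r_{\vec b-\vec c}(\lambda/(3C_n))$. The outcome is the Caccioppoli--Sobolev inequality
\[
\norm{\eta\, u^\beta}_{\frac{2n}{n-2}}^2\le C\int_\Omega(\eta^2+\abs{\nabla\eta}^2)\,u^{2\beta},
\]
with $C$ depending only on $n$, $\lambda$, $\norm{\mathbf A}_\infty$, and $r_{\vec b-\vec c}(\lambda/(3C_n))$, and in particular controlled uniformly in $\beta\ge1$ (both sides of the pre-absorption inequality scale like $\beta^{-1}$).

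Iterating this inequality over the balls $B_{r(1+2^{-m})/2}$ with exponents $\beta=(n/(n-2))^m$ in the standard way — using $r\lesssim1$, so that the rescaled Sobolev constants stay bounded — yields $\sup_{B_{r/2}}u\le C\bigl(\fint_{B_r}u^2\bigr)^{1/2}$ with $C$ of the asserted form. Finally, to replace the $L^2$ average by the $L^1$ average on the right-hand side, I would apply the standard argument of \cite[pp.~80--82]{Gi93}, precisely as in Lemma~\ref{LocalBoundForp}.

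The main obstacle is the Caccioppoli estimate: because $\vec b-\vec c$ is critical (merely $L^n$), the drift term cannot be absorbed by Young's inequality with a constant depending only on the geometry, and one genuinely needs the quantitative smallness of $\norm{(\vec b-\vec c)^{(t)}}_n$ encoded by $R_{\vec b-\vec c}$ and $r_{\vec b-\vec c}$. Keeping the resulting constant controlled uniformly in the Moser exponent $\beta$, together with the justification of the Step~1 reduction by density, are the remaining technical points.
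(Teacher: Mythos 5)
Your proposal is correct and follows essentially the same route as the paper's own proof: you first reduce to showing that $u$ is a subsolution of $-\dv(\mathbf A\tran\nabla u)+(\vec b-\vec c)\cdot\nabla u=0$ (using $d\ge\dv\vec c$ and $u\phi\ge0$), then run the Moser iteration of \cite[Theorem 8.17]{Gilbarg} with the truncation device of \eqref{eq:ft}--\eqref{eq:b-cBound} to absorb the critical drift, obtaining $\sup_{B_{r/2}}u\le C\bigl(\fint_{B_r}u^2\bigr)^{1/2}$ with the stated dependence, and finally pass from the $L^2$ to the $L^1$ average via \cite[pp.\ 80--82]{Gi93}. The extra care you take in checking that the absorbed constant is uniform in the iteration exponent $\beta$ is exactly the technical point that makes the iteration close.
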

\begin{proof}
With a procedure similar to the proof of Lemma \ref{LocalBoundForp}, we first show that $u$ is a subsolution of
\[
-\dv(\mathbf{A}\tran\nabla u)+(\vec b-\vec c) \cdot \nabla u= 0\quad\text{in}\quad \Omega.
\]
The proof then follows the lines of the proof of \cite[Theorem 8.17]{Gilbarg}, where we apply a procedure similar to \eqref{eq:b-cBound}, and we obtain that
\[
\sup_{B_{r/2}}u\le C\left(\fint_{B_r} \abs{u}^2\right)^{1/2},
\]
for some constant $C>0$ that depends on $n$, $\lambda$, $\norm{\mathbf A}_\infty$, and $r_{\vec b-\vec c}\left(\frac{\lambda}{3C_n}\right)$.
We then apply a standard procedure as in \cite[pp. 80--82]{Gi93} to complete the proof.
\end{proof}

\subsection{Lorentz Regularity}
In this subsection we show that solutions that belong to $L^{\frac{n}{n-2},\infty}(\Omega)$ have gradients that belong to $L^{\frac{n}{n-1},\infty}(\Omega)$.
To show this, we first need the next variant of Caccioppoli's estimate.

\begin{lemma}			\label{Caccioppoli}
Let $\mathbf{A}$ be bounded and satisfy the uniform ellipticity condition \eqref{ellipticity} and $\vec b$, $\vec c \in L^n(\Omega)$, $d\in L^{n/2}(\Omega)$, and $d\ge\dv\vec b$ (or $d\ge\dv\vec c$).
Let also $B_s=B_s(x)\subset \Omega$ be a ball compactly supported in $\Omega$ with $s<\frac13$.
Let $\Omega'=\Omega\setminus B_s$ and suppose that $u\in W^{1,2}(\Omega')$, $u=0$ on $\partial\Omega$, and $u$ satisfies the equation
\[
-\dv(\mathbf{A}\nabla u+\vec bu)+\vec c\cdot\nabla u+du=0\quad\text{in}\quad \Omega'.
\]
Then, for any $r\in(3s,2]$, we have
\[
\int_{\Omega\cap (B_r\setminus B_{r/2})}  \,\abs{\nabla u}^2\le \frac{C}{r^2}\int_{\Omega\cap (B_{2r} \setminus B_{r/3})} \, \abs{u}^2.
\]
Moreover, if $r\ge 1$, we have
\[
\int_{\Omega\setminus B_{r/2}} \, \abs{\nabla u}^2\le C\int_{\Omega\setminus B_{r/3}}\, \abs{u}^2.
\]
In the above, $C$ depends only on $n$, $\lambda$, $\norm{\mathbf A}_{\infty}$, and $r_{\vec b-\vec c}\left(\frac{\lambda}{3C_n}\right)$.
\end{lemma}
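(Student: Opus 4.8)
The plan is a Caccioppoli argument adapted to the singular lower order coefficients. I will test the equation with $\eta^{2}u$ for a cutoff $\eta$ adapted to the relevant annulus, use the sign condition $d\ge\dv\vec b$ (or $d\ge\dv\vec c$) to absorb the zeroth order term into one of the first order terms, and then handle the surviving $L^{n}$ drift term $(\vec b-\vec c)\cdot\nabla u$ by splitting it at the level $t=r_{\vec b-\vec c}\!\left(\frac{\lambda}{3C_{n}}\right)$, exactly as in the estimate leading to \eqref{eq:b-cBound}.

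For the first inequality, fix $r\in(3s,2]$ and choose $\eta\in C_{c}^{\infty}(\bR^{n})$ with $0\le\eta\le1$, $\eta\equiv1$ on $B_{r}\setminus B_{r/2}$, $\operatorname{supp}\eta\subset B_{2r}\setminus B_{r/3}$, and $\abs{\nabla\eta}\le C/r$; since $r/3>s$ the function $\eta^{2}u$ vanishes near $\partial B_{s}$, and since $u=0$ on $\partial\Omega$ it is an admissible test function in $W_{0}^{1,2}(\Omega')$. (For the second inequality I take instead $r\ge1$ and $\eta\equiv1$ on $\Omega\setminus B_{r/2}$, $\eta\equiv0$ on $B_{r/3}$, so that $\abs{\nabla\eta}\le C/r\le C$ and $\eta^{2}u\in W_{0}^{1,2}(\Omega\setminus B_{r/3})$.) Inserting $\phi=\eta^{2}u$ into the weak formulation and using $\nabla(\eta^{2}u)=\eta^{2}\nabla u+2\eta u\nabla\eta$ gives
\[
\int\eta^{2}\mathbf A\nabla u\cdot\nabla u=-2\int\eta u\,\mathbf A\nabla u\cdot\nabla\eta-\int\vec b\cdot\nabla(\eta^{2}u)\,u-\int\vec c\cdot\nabla u\,\eta^{2}u-\int d\,\eta^{2}u^{2}.
\]
Applying the distributional inequality $d-\dv\vec b\ge0$ to the nonnegative test function $\eta^{2}u^{2}$ and expanding $\vec b\cdot\nabla(\eta^{2}u^{2})=2\eta^{2}u\,\vec b\cdot\nabla u+2\eta u^{2}\,\vec b\cdot\nabla\eta$, a short rearrangement yields $-\int\vec b\cdot\nabla(\eta^{2}u)\,u-\int d\,\eta^{2}u^{2}\le\int\eta^{2}u\,\vec b\cdot\nabla u$, hence
\[
\int\eta^{2}\mathbf A\nabla u\cdot\nabla u\le-2\int\eta u\,\mathbf A\nabla u\cdot\nabla\eta+\int\eta^{2}u\,(\vec b-\vec c)\cdot\nabla u.
\]
If only $d\ge\dv\vec c$ is available, one integrates by parts in the $\vec c$-term instead; the $d$-term then cancels the $\vec c$-term, leaving the same estimate with one harmless additional term $-2\int\eta u^{2}\,(\vec b-\vec c)\cdot\nabla\eta$, which is treated by the same splitting as below.

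From this point the argument is routine. By \eqref{ellipticity} the left side is at least $\lambda\int\eta^{2}\abs{\nabla u}^{2}$; the $\mathbf A$-term is at most $\frac{\lambda}{8}\int\eta^{2}\abs{\nabla u}^{2}+C\int u^{2}\abs{\nabla\eta}^{2}$ with $C=C(n,\lambda,\norm{\mathbf A}_{\infty})$, by Young's inequality. Writing $\abs{\vec b-\vec c}=\abs{\vec b-\vec c}_{(t)}+\abs{\vec b-\vec c}^{(t)}$ with $t=r_{\vec b-\vec c}\!\left(\frac{\lambda}{3C_{n}}\right)$, so that $\bignorm{\abs{\vec b-\vec c}^{(t)}}_{n}=R_{\vec b-\vec c}(t)\le\frac{\lambda}{3C_{n}}$, the bounded part contributes at most $t\int\eta^{2}\abs{u}\,\abs{\nabla u}\le\frac{\lambda}{8}\int\eta^{2}\abs{\nabla u}^{2}+\frac{2t^{2}}{\lambda}\int\eta^{2}u^{2}$, while the unbounded part, by H\"older's inequality with exponents $\left(n,\frac{2n}{n-2},2\right)$, the Sobolev inequality applied to $\eta u$, and $C_{n}R_{\vec b-\vec c}(t)\le\frac{\lambda}{3}$, is at most $\frac{\lambda}{2}\int\eta^{2}\abs{\nabla u}^{2}+\frac{\lambda}{6}\int u^{2}\abs{\nabla\eta}^{2}$, just as in \eqref{eq:b-cBound}. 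Absorbing the three $\int\eta^{2}\abs{\nabla u}^{2}$ contributions (coefficients summing to $\tfrac{3\lambda}{4}<\lambda$) into the left side gives $\int\eta^{2}\abs{\nabla u}^{2}\le C\int u^{2}\abs{\nabla\eta}^{2}+Ct^{2}\int\eta^{2}u^{2}$ with $C=C(n,\lambda,\norm{\mathbf A}_{\infty})$. For the first inequality, $\abs{\nabla\eta}\le C/r$ and $\operatorname{supp}\eta\subset B_{2r}\setminus B_{r/3}$ give $\int u^{2}\abs{\nabla\eta}^{2}\le\frac{C}{r^{2}}\int_{\Omega\cap(B_{2r}\setminus B_{r/3})}u^{2}$, and since $r\le2$ we have $1\le4/r^{2}$, so $Ct^{2}\int\eta^{2}u^{2}\le\frac{C}{r^{2}}\int_{\Omega\cap(B_{2r}\setminus B_{r/3})}u^{2}$ with the constant now also depending on $r_{\vec b-\vec c}\!\left(\frac{\lambda}{3C_{n}}\right)$; since $\eta\equiv1$ on $B_{r}\setminus B_{r/2}$ this is the claimed bound. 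For the second inequality, $r\ge1$ forces $\abs{\nabla\eta}\le C$ and $\operatorname{supp}\nabla\eta\subset B_{r/2}\setminus B_{r/3}$, so both terms on the right are $\le C\int_{\Omega\setminus B_{r/3}}u^{2}$, and $\eta\equiv1$ on $\Omega\setminus B_{r/2}$ concludes.

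I expect the only genuinely delicate point to be the cancellation in the second paragraph: the term $\int d\,\eta^{2}u^{2}$ has no sign and is merely finite through the Sobolev pairing $L^{n/2}\times L^{n/(n-2)}$, so it cannot be absorbed by Young's inequality; it is precisely the structural hypothesis $d\ge\dv\vec b$ (or $d\ge\dv\vec c$) that lets it cancel against a first order term and close the estimate, and one must check in passing that $\eta^{2}u^{2}$ is an admissible nonnegative test function for that distributional inequality, which it is since it can be approximated by nonnegative functions in $C_{c}^{\infty}(\Omega')$.
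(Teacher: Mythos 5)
Your proposal matches the paper's proof: you test with $\eta^2 u$, use the distributional sign condition $d\ge\dv\vec b$ (or $d\ge\dv\vec c$) on the nonnegative function $\eta^2 u^2$ to eliminate the zeroth order term at the cost of one drift term, absorb the $\mathbf A$ cross term by Young's inequality, split $\abs{\vec b-\vec c}$ at a level near $r_{\vec b-\vec c}\left(\tfrac{\lambda}{3C_n}\right)$ and absorb the small piece via Sobolev, and choose cutoffs supported in the annuli to read off both inequalities. One small technical point: by the definition \eqref{eq:R_f} one only knows $R_{\vec b-\vec c}(t)<\tfrac{\lambda}{3C_n}$ for $t>r_{\vec b-\vec c}\left(\tfrac{\lambda}{3C_n}\right)$, not at $t=r_{\vec b-\vec c}\left(\tfrac{\lambda}{3C_n}\right)$ itself (the level set $\{\abs{\vec b-\vec c}=t\}$ may carry mass), so one should run the estimate for $t$ slightly above this value and then let $t\to r_{\vec b-\vec c}\left(\tfrac{\lambda}{3C_n}\right)^+$, as the paper does.
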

\begin{proof}
Consider first the case when $d\ge\dv\vec b$.
Let $\phi$ be a smooth and nonnegative cutoff function which vanishes in $B_s$.
Then, we have $u\phi^2\in W_0^{1,2}(\Omega')$, and thus, using $u\phi^2$ as a test function, we obtain that
\[
\int_{\Omega'}\mathbf{A}\nabla u\cdot\nabla(u\phi^2)+\vec b\cdot\nabla(u\phi^2)\,u+\vec c\cdot\nabla u\,u\phi^2+du^2\phi^2=0.
\]
Since $d\geq\dv\vec b$ and $u^2\phi^2\geq 0$, we obtain that
\[
\int_{\Omega}\vec b\cdot\nabla(u\phi^2)\,u+du^2\phi^2=\int_{\Omega}\vec b\cdot\nabla(u^2\phi^2)+du^2\phi^2-\int_{\Omega}\vec b\cdot\nabla u\,u\phi^2\geq -\int_{\Omega}\vec b\cdot\nabla u\,u\phi^2,
\]
which implies that
\[
\int_{\Omega'}\mathbf{A}\nabla u\cdot\nabla(u\phi^2)+(\vec c-\vec b)\cdot\nabla u\, u\phi^2 \le 0.
\]
Using the ellipticity of $\mathbf{A}$, this implies that
\begin{align}
				\nonumber
\lambda\int_{\Omega'} \abs{\phi\nabla u}^2 &\le \int_{\Omega'} \mathbf{A} \nabla u\cdot\nabla u\,\phi^2 = \int_{\Omega'} \mathbf{A} \nabla u \cdot \nabla(u \phi^2) - 2\mathbf{A} \nabla u\cdot \nabla \phi\, u\phi\\
				\label{eq:I_1+I_2}
&\le \int_{\Omega'} (\vec b-\vec c)\cdot\nabla u\,u\phi^2-\int_{\Omega'}2\mathbf{A}\nabla u\cdot\nabla\phi\,u\phi:=I_1+I_2.
\end{align}
To bound $I_2$, we use boundedness of $\mathbf A$ to deduce that
\begin{equation}\label{eq:I_2Bound}
I_2 \le 2\norm{\mathbf A}_\infty\, \norm{\phi\nabla u}_2 \,\norm{u\nabla\phi}_2
\le \frac{\lambda}{12} \norm{\phi\nabla u}_2^2+\frac{12\norm{\mathbf A}_\infty^2}{\lambda} \norm{u\nabla\phi}_2^2.
\end{equation}
In order to bound $I_1$, we split $\abs{\vec b-\vec c}$ as in \eqref{eq:ft}, and we obtain that, for any $t>0$,
\begin{align*}
I_1&\le t\, \norm{\phi\nabla u}_2 \,\norm{\phi u}_2+R_{\vec b-\vec c}(t) \, \norm{\phi\nabla u}_2 \, \norm{\phi u}_{2^*}\\
&\le t \,\norm{\phi\nabla u}_2 \, \norm{\phi u}_2+C_n R_{\vec b-\vec c}(t) \, \norm{\phi\nabla u}_2 \, \norm{\nabla(\phi u)}_2\\
&\le t \,\norm{\phi\nabla u}_2 \, \norm{\phi u}_2+C_nR_{\vec b-\vec c}(t) \,\norm{\phi\nabla u}_2^2+C_nR_{\vec b-\vec c}(t) \,\norm{\phi\nabla u}_2 \, \norm{u\nabla\phi}_2\\
&\le t \,\norm{\phi\nabla u}_2 \, \norm{\phi u}_2 + 2C_nR_{\vec b-\vec c}(t)\, \norm{\phi\nabla u}_2^2+C_nR_{\vec b-\vec c}(t) \, \norm{u\nabla\phi}_2^2\\
&\le \left( t\delta+2C_nR_{\vec b-\vec c}(t)\right)\, \norm{\phi\nabla u}_2^2+\frac{t}{4\delta} \,\norm{\phi u}_2^2+C_nR_{\vec b-\vec c}(t)\, \norm{u\nabla\phi}_2^2.
\end{align*}
Choosing $t>r_{\vec b-\vec c}\left(\frac{\lambda}{3C_n}\right)$ and $\delta=\frac{\lambda}{6t}$, we obtain that
\begin{equation}\label{eq:I_1Bound}
I_1\le\frac{5\lambda}{6}  \norm{\phi\nabla u}_2^2+\frac{3t^2}{2\lambda} \norm{\phi u}_2^2+\frac{\lambda}{3} \norm{u\nabla\phi}_2^2.
\end{equation}
Substituting \eqref{eq:I_2Bound} and \eqref{eq:I_1Bound} in \eqref{eq:I_1+I_2}, we obtain that
\[
\lambda \norm{\phi\nabla u}_2^2 \le \frac{11}{12} \lambda \norm{\phi\nabla u}_2^2+\frac{3t^2}{2\lambda} \norm{\phi u}_2^2+\left(\frac{\lambda}{3}+\frac{12\norm{\mathbf A}_\infty^2}{\lambda}\right) \norm{u\nabla\phi}_2^2
\]
for any $t>0$ with $t>r_{\vec b-\vec c}\left(\frac{\lambda}{3C_n}\right)$.
Hence, by letting $t\to r_{\vec b-\vec c}\left(\frac{\lambda}{3C_n}\right)$, we obtain
\begin{equation}\label{eq:In2^*Bound}
\norm{\phi\nabla u}_2^2 \le C\norm{\phi u}_2^2+C\norm{u\nabla\phi}_2^2,
\end{equation}
where $C$ depends only on $n$, $\lambda$, $\norm{\mathbf A}_{\infty}$, and $r_{\vec b-\vec c}\left(\frac{\lambda}{3C_n}\right)$.

We now choose $\phi$ such that $0\le\phi\le 1$, $\phi \equiv 1$ in $B_r \setminus B_{r/2}$, $\phi$ is supported in $B_{2r} \setminus B_{r/3}$, and $\abs{\nabla\phi} \le C r^{-1}$.
Then $\phi$ vanishes in $B_s$, and thus \eqref{eq:In2^*Bound} shows the first inequality.
To show the second inequality, we choose $\phi$ such that $0\le\phi\le 1$, $\phi$ is supported in $\mathbb R^n\setminus B_{r/3}$, $\phi\equiv 1$ in $\mathbb R^n\setminus B_{r/2}$, and $\abs{\nabla\phi} \le Cr^{-1}$.

Now, consider the case when $d\ge\dv\vec c$.
In this case, $u$ satisfies the inequality
\[
\int_{\Omega'}\mathbf{A}\nabla u\cdot\nabla(u\phi^2)+(\vec b-\vec c)\cdot\nabla(u\phi^2)\, u \le 0,
\]
and the only change in \eqref{eq:I_1+I_2} is the term $I_1$, which is equal to
\[
I_1=\int_{\Omega'}(\vec c-\vec b)\cdot\nabla(u\phi^2)\,u=\int_{\Omega'}(\vec c-\vec b)\cdot\nabla u\,u\phi^2+2\int_{\Omega'}(\vec c-\vec b)\cdot\nabla \phi\,u^2\phi=I_3+I_4.
\]
Then, similar to \eqref{eq:I_1Bound}, we have
\begin{equation*}	
I_3\le\frac{5\lambda}{6} \norm{\phi\nabla u}_2^2+\frac{3t^2}{2\lambda} \norm{\phi u}_2^2+\frac{\lambda}{3} \norm{u\nabla\phi}_2^2,
\end{equation*}
for any $t>r_{\vec b-\vec c}\left(\frac{\lambda}{3C_n}\right)$.
To bound $I_4$, we split again $\vec b-\vec c$ as in \eqref{eq:ft}, and we obtain
\begin{align*}
I_4& \le 2t \,\norm{u\nabla\phi}_2 \, \norm{u\phi}_2+2R_{\vec b-\vec c}(t)\, \norm{u\nabla\phi}_2 \,\norm{u\phi}_{2^*}\\
&\le 2t \,\norm{u\nabla\phi}_2 \, \norm{u\phi}_2+2C_nR_{\vec b-\vec c}(t)\,\norm{u\nabla\phi}_2^2+2C_nR_{\vec b-\vec c}(t)\,\norm{u\nabla\phi}_2\,\norm{\phi\nabla u}_2\\
&\le \left(t+2C_nR_{\vec b-\vec c}(t)+\frac{C_nR_{\vec b-\vec c}(t)}{2\delta}\right) \norm{u\nabla\phi}_2^2+t\,\norm{u\phi}_2^2+2C_nR_{\vec b-\vec c}(t) \,\delta\,\norm{\phi\nabla u}_2^2
\end{align*}
for any $t>r_{\vec b-\vec c}\left(\frac{\lambda}{3C_n}\right)$ and any $\delta>0$. We now let $t\to r_{\vec b-\vec c}\left(\frac{\lambda}{3C_n}\right)$ and choose $\delta$ sufficiently small, and we proceed as above to finish the proof.
\end{proof}

We will also need the next reverse H{\"o}lder inequality for solutions.

\begin{lemma}\label{2By1}
Assume the same hypotheses as in Lemma~\ref{Caccioppoli}.
Then, for any $r\in (4s,2]$, we have
\[
\int_{\Omega\cap(B_{2r} \setminus B_{r/3})} \abs{u}^2\le\frac{C}{r^n}\left(\int_{\Omega \cap(B_{3r}\setminus B_{r/4})} \abs{u}\right)^2,
\]
where $C$ depends only on $n$, $\lambda$, $\norm{\mathbf A}_{\infty}$, and $r_{\vec b-\vec c}\left(\frac{\lambda}{3C_n}\right)$.
Moreover, we also have
\[
\int_{\Omega\setminus B_{2/3}}\abs{u}^2\le C\left(\int_{\Omega\setminus B_{1/3}} \abs{u}\right)^2.
\]
\end{lemma}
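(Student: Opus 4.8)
The plan is to reduce both estimates to a De Giorgi--Moser local bound of the form $\sup\abs u\lesssim\fint\abs u$ on small balls, and then sum this bound over an appropriate covering of the annular regions. The first step is to observe that $\abs u$ is a nonnegative subsolution of a ``reduced'' equation. Extend $u$ by zero outside $\Omega$; since $u=0$ on $\partial\Omega$, the resulting function---still denoted $u$---belongs to $W^{1,2}_{\loc}(\bR^n\setminus\overline{B_s})$. I would then argue that $u^+$ and $u^-$ are nonnegative subsolutions, in $\bR^n\setminus\overline{B_s}$, of the uniformly elliptic equation
\[
-\dv(\mathbf A\nabla v)+(\vec c-\vec b)\cdot\nabla v\le 0\quad\text{if }d\ge\dv\vec b,
\]
and of $-\dv(\mathbf A\nabla v+(\vec b-\vec c)v)\le 0$ if $d\ge\dv\vec c$. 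Inside $\Omega'=\Omega\setminus\overline{B_s}$ this follows exactly as in the opening of the proof of Lemma~\ref{Caccioppoli}: test $Lu=0$ against $\phi\,H_\varepsilon(u)$ with $\phi\ge0$ vanishing near $B_s$ and $0\le H_\varepsilon\uparrow\chi_{(0,\infty)}$, discard the nonnegative term $\int\mathbf A\nabla u\cdot\nabla u\,\phi\,H_\varepsilon'(u)$, use the sign condition to absorb the zeroth order contribution, and let $\varepsilon\to0$; applying the same to $-u$ handles $u^-$. That these inequalities persist across $\partial\Omega$ after the zero extension is the standard fact that the zero extension of a nonnegative subsolution vanishing on $\partial\Omega$ is again a subsolution (test against $\psi\,v/(v+\varepsilon)$ and let $\varepsilon\to0$). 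Hence $\abs u=u^++u^-$ is a nonnegative subsolution of the same reduced equation in $\bR^n\setminus\overline{B_s}$, whose only lower order coefficient is the critical drift $\vec b-\vec c\in L^n$.

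Running Moser's iteration for $\abs u$ exactly as in the proofs of Lemmas~\ref{LocalBoundForp} and \ref{LocalBoundForn}---following \cite[Theorem 8.17]{Gilbarg}, handling the $L^n$ drift through the splitting $\abs{\vec b-\vec c}=\abs{\vec b-\vec c}_{(t)}+\abs{\vec b-\vec c}^{(t)}$ as in \eqref{eq:b-cBound}, and then passing from an $L^2$ average to an $L^1$ average as in \cite[pp.~80--82]{Gi93}---then yields, for every ball with $B_{2\rho}(y)\subset\bR^n\setminus\overline{B_s}$ and $\rho\le1$,
\[
\sup_{B_\rho(y)}\abs u\le C\fint_{B_{2\rho}(y)}\abs u,\qquad C=C\Bigl(n,\lambda,\norm{\mathbf A}_\infty,r_{\vec b-\vec c}\bigl(\tfrac{\lambda}{3C_n}\bigr)\Bigr).
\]
After rescaling the ball to unit size the hypotheses are scale invariant by Remark~\ref{rmk1344sat}, so $C$ does not depend on $\rho$ or $y$.

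For the first inequality, fix $r\in(4s,2]$ and set $\rho=r/50$, and cover the compact annulus $\overline{B_{2r}\setminus B_{r/3}}$ by $N=N(n)$ balls $B_\rho(x_i)$ with centers in it. Since $2\rho<r/12$ and $s<r/4$, each doubled ball satisfies $B_{2\rho}(x_i)\subset B_{3r}\setminus\overline{B_{r/4}}$ and $B_{2\rho}(x_i)\cap B_s=\emptyset$, so the bound above applies on it. Using that $\abs u\equiv0$ off $\Omega$ (so every integral is effectively over its intersection with $\Omega$) and that $\abs{B_{2\rho}}\simeq r^n$,
\begin{align*}
\int_{\Omega\cap(B_{2r}\setminus B_{r/3})}\abs u^2
&\le\sum_{i=1}^N\int_{B_\rho(x_i)}\abs u^2\le\sum_{i=1}^N\Bigl(\sup_{B_\rho(x_i)}\abs u\Bigr)\int_{B_\rho(x_i)}\abs u\\
&\le C\sum_{i=1}^N\Bigl(\fint_{B_{2\rho}(x_i)}\abs u\Bigr)\int_{\Omega\cap(B_{3r}\setminus B_{r/4})}\abs u\le\frac{C\,N(n)}{r^n}\Bigl(\int_{\Omega\cap(B_{3r}\setminus B_{r/4})}\abs u\Bigr)^2.
\end{align*}
For the ``moreover'' the set $\bR^n\setminus B_{2/3}$ may be unbounded, so instead I would use a locally finite, bounded-overlap cover of $\bR^n\setminus B_{2/3}$ by balls $B_{1/100}(y_j)$ chosen so that $B_{1/50}(y_j)\subset\bR^n\setminus\overline{B_{1/3}}$ (hence $B_{1/50}(y_j)\cap B_s=\emptyset$, since $s<1/3$). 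Applying the local bound on each $B_{1/50}(y_j)$ and using $\fint_{B_{1/50}(y_j)}\abs u\le C(n)\int_{\Omega\setminus B_{1/3}}\abs u$ together with $\sum_j\int_{B_{1/100}(y_j)}\abs u\le C(n)\int_{\Omega\setminus B_{1/3}}\abs u$ (bounded overlap), one obtains $\int_{\Omega\setminus B_{2/3}}\abs u^2\le C\bigl(\int_{\Omega\setminus B_{1/3}}\abs u\bigr)^2$; the series converge, since either the right-hand side is infinite and there is nothing to prove, or $\abs u\in L^1\cap L^2(\Omega\setminus B_{1/3})$.

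The only genuinely delicate points are: (i) the subsolution property of $\abs u$ up to $\partial\Omega$ after the zero extension---which can also be bypassed, since $u=0$ on $\partial\Omega$ makes test functions of the form $\abs u^{\beta}\phi^2$, with $\phi$ cutting off only near $B_s$, already admissible in $\Omega'$; and (ii) carrying the critical $L^n$ drift $\vec b-\vec c$ through the iteration so that the constant depends only on $r_{\vec b-\vec c}(\lambda/3C_n)$---but this is exactly what is already done in the proofs of Lemmas~\ref{LocalBoundForp}--\ref{LocalBoundForn}, so no new estimate is required; the remainder is the elementary covering bookkeeping above.
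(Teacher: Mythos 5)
Your argument hinges on the local $L^\infty$--$L^1$ estimate $\sup_{B_\rho}\abs u\le C\fint_{B_{2\rho}}\abs u$ with $C$ depending only on $r_{\vec b-\vec c}\!\left(\frac{\lambda}{3C_n}\right)$, but that bound fails in one of the two cases the lemma must cover. Lemma~\ref{Caccioppoli} allows either $d\ge\dv\vec b$ or $d\ge\dv\vec c$; in the latter case your reduction turns $\abs u$ into a nonnegative subsolution of the \emph{divergence-form} drift equation $-\dv(\mathbf A\nabla v+(\vec b-\vec c)v)\le 0$ with $\vec b-\vec c$ merely in $L^n$. The paper's own Lemma~\ref{ToCounterexample} produces an unbounded nonnegative $W^{1,2}$ solution of precisely such an equation (with $\mathbf A=\mathbf I$ and a drift in $L^n$ of arbitrarily small norm), so no local maximum principle of the kind you invoke can hold there. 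The lemmas you cite do not close this gap: Lemma~\ref{LocalBoundForp} requires $\vec b-\vec c\in L^p$ with $p>n$, and Lemma~\ref{LocalBoundForn} handles the \emph{non-divergence} drift form $-\dv(\mathbf A^T\nabla u)+(\vec b-\vec c)\cdot\nabla u\le 0$; the divergence-form, critical-$L^n$ case is exactly the one that is excluded. Your argument is fine in the case $d\ge\dv\vec b$, where the reduced drift is in non-divergence form, but the lemma as stated must also cover $d\ge\dv\vec c$.

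The paper sidesteps this by never going up to $L^\infty$. It takes the Caccioppoli inequality \eqref{eq:In2^*Bound} (derived under either sign condition), adds $\norm{u\nabla\phi}_2^2$ and uses Sobolev to get $\norm{u\phi}_{2^*}^2\le C\norm{u\phi}_2^2+C\norm{u\nabla\phi}_2^2$ on telescoping cutoffs, interpolates the $L^2$ integral between $L^1$ and $L^{2^*}$ with H\"older, applies Young's inequality, and then absorbs the $L^2$ term via the iteration lemma of \cite[p.~81]{Gi93}, before summing over a covering. This produces exactly the $L^2$--$L^1$ reverse H\"older estimate the lemma asserts, while making no use of local boundedness---which is genuinely unavailable in the critical divergence-drift case. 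Your covering bookkeeping, the treatment of the zero extension, and the subsolution computation for $\abs u$ are all reasonable, but the central local estimate needs to be weakened from a sup bound to a reverse H\"older inequality of the type the paper obtains directly from Caccioppoli.
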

\begin{proof}
Note that in both cases $d\ge\dv\vec b$ and $d\ge\dv\vec c$, the proof of Lemma~\ref{Caccioppoli} shows that the inequality \eqref{eq:In2^*Bound} holds for any smooth nonnegative $\phi$ that vanishes in $B_s$. 
We now add the term $\norm{u\nabla\phi}_2^2$ in both sides of \eqref{eq:In2^*Bound}.
Since $u\phi\in W_0^{1,2}(\Omega')$, using the Sobolev inequality we obtain that
\[
\norm{u\phi}_{2^*}^2\le C_n^2 \norm{\nabla(u\phi)}_2^2 \le C\norm{\phi u}_2^2+C\norm{u\nabla\phi}_2^2,
\]
where $C$ depends only on $n$, $\lambda$, $\norm{\mathbf A}_{\infty}$, and $r_{\vec b-\vec c}\left(\frac{\lambda}{3C_n}\right)$.

To show the first estimate, fix any $y \in \Omega$ such that $B_{r/12}(y)\cap B_s=\emptyset$.
Let $0< t<\tau< r/12$ and choose $\phi$ such that $0\le\phi\le 1$, $\phi$ is supported in $B_{\tau}(y)$, $\phi\equiv 1$ in $B_t(y)$, and $\abs{\nabla\phi} \le C (\tau-t)^{-1}$.
Then, we get
\begin{equation}\label{eq:Overt^2}
\left(\int_{E_t} \abs{u}^{2^*}\right)^{2/2^*} \le \frac{C}{(\tau -t)^2}\int_{E_{\tau}} \abs{u}^2,\quad\text{where }\; E_t:=B_t(y) \cap \Omega.
\end{equation}
We now use H{\"o}lder's inequality, to obtain that
\begin{align*}
\int_{E_t} \abs{u}^2&=\int_{E_t} \abs{u}^{\frac{4}{n+2}}\, \abs{u}^{\frac{2n}{n+2}}
\le \left(\int_{E_t} \abs{u}\right)^{\frac{4}{n+2}}\left(\int_{E_t} \abs{u}^{\frac{2n}{n-2}}\right)^{\frac{n-2}{n+2}}
\le \left(\int_{E_t} \abs{u} \right)^{\frac{4}{n+2}}\left(\frac{C}{(\tau-t)^2}\int_{E_{\tau}} \abs{u}^2\right)^{\frac{n}{n+2}}\\
&=\left(\frac{C}{(\tau-t)^{\frac{n}{2}}} \int_{E_t} \abs{u} \right)^{\frac{4}{n+2}}\left(\int_{E_{\tau}} \abs{u}^2\right)^{\frac{n}{n+2}},
\end{align*}
where we used \eqref{eq:Overt^2} in the second inequality. We now use Young's inequality
\[
ab\le\frac{2}{n+2}\delta^{-\frac{n+2}{2}}a^\frac{n+2}{2}+\frac{n}{n+2}\delta^{\frac{n+2}{n}}b^{\frac{n+2}{n}}
\]
to bound the last term, and we obtain that
\[
\int_{E_t} \abs{u}^2\le\frac{2}{n+2}\delta^{-\frac{n+2}{2}} \,\frac{C}{(\tau-t)^n}\left(\int_{E_t} \abs{u}\right)^2+\frac{n}{n+2}\delta^{\frac{n+2}{n}}\int_{E_{\tau}} \abs{u}^2.
\]
Choosing $\delta>0$ such that $\frac{n}{n+2}\delta^{\frac{n+2}{n}}<\frac{1}{2}$, we finally obtain that
\[
\int_{E_t} \abs{u}^2\le\frac{C}{(\tau-t)^n}\left(\int_{E_t} \abs{u}\right)^2+\frac{1}{2}\int_{E_{\tau}} \abs{u}^2\le\frac{C}{(\tau-t)^n}\left(\int_{E_{r/12}} \abs{u}\right)^2+\frac{1}{2}\int_{E_{\tau}} \abs{u}^2.
\]
Hence, using Lemma 5.1 in \cite[p. 81]{Gi93}, we finally obtain that
\begin{equation}\label{eq:r/24}
\int_{E_{r/24}} \abs{u}^2 \le\frac{C}{r^n}\left(\int_{E_{r/12}} \abs{u}\right)^2.
\end{equation}
We now cover the annulus $B_{2r}\setminus B_{r/3}$ by $N$ balls of radius $r/24$ centered at points $y$ such that $B_{r/12}(y)\cap B_s=\emptyset$;
the number $N$ can be chosen so that it depends only on $n$.
Then, the union of the doubles of those balls is a subset of $B_{3r}\setminus B_{r/4}$.
Hence, adding the inequalities in \eqref{eq:r/24} for each one of such balls, we get the desired estimate.

To show the second estimate, we follow a similar procedure.
For any $t$, $\tau\in (0,\frac{1}{4})$ with $t<\tau$, we consider $\phi$ such that $0\leq\phi\leq 1$, $\phi=1$ outside $B_{1/3\, -t}$, $\phi=0$ in $B_{1/3\,-\tau}$, and $\abs{\nabla\phi} \le C(\tau-t)^{-1}$. Then, we obtain
\[
\left(\int_{F_t} \abs{u}^{2^*}\right)^{2/2^*} \le \frac{C}{(\tau -t)^2}\int_{F_{\tau}} \abs{u}^2,\quad\text{where }\; F_t:=\Omega\setminus B_{1/3\,-t}.
\]
As above, we use Young's inequality and H{\"o}lder's inequality, to obtain that
\[
\int_{F_t} \abs{u}^2\le\frac{C}{(\tau-t)^n}\left(\int_{\Omega\setminus B_{1/3}} \abs{u}\right)^2+\frac{1}{2}\int_{F_{\tau}} \abs{u}^2.
\]
Therefore, using Lemma 5.1 in \cite[p. 81]{Gi93}, we obtain that
\[
\int_{\Omega\setminus B_{2/3}}\abs{u}^2 \le C\left(\int_{\Omega\setminus B_{1/3}} \abs{u}\right)^2,
\]
which completes the proof.
\end{proof}

Finally, we show the next regularity estimate.

\begin{proposition}\label{GeneralLorentzBound}
Let $\Omega\subset \mathbb R^n$ be a domain with $\abs{\Omega}<+\infty$.
Suppose that $\mathbf{A}$ is bounded and satisfies the uniform ellipticity condition \eqref{ellipticity} and $\vec b ,\vec c \in L^n(\Omega)$, $d\in L^{n/2}(\Omega)$, with $d\ge\dv\vec b$, or $d\geq\dv\vec c$.
Let also $x_0\in\Omega$ and suppose that $u\in W_0^{1,q}(\Omega)$ for some $q \in[1, \frac{n}{n-1})$, $u\in W^{1,2}(\Omega\setminus B_s(x_0))$ for all $s>0$, and $u$ satisfies the equation
\[
\dv(\mathbf{A}\nabla u+\vec bu)+\vec c\cdot\nabla u+du=0\quad\text{in}\quad \Omega\setminus \set{x_0}.
\]
If $\norm{u}_{L^{\frac{n}{n-2},\infty}(\Omega)}<+\infty$, then we have
\[
\norm{\nabla u}_{L^{\frac{n}{n-1},\infty}(\Omega)} \le C \norm{u}_{L^{\frac{n}{n-2},\infty}(\Omega)}^2+C,
\]
where $C$ depends on $n$, $\lambda$, $\norm{\mathbf A}_{\infty}$, $r_{\vec b-\vec c}\left(\frac{\lambda}{3C_n}\right)$, and $\abs{\Omega}$.
\end{proposition}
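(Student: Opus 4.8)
The plan is to localize on dyadic annuli around $x_0$, apply the annular Caccioppoli and reverse H\"older estimates of Lemmas~\ref{Caccioppoli} and~\ref{2By1}, and then convert the resulting $L^2$ bounds into a weak-$L^{\frac{n}{n-1}}$ bound by a standard level-set argument. Write $M:=\norm{u}_{L^{\frac{n}{n-2},\infty}(\Omega)}$ and assume $x_0=0$ (if $M=0$ then $u\equiv0$ and there is nothing to prove). We shall repeatedly use the elementary bound $\int_E\abs{u}\le C\,M\,\abs{E}^{2/n}$ for measurable $E\subseteq\Omega$, which follows from H\"older's inequality in Lorentz spaces (equivalently, by integrating the distribution function of $u$) together with $\abs{E}^{1-\frac{n-2}{n}}=\abs{E}^{2/n}$. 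Since $u\in W^{1,2}(\Omega\setminus B_s(0))$ for every $s>0$ and $u=0$ on $\partial\Omega$ (from $u\in W_0^{1,q}(\Omega)$), for all sufficiently small $s$ the ball $B_s=B_s(0)$ is admissible in those two lemmas with $\Omega'=\Omega\setminus B_s$, and their right-hand sides do not involve $s$; the equation for $u$ is of the type treated there.

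First I would estimate $u$ on the dyadic annuli $\mathcal A_k=B_{\rho_k}\setminus B_{\rho_k/2}$, $\rho_k=2^{-k}$, $k\ge 0$. Fixing $k$ and taking $s<\rho_k/4$, Lemma~\ref{Caccioppoli} followed by Lemma~\ref{2By1} and then the elementary bound above give
\[
\int_{\Omega\cap\mathcal A_k}\abs{\nabla u}^2
\le\frac{C}{\rho_k^2}\int_{\Omega\cap(B_{2\rho_k}\setminus B_{\rho_k/3})}\abs{u}^2
\le\frac{C}{\rho_k^{\,n+2}}\Bigl(\int_{\Omega\cap(B_{3\rho_k}\setminus B_{\rho_k/4})}\abs{u}\Bigr)^2
\le\frac{C}{\rho_k^{\,n+2}}\bigl(C\,M\,\rho_k^{2}\bigr)^2
= C\,M^2\,\rho_k^{\,2-n},
\]
a statement not involving $s$, hence valid for every $k\ge 0$. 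Likewise, the ``$r\ge 1$'' parts of the two lemmas (applied with $r=2$) give
\[
\int_{\Omega\setminus B_1}\abs{\nabla u}^2
\le C\int_{\Omega\setminus B_{2/3}}\abs{u}^2
\le C\Bigl(\int_{\Omega\setminus B_{1/3}}\abs{u}\Bigr)^2
\le C\,M^2\abs{\Omega}^{4/n}.
\]
Since the annuli $\set{\mathcal A_k}_{k\ge0}$ together with $\Omega\setminus B_1$ cover $\Omega\setminus\set{0}$ up to a null set, these two families control $\abs{\nabla u}^2$ on all of $\Omega$.

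Next I would pass to the distribution function of $\abs{\nabla u}$. On each of the pieces above we have two bounds for $\abs{\set{x:\abs{\nabla u(x)}>t}}$: the trivial one, by the measure of the piece, and the Chebyshev one, by $t^{-2}$ times the $L^2$ estimate just obtained. On $\Omega\cap\mathcal A_k$ this is $\min\bigl(C\rho_k^{\,n},\,C\,M^2\rho_k^{\,2-n}t^{-2}\bigr)$; splitting the sum over $k$ at the crossover scale $\rho_\ast=(M/t)^{1/(n-1)}$ and summing the two resulting geometric series (the one in $\rho_k^{\,n}$ over $\rho_k\le\rho_\ast$ and the one in $\rho_k^{\,2-n}$ over $\rho_k>\rho_\ast$, each dominated by its term at $\rho_\ast$) yields
\[
\abs{\set{x\in\Omega\cap B_1:\abs{\nabla u(x)}>t}}\le C\,(M/t)^{\frac{n}{n-1}},
\]
while for the exterior piece the analogous balancing against $\abs{\Omega\setminus B_1}\le\abs{\Omega}$ gives $\abs{\set{x\in\Omega\setminus B_1:\abs{\nabla u}>t}}\le C\min\bigl(\abs{\Omega},\,M^2\abs{\Omega}^{4/n}t^{-2}\bigr)$. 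Taking $\sup_{t>0}t\,\abs{\set{\,\cdots\,}}^{(n-1)/n}$ of each contribution (using $\abs{\Omega\cap B_1}\le\omega_n$ for small $t$ in the interior piece) produces $\norm{\nabla u}_{L^{\frac{n}{n-1},\infty}(\Omega)}\le C\,M$ with $C$ depending on $n$, $\lambda$, $\norm{\mathbf A}_\infty$, $r_{\vec b-\vec c}\!\left(\frac{\lambda}{3C_n}\right)$ and $\abs{\Omega}$; since $M\le M^2+1$, this gives the asserted bound $\le C M^2+C$.

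I expect the only genuine subtlety to be the bookkeeping in the last step: organizing the dyadic sum so that the weights $\rho_k^{\,2-n}$, which blow up as $\rho_k\to0$, are absorbed by the trivial volume bound near the crossover scale $\rho_\ast$, and verifying that in the exterior region --- where $\abs{\Omega}$, not the local scale, is the relevant length --- one still recovers the power $t^{-\frac{n}{n-1}}$ (so that the boundary contribution enters only through $C$). Everything else is a direct invocation of Lemmas~\ref{Caccioppoli} and~\ref{2By1}; the hypothesis $u\in W_0^{1,q}(\Omega)$ with $q<\frac{n}{n-1}$ is used precisely to guarantee that $\nabla u$ is an honest $L^q$ function on all of $\Omega$, with no concentration at $x_0$, so that $\norm{\nabla u}_{L^{\frac{n}{n-1},\infty}(\Omega)}$ is well defined and coincides with the quantity estimated above.
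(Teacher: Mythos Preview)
Your argument is correct and follows the same overall strategy as the paper—localize on dyadic annuli, invoke Lemmas~\ref{Caccioppoli} and~\ref{2By1}, and convert to a weak-type bound via Chebyshev—but the bookkeeping in the final summing step is organized differently. The paper fixes $t$, sets $\tau=t^{-1/(n-1)}$, and works with $t$-dependent annuli $D_{r_j}$, $r_j=2^j\tau$; it keeps one copy of $\int_{F_{r_j}}\abs{u}$ unsquared, sums, and then applies H\"older's inequality in Lorentz spaces to produce the factor $K^2$, which requires a separate computation of $\bignorm{\sum_j 2^{-nj}\chi_{F_{r_j}}}_{L^{n/2,1}}$. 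You instead use fixed dyadic annuli $\mathcal{A}_k$, square the bound $\int\abs{u}\le CM\rho_k^2$ immediately to get $\int_{\mathcal{A}_k}\abs{\nabla u}^2\le CM^2\rho_k^{2-n}$, and then run the standard $\min(\text{volume},\text{Chebyshev})$ geometric-sum argument with crossover at $\rho_\ast=(M/t)^{1/(n-1)}$. Your route avoids the Lorentz-space machinery entirely and in fact yields the linear estimate $\norm{\nabla u}_{L^{\frac{n}{n-1},\infty}}\le CM$, which you then relax to $CM^2+C$; the paper's organization lands directly on $CK^2+C$. Both are valid; yours is the more elementary packaging.
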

\begin{proof}
Let
\[
\Omega_t=\set{ x \in \Omega: \abs{\nabla u(x)} \ge t}\quad\text{and}\quad K=\norm{u}_{L^{\frac{n}{n-2},\infty}(\Omega)}.
\]
We shall show $t^{\frac{n}{n-1}} \, \abs{\Omega_t} \le C(K^2+1)$ for all $t>0$.
Note first that, if $t\le 4^{n-1}$, then
\[
t^{\frac{n}{n-1}}\, \abs{\Omega_t} \le 4^{n}\abs{\Omega}.
\]
Therefore, it is enough to consider the case when $t>4^{n-1}$.
Let $t>4^{n-1}$ and set $\tau=t^{-\frac{1}{n-1}}$.
Then, we have
\[
\tau<\left(4^{n-1}\right)^{-\frac{1}{n-1}}=\frac{1}{4},\quad -\frac{\ln\tau}{\ln 2}>-\frac{\ln(1/4)}{\ln 2}=2.
\]
Now, let $r\in[4 \tau,2]$, write $B_r=B_r(x_0)$, and set
\[
D_r=\Omega\cap(B_r\setminus B_{r/2}),\quad E_r=\Omega\cap(B_{2r}\setminus B_{r/3}),\quad F_r=\Omega\cap(B_{3r}\setminus B_{r/4}).
\]
Then, from Lemmas \ref{Caccioppoli} and \ref{2By1} we obtain that
\[
\int_{D_r} \abs{\nabla u}^2\le\frac{C}{r^2}\int_{E_r} \abs{u}^2\le\frac{C}{r^{2+n}}\left(\int_{F_r} \abs{u}\right)^2,
\]
where $C$ depends on $n$, $\lambda$, $\norm{\mathbf{A}}_{\infty}$, and $r_{\vec b-\vec c}\left(\frac{\lambda}{3C_n}\right)$.
By H{\"o}lder's inequality for Lorentz norms, we have (see, for example, \cite[Theorem 1.4.17]{Grafakos})
\[
\int_{F_r} \abs{u} \le \norm{u}_{\frac{n}{n-2},\infty} \norm{\chi_{F_r}}_{\frac{n}{2},1} \le  C \norm{u}_{\frac{n}{n-2},\infty} \,\abs{F_r}^{2/n}\le C \norm{u}_{\frac{n}{n-2},\infty}\,r^2=CKr^2,
\]
and thus, we have
\[
\int_{D_r} \abs{\nabla u}^2\le\frac{C}{r^{2+n}}\left(\int_{F_r}\abs{u}\right)^2\le\frac{CK}{r^n}\int_{F_r} \abs{u}.
\]
Then, for $r\in[4 \tau, 2]$ as above, we compute
\[
t^{\frac{n}{n-1}} \,\abs{\Omega_t\cap D_r} \le t^{\frac{n}{n-1}-2}\int_{D_r}\abs{\nabla u}^2\le CKt^{\frac{n}{n-1}-2}r^{-n}\int_{F_r} \abs{u}.
\]
Since $t=\tau^{1-n}$, the last estimate shows that
\[
t^{\frac{n}{n-1}} \,\abs{\Omega_t\cap D_r} \le CK\tau^{n-2}r^{-n}\int_{F_r} \abs{u}.
\]
We now consider an integer $j$ with
\[
2\le j \le N:= \left\lfloor 1-\frac{\ln \tau}{\ln 2}\right\rfloor,
\]
where $\lfloor\cdot\rfloor$ denotes the floor function.
Note then that $2^N \tau\in(1,2]$, and $2^j \tau \in[4 \tau,2]$ for all $j=2,\dots, N$.
Hence, if we apply the previous estimate for $r=r_j:=2^j \tau$, we obtain that
\[
t^{\frac{n}{n-1}} \,\abs{\Omega_t\cap D_{r_j}} \le CK \tau^{n-2}2^{-nj} \tau^{-n}\int_{F_{r_j}} \abs{u}=CK \tau^{-2}2^{-nj}\int_{F_{r_j}} \abs{u}.
\]
We then add those inequalities for $j=2,\dots, N$, to obtain that
\begin{align*}
t^{\frac{n}{n-1}} \Abs{\Omega_t\cap\left(B_{2^N \tau}\setminus B_{2\tau}\right)}&\le\sum_{j=2}^N t^{\frac{n}{n-1}} \,\abs{\Omega_t\cap D_{r_j}} \le CK\tau ^{-2}\sum_{j=2}^N 2^{-nj}\int_{F_{r_j}} \abs{u}\\
&=CK\tau^{-2} \int_{\Omega} \abs{u}\sum_{j=2}^N 2^{-nj}\chi_{F_{r_j}}\\
&\le CK^2\tau ^{-2}\,\Bignorm{\sum_{j=2}^N 2^{-nj}\chi_{F_{r_j}}}_{L^{\frac{n}{2},1}}\;,
\end{align*}
where we used H{\"o}lder's inequality for Lorentz norms.
Therefore, we have shown that
\begin{equation}\label{eq:OutsideB_{4t_0}}
t^{\frac{n}{n-1}} \Abs{\Omega_t\cap\left(B_{2^N \tau}\setminus B_{2\tau}\right)} \le CK^2\tau^{-2}\, \Bignorm{\sum_{j=2}^N2^{-nj}\chi_{F_{r_j}}}_{L^{\frac{n}{2},1}}.
\end{equation}
In addition,
\[
t^{\frac{n}{n-1}} \Abs{\Omega_t\cap B_{2\tau}} \le t^{\frac{n}{n-1}} \abs{B_{2\tau}} =\tau^{-n}\, (2\tau)^n\, \abs{B_1}=C 2^n.
\]
Therefore, adding with \eqref{eq:OutsideB_{4t_0}}, we obtain that
\begin{equation}\label{eq:BeforeLorentzEstimate}
t^{\frac{n}{n-1}} \Abs{\Omega_t\cap B_{2^N\tau}} \le CK^2\tau^{-2} \,\Bignorm{\sum_{j=2}^N 2^{-nj}\chi_{F_{r_j}}}_{L^{\frac{n}{2},1}}+C 2^n,
\end{equation}
where $C$ depends only on $n$, $\lambda$, $\norm{\mathbf A}_{\infty}$, $r_{\vec b-\vec c}\left(\frac{\lambda}{3C_n}\right)$, and $\abs{\Omega}$.

We now turn our attention to the last $L^{\frac{n}{2},1}$ norm. Note first that, for any $r>0$,
\[
F_r=\Omega\cap(B_{3r}(x)\setminus B_{r/4}(x))\subset \bigcup_{k=1}^6\left(\Omega\cap(B_{kr/2}\setminus B_{kr/4})\right)=\bigcup_{k=1}^6 D_{kr/2},
\]
which shows that
\[
\sum_{j=2}^N 2^{-nj}\chi_{F_{r_j}} \le \sum_{j=2}^N 2^{-nj}\sum_{k=1}^6\chi_{D_{kr_j/2}}=\sum_{k=1}^6 \sum_{j=2}^N2^{-nj}\chi_{D_{kr_j/2}}.
\]
Therefore, 
\begin{equation}\label{eq:Lorentzestimate6}
\Bignorm{\sum_{j=2}^N 2^{-nj}\chi_{F_{r_j}}}_{L^{\frac{n}{2},1}} \le C\sum_{k=1}^6\, \Bignorm{\sum_{j=2}^N2^{-nj}\chi_{D_{kr_j/2}}}_{L^{\frac{n}{2},1}}.
\end{equation}
Now, for fixed $k\in \set{1,\dots, 6}$, the sets $D_{kr_j/2}$ are pairwise disjoint, with
\[
\abs{D_{kr_j/2}}= C \left(\frac{kr_j}{2}\right)^n\le C r_j^n=C (2^j \tau)^n=C 2^{jn} \tau^n.
\]
Therefore, we obtain that, for $j=2,\dots N$,
\[
b_j:=\sum_{i=2}^j \,\abs{D_{kr_i/2}} \leq C \tau^n\sum_{i=2}^j 2^{in}\le C \tau^n 2^{jn}.
\]
As in \cite[Example 1.4.8]{Grafakos}, we then estimate
\[
\Bignorm{\sum_{j=2}^N 2^{-nj}\chi_{D_{kr_j/2}}}_{L^{\frac{n}{2},1}}\le C \sum_{j=2}^N2^{-nj} b_j^{2/n}\le C\sum_{j=2}^N2^{-nj}(\tau^n2^{nj})^{2/n}=C \tau^2\sum_{j=2}^N 2^{(2-n)j}\le C \tau^2.
\]
Hence, substituting in \eqref{eq:Lorentzestimate6}, and then substituting in \eqref{eq:BeforeLorentzEstimate}, we finally obtain that
\begin{align*}
t^{\frac{n}{n-1}} \,\abs{\Omega_t\cap B_{2^N\tau}}&\le CK^2\tau^{-2}\sum_{k=1}^6 \,\Bignorm{\sum_{j=2}^N2^{-nj}\chi_{D_{kr_j/2}}}_{L^{\frac{n}{2},1}}+ C 2^n\\
&\le CK^2\tau^{-2}\sum_{k=1}^6 \tau^2+C 2^n \le C(K^2+1).
\end{align*}
Since $2^N\tau>1$, we have thus shown that
\begin{equation}\label{eq:InsideB_1}
t^{\frac{n}{n-1}}\,\abs{\Omega_t\cap B_1}\le C(K^2+1),
\end{equation}
where $C$ depends on $n$, $\lambda$, $\norm{\mathbf A}_{\infty}$ and $r_{\vec b-\vec c}\left(\frac{\lambda}{3C_n}\right)$. On the other hand, by the second estimate in Lemma~\ref{Caccioppoli} and the second estimate in Lemma~\ref{2By1}, we have
\[
\int_{\Omega\setminus B_1}\abs{\nabla u}^2\leq C\int_{\Omega\setminus B_{2/3}}\abs{u}^2\leq C\left(\int_{\Omega\setminus B_{1/3}} \abs{u}\right)^2\leq CK^2 \abs{\Omega\setminus B_{1/3}}^{4/n}\leq CK^2,
\]
where $C$ depends on $\abs{\Omega}$ as well.
Therefore, we have (recall $t>4^{n-1}$)
\[
t^{\frac{n}{n-1}}\,\abs{\Omega_t \setminus B_1} \le t^{\frac{n}{n-1}}\, t^{-2} \int_{\Omega_t \setminus B_1} \abs{\nabla u}^2 \le CK^2.
\]
Combining together with \eqref{eq:InsideB_1}, we get $t^{\frac{n}{n-1}} \, \abs{\Omega_t} \le C(K^2+1)$ as desired.
\end{proof}

\section{Properties of variational solutions}								\label{sec4}
In this section we will construct variational $W_0^{1,2}(\Omega)$ solutions to the equations $Lu=F$ and $L\tran u=F$ for $F\in W^{-1,2}(\Omega)$. We will then show boundedness properties of the operators $T=L^{-1}$ and $S=(L\tran)^{-1}$.

\subsection{The constructions}
	
To construct variational solutions we will use the Fredholm alternative. The first step is the following.

\begin{lemma}\label{Gamma}
Let $\mathbf{A}$ be bounded and satisfy the uniform ellipticity condition \eqref{ellipticity} and $\vec b ,\vec c \in L^n(\Omega)$, $d\in L^{n/2}(\Omega)$, and $d\ge\dv\vec b$.
Let us set $\gamma=\frac{3}{4\lambda}\, r_{\vec b-\vec c}\left(\frac{\lambda}{3C_n}\right)^2$.
Then, for any $u\in W_0^{1,2}(\Omega)$, we have
\[
\int_{\Omega}\left(\mathbf{A}\nabla u \cdot \nabla u+(\vec b+\vec c) \cdot \nabla u \,u+du^2\right)+\gamma\int_{\Omega}u^2\ge\frac{\lambda}{3}\int_{\Omega} \abs{\nabla u}^2.
\]
\end{lemma}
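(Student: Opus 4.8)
The plan is to exploit the sign condition $d \ge \dv\vec b$ to absorb both the zeroth order term $\int_\Omega d u^2$ and half of the first order term, and then to dominate the leftover $\vec b - \vec c$ contribution by the splitting device of \eqref{eq:ft} together with the Sobolev inequality, in exactly the spirit of the estimates \eqref{eq:b-cBound}--\eqref{eq:Forb-c} in the proof of Lemma~\ref{Pointwise}.

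\emph{Step 1: testing $d - \dv\vec b \ge 0$ against $u^2$.} The distributional inequality means $\int_\Omega d\phi + \int_\Omega \vec b\cdot\nabla\phi \ge 0$ for every nonnegative $\phi \in C_c^\infty(\Omega)$. Given $u \in W_0^{1,2}(\Omega)$, choose $v_k \in C_c^\infty(\Omega)$ with $v_k \to u$ in $W_0^{1,2}(\Omega)$ and apply this with $\phi = v_k^2 \ge 0$. Since $v_k^2 \to u^2$ in $L^{\frac{n}{n-2}}(\Omega)$ and $\nabla(v_k^2) = 2v_k\nabla v_k \to 2u\nabla u$ in $L^{\frac{n}{n-1}}(\Omega)$ (using $W_0^{1,2}\hookrightarrow L^{\frac{2n}{n-2}}$ and H\"older), while $d \in L^{n/2}(\Omega)$ and $\vec b \in L^n(\Omega)$, we may pass to the limit to obtain
\[
\int_\Omega d u^2 \ge -2\int_\Omega \vec b\cdot\nabla u\, u.
\]

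\emph{Step 2: reduction and the $\vec b-\vec c$ estimate.} Adding $\int_\Omega (\vec b + \vec c)\cdot\nabla u\, u$ to both sides and using ellipticity \eqref{ellipticity},
\[
\int_\Omega \bigl(\mathbf A\nabla u\cdot\nabla u + (\vec b + \vec c)\cdot\nabla u\, u + d u^2\bigr) \ge \lambda\int_\Omega \abs{\nabla u}^2 - \int_\Omega (\vec b - \vec c)\cdot\nabla u\, u.
\]
Split $\abs{\vec b - \vec c} = \abs{\vec b - \vec c}_{(t)} + \abs{\vec b - \vec c}^{(t)}$ as in \eqref{eq:ft}; by H\"older, the Sobolev inequality $\norm{u}_{2^*} \le C_n\norm{\nabla u}_2$, and $\norm{\abs{\vec b - \vec c}^{(t)}}_n = R_{\vec b - \vec c}(t)$,
\[
\Abs{\int_\Omega (\vec b - \vec c)\cdot\nabla u\, u} \le t\,\norm{\nabla u}_2\norm{u}_2 + C_n R_{\vec b - \vec c}(t)\,\norm{\nabla u}_2^2.
\]
For any $t$ with $R_{\vec b - \vec c}(t) < \frac{\lambda}{3C_n}$, apply Cauchy's inequality with $\delta = \frac{\lambda}{3t}$ to the first term to bound the right-hand side by $\frac{2\lambda}{3}\norm{\nabla u}_2^2 + \frac{3t^2}{4\lambda}\norm{u}_2^2$. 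Combining with the previous display gives, for every such $t$,
\[
\int_\Omega \bigl(\mathbf A\nabla u\cdot\nabla u + (\vec b + \vec c)\cdot\nabla u\, u + d u^2\bigr) + \frac{3t^2}{4\lambda}\int_\Omega u^2 \ge \frac{\lambda}{3}\int_\Omega \abs{\nabla u}^2,
\]
and letting $t \downarrow r_{\vec b - \vec c}\!\left(\frac{\lambda}{3C_n}\right)$ yields the claim with $\gamma = \frac{3}{4\lambda}\,r_{\vec b - \vec c}\!\left(\frac{\lambda}{3C_n}\right)^2$.

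\emph{Main obstacle.} The only genuinely delicate point is Step~1: justifying that $u^2$ is an admissible test function in the distributional inequality $d \ge \dv\vec b$. This needs the integrability bookkeeping ($d u^2 \in L^1$ since $d \in L^{n/2}$ and $u^2 \in L^{\frac{n}{n-2}}$; and $\vec b\cdot\nabla u\, u \in L^1$ by H\"older with exponents $n,\,2,\,\frac{2n}{n-2}$) together with the approximation $\phi_k = v_k^2$ described above. Once Step~1 is in place, the remainder is the routine splitting estimate already employed for Lemma~\ref{Pointwise}.
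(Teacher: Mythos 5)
Your proposal is correct and follows essentially the same route as the paper's proof of Lemma~\ref{Gamma}: use $d \ge \dv\vec b$ to reduce the first-order term to $(\vec c - \vec b)\cdot\nabla u\, u$, split $\abs{\vec b - \vec c}$ via \eqref{eq:ft}, bound by Sobolev and Cauchy with the same $\delta = \frac{\lambda}{3t}$, and let $t \downarrow r_{\vec b-\vec c}(\frac{\lambda}{3C_n})$. The only addition is that you spell out the density argument justifying the test function $u^2$ in Step~1, which the paper leaves implicit; this is a harmless elaboration, not a deviation.
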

\begin{proof}
We follow the proof of \cite[Theorem 3.2]{StampacchiaDirichlet}.
First, note that the hypothesis $d\ge\dv\vec b$ shows that
\[
\int_{\Omega}du^2+\vec b\cdot\nabla(u^2)\ge 0.
\]
Therefore,
\[
\alpha(u,u)=\int_{\Omega}\mathbf{A}\nabla u \cdot \nabla u+\vec b \cdot\nabla u\, u+\vec c\cdot\nabla u\, u+du^2\ge\int_{\Omega}\mathbf{A}\nabla u \cdot \nabla u+(\vec c-\vec b)\cdot \nabla u\, u.
\]
Splitting $\vec b-\vec c$ as in Lemma~\ref{Pointwise} and using the Cauchy inequality, we get
\begin{align*}
\int_{\Omega}\mathbf{A}\nabla u\cdot\nabla u+(\vec c-\vec b)\cdot\nabla u\, u
&\ge\lambda\int_{\Omega}\abs{\nabla u}^2-\int_{\Omega}\abs{\vec b-\vec c}_{(t)}\, \abs{\nabla u\, u}-\int_{\Omega} \abs{\vec b-\vec c}^{(t)}\, \abs{\nabla u\, u}\\
&\ge\lambda \norm{\nabla u}_2^2-t\, \norm{\nabla u}_2\,\norm{u}_2-C_n\, \norm{\,\abs{\vec b-\vec c}^{(t)}}_n \,\norm{\nabla u}_2^2\\
&\ge\left(\lambda-C_nR_{\vec c- \vec b}(t)- t\delta\right) \norm{\nabla u}_2^2-\frac{t}{4\delta} \norm{u}_2^2.
\end{align*}
By choosing $t>0$ such that $R_{\vec b-\vec c}(t)<\frac{\lambda}{3 C_n}$ and $\delta=\frac{\lambda}{3t}$, we then get 
\[
\int_{\Omega}\mathbf{A}\nabla u \cdot\nabla u+(\vec b+\vec c) \cdot \nabla u\, u+du^2 \ge
\frac{\lambda}{3}\int_{\Omega}\abs{\nabla u}^2-\frac{3 t^2}{4\lambda}\int_{\Omega}\abs{u}^2.
\]
Therefore, the inequality holds with $\gamma=3 t^2/4\lambda$ for any $t>0$ with $R_{\vec b-\vec c}(t)<\frac{\lambda}{3C_n}$.
Letting $t\to r_{\vec b-\vec c} \left(\frac{\lambda}{3C_n}\right)$ completes the proof.
\end{proof}

We now proceed to construct $W_0^{1,2}(\Omega)$ solutions.

\begin{lemma}\label{W_0^{1,2}Solvability}
Let $\Omega\subset \mathbb R^n$ be a domain with $\abs{\Omega}<+\infty$.
Let $\mathbf{A}$ be bounded and satisfy the uniform ellipticity condition \eqref{ellipticity} and $\vec b$, $\vec c\in L^n(\Omega)$, $d\in L^{n/2}(\Omega)$, and $d\ge\dv\vec b$.
Then, for every $F\in W^{-1,2}(\Omega)$, there exists a unique $u\in W_0^{1,2}(\Omega)$, such that 
\[
-\dv (\mathbf{A}\nabla u+\vec bu)+\vec c\cdot\nabla u+du=F \quad\text{in}\quad\Omega.
\]
\end{lemma}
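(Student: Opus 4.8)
The plan is to write $L$ as a compact perturbation of a coercive operator, invoke the Fredholm alternative to reduce existence (for every $F$) to uniqueness, and then derive uniqueness from the maximum principle proved earlier.

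First I would work with the bilinear form
\[
\alpha(u,v)=\int_\Omega\mathbf A\nabla u\cdot\nabla v+\vec b\cdot\nabla v\,u+\vec c\cdot\nabla u\,v+duv,\qquad u,v\in W_0^{1,2}(\Omega).
\]
One checks that $\alpha$ is bounded on $W_0^{1,2}(\Omega)\times W_0^{1,2}(\Omega)$: the principal term is controlled by $\norm{\mathbf A}_\infty$, and the lower-order terms by H\"older's inequality together with the Sobolev embedding $W_0^{1,2}(\Omega)\hookrightarrow L^{2n/(n-2)}(\Omega)$ --- e.g. $\abs{\int_\Omega\vec b\cdot\nabla v\,u}\le C_n\norm{\vec b}_n\norm{\nabla v}_2\norm{\nabla u}_2$ and $\abs{\int_\Omega duv}\le C_n^2\norm{d}_{n/2}\norm{\nabla u}_2\norm{\nabla v}_2$. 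Here $\norm{\nabla\,\cdot\,}_2$ is an equivalent norm on $W_0^{1,2}(\Omega)$ since $\abs{\Omega}<+\infty$ (Poincar\'e: $\norm{u}_2\le\abs{\Omega}^{1/n}\norm{u}_{2n/(n-2)}\le C_n\abs{\Omega}^{1/n}\norm{\nabla u}_2$).

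Next, set $\gamma=\frac{3}{4\lambda}r_{\vec b-\vec c}(\frac{\lambda}{3C_n})^2$ as in Lemma~\ref{Gamma} and consider $\alpha_\gamma(u,v):=\alpha(u,v)+\gamma\int_\Omega uv$. Since $\alpha(u,u)=\int_\Omega\mathbf A\nabla u\cdot\nabla u+(\vec b+\vec c)\cdot\nabla u\,u+du^2$, Lemma~\ref{Gamma} gives $\alpha_\gamma(u,u)\ge\frac\lambda3\norm{\nabla u}_2^2$, so $\alpha_\gamma$ is bounded and coercive on $W_0^{1,2}(\Omega)$; by the Lax--Milgram theorem there is a bounded linear operator $T_\gamma\colon W^{-1,2}(\Omega)\to W_0^{1,2}(\Omega)$ with $\alpha_\gamma(T_\gamma G,v)=\ip{G,v}$ for all $v\in W_0^{1,2}(\Omega)$. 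Now $u$ solves $Lu=F$ iff $\alpha(u,v)=\ip{F,v}$ for all $v$, iff $\alpha_\gamma(u,v)=\ip{F+\gamma u,v}$ for all $v$, which is equivalent to $u=T_\gamma F+\gamma\,T_\gamma(\iota u)$, where $\iota\colon W_0^{1,2}(\Omega)\hookrightarrow L^2(\Omega)\hookrightarrow W^{-1,2}(\Omega)$ is compact because $\abs{\Omega}<+\infty$ (Rellich--Kondrachov). Hence $K:=\gamma\,T_\gamma\circ\iota$ is compact on $W_0^{1,2}(\Omega)$, and $Lu=F$ is equivalent to $(I-K)u=T_\gamma F$. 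By the Fredholm alternative, $I-K$ is surjective iff it is injective, so it remains only to show that $Lu=0$ with $u\in W_0^{1,2}(\Omega)$ forces $u=0$.

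For uniqueness, observe that if $Lu=0$ then, by linearity, both $u$ and $-u$ are $W^{1,2}(\Omega)$ subsolutions of $L(\pm u)=0$ with zero Dirichlet data. Applying Proposition~\ref{BoundForCriticalSubsolutions} (valid for $\abs{\Omega}<+\infty$ by Remark~\ref{rmk1344sat}) with $\vec f=0$ and $g=0$ gives $\sup_\Omega u\le0$ and $\sup_\Omega(-u)\le0$, whence $u\equiv0$, completing the proof. The boundedness estimates for $\alpha$ and the compactness of $\iota$ are routine; the step that genuinely uses the structural hypothesis $d\ge\dv\vec b$ is the uniqueness argument through the maximum principle --- \emph{this} is the essential point, since without such a sign condition the homogeneous Dirichlet problem need not be uniquely solvable.
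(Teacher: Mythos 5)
Your proof is correct and follows essentially the same route as the paper: coercivity of $\alpha_\gamma$ from Lemma~\ref{Gamma}, compactness of $W_0^{1,2}(\Omega)\hookrightarrow L^2(\Omega)$ (since $\abs{\Omega}<+\infty$), the Fredholm alternative, and uniqueness via Proposition~\ref{BoundForCriticalSubsolutions} together with Remark~\ref{rmk1344sat}. The paper merely cites these ingredients without spelling out the Lax--Milgram/compact-perturbation reduction that you write out.
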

\begin{proof}
Since we are assuming that $\abs{\Omega}<+\infty$, \cite[Lemma 11.2]{Tartar} shows that the embedding $W_0^{1,2}(\Omega)\to L^2(\Omega)$ is compact.
Then, the proof follows from the Fredholm alternative, using Lemma~\ref{Gamma}, Proposition~\ref{BoundForCriticalSubsolutions}, and Remark~\ref{rmk1344sat}.
\end{proof}

From the previous lemma, the operator
\begin{equation}\label{eq:T}
T:W^{-1,2}(\Omega)\to W_0^{1,2}(\Omega),\quad TF=u,
\end{equation}
that sends $F\in W^{-1,2}(\Omega)$ to the solution $u\in W_0^{1,2}(\Omega)$ of
\[
-\dv (\mathbf{A}\nabla u+\vec bu)+\vec c \cdot \nabla u+du=F,
\]
is well defined and injective.
$T$ is also surjective, since, if $u\in W_0^{1,2}(\Omega)$, by setting
\[
\ip{F_u, v}:=\int_{\Omega}\mathbf{A}\nabla u \cdot\nabla v+\vec b\cdot\nabla v\, u+\vec c\cdot\nabla u\, v+duv
\]
for $v\in W_0^{1,2}(\Omega)$, then $F_u\in W^{-1,2}(\Omega)$, and $TF_u=u$.

\begin{lemma}\label{W_0^{1,2}SolvabilityAdjoint}
Assume the same hypotheses as in Lemma~\ref{W_0^{1,2}Solvability}.
Then, for every $F\in W^{-1,2}(\Omega)$, there exists a unique $u\in W_0^{1,2}(\Omega)$, such that
\[
-\dv(\mathbf{A}\tran\nabla u+\vec cu)+\vec b \cdot \nabla u+du=F \quad \text{in}\quad \Omega.
\] 
\end{lemma}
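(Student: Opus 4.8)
The plan is to follow the Fredholm-alternative scheme used for Lemma~\ref{W_0^{1,2}Solvability}, with one change: since the weak maximum principle of Proposition~\ref{BoundForCriticalSubsolutions} is \emph{not} available for $L\tran$ under the present hypotheses (it would require $d\ge\dv\vec c$), I would derive the injectivity of $L\tran$ from the surjectivity of $L$ by a duality argument rather than from a pointwise bound.

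First, I would record that the bilinear form of $L\tran$,
\[
\alpha\tran(u,v):=\int_{\Omega}\mathbf{A}\tran\nabla u\cdot\nabla v+\vec c\cdot\nabla v\,u+\vec b\cdot\nabla u\,v+duv ,
\]
satisfies $\alpha\tran(u,v)=\alpha(v,u)$ for all $u,v\in W_0^{1,2}(\Omega)$, where $\alpha$ is the form of $L$; in particular $\alpha\tran(u,u)=\alpha(u,u)$. Hence Lemma~\ref{Gamma} applies without change and yields, for the same $\gamma$,
\[
\alpha\tran(u,u)+\gamma\int_{\Omega}u^2\ \ge\ \frac{\lambda}{3}\int_{\Omega}\abs{\nabla u}^2 ,\qquad u\in W_0^{1,2}(\Omega),
\]
and since $\abs{\Omega}<+\infty$ the right-hand side controls $\norm{u}_{W_0^{1,2}(\Omega)}^2$ up to a constant (Sobolev and H\"older). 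By Lax--Milgram, $L\tran+\gamma\iota\colon W_0^{1,2}(\Omega)\to W^{-1,2}(\Omega)$ is an isomorphism, where $\iota$ denotes the inclusion $W_0^{1,2}(\Omega)\hookrightarrow W^{-1,2}(\Omega)$, which is compact because it factors through the compact embedding $W_0^{1,2}(\Omega)\hookrightarrow L^2(\Omega)$ furnished by \cite[Lemma 11.2]{Tartar}.

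Next I would write $L\tran=(L\tran+\gamma\iota)(I-K)$ with $K:=\gamma(L\tran+\gamma\iota)^{-1}\iota$ compact on $W_0^{1,2}(\Omega)$, so that $L\tran$ is bijective if and only if $I-K$ is, and, by the Fredholm alternative, this holds if and only if $\ker(I-K)=\ker L\tran=\set{0}$. To verify the last equality, suppose $L\tran w=0$ for some $w\in W_0^{1,2}(\Omega)$; then for every $u\in W_0^{1,2}(\Omega)$,
\[
0=\ip{L\tran w,u}=\alpha\tran(w,u)=\alpha(u,w)=\ip{Lu,w}.
\]
By Lemma~\ref{W_0^{1,2}Solvability}, $L$ maps $W_0^{1,2}(\Omega)$ onto $W^{-1,2}(\Omega)$, so $\ip{F,w}=0$ for all $F\in W^{-1,2}(\Omega)=(W_0^{1,2}(\Omega))'$, and therefore $w=0$. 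This gives both existence and uniqueness of the solution.

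The argument has essentially no hard step; the only thing to notice is that the G{\aa}rding inequality for $L\tran$ is literally the one for $L$ (so no new coercivity estimate is needed) and that injectivity of $L\tran$ should be read off from the already proved surjectivity of $L$ through the pairing $\ip{Lu,w}=\ip{L\tran w,u}$, since a direct maximum-principle argument is not available without $d\ge\dv\vec c$. Everything else is the same Fredholm machinery already invoked for Lemma~\ref{W_0^{1,2}Solvability}.
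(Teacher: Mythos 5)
Your proof is correct and uses essentially the same strategy as the paper: coercivity of $\alpha\tran+\gamma$ follows from $\alpha\tran(u,u)=\alpha(u,u)$ and Lemma~\ref{Gamma}, and injectivity of $L\tran$ is read off from the surjectivity of $L$ via the identity $\ip{L\tran w,u}=\alpha(u,w)=\ip{Lu,w}$ (the paper writes this as $0=\alpha\tran(u,TF)=\alpha(TF,u)=\ip{F,u}$). Your exposition is somewhat more explicit about the Fredholm decomposition $L\tran=(L\tran+\gamma\iota)(I-K)$, but the argument is the same.
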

\begin{proof}
Let $\alpha$ be the bilinear form that corresponds to $L$ as in the proof of Lemma~\ref{Gamma}, and $\alpha\tran$ be the bilinear form that corresponds to $L\tran$.
Then we have
\[
\alpha\tran(v,u)=\alpha(u,v)\quad \forall u, v \in W^{1,2}_0(\Omega).
\]
Suppose that $\alpha\tran(u,v)=0$ for some $u\in W_0^{1,2}(\Omega)$ and for all $v\in W_0^{1,2}(\Omega)$. Then, for all $F\in W^{-1,2}(\Omega)$,
\[
0=\alpha\tran(u,TF)=\alpha(TF,u)=\ip{F,u},
\]
which implies that $u=0$.
Therefore, we see that $u=0$ is the unique solution in $W^{1,2}_0(\Omega)$ for the equation $L\tran u=0$.
Then, the lemma follows from the Fredholm alternative.
\end{proof}

Let $S$ be the operator
\begin{equation}\label{eq:S}
S:W^{-1,2}(\Omega)\to W_0^{1,2}(\Omega),\quad SF=u,
\end{equation}
that sends $F\in W^{-1,2}(\Omega)$ to the solution $u\in W_0^{1,2}(\Omega)$ of
\[
-\dv(\mathbf{A}\tran\nabla u+\vec cu)+\vec b\cdot\nabla u+du=F.
\]
Then, an integration by parts argument shows that the $W^{-1,2}(\Omega)\to W_0^{1,2}(\Omega)$ adjoint of $T$ is equal to $S$. 
Moreover, considering the embedding $i:L^2(\Omega)\to W^{-1,2}(\Omega)$, we obtain that
\begin{equation}\label{eq:AdjointRelation}
\int_{\Omega}Tf_1\,f_2=\int_{\Omega}f_1\,Sf_2,\quad \forall f_1,f_2\in L^2(\Omega).
\end{equation}

Finally, we show the next lemma, which gives an a priori bound on the gradients of solutions to $Lu=f$.

\begin{lemma}\label{BoundOnU}
	Let $\mathbf{A}$ be bounded and satisfy the uniform ellipticity condition \eqref{ellipticity} and $\vec b$, $\vec c\in L^n(\Omega)$, $d\in L^{n/2}(\Omega)$, and  $d\ge\dv\vec b$.
	For $f\in L^2(\Omega)$, let $u\in W_0^{1,2}(\Omega)$ be a solution to the equation
	\[
	-\dv(\mathbf{A}\nabla u+\vec bu)+\vec c \cdot\nabla u+du=f\quad \text{in}\quad \Omega.
	\]
	Then,
	\[
	\norm{\nabla u}_{L^2(\Omega)}\le C\left(\norm{f}_{L^2(\Omega)}+\norm{u}_{L^2(\Omega)}\right),
	\]
	where $C$ depends only on $n$, $\lambda$, and $r_{\vec b-\vec c}\left(\frac{\lambda}{3 C_n}\right)$.
\end{lemma}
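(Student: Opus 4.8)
The plan is to run the standard energy estimate: test the equation against $u$ itself and then invoke the Gårding-type inequality of Lemma~\ref{Gamma}. Note that the claimed constant does not involve $\abs{\Omega}$, which is consistent since Lemma~\ref{Gamma} imposes no restriction on the size of $\Omega$, and neither does the present statement.

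First I would check that $\phi=u$ is an admissible test function. The weak formulation of $-\dv(\mathbf{A}\nabla u+\vec b u)+\vec c\cdot\nabla u+du=f$ extends from $C_c^\infty(\Omega)$ to all of $W_0^{1,2}(\Omega)$ (as recalled in Section~\ref{sec2}), and with $\phi=u\in W_0^{1,2}(\Omega)$ all terms are finite: $\int_\Omega\mathbf A\nabla u\cdot\nabla u$ and $\int_\Omega fu$ by boundedness of $\mathbf A$ and Cauchy--Schwarz, while $\int_\Omega(\vec b\cdot\nabla u)\,u$, $\int_\Omega(\vec c\cdot\nabla u)\,u$ and $\int_\Omega du^2$ are controlled by the generalized Hölder inequality together with the Sobolev embedding $W_0^{1,2}(\Omega)\hookrightarrow L^{\frac{2n}{n-2}}(\Omega)$, using $\vec b,\vec c\in L^n(\Omega)$ and $d\in L^{n/2}(\Omega)$ (the relevant exponents sum to $1$ in each case). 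Taking $\phi=u$ then yields
\[
\int_\Omega \mathbf A\nabla u\cdot\nabla u+(\vec b+\vec c)\cdot\nabla u\,u+du^2=\int_\Omega fu.
\]

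Next I would apply Lemma~\ref{Gamma} to the left-hand side. With $\gamma=\frac{3}{4\lambda}\,r_{\vec b-\vec c}\left(\frac{\lambda}{3C_n}\right)^2$ it gives
\[
\frac{\lambda}{3}\int_\Omega\abs{\nabla u}^2\le \int_\Omega \mathbf A\nabla u\cdot\nabla u+(\vec b+\vec c)\cdot\nabla u\,u+du^2+\gamma\int_\Omega u^2=\int_\Omega fu+\gamma\int_\Omega u^2.
\]
Bounding $\int_\Omega fu\le \frac12\norm{f}_2^2+\frac12\norm{u}_2^2$ by Young's inequality and rearranging produces $\norm{\nabla u}_2^2\le C\left(\norm{f}_2^2+\norm{u}_2^2\right)$ with $C$ depending only on $\lambda$ and $\gamma$, hence only on $n$, $\lambda$, and $r_{\vec b-\vec c}\left(\frac{\lambda}{3C_n}\right)$; taking square roots gives the stated inequality.

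I do not expect any genuine obstacle: this is the textbook energy estimate, and the only points demanding a little care are the integrability of the first- and zeroth-order cross terms (handled by Hölder plus the Sobolev embedding, exactly as above) and verifying that the final constant inherits precisely the claimed dependencies through the quantity $\gamma$ coming from Lemma~\ref{Gamma}.
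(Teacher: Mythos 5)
Your proposal is correct and matches the paper's proof: the paper likewise tests against $u$, invokes Lemma~\ref{Gamma} to get $\frac{\lambda}{3}\norm{\nabla u}_2^2\le\int_\Omega fu+\gamma\norm{u}_2^2$, and concludes. Your additional remarks on the admissibility of $\phi=u$ (integrability of the cross terms via Hölder and Sobolev) are a welcome elaboration but not a departure from the argument.
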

\begin{proof}
	Note that, for the $\gamma$ that appears in Lemma \ref{Gamma}, we have
	\[
	\frac{\lambda}{3}\int_{\Omega} \abs{\nabla u}^2\le\int_{\Omega}\left(\mathbf{A}\nabla u \cdot\nabla u+(\vec b+\vec c)\cdot \nabla u\, u+du^2\right)+\gamma\int_{\Omega}u^2.
	\]
	Hence, if $u$ solves the equation $Lu=f$, we obtain that
	\[
	\frac{\lambda}{3}\int_{\Omega} \abs{\nabla u}^2\le\int_{\Omega}fu+\gamma\int_{\Omega}u^2,
	\]
	and the claim follows.
\end{proof}

\subsection{Boundedness of solutions}
We now turn to boundedness of $T$, when we restrict its domain to $W^{-1,q'}(\Omega)$, where $q'$ is the conjugate exponent to $q$.
Note first that, from \cite[Theorem 3.9]{Adams}, every $F\in W^{-1,q'}$ can be written as $f+\dv \vec g$ with $f$, $\vec g \in L^{q'}$ and $\norm{F}_{W^{-1,q'}}$ is controlled by $\norm{f}_{q'} + \norm{\vec g}_{q'}$.
The following corollary follows from Proposition~\ref{BoundForCriticalSubsolutions} and Remark~\ref{rmk1344sat}.
\begin{corollary}	\label{BoundForCritical}
Let $\Omega\subset \mathbb R^n$ be a domain with $\abs{\Omega}<+\infty$.
Let $\mathbf{A}$ be bounded and satisfy the uniform ellipticity condition \eqref{ellipticity} and $\vec b$, $\vec c\in L^n(\Omega)$, $d\in L^{n/2}(\Omega)$, and $d\ge\dv\vec b$.
Then, for $1<q< \frac{n}{n-1}$, the operator $T$ defined in \eqref{eq:T} maps $W^{-1,q'}(\Omega)$ to $L^{\infty}(\Omega)$, with its operator norm bounded by a constant depending only on $n$, $q$, $\lambda$, $r_{\vec b-\vec c}\left(\frac{\lambda}{3 C_n}\right)$, and $\abs{\Omega}$.
\end{corollary}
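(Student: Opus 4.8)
The plan is to deduce this directly from Proposition~\ref{BoundForCriticalSubsolutions}, applied to the equation solved by $u:=TF$ and then to the one solved by $-u$. First I would fix $F\in W^{-1,q'}(\Omega)$ with $1<q<\frac{n}{n-1}$, so that $q'>n$ and in particular $q<2$; since $\abs{\Omega}<+\infty$, H\"older's inequality gives the continuous embeddings $W_0^{1,2}(\Omega)\hookrightarrow W_0^{1,q}(\Omega)$ and hence $W^{-1,q'}(\Omega)\hookrightarrow W^{-1,2}(\Omega)$, so $u=TF\in W_0^{1,2}(\Omega)$ is well defined by Lemma~\ref{W_0^{1,2}Solvability} and solves $Lu=F$ in $\Omega$. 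By \cite[Theorem 3.9]{Adams} I would fix a representation $F=f+\dv\vec g$ with $f,\vec g\in L^{q'}(\Omega)$ and $\norm{f}_{q'}+\norm{\vec g}_{q'}\le C\norm{F}_{W^{-1,q'}}$.

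Next I would verify the hypotheses of Proposition~\ref{BoundForCriticalSubsolutions} with its exponent ``$q$'' taken to be $q'$ (the remaining structural hypotheses on $\mathbf A,\vec b,\vec c,d$ are exactly those assumed here): its datum ``$\vec f$'' is our $\vec g\in L^{q'}(\Omega)$, and its datum ``$g$'' is our $f$, which lies in $L^{q'/2}(\Omega)$ since $\abs{\Omega}<+\infty$, with $\norm{f}_{q'/2}\le\abs{\Omega}^{1/q'}\norm{f}_{q'}$. Because $u\in W_0^{1,2}(\Omega)$ we have $\sup_{\partial\Omega}u^+=0$, so Proposition~\ref{BoundForCriticalSubsolutions} — valid for every domain with $\abs{\Omega}<+\infty$ by Remark~\ref{rmk1344sat}, at the cost of letting the constant depend on $\abs{\Omega}$ — gives
\[
\sup_{\Omega}u\le C\bigl(\norm{\vec g}_{q'}+\norm{f}_{q'/2}\bigr)\le C\norm{F}_{W^{-1,q'}}.
\]
Applying the same proposition to $-u=T(-F)$, which solves $L(-u)=-f+\dv(-\vec g)$, yields $\sup_{\Omega}(-u)\le C\norm{F}_{W^{-1,q'}}$, and combining the two estimates gives $\norm{TF}_{L^\infty(\Omega)}\le C\norm{F}_{W^{-1,q'}}$ with $C$ depending only on $n$, $q$, $\lambda$, $r_{\vec b-\vec c}\left(\frac{\lambda}{3C_n}\right)$, and $\abs{\Omega}$, as claimed.

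There is no serious obstacle; the only points requiring care are the exponent bookkeeping — that $q<\frac{n}{n-1}$ forces $q'>n$ so Proposition~\ref{BoundForCriticalSubsolutions} genuinely applies, and that the finiteness of $\abs{\Omega}$ absorbs the gap between $L^{q'}$ and $L^{q'/2}$ for the scalar datum $f$ — together with invoking the decomposition of \cite[Theorem 3.9]{Adams} with norm control so that the bound is expressed in terms of $\norm{F}_{W^{-1,q'}}$ rather than of the chosen decomposition.
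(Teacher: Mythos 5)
Your proof is correct and is essentially the paper's own argument, just spelled out: the paper disposes of this corollary by citing Proposition~\ref{BoundForCriticalSubsolutions} together with Remark~\ref{rmk1344sat} and the observation (stated right before the corollary) that elements of $W^{-1,q'}(\Omega)$ decompose as $f+\dv\vec g$ with $f,\vec g\in L^{q'}$. Your exponent bookkeeping ($q<\frac{n}{n-1}\Rightarrow q'>n$, and $\norm{f}_{q'/2}\le\abs{\Omega}^{1/q'}\norm{f}_{q'}$), the application to both $u$ and $-u$, and the appeal to Remark~\ref{rmk1344sat} to remove the $\abs{\Omega}\le 1$ normalization are all exactly the intended ingredients.
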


By considering the dual operator to $T$, we obtain the next corollary.

\begin{corollary}\label{MuBound}
Assume the same hypotheses as in Corollary~\ref{BoundForCritical}.
Then, for $1<q< \frac{n}{n-1}$, the operator $T^*$ maps $L^1(\Omega)$ to $W_0^{1,q}(\Omega)$ with its operator norm bounded by a constant depending only on $n$, $q$, $\lambda$, and $r_{\vec b-\vec c}\left(\frac{\lambda}{3 C_n}\right)$, and $\abs{\Omega}$.

\end{corollary}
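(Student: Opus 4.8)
The plan is to obtain this corollary from Corollary~\ref{BoundForCritical} by pure duality, without doing any new PDE analysis. Since $1<q<\frac{n}{n-1}<2$, the conjugate exponent satisfies $q'>n$, the spaces $W_0^{1,q}(\Omega)$ and $W^{-1,q'}(\Omega)$ are reflexive, and under the natural pairing one has $\bigl(W_0^{1,q}(\Omega)\bigr)^{*}=W^{-1,q'}(\Omega)$ and hence also $\bigl(W^{-1,q'}(\Omega)\bigr)^{*}=W_0^{1,q}(\Omega)$; moreover $L^1(\Omega)$ embeds isometrically into $\bigl(L^\infty(\Omega)\bigr)^{*}$. Corollary~\ref{BoundForCritical} says precisely that $T\colon W^{-1,q'}(\Omega)\to L^\infty(\Omega)$ is bounded with norm controlled by $n$, $q$, $\lambda$, $r_{\vec b-\vec c}\!\left(\frac{\lambda}{3C_n}\right)$, and $\abs{\Omega}$. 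Taking the Banach-space adjoint of this map, it carries $\bigl(L^\infty(\Omega)\bigr)^{*}$ boundedly into $\bigl(W^{-1,q'}(\Omega)\bigr)^{*}=W_0^{1,q}(\Omega)$ with the same operator norm, and restricting it to $L^1(\Omega)$ yields the asserted operator $T^{*}\colon L^1(\Omega)\to W_0^{1,q}(\Omega)$; the remaining work is just to make the identifications explicit and to check that this restriction agrees with the operator $S$ built in Section~\ref{sec4}.

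To carry this out I would proceed as follows. First record the continuous dense embeddings $W_0^{1,2}(\Omega)\hookrightarrow W_0^{1,q}(\Omega)$ and, dually, $W^{-1,q'}(\Omega)\hookrightarrow W^{-1,2}(\Omega)$, which hold because $q<2$, $\abs{\Omega}<+\infty$ (so that H\"older's inequality applies), and $C_c^\infty(\Omega)$ is dense in each of the Sobolev spaces involved. In particular $TF\in W_0^{1,2}(\Omega)\subset W_0^{1,q}(\Omega)$ is well defined for every $F\in W^{-1,q'}(\Omega)$, and $\norm{TF}_{L^\infty(\Omega)}\le C\norm{F}_{W^{-1,q'}(\Omega)}$ by Corollary~\ref{BoundForCritical}. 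For $g\in L^1(\Omega)$, the adjoint element $T^{*}g\in W_0^{1,q}(\Omega)$ is then determined by
\[
\ip{T^{*}g,\,F}=\int_\Omega (TF)\,g\qquad\text{for all }\;F\in W^{-1,q'}(\Omega),
\]
so that $\bigabs{\ip{T^{*}g,F}}\le\norm{TF}_{L^\infty(\Omega)}\,\norm{g}_{L^1(\Omega)}\le C\norm{F}_{W^{-1,q'}(\Omega)}\,\norm{g}_{L^1(\Omega)}$, and taking the supremum over the unit ball of $W^{-1,q'}(\Omega)$ gives $\norm{T^{*}g}_{W_0^{1,q}(\Omega)}\le C\norm{g}_{L^1(\Omega)}$ with $C$ depending only on $n$, $q$, $\lambda$, $r_{\vec b-\vec c}\!\left(\frac{\lambda}{3C_n}\right)$, and $\abs{\Omega}$. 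Finally, when $g\in L^1(\Omega)\cap L^2(\Omega)$ one checks, using the $W^{-1,2}(\Omega)$ adjoint relation between $T$ and $S$ (the extension of \eqref{eq:AdjointRelation}) together with the compatibility of the $W^{-1,2}$ and $W^{-1,q'}$ pairings under the above inclusions, that $T^{*}g=Sg$; hence $T^{*}$ really is the promised extension of $S$ to $L^1(\Omega)$.

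The estimate itself is immediate from Corollary~\ref{BoundForCritical}, so the only points that require genuine care are functional-analytic: the reflexivity and duality identifications for the negative-order Sobolev spaces, and the fact that the various dual pairings are consistent under $W^{-1,q'}(\Omega)\hookrightarrow W^{-1,2}(\Omega)$ and $W_0^{1,2}(\Omega)\hookrightarrow W_0^{1,q}(\Omega)$. Once these are spelled out there is no further analysis to perform, and in particular no density argument is needed to get the norm bound — it comes straight from the equality of the norm of an operator and the norm of its adjoint.
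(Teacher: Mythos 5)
Your proposal is correct and takes essentially the same route as the paper: the paper's proof is the same two-line duality argument (reflexivity of $W_0^{1,q}(\Omega)$, hence $\bigl(W^{-1,q'}(\Omega)\bigr)^{*}=W_0^{1,q}(\Omega)$, take the Banach-space adjoint of the bounded map $T\colon W^{-1,q'}(\Omega)\to L^{\infty}(\Omega)$ from Corollary~\ref{BoundForCritical}, and restrict to $L^1(\Omega)\subset\bigl(L^\infty(\Omega)\bigr)^{*}$, with $\norm{T^*}=\norm{T}$). You merely spell out the consistency of pairings under the inclusions $W_0^{1,2}\hookrightarrow W_0^{1,q}$ and $W^{-1,q'}\hookrightarrow W^{-1,2}$ and the identification of $T^*$ with $S$ on $L^1\cap L^2$, which the paper leaves implicit.
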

\begin{proof}
Note first that $W_0^{1,q}(\Omega)$ is reflexive; see e.g. \cite[Theorem 3.6]{Adams}.
By Corollary~\ref{BoundForCritical}, we then see that $T^*$ maps $L^1(\Omega)\subsetneq \left(L^{\infty}(\Omega)\right)^*$ to $W_0^{1,q}(\Omega)$ with $\norm{T^*}=\norm{T}$.
\end{proof}

Similar corresponding results hold for the adjoint equation if we assume that $\vec c, d$ are regular enough.

We remark that Lemma~\ref{BeforeMuBoundAdjoint} below will only be used qualitatively.
For this reason, we do not indicate in the proof how the operator norm $\norm{S}_{W^{-1,q'}(\Omega)\to L^{\infty}(\Omega)}$ depends on the coefficients.
Also, we have not attempted to find the optimal conditions for the coefficients $\vec c$ and $d$ for the same reason; they can be relaxed but it is not important for our purposes.

\begin{lemma}\label{BeforeMuBoundAdjoint}
Let $\Omega\subset\mathbb R^n$ be a domain with $\abs{\Omega}<+\infty$.
Let $\mathbf{A}$ be bounded and satisfy the uniform ellipticity condition \eqref{ellipticity} and $\vec b\in L^n(\Omega)$, $\vec c ,d\in\Lip(\Omega)\cap L^{\infty}(\Omega)$, and $d\geq\dv \vec b$.
Then the operator $S$, defined in \eqref{eq:S}, is bounded from $W^{-1,q'}(\Omega)$ to $L^{\infty}(\Omega)$ for $1<q< \frac{n}{n-1}$.
\end{lemma}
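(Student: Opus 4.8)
The plan is to exploit the Lipschitz regularity of $\vec c$ in order to replace $L\tran$ by an operator already covered by the results of Section~\ref{sec4}. Since $\vec c\in\Lip(\Omega)$, one has $\dv(\vec c\,u)=(\dv\vec c)\,u+\vec c\cdot\nabla u$ weakly for every $u\in W^{1,2}(\Omega)$, so $L\tran u=F$ is equivalent to
\[
-\dv(\mathbf A\tran\nabla u)+(\vec b-\vec c)\cdot\nabla u+(d-\dv\vec c)\,u=F\quad\text{in }\Omega.
\]
Here the drift $\vec b-\vec c$ belongs to $L^n(\Omega)$ (since $\vec b\in L^n(\Omega)$ and $\vec c\in L^{\infty}(\Omega)\subset L^n(\Omega)$, using $\abs{\Omega}<+\infty$), while the zeroth order coefficient $d-\dv\vec c$ belongs to $L^{\infty}(\Omega)$, because $d\in L^{\infty}(\Omega)$ and $\dv\vec c\in L^{\infty}(\Omega)$. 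As this coefficient need not be nonnegative, I would shift it: put $\nu=\bignorm{(d-\dv\vec c)^{-}}_{L^{\infty}(\Omega)}<+\infty$ and $\hat e=d-\dv\vec c+\nu\ge 0$, and consider
\[
\widehat L\,w:=-\dv(\mathbf A\tran\nabla w)+(\vec b-\vec c)\cdot\nabla w+\hat e\,w.
\]
This is of the form $-\dv(\mathbf A'\nabla w+\vec b'\,w)+\vec c'\cdot\nabla w+d'\,w$ with $\mathbf A'=\mathbf A\tran$ bounded and uniformly elliptic, $\vec b'=\vec 0\in L^n(\Omega)$, $\vec c'=\vec b-\vec c\in L^n(\Omega)$, $d'=\hat e\in L^{\infty}(\Omega)\subset L^{n/2}(\Omega)$, and $d'\ge 0=\dv\vec b'$. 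Hence Lemma~\ref{W_0^{1,2}Solvability} and Corollary~\ref{BoundForCritical}, applied with $\widehat L$ in place of $L$, show that $\widehat L$ is invertible on $W^{-1,2}(\Omega)$ and that $\widehat L^{-1}$ maps $W^{-1,q'}(\Omega)$ boundedly into $L^{\infty}(\Omega)$ for $1<q<\tfrac{n}{n-1}$.

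Next I would apply this to $u=SF$, which exists in $W_0^{1,2}(\Omega)$ by Lemma~\ref{W_0^{1,2}SolvabilityAdjoint} and satisfies $\widehat L\,u=F+\nu u$. Because $S$ is the inverse of the bounded linear bijection $L\tran\colon W_0^{1,2}(\Omega)\to W^{-1,2}(\Omega)$, it is bounded, and together with the embedding $W^{-1,q'}(\Omega)\hookrightarrow W^{-1,2}(\Omega)$ (valid as $q<2$ and $\abs{\Omega}<+\infty$) this gives $\norm{u}_{L^2(\Omega)}\le C\,\norm{F}_{W^{-1,q'}(\Omega)}$. Writing $F=f+\dv\vec g$ with $f,\vec g\in L^{q'}(\Omega)$ and $\norm f_{q'}+\norm{\vec g}_{q'}\le C\,\norm F_{W^{-1,q'}(\Omega)}$ (see \cite[Theorem~3.9]{Adams}; note $q'>n$), the function $u$ solves $\widehat L\,u=(f+\nu u)+\dv\vec g$, so a finite, quantitative bootstrap using standard $L^p$ estimates for divergence-form equations with $L^n$ drift and bounded potential (see, e.g., \cite{Gilbarg}, \cite{StampacchiaDirichlet}) raises $u$ from $L^{\frac{2n}{n-2}}(\Omega)$ to a Lebesgue space that embeds into $W^{-1,q'}(\Omega)$, with the corresponding norm still bounded by $C\,\norm F_{W^{-1,q'}(\Omega)}$. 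Then $u=\widehat L^{-1}(F+\nu u)$ and the bound on $\widehat L^{-1}$ give
\[
\norm{SF}_{L^{\infty}(\Omega)}=\norm{u}_{L^{\infty}(\Omega)}\le C\,\norm{F+\nu u}_{W^{-1,q'}(\Omega)}\le C\,\norm{F}_{W^{-1,q'}(\Omega)},
\]
which is the asserted boundedness of $S\colon W^{-1,q'}(\Omega)\to L^{\infty}(\Omega)$.

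The heart of the matter is that the adjoint equation does not come with the structural sign condition $d\ge\dv\vec c$ that makes the maximum principle of Proposition~\ref{BoundForCriticalSubsolutions} (hence Corollary~\ref{BoundForCritical}) available, in contrast with $d\ge\dv\vec b$ for $T$. The Lipschitz hypothesis on $\vec c$ is exactly what lets one convert the divergence term $\dv(\vec c\,u)$ into a bounded potential; rendering it nonnegative costs the extra term $\nu u$, which is the only point requiring care, and it is harmless because $u\in W_0^{1,2}(\Omega)$ is already in $L^{\frac{2n}{n-2}}(\Omega)$ and its integrability can be raised by the iteration above (in low dimensions this step is unnecessary, as $L^{\frac{2n}{n-2}}(\Omega)\hookrightarrow W^{-1,q'}(\Omega)$ already). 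In accordance with the remark preceding the statement, I would not keep track of how the constant depends on $\mathbf A$, $\vec b$, $\vec c$, $d$, and $\abs{\Omega}$.
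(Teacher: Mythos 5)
Your overall strategy is sound and genuinely different from the paper's: you convert the divergence term $\dv(\vec c u)$ into drift and potential terms using the Lipschitz regularity of $\vec c$, shift the zeroth-order coefficient by $\nu = \|(d-\dv\vec c)^-\|_{\infty}$ to restore nonnegativity, and then invoke Lemma~\ref{W_0^{1,2}Solvability} and Corollary~\ref{BoundForCritical} for the modified operator $\widehat L$. The reduction $L\tran u = \widehat L u - \nu u$ is correct, $\widehat L$ does satisfy the hypotheses of those results (with $\vec b'=\vec 0$, $\vec c'=\vec b - \vec c \in L^n$, $d'=\hat e\ge 0=\dv\vec b'$), and the a priori $L^2$ bound on $u=SF$ via the bounded inverse of $L\tran$ and the embedding $W^{-1,q'}\hookrightarrow W^{-1,2}$ is fine.

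The gap is the bootstrap that promotes $u$ from $L^{\frac{2n}{n-2}}(\Omega)$ to a space embedding into $W^{-1,q'}(\Omega)$. For $q$ near $n/(n-1)$ one needs roughly $u\in L^s$ with $s>n/2$, and since $\frac{2n}{n-2}\le\frac n2$ for $n\ge 6$, this step is not vacuous. You defer it to ``standard $L^p$ estimates for divergence-form equations with $L^n$ drift and bounded potential'' in \cite{Gilbarg} and \cite{StampacchiaDirichlet}, but neither reference gives this directly in your setting: the intermediate integrability estimates in \cite[\S 8]{Gilbarg} require the drift in $L^q$ with $q>n$, and the corresponding $L^p$-regularity theorems of Stampacchia are proved under a coercivity hypothesis that $\widehat L$ need not satisfy after merely shifting $\hat e$ to be nonnegative. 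Moreover, the two mapping properties of $\widehat L^{-1}$ available in the paper, namely $W^{-1,2}\to W_0^{1,2}$ and $W^{-1,q'}\to L^\infty$, are precisely the two endpoints of the range you need and give no intermediate improvement without interpolation arguments you would still have to justify, so the fixed-point identity $u=\widehat L^{-1}(F+\nu u)$ alone does not obviously break the circularity. An iteration \`a la Moser with the $L^n$-splitting trick from Lemma~\ref{Pointwise} is plausible, but it is not ``standard'' in the cited sources and would need to be carried out.

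The paper sidesteps exactly this difficulty. It keeps $\vec c$ in divergence form, chooses one $\gamma$ large enough that both $\alpha_\gamma\tran$ is coercive and $d+\gamma\ge\dv\vec c$, so Proposition~\ref{BoundForCriticalSubsolutions} applies to $L\tran+\gamma I$ directly; the shift is then removed via a Fredholm alternative argument (compactness of $\tilde S_\gamma=i\circ S_\gamma$ on $W^{-1,q'}(\Omega)$, injectivity of $I-\gamma\tilde S_\gamma$ from uniqueness of $L\tran$-solutions, open mapping theorem). That argument works uniformly in $n$ and produces the bound on $S$ without any iteration. You could close your gap the same way: after establishing $\widehat L^{-1}:W^{-1,q'}(\Omega)\to L^\infty(\Omega)$, set $K=\nu\,i\circ\widehat L^{-1}:W^{-1,q'}(\Omega)\to W^{-1,q'}(\Omega)$, which is compact because $L^{q'}(\Omega)\hookrightarrow W^{-1,q'}(\Omega)$ is compact, show $\ker(I-K)=\{0\}$ from the uniqueness in Lemma~\ref{W_0^{1,2}SolvabilityAdjoint}, and conclude by the open mapping theorem. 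This replaces the unproven bootstrap with soft functional analysis and is essentially the paper's proof transported into your rewritten form of $L\tran$.
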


\begin{proof}
Using a procedure analogous to the proof of Lemma \ref{Gamma}, we can find $\gamma>0$ such that the bilinear form
\[
\alpha_{\gamma}\tran(u,v)=\int_{\Omega}\mathbf{A}\tran\nabla u \cdot \nabla v+\vec c\cdot\nabla v\, u+\vec b \cdot\nabla u\, v+(d+\gamma)uv
\]
is coercive in $W_0^{1,2}(\Omega)\times W_0^{1,2}(\Omega)$.
Also, since $\vec c,d\in\Lip(\Omega)\cap L^\infty(\Omega)$, we can find $\gamma$ such that
\[
d+\gamma\ge\dv\vec c.
\]
Then, Proposition~\ref{BoundForCriticalSubsolutions} shows that, for any $q\in (1,\frac{n}{n-1})$, there exists a constant $C$ such that, if $F\in W^{-1,q'}(\Omega)$ and $u\in W_0^{1,2}(\Omega)$ is a solution to the equation
\[
-\dv(\mathbf{A}\tran\nabla u+\vec cu)+\vec b\cdot\nabla u+(d+\gamma)u=F,
\]
then
\[
\sup_{\Omega}\, \abs{u}\le C \norm{F}_{W^{-1,q'}(\Omega)}.
\]
Set $S_{\gamma}:W^{-1,2}(\Omega)\to W_0^{1,2}(\Omega)$ to be the operator that sends $F$ to $u$ above.
Then $S_\gamma$ is bounded from $W^{-1,q'}(\Omega)$ to $L^{\infty}(\Omega)$. 
Consider also the inclusion operator
\[
i:L^{\infty}(\Omega)\to W^{-1,q'}(\Omega),\quad \ip{i(u),v}=\int_{\Omega}uv.
\]
Since the embedding $W_0^{1,q}(\Omega)\hookrightarrow L^q(\Omega)$ is compact (see e.g., \cite[Lemma 11.2]{Tartar}), the embedding $L^{q'}(\Omega)\hookrightarrow W^{-1,q'}(\Omega)$ is also compact.
Therefore the operator $i$ is compact and thus, if we set
\[
\tilde{S}_{\gamma}=i\circ S_{\gamma}:W^{-1,q'}(\Omega)\to W^{-1,q'}(\Omega),
\]
then $\tilde{S}_{\gamma}$ is compact.
	
Now, we show that $\ker\left(I-\gamma\tilde{S}_{\gamma}\right)=\set{0}$.
Suppose $F\in W^{-1,q'}(\Omega)$ satisfies $F=\gamma\tilde{S}_{\gamma}F$.
Then, $i(\gamma S_{\gamma}F)=F$, and thus, we have
\[
\gamma\int_{\Omega}S_{\gamma}F\, v=\ip{F,v},\quad \forall v\in W_0^{1,q}(\Omega).
\]
Since  $W_0^{1,2}(\Omega)\subset W_0^{1,q}(\Omega)$ and $W^{-1,q'}(\Omega)\subset W^{-1,2}(\Omega)$, we find that $F\in W^{-1,2}(\Omega)$ and 
\[
\gamma\int_{\Omega}S_{\gamma}F\, v=\ip{F,v},\quad \forall v\in W_0^{1,2}(\Omega).
\]
Therefore, for any $\phi\in C_c^{\infty}(\Omega)$, we have
\[
\alpha\tran(S_{\gamma}F,\phi)=\alpha_{\gamma}\tran (S_{\gamma}F,\phi)-\gamma\int_{\Omega}S_{\gamma}F\,\phi=\ip{F,\phi}-\gamma\int_{\Omega}S_{\gamma}F\, \phi=0.
\]
Note that our hypotheses imply that Lemma \ref{W_0^{1,2}SolvabilityAdjoint} is applicable, therefore we obtain that $S_\gamma F=0$ and thus $F=0$, which shows that $I-\gamma\tilde{S}_{\gamma}$ is injective as claimed.
Then, the Fredholm alternative shows that the operator
\[
I-\gamma\tilde{S}_{\gamma}:W^{-1,q'}(\Omega)\to W^{-1,q'}(\Omega)
\]
is surjective.
Hence, the open mapping theorem implies that there exists $C>0$ such that, for every $F\in W^{-1,q'}(\Omega)$, there exists $G\in W^{-1,q'}(\Omega)$, with
\[
G-\gamma\tilde{S}_{\gamma}G=F,\quad \norm{G}_{W^{-1,q}(\Omega)}\le C\norm{F}_{W^{-1,q}(\Omega)}.
\]
If we set $u=S_{\gamma}G$, then since $S_{\gamma}:W^{-1,q'}(\Omega)\to L^{\infty}(\Omega)$ is bounded, we have
\[
\norm{u}_{L^{\infty}(\Omega)}\le C \norm{G}_{W^{-1,q'}(\Omega)}\le C \norm{F}_{W^{-1,q'}(\Omega)}.
\]
Moreover, $u\in W_0^{1,2}(\Omega)$ and for every $\phi\in C_c^{\infty}(\Omega)$, we have
\[
\alpha\tran(u,\phi)=\alpha_{\gamma}\tran(S_{\gamma}G,\phi)-\int_{\Omega}\gamma S_{\gamma}G\,\phi= \ip{G, \phi}-\gamma \ip{i\circ S_{\gamma}(G), \phi}= \ip{G-\gamma\tilde{S}_{\gamma}G, \phi}=\ip{F,\phi}.
\]
Hence, $u$ solves the equation $L\tran u=F$.
Since from Lemma~\ref{W_0^{1,2}SolvabilityAdjoint} solutions for $F\in W^{-1,2}(\Omega)$ are unique, we obtain that $u=SF$.
Therefore, $S:W^{-1,q'}(\Omega)\to L^{\infty}(\Omega)$ is bounded.
\end{proof}

By considering the dual operator to $S$, we obtain the next corollary.

\begin{corollary}\label{MuBoundAdjoint}
Let $\Omega\subset\mathbb R^n$ be a domain with $\abs{\Omega} <+\infty$.
Let $\mathbf{A}$ be bounded and satisfy the uniform ellipticity condition \eqref{ellipticity} and $\vec b\in L^n(\Omega)$, $\vec c,d\in\Lip(\Omega)\cap L^{\infty}(\Omega)$, and $d\ge\dv\vec b$.
Then, the operator $S^*$ maps $L^1(\Omega)$ to $W_0^{1,q}(\Omega)$ for  $1<q<\frac{n}{n-1}$.
\end{corollary}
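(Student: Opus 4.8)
The plan is to deduce Corollary~\ref{MuBoundAdjoint} by duality from Lemma~\ref{BeforeMuBoundAdjoint}, in exact parallel with the way Corollary~\ref{MuBound} is obtained from Corollary~\ref{BoundForCritical}. Fix $q\in(1,\frac{n}{n-1})$. Since $1<q<\infty$, the space $W_0^{1,q}(\Omega)$ is reflexive (see \cite[Theorem 3.6]{Adams}), so the canonical pairing identifies $(W^{-1,q'}(\Omega))^{*}$ with $(W_0^{1,q}(\Omega))^{**}=W_0^{1,q}(\Omega)$, recalling that $W^{-1,q'}(\Omega)=(W_0^{1,q}(\Omega))^{*}$.

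By Lemma~\ref{BeforeMuBoundAdjoint}, under the present hypotheses the operator $S\colon W^{-1,q'}(\Omega)\to L^{\infty}(\Omega)$ of \eqref{eq:S} is bounded. Its Banach-space adjoint is therefore a bounded operator $S^{*}\colon (L^{\infty}(\Omega))^{*}\to (W^{-1,q'}(\Omega))^{*}=W_0^{1,q}(\Omega)$. Composing with the isometric inclusion $L^{1}(\Omega)\hookrightarrow (L^{\infty}(\Omega))^{*}$ shows that $S^{*}$ restricts to a bounded map $L^{1}(\Omega)\to W_0^{1,q}(\Omega)$, with operator norm at most $\norm{S}_{W^{-1,q'}(\Omega)\to L^{\infty}(\Omega)}$. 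Since the statement only asks for a qualitative mapping property, this finishes the argument; as in the remark preceding Lemma~\ref{BeforeMuBoundAdjoint}, we do not track the quantitative dependence on the coefficients.

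There is no serious obstacle; the only point that needs a moment's attention is the identification $(W^{-1,q'}(\Omega))^{*}=W_0^{1,q}(\Omega)$, which hinges on reflexivity of $W_0^{1,q}(\Omega)$ for $1<q<\infty$, together with the fact that $L^{1}(\Omega)$ embeds into $(L^{\infty}(\Omega))^{*}$ so that restricting $S^{*}$ is meaningful. If one wanted a concrete description of $S^{*}f$ for $f\in L^{1}(\Omega)$, one could mimic the integration-by-parts computation leading to \eqref{eq:AdjointRelation} to see that $S^{*}f$ is the variational solution of the original (non-adjoint) equation $Lu=f$; but this is not needed for the corollary as stated.
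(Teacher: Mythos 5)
Your argument is correct and is precisely the paper's argument: the paper proves Corollary~\ref{MuBoundAdjoint} by invoking Lemma~\ref{BeforeMuBoundAdjoint} and repeating the duality reasoning used for Corollary~\ref{MuBound}, i.e., reflexivity of $W_0^{1,q}(\Omega)$, the identification $(W^{-1,q'}(\Omega))^*=W_0^{1,q}(\Omega)$, and the inclusion $L^1(\Omega)\hookrightarrow (L^\infty(\Omega))^*$. Your closing remark that $S^*f$ is the variational solution of $Lu=f$ is also consistent with the adjoint relation \eqref{eq:AdjointRelation}.
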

\begin{proof}
The proof follows from Lemma \ref{BeforeMuBoundAdjoint}, as in the proof of Corollary \ref{MuBound}.
\end{proof}

\section{The main constructions}										\label{sec5}
\subsection{Definition and construction of Green's function} We now turn to the definition of Green's function. 

\begin{definition}\label{GreenDefinition}
Let $\Omega\subset \mathbb R^n$ be a domain with $\abs{\Omega}<+\infty$.
Let $\mathbf{A}$ be bounded and satisfy the uniform ellipticity condition \eqref{ellipticity} and $\vec b$, $\vec c\in L^n(\Omega)$, $d\in L^{n/2}(\Omega)$, and $d\ge\dv\vec b$.
We call a function $G_y(x)=G(x,y)$ to be Green's function for the equation $Lu=0$ in $\Omega$, if $G_y\in L^1(\Omega)$ for almost every $y\in\Omega$, and also
\[
S\phi(y)=\int_{\Omega}G(x,y)\phi(x)\,dx,
\]
for every $\phi\in L^{\infty}(\Omega)$, where $S$ is defined in \eqref{eq:S}.
Similarly, we call a function $g_x(y)=g(y,x)$ to be Green's function for the equation $L{\tran}u=0$ in $\Omega$, if $g_x\in L^1(\Omega)$ for almost every $x\in\Omega$, and also
\[
T\phi(x)=\int_{\Omega}g(y,x)\phi(y)\,dy,
\]
for every $\phi\in L^{\infty}(\Omega)$, where $T$ is defined in \eqref{eq:T}.
\end{definition}

We have not yet established existence of Green's functions.
However, if $G$ and $\tilde G$ are Green's functions for $L$ in $\Omega$, then for almost every $y\in\Omega$ and every $\phi\in C_c^{\infty}(\Omega)$,
\[
\int_{\Omega}\left(G(x,y)-\tilde{G}(x,y)\right)\phi(x)\,dx=0\Rightarrow G_y=\tilde{G}_y.
\]
Therefore, Green's function, if it exists, is unique up to sets of measure zero.
The same holds for Green's function for $L{\tran}$.

We now apply Corollary~\ref{MuBound} to the function
\[
f_k=\abs{B_{1/k}(x)}^{-1}\chi_{B_{1/k}(x)}
\]
to obtain a solution $g_x^k=Sf_k\in W_0^{1,2}(\Omega)$ to the equation
\[
-\dv(\mathbf{A}\tran \nabla g_x^k+\vec cg_x^k)+\vec b\cdot\nabla g_x^k+dg_x^k=f_k\quad\text{in}\quad\Omega
\]
with
\begin{equation}\label{eq:g_x^kBound}
\norm{g_x^k}_{W_0^{1,q}(\Omega)}\le C \norm{f_k}_{L^1(\Omega)}\le C,
\end{equation}
where $C$ depends only on $n$, $q$, $\lambda$, $r_{\vec b-\vec c}\left(\frac{\lambda}{3C_n}\right)$, and $\abs{\Omega}$.

We assume that $k$ is large enough that $B_{1/k}(x)\subset \Omega$.
In addition, since $\set{g_x^k}$ is bounded in $W_0^{1,q}(\Omega)$, it has a subsequence $\set{g_x^{i_k}}$ that converges weakly in $W_0^{1,q}(\Omega)$, strongly in $L^q(\Omega)$, and almost everywhere in $\Omega$, to a function $g_x\in W_0^{1,q}(\Omega)$. Then, for every $\phi\in L^{\infty}(\Omega)$, we have
\begin{equation*}
\int_{\Omega}g_x\phi=\lim_{k\to\infty}\int_{\Omega}g_x^{i_k}\phi =\lim_{k\to\infty}\int_{\Omega}Sf_{i_k}\,\phi =\lim_{k\to\infty}\int_{\Omega}T\phi\, f_{i_k}=\lim_{k\to\infty}\fint_{B_{1/i_k}(x)}T\phi=T\phi(x),
\end{equation*}
for almost every $x\in\Omega$, where we used \eqref{eq:AdjointRelation} in the third equality, and Lebesgue's differentiation theorem.
Setting $g(y,x)=g_x(y)$, we are led to the following proposition.

\begin{proposition}\label{g_x}
Let $\Omega\subset \mathbb R^n$ be a domain with $\abs{\Omega} <+\infty$.
Let $\mathbf{A}$ be bounded and satisfy the uniform ellipticity condition \eqref{ellipticity} and $\vec b$, $\vec c\in L^n(\Omega)$, $d\in L^{n/2}(\Omega)$, and $d\ge \dv \vec b$.
Then there exists a function $g(\cdot,\cdot)$ on $\Omega\times \Omega$
such that the solution $u\in W_0^{1,2}(\Omega)$ of the equation $Lu=\phi$, for $\phi\in L^{\infty}(\Omega)$, is given by
\[
u(x)=\int_{\Omega}g(y,x)\phi(y)\,dy
\]
for almost every $x\in\Omega$. 
Moreover, for $1\le q <\frac{n}{n-1}$, there exists a constant $C_q$ depending only on $n$, $q$, $\lambda$, $r_{\vec b-\vec c}\left(\frac{\lambda}{3C_n}\right)$, and $\abs{\Omega}$ such that
\[
\norm{g(\cdot,x)}_{W_0^{1,q}(\Omega)}\le C_q
\]
uniformly in $x$.
\end{proposition}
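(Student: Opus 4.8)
The plan is to make precise the construction already sketched in the paragraphs preceding the statement. \emph{Step 1.} First I would fix $x\in\Omega$ and, for $k$ large enough that $B_{1/k}(x)\subset\Omega$, set $f_k=\abs{B_{1/k}(x)}^{-1}\chi_{B_{1/k}(x)}$, so that $f_k\in L^{\infty}(\Omega)\subset L^2(\Omega)$ and $\norm{f_k}_{L^1(\Omega)}=1$. Since $f_k\in L^2(\Omega)\subset W^{-1,2}(\Omega)$, the operator $S$ of \eqref{eq:S} (available by Lemma~\ref{W_0^{1,2}SolvabilityAdjoint}, which applies because $\vec b,\vec c\in L^n$, $d\in L^{n/2}$, and $d\ge\dv\vec b$) produces $g_x^k:=Sf_k\in W_0^{1,2}(\Omega)$; since $S$ restricted to $L^2(\Omega)$ coincides with the restriction of $T^*$, Corollary~\ref{MuBound} gives, for every $q\in(1,\tfrac{n}{n-1})$,
\[
\norm{g_x^k}_{W_0^{1,q}(\Omega)}\le C_q\norm{f_k}_{L^1(\Omega)}=C_q,\qquad C_q=C_q\bigl(n,q,\lambda,r_{\vec b-\vec c}(\tfrac{\lambda}{3C_n}),\abs{\Omega}\bigr).
\]
The crucial feature is that this bound is uniform in both $k$ and $x$.

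\emph{Step 2: passage to the limit and the norm bound.} I would fix $q_0\in(1,\tfrac{n}{n-1})$ depending only on $n$ (e.g. the midpoint of that interval). Then $\set{g_x^k}$ is bounded in the reflexive space $W_0^{1,q_0}(\Omega)$, and, using the compact embedding $W_0^{1,q_0}(\Omega)\hookrightarrow L^{q_0}(\Omega)$ (valid since $\abs{\Omega}<+\infty$, \cite[Lemma 11.2]{Tartar}), I would extract a subsequence $\set{g_x^{i_k}}$ converging weakly in $W_0^{1,q_0}(\Omega)$, strongly in $L^{q_0}(\Omega)$, and a.e. in $\Omega$, to a function $g_x$; then set $g(y,x):=g_x(y)$, which defines $g$ on $\Omega\times\Omega$. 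For each fixed $q\in(1,\tfrac{n}{n-1})$ the sequence $\set{g_x^{i_k}}$ is bounded in $W_0^{1,q}(\Omega)$ by $C_q$ and converges a.e. to $g_x$, so passing to a further weakly convergent subsequence, identifying the weak limit with $g_x$ by uniqueness of a.e. limits, and invoking weak lower semicontinuity of the norm yields $\norm{g_x}_{W_0^{1,q}(\Omega)}\le C_q$; the case $q=1$ then follows by H\"older's inequality and $\abs{\Omega}<+\infty$, with a constant depending only on $n,\lambda,r_{\vec b-\vec c}(\tfrac{\lambda}{3C_n}),\abs{\Omega}$ (through $q_0$). Since $\norm{f_k}_{L^1(\Omega)}=1$ independently of $x$, this bound is uniform in $x$, which is the second assertion.

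\emph{Step 3: the reproducing property.} For $\phi\in L^{\infty}(\Omega)$ we have $\phi\in L^{q_0'}(\Omega)$ because $\abs{\Omega}<+\infty$ and $q_0'>n$. Then
\[
\int_{\Omega}g_x\,\phi=\lim_{k\to\infty}\int_{\Omega}g_x^{i_k}\,\phi=\lim_{k\to\infty}\int_{\Omega}(Sf_{i_k})\,\phi=\lim_{k\to\infty}\int_{\Omega}f_{i_k}\,(T\phi)=\lim_{k\to\infty}\fint_{B_{1/i_k}(x)}T\phi,
\]
where the first equality uses the $L^{q_0}$ convergence tested against $\phi\in L^{q_0'}(\Omega)$, and the third is the adjoint relation \eqref{eq:AdjointRelation}, legitimate since $\phi,f_{i_k}\in L^2(\Omega)$. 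As $T\phi\in W_0^{1,2}(\Omega)\subset L^1(\Omega)$, Lebesgue's differentiation theorem identifies the last limit with $T\phi(x)$ at every Lebesgue point of $T\phi$, hence for a.e. $x\in\Omega$. Thus the solution $u=T\phi$ of $Lu=\phi$ satisfies $u(x)=\int_{\Omega}g(y,x)\phi(y)\,dy$ for a.e. $x$, completing the proof.

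I do not expect a genuine obstacle, as all the analytic input is already available; the points needing care are bookkeeping ones: verifying that $g_x^k=Sf_k$ is exactly the object to which Corollary~\ref{MuBound} applies (that $S$ and $T^*$ agree on $L^2\cap L^1$, automatic here since $\abs{\Omega}<+\infty$), and noticing that the exceptional null set in the reproducing formula depends on $\phi$ only through the Lebesgue set of $T\phi$ — harmless, because the statement asserts an a.e. identity for each fixed $\phi$ rather than joint measurability of $g$ on $\Omega\times\Omega$.
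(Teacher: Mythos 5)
Your proof is correct and follows essentially the same route as the paper's (which is sketched in the paragraphs preceding the proposition): apply Corollary~\ref{MuBound} to the approximate identities $f_k$, extract a subsequence of $g_x^k=Sf_k$ converging weakly in $W_0^{1,q}$, strongly in $L^q$, and a.e.\ to $g_x$, then verify the reproducing identity via the adjoint relation \eqref{eq:AdjointRelation} and Lebesgue differentiation. Your added care about identifying $S$ with $T^*$ on $L^1\cap L^2$, the $q=1$ case via H\"older, and the $\phi$-dependent null set are exactly the bookkeeping points the paper leaves implicit.
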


We shall call $g(\cdot,\cdot)$ to be \textit{Green's function} for the operator $L\tran$ in $\Omega$.

In the following, we will need to apply the maximum principle in some cases in order to obtain the pointwise bounds for Green's function. Since solutions to the equation $L\tran u=0$ do not necessarily satisfy the maximum principle, we will construct Green's function for the operator $L$, first under an additional qualitative assumption that the lower order coefficients are regular enough. In order to do this, we apply Corollary~\ref{MuBoundAdjoint}, to obtain that, if $\vec c,d\in\Lip(\Omega)\cap L^{\infty}(\Omega)$, and $1<q<\frac{n}{n-1}$, then there exists a solution $G_y^m=Th_m\in W_0^{1,2}(\Omega)$ to the equation
\[
-\dv(\mathbf{A}\nabla u+\vec bu)+\vec c \cdot \nabla u+du=h_m:=\abs{B_{1/m}(y)}^{-1}\chi_{B_{1/m}(y)},
\]
with
\begin{equation}\label{eq:G_y^mBound}
\norm{G_y^m}_{W_0^{1,q}(\Omega)}\le C\norm{h_m}_{L^1(\Omega)}=C.
\end{equation}
Since $\set{G_y^m}$ is bounded in $W_0^{1,q}(\Omega)$, it has a subsequence $\set{G_y^{j_m}}$ that converges weakly in $W_0^{1,q}(\Omega)$, strongly in $L^q(\Omega)$, and almost everywhere in $\Omega$, to a function $G_y\in W_0^{1,q}(\Omega)$.
Then, we are led to the following proposition.

\begin{proposition}\label{G_y}
Let $\Omega\subset \mathbb R^n$ be a domain with $\abs{\Omega} <+\infty$.
Let $\mathbf{A}$ be bounded and satisfy the uniform ellipticity condition \eqref{ellipticity} and $\vec b\in L^n(\Omega)$, $\vec c,d\in\Lip(\Omega)\cap L^{\infty}(\Omega)$, and $d\ge\dv\vec b$.
Then there exists a function $G(\cdot,\cdot)$ on $\Omega\times \Omega$
such that the solution $u\in W_0^{1,2}(\Omega)$ of the equation $L\tran u=\phi$, for $\phi\in L^{\infty}(\Omega)$, is given by
\[
u(y)=\int_{\Omega}G(x,y)\phi(x)\,dx
\]
for almost every $y\in\Omega$.
Moreover, for $1 \le q< \frac{n}{n-1}$, the norms $\norm{G(\cdot,y)}_{W_0^{1,q}(\Omega)}$ are bounded uniformly in $y$.
\end{proposition}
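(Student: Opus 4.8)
The statement is a repackaging of the construction carried out in the paragraph just above it, so the plan is to verify the two asserted properties of the function $G_y$ defined there as a weak $W_0^{1,q}(\Omega)$ limit. First I would record the uniform bound. Under the present hypotheses — $\vec c,d\in\Lip(\Omega)\cap L^\infty(\Omega)$ and $d\ge\dv\vec b$ — Corollary~\ref{MuBoundAdjoint} applies and supplies, for each large $m$, the solution $G_y^m=Th_m\in W_0^{1,2}(\Omega)$ of $LG_y^m=h_m$ with $\norm{G_y^m}_{W_0^{1,q}(\Omega)}\le C$ for every $q\in(1,\tfrac{n}{n-1})$, where $C$ is independent of $m$ and $y$ by \eqref{eq:G_y^mBound}. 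Passing to the subsequence $\{G_y^{j_m}\}$ that converges weakly in $W_0^{1,q}(\Omega)$, strongly in $L^q(\Omega)$ and a.e.\ in $\Omega$ to $G_y$, I would invoke weak lower semicontinuity of the norm to conclude $\norm{G_y}_{W_0^{1,q}(\Omega)}\le C$ uniformly in $y$; passing to a further subsequence if necessary, the same $G_y$ serves simultaneously for every $q\in(1,\tfrac{n}{n-1})$, and the endpoint $q=1$ then follows from H\"older's inequality since $\abs{\Omega}<+\infty$. In particular $G_y\in L^1(\Omega)$ for every $y$, so $G(x,y):=G_y(x)$ is a well-defined function on $\Omega\times\Omega$.

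Next I would establish the representation formula. Fix $\phi\in L^\infty(\Omega)$; since $\abs{\Omega}<+\infty$ we have $\phi\in L^2(\Omega)$, so let $u=S\phi\in W_0^{1,2}(\Omega)$ be the unique solution of $L\tran u=\phi$ provided by Lemma~\ref{W_0^{1,2}SolvabilityAdjoint}. For every $m$ the function $h_m$ is bounded with compact support, hence lies in $L^2(\Omega)$, so the adjoint relation \eqref{eq:AdjointRelation} with $f_1=h_m$ and $f_2=\phi$ gives
\[
\int_\Omega G_y^m\,\phi=\int_\Omega Th_m\,\phi=\int_\Omega h_m\,S\phi=\fint_{B_{1/m}(y)}u.
\]
Along the subsequence $\{j_m\}$ the left-hand side tends to $\int_\Omega G_y\,\phi=\int_\Omega G(x,y)\phi(x)\,dx$, since $G_y^{j_m}\to G_y$ in $L^q(\Omega)$ and $\phi\in L^\infty(\Omega)\subset L^{q'}(\Omega)$; the right-hand side tends to $u(y)$ for a.e.\ $y$, since $u\in W_0^{1,2}(\Omega)\hookrightarrow L^{2^*}(\Omega)\subset L^1_{\loc}(\Omega)$ and Lebesgue's differentiation theorem holds at every Lebesgue point of $u$, in particular along the radii $1/j_m\to0$. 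Passing to the limit I would then obtain $u(y)=\int_\Omega G(x,y)\phi(x)\,dx$ for a.e.\ $y\in\Omega$, with the exceptional null set allowed to depend on $\phi$ through $u$.

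I do not expect any serious obstacle: the proposition is a reorganization of the preceding construction together with the duality $T^*=S$ already recorded in Section~\ref{sec4}. The only points that need (routine) care are that the two limits above must be taken along one and the same subsequence $\{j_m\}$ — which is legitimate because Lebesgue differentiation holds for the entire family of radii shrinking to $0$ — and, if one wants $G(\cdot,\cdot)$ to be jointly measurable rather than merely defined $y$-by-$y$, that the pointwise construction be carried out measurably via a standard selection/diagonal argument; the latter is not, however, needed for the conclusion as stated.
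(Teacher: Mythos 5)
Your proof is correct and follows essentially the same route as the paper: you use the bound \eqref{eq:G_y^mBound} from Corollary~\ref{MuBoundAdjoint} together with weak lower semicontinuity to get the uniform $W_0^{1,q}$ bound, and you derive the representation formula from the adjoint identity \eqref{eq:AdjointRelation} combined with Lebesgue differentiation, passing to the limit along the same subsequence $\{j_m\}$ that defines $G_y$. The extra remarks you add (the $q=1$ case via H\"older on a finite-measure domain, and the note on joint measurability) are sound but not needed for the statement and are not in the paper's proof.
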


\begin{proof}
Set $G(x,y)=G_y(x)$, the function constructed above.
Then, the $W_0^{1,q}(\Omega)$ bound follows from \eqref{eq:G_y^mBound}.
Moreover, for any $\phi\in L^{\infty}(\Omega)$, we have
\[
\int_{\Omega}G_y\phi=\lim_{m\to\infty}\int_{\Omega}G_y^{j_m}\phi=\lim_{m\to\infty}\int_{\Omega}Th_{j_m}\,\phi=\lim_{m\to\infty}\fint_{B_{1/j_m}(y)} S\phi=S\phi(y),
\]
for almost every $y\in\Omega$ by \eqref{eq:AdjointRelation} and Lebesgue's differentiation theorem.
\end{proof}

At this moment, we will divert from the critical case in order to obtain the next lemma.

\begin{lemma}\label{Presymmetry}
Let $\Omega\subset \mathbb R^n$ be a domain with $\abs{\Omega} <+\infty$.
Let $\mathbf{A}$ be bounded and satisfy the uniform ellipticity condition \eqref{ellipticity} and $\vec b$, $\vec c$, $d\in \Lip(\Omega)\cap L^{\infty}(\Omega)$, and 
$d \ge \dv \vec b$.
Then for every $x,y\in\Omega$, with $x\neq y$, and $m$ large enough, we have
\[
G_y^m(x)=\lim_{k\to\infty}\fint_{B_{1/m}(y)}g_x^k=\fint_{B_{1/m}(y)}g_x.
\]
Moreover, for every $x,y\in\Omega$, with $x\neq y$, and $k$ large enough, we have
\[
g_x^k(y)=\lim_{m\to\infty}\fint_{B_{1/k}(x)}G_y^m=\fint_{B_{1/k}(x)}G_y.
\]
In particular, we have $g_x^k\ge 0$.
\end{lemma}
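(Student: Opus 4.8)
The plan is to derive both identities from the adjoint relation \eqref{eq:AdjointRelation}. Since $f_k=\abs{B_{1/k}(x)}^{-1}\chi_{B_{1/k}(x)}$ and $h_m=\abs{B_{1/m}(y)}^{-1}\chi_{B_{1/m}(y)}$ both lie in $L^2(\Omega)$, and $G_y^m=Th_m$, $g_x^k=Sf_k$, applying \eqref{eq:AdjointRelation} with $f_1=h_m$, $f_2=f_k$ gives, for all $k,m$ large enough that the two balls lie in $\Omega$,
\[
\fint_{B_{1/k}(x)}G_y^m=\int_{\Omega}(Th_m)\,f_k=\int_{\Omega}h_m\,(Sf_k)=\fint_{B_{1/m}(y)}g_x^k .
\]
With this symmetric relation in hand, each displayed formula is obtained by freezing one index and sending the other to infinity; the only analytic ingredient is the interior continuity of an approximate Green's function away from the support of its mollified pole.

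For the first identity, fix $m$; since $x\neq y$ we may also assume $x\notin\overline{B_{1/m}(y)}$, so that $G_y^m$ solves $LG_y^m=0$ in a neighborhood of $x$ and is therefore continuous there, by the interior De Giorgi-Nash-Moser theory for the classes $\vec b,\vec c\in L^n(\Omega)$ (a local version of Lemma~\ref{Pointwise}, together with the usual oscillation decay). Hence $\fint_{B_{1/k}(x)}G_y^m\to G_y^m(x)$ as $k\to\infty$, and by the relation above the whole sequence $\fint_{B_{1/m}(y)}g_x^k$ then converges to $G_y^m(x)$ as well. On the other hand, along the subsequence $\set{i_k}$ for which $g_x^{i_k}\to g_x$ in $L^q(\Omega)$ --- hence in $L^1(B_{1/m}(y))$, since $\abs{\Omega}<+\infty$ and $q\ge1$ --- we have $\fint_{B_{1/m}(y)}g_x^{i_k}\to\fint_{B_{1/m}(y)}g_x$. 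Comparing the two limits yields $G_y^m(x)=\fint_{B_{1/m}(y)}g_x$.

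The second identity is entirely symmetric. Fix $k$; since $x\neq y$ assume $y\notin\overline{B_{1/k}(x)}$, so that $g_x^k$ solves $L\tran g_x^k=0$ near $y$ and is continuous there by Lemma~\ref{LocalBoundForn} and the oscillation estimate --- note that $\vec c,d\in\Lip(\Omega)\cap L^{\infty}(\Omega)$, hence $\vec b-\vec c\in L^n(\Omega)$. Thus $\fint_{B_{1/m}(y)}g_x^k\to g_x^k(y)$ as $m\to\infty$, so by the relation $\fint_{B_{1/k}(x)}G_y^m\to g_x^k(y)$; and along the subsequence $\set{j_m}$ with $G_y^{j_m}\to G_y$ in $L^q(\Omega)$ we get $\fint_{B_{1/k}(x)}G_y^{j_m}\to\fint_{B_{1/k}(x)}G_y$. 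Hence $g_x^k(y)=\fint_{B_{1/k}(x)}G_y$.

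It remains to prove $g_x^k\ge0$. The function $-G_y^m\in W_0^{1,2}(\Omega)$ satisfies $L(-G_y^m)=-h_m\le0$, hence is a subsolution of $L(-G_y^m)=0$ with $-G_y^m\le0$ on $\partial\Omega$; since $d\ge\dv\vec b$, Proposition~\ref{BoundForCriticalSubsolutions} (with $\vec f=0$ and $g=0$) gives $\sup_{\Omega}(-G_y^m)\le0$, that is, $G_y^m\ge0$ a.e.\ in $\Omega$, and passing to the a.e.\ limit along $\set{j_m}$ yields $G_y\ge0$ a.e.\ in $\Omega$ for every $y$. Since $g_x^k$ solves $L\tran g_x^k=f_k$ with $f_k\in L^{\infty}(\Omega)$, it is continuous on all of $\Omega$, so the computation of the preceding paragraph in fact delivers $g_x^k(y)=\fint_{B_{1/k}(x)}G_y$ for every $y\in\Omega$; together with $G_y\ge0$ this gives $g_x^k\ge0$. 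The only non-formal point above is the interior continuity of the approximate Green's functions away from their poles --- this is precisely why the evaluation point is kept away from the mollified Dirac mass, so that the relevant function solves a homogeneous equation there --- and it is the place where one must check that the De Giorgi-Nash-Moser machinery is available for the coefficient classes at hand; the interchange of limits is then routine given the uniform $W_0^{1,q}$-bounds and the $L^q$-convergence already established.
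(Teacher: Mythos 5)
Your proof is correct and follows essentially the same route as the paper's. You invoke the adjoint identity \eqref{eq:AdjointRelation} to obtain the symmetric relation $\fint_{B_{1/k}(x)}G_y^m=\fint_{B_{1/m}(y)}g_x^k$; the paper re-derives this by directly cross-testing the weak formulations of the two approximating equations, which is the same computation, and \eqref{eq:AdjointRelation} was established earlier so there is no circularity. The remaining steps — interior continuity of the approximate Green's function away from its mollified pole (the paper cites Stampacchia's Theorem~7.1 where you appeal to the De Giorgi--Nash--Moser oscillation estimate), sending $k\to\infty$ or $m\to\infty$ on the side where continuity is available, and using the $L^q$-convergence of the approximating sequences on the other side — all match. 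For nonnegativity the paper is marginally more direct: once $G_y^m\ge0$ is known it concludes $g_x^k(y)=\lim_m\fint_{B_{1/k}(x)}G_y^m\ge0$ straightaway, whereas you first pass to the a.e.\ limit $G_y\ge0$ and then read off $g_x^k\ge0$ from the second identity extended to all of $\Omega$ via continuity of $g_x^k$ as a solution with $L^\infty$ right-hand side; both variants are sound.
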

\begin{proof}
Let $x$, $y\in\Omega$ and consider the functions $g_x^k$ and $G_y^m$ in \eqref{eq:g_x^kBound} and \eqref{eq:G_y^mBound}.
Recall that $g_x^k$ and $G_y^m$ belong to $W_0^{1,2}(\Omega)$, and they solve the equations
\begin{align*}
\int_{\Omega}\mathbf{A}\tran\nabla g_x^k \cdot\nabla\phi+\vec c\cdot\nabla\phi\, g_x^k+\vec b\cdot\nabla g_x^k\,\phi+dg_x^k\phi &=\fint_{B_{1/k}(x)}\phi,\\
\int_{\Omega}\mathbf{A}\nabla G_y^m\cdot \nabla\psi+\vec b\cdot\nabla\psi\, G_y^m+\vec c\cdot\nabla G_y^m\,\psi+dG_y^m\psi &=\fint_{B_{1/m}(y)}\psi,
\end{align*}
for $\phi$, $\psi\in W_0^{1,2}(\Omega)$.
Setting $\phi=G_y^m$ and $\psi=g_x^k$, we see that the left hand sides in the above coincide, and thus we have
\begin{equation}\label{eq:BasSym}
\fint_{B_{1/k}(x)}G_y^m=\fint_{B_{1/m}(y)}g_x^k.
\end{equation}
Consider now $x\neq y$ in $\Omega$, and fix $m\in\mathbb N$ such that $\frac{1}{m}<\frac{1}{2} \abs{x-y}$.
Then, since $LG_y^m=0$ in $\Omega\setminus B_{1/m}(y)$, Theorem 7.1 in \cite{StampacchiaDirichlet} shows that $G_y^m$ is continuous at $x$.
Hence, letting $k\to\infty$ in \eqref{eq:BasSym}, we obtain that
\[
G_y^m(x)=\lim_{k\to\infty}\fint_{B_{1/k}(x)}G_y^m=\lim_{k\to\infty}\fint_{B_{1/m}(y)}g_x^k.
\]
This shows the first equality. 
Moreover, since a subsequence of $g_x^k$ converges to $g_x$ in $L^1(\Omega)$, we obtain the second equality. To show the third and fourth equalities, we follow a similar procedure. 
	
Finally, since $G_y^m$ satisfies $LG_y^m\ge 0$ in $\Omega$, and $G_y^m$ vanishes on the boundary, Proposition~\ref{BoundForCriticalSubsolutions} applied to $-G_y^m$ shows that $G_y^m\ge 0$ in $\Omega$, hence $g_x^k\ge 0$ in $\Omega$.
\end{proof}

\subsection{A Lorentz bound}
The aim of this subsection is to obtain a good estimate on a Lorentz norm of Green's function when the coefficients are nice enough. We begin with the following lemma, which we will need later on.

\begin{lemma}\label{f(s)Bound}
Suppose that $a>0$, $f:[a,\infty)\to\mathbb R_+$ is non-increasing, and
\[
f(2t)\le\frac{c_1}{t}+\frac{1}{3}f(t),\quad \forall t \ge a,
\]
for some $c_1>0$. 
Then $f(t)\le\dfrac{C}{t}\,$ for all $t\ge a$, where
\[
C=6c_1+2af(a).
\]
\end{lemma}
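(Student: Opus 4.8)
The plan is to iterate the given recursion along the dyadic sequence $t, 2t, 4t, \dots$ and sum the resulting geometric-type series. Fix $t \ge a$ and, for each integer $j \ge 0$, apply the hypothesis at the point $2^j t$ to obtain
\[
f(2^{j+1}t) \le \frac{c_1}{2^j t} + \frac{1}{3} f(2^j t).
\]
Unwinding this recursion from $j = k-1$ down to $j = 0$ gives
\[
f(2^k t) \le \frac{1}{3^k} f(t) + \frac{c_1}{t} \sum_{j=0}^{k-1} \frac{1}{3^{\,k-1-j}} \cdot \frac{1}{2^j} = \frac{1}{3^k} f(t) + \frac{c_1}{t \cdot 3^{k-1}} \sum_{j=0}^{k-1} \left(\frac{3}{2}\right)^j,
\]
and the finite geometric sum $\sum_{j=0}^{k-1}(3/2)^j \le 2 (3/2)^k = 2 \cdot 3^k / 2^k$, so that $f(2^k t) \le 3^{-k} f(t) + 2 c_1 / (2^k t)$. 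This is the key estimate: it controls $f$ at every dyadic multiple of $t$ in terms of $f(t)$, and the error term decays like $1/(2^k t)$, i.e. inversely proportional to the point $2^k t$.

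Next I would convert this dyadic control into control at an arbitrary point $s \ge a$. Given $s \ge a$, choose the unique integer $k \ge 0$ with $2^k a \le s < 2^{k+1} a$; apply the displayed estimate with $t = a$ to get $f(2^k a) \le 3^{-k} f(a) + 2c_1/(2^k a)$. Since $f$ is non-increasing and $s \ge 2^k a$, we have $f(s) \le f(2^k a)$. Now use $2^k a > s/2$, which gives $1/(2^k a) < 2/s$ and hence $2c_1/(2^k a) < 4c_1/s$; moreover $3^{-k} \le 2^{-k} \cdot (2/3)^{k} \cdot \dots$ — more simply, $3^{-k} f(a) \le 2^{-k} f(a) \le (2a/s) f(a)$ since $2^k \ge s/(2a)$. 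Combining,
\[
f(s) \le \frac{2a f(a)}{s} + \frac{4 c_1}{s} < \frac{6 c_1 + 2 a f(a)}{s} = \frac{C}{s},
\]
with $C = 6c_1 + 2 a f(a)$ exactly as claimed (the inequality $4c_1 \le 6c_1$ giving room to spare).

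The only mild obstacle is bookkeeping with the constants: one must be careful that the geometric sum $\sum (3/2)^j$ is bounded by a clean multiple of its last term, and that the passage from the dyadic point $2^k a$ to the general point $s$ only loses a factor of $2$ in each of the two terms. Both are routine; there is no analytic subtlety, since $f$ is only assumed non-increasing (no continuity needed) and the recursion is used purely combinatorially. I would present the argument in the two stages above — first the iterated dyadic bound, then the interpolation to arbitrary $s$ — and keep track of constants to land precisely on $C = 6c_1 + 2af(a)$.
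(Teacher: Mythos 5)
Your overall strategy — iterate the recursion along dyadic points and then pass to a general $s$ by monotonicity — is the same as the paper's (the paper proves the analogous inductive estimate \eqref{eq:Ind} for $f(2^{n+1}a)$ and then specializes). However, there is an arithmetic error in your dyadic bound, and on top of that the conversion step intrinsically loses a factor of $2$ that your error happens to hide, so the argument as written does not actually land on the stated constant.

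First, the arithmetic: from $\sum_{j=0}^{k-1}(3/2)^j \le 2(3/2)^k = 2\cdot 3^k/2^k$ you should get
\[
\frac{c_1}{t\cdot 3^{k-1}}\sum_{j=0}^{k-1}\left(\frac{3}{2}\right)^j \le \frac{c_1}{t\cdot 3^{k-1}}\cdot\frac{2\cdot 3^k}{2^k} = \frac{6c_1}{2^k t},
\]
not $2c_1/(2^k t)$; the correct dyadic estimate is $f(2^k t)\le 3^{-k}f(t)+6c_1/(2^k t)$. Second, even with $6c_1$ in place, taking $t=a$ and using $f(s)\le f(2^k a)$ together with $1/(2^k a) < 2/s$ gives $6c_1/(2^k a) < 12c_1/s$, so your method yields $f(s)\le (12c_1 + 2af(a))/s$, which is strictly worse than the claimed $C=6c_1+2af(a)$. (This is not just slack in the bookkeeping: the recursion's fixed-point profile $6c_1/t$ shows that passing through $f(2^k a)$ and then paying the factor $2$ for $s < 2^{k+1}a$ genuinely costs you the factor on the $c_1$-term.) The fix is small but essential: instead of iterating from $t=a$, iterate from $t=s/2^k$, where $k\ge 0$ is chosen so that $s/2^k\in[a,2a)$. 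Then the dyadic estimate reads $f(s)\le 3^{-k}f(s/2^k)+6c_1/s$ with the error term already scaled by $s$ rather than by $2^k a$, and combining with $f(s/2^k)\le f(a)$ and $3^{-k}\le 2^{-k}<2a/s$ gives exactly $f(s)\le (2af(a)+6c_1)/s = C/s$, the case $k=0$ being immediate from $f(s)\le f(a) < 2af(a)/s$.
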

\begin{proof}
By induction on $n$, we show that
\begin{equation}\label{eq:Ind}
f(2^{n+1}a)\le\frac{c_1}{2^na}\sum_{k=0}^n\frac{2^k}{3^k}+\frac{1}{3^n}f(a).
\end{equation}
Let now $t\ge a$, then there exists $m\in\mathbb N$ such that $2^ma\le t<2^{m+1}a$. Then, since $f$ is nonincreasing, and also using \eqref{eq:Ind}, we obtain that
\begin{align*}
f(t)&\le f(2^{m+1}a)\le\frac{c_1}{2^ma}\sum_{k=0}^m\frac{2^k}{3^k}+\frac{1}{3^m}f(a)\le\frac{c_1}{2^ma}\sum_{k=0}^{\infty}\frac{2^k}{3^k}+\frac{1}{3^m}f(a)\\
&=\frac{3c_1}{2^ma}+\frac{1}{3^m}f(a)=\frac{6c_1}{2^{m+1}a}+\frac{2^{m+1}}{3^m}\frac{1}{2^{m+1}a}af(a)\le\frac{6c_1+2af(a)}{t},
\end{align*}
which completes the proof.
\end{proof}

\begin{proposition}\label{GreenConstructionLipschitz}
Let $\Omega\subset \mathbb R^n$ be a domain with $\abs{\Omega} <+\infty$.
Let $\mathbf{A}$ be bounded and satisfy the uniform ellipticity condition \eqref{ellipticity} and $\vec b$, $\vec c$, $d\in \Lip(\Omega)\cap L^{\infty}(\Omega)$.
Assume also that $d\ge\dv\vec b$.
Then, for almost every $x\in\Omega$, the functions $g_x^k$ in \eqref{eq:g_x^kBound} satisfy the uniform bound
\[
\norm{g_x^k}_{L^{\frac{n}{n-2},\infty}(\Omega)}\le C,
\]
where $C$ depends only on $n$, $\lambda$, $\norm{\mathbf A}_{\infty}$, $r_{\vec b-\vec c}\left(\frac{\lambda}{3C_n}\right)$, $\tilde{r}_{\vec b-\vec c}\left(\frac{\lambda}{3C_n}\right)$, and $\abs{\Omega}$.
\end{proposition}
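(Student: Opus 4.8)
The plan is to fix $x\in\Omega$ and $k$ large enough that $B_{1/k}(x)\Subset\Omega$, write $g=g_x^k$, and recall from Lemma~\ref{Presymmetry} that $g\ge 0$ (the hypotheses of that lemma are in force here); moreover $g\in W_0^{1,2}(\Omega)$ solves $L\tran g=f_k$ with $f_k=\abs{B_{1/k}(x)}^{-1}\chi_{B_{1/k}(x)}$, so that $\int_\Omega f_k v=\fint_{B_{1/k}(x)}v$ for every $v\in W_0^{1,2}(\Omega)$. Put $A_t=\set{g>t}$ and $f(t)=\abs{A_t}^{\frac{n-2}{n}}$; this is non-increasing and bounded by $\abs{\Omega}^{\frac{n-2}{n}}$, and $\norm{g}_{L^{\frac{n}{n-2},\infty}(\Omega)}=\sup_{t>0}t\,f(t)$. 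Since $t\,f(t)\le a\,\abs{\Omega}^{\frac{n-2}{n}}$ whenever $t\le a$, it is enough to find a uniform $a>0$ and a uniform $c_1>0$ with
\[
f(2t)\le\frac{c_1}{t}+\frac13\,f(t),\qquad\forall\,t\ge a;
\]
then Lemma~\ref{f(s)Bound} gives $f(t)\le(6c_1+2af(a))/t$ for $t\ge a$, hence $\norm{g}_{L^{\frac{n}{n-2},\infty}(\Omega)}\le\max\{a\,\abs{\Omega}^{\frac{n-2}{n}},\,6c_1+2a\,\abs{\Omega}^{\frac{n-2}{n}}\}$, a bound of the required type.

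To establish the recursion I would fix $t>0$ and test the equation for $g$ against $v_t=\min(g,2t)-\min(g,t)\in W_0^{1,2}(\Omega)$, which satisfies $0\le v_t\le t$, $v_t\equiv t$ on $A_{2t}$, and $\nabla v_t=\chi_{E_t}\nabla g$ where $E_t=\set{t<g<2t}$. Using $g v_t\ge 0$ and $d\ge\dv\vec b$ to absorb the zeroth order term and the $\vec b$ first order term — exactly as in the proof of Lemma~\ref{Caccioppoli} — together with the ellipticity of $\mathbf{A}\tran$, testing gives
\[
\lambda\int_{E_t}\abs{\nabla g}^2\le\int_\Omega\mathbf{A}\tran\nabla g\cdot\nabla v_t\le\fint_{B_{1/k}(x)}v_t+\int_{E_t}(\vec b-\vec c)\cdot\nabla g\,g\le t+\int_{E_t}\abs{\vec b-\vec c}\,\abs{\nabla g}\,g.
\]
On $E_t$ we have $g\le 2t$ and $g\,\chi_{E_t}\le v_t+t$ pointwise, so the Sobolev inequality applied to $v_t$ yields $\norm{g}_{L^{2^*}(E_t)}\le\norm{v_t}_{2^*}+t\abs{E_t}^{1/2^*}\le C_n\norm{\nabla g}_{L^2(E_t)}+t\abs{E_t}^{1/2^*}$, whence by H\"older's inequality
\[
\int_{E_t}\abs{\vec b-\vec c}\,\abs{\nabla g}\,g\le\norm{\vec b-\vec c}_{L^n(E_t)}\norm{\nabla g}_{L^2(E_t)}\Bigl(C_n\norm{\nabla g}_{L^2(E_t)}+t\abs{E_t}^{1/2^*}\Bigr).
\]

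Assuming for the moment that $\norm{\vec b-\vec c}_{L^n(E_t)}\le\lambda/(3C_n)$, the first product above is absorbed into $\lambda\int_{E_t}\abs{\nabla g}^2$ and the second is treated by Young's inequality — the Young parameter being chosen so that the residual coefficient comes out right — leaving $\int_{E_t}\abs{\nabla g}^2\le\tfrac{2t}{\lambda}+c\,t^2\abs{A_t}^{\frac{n-2}{n}}$ with $C_n^2 c\le\tfrac13$. Combined with the Sobolev estimate $t^2\abs{A_{2t}}^{\frac{n-2}{n}}\le C_n^2\int_{E_t}\abs{\nabla g}^2$ (which uses $v_t\equiv t$ on $A_{2t}$), this produces precisely $f(2t)\le\tfrac{c_1}{t}+\tfrac13 f(t)$ with $c_1=2C_n^2/\lambda$, the contraction factor $\tfrac13$ being exactly what the threshold $\lambda/(3C_n)$ buys. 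It remains to make the smallness of $\norm{\vec b-\vec c}_{L^n(E_t)}$ uniform: since $\norm{\vec b-\vec c}_{L^n(E_t)}\le\norm{\vec b-\vec c}_{L^n(A_t)}\le\tilde R_{\vec b-\vec c}(\abs{A_t})$, it holds once $\abs{A_t}<\tilde r_{\vec b-\vec c}\bigl(\tfrac{\lambda}{3C_n}\bigr)$. Fixing once and for all an exponent $q=q(n)\in\bigl(1,\tfrac{n}{n-1}\bigr)$, with Sobolev conjugate $q^*=\tfrac{nq}{n-q}<\tfrac{n}{n-2}$, the uniform bound \eqref{eq:g_x^kBound} from Corollary~\ref{MuBound} gives $\norm{g}_{W_0^{1,q}(\Omega)}\le C$, hence $\norm{g}_{L^{q^*}(\Omega)}\le C$ and $\abs{A_t}\le C t^{-q^*}$; therefore $\abs{A_t}<\tilde r_{\vec b-\vec c}\bigl(\tfrac{\lambda}{3C_n}\bigr)$ for all $t\ge a:=\bigl(C/\tilde r_{\vec b-\vec c}(\tfrac{\lambda}{3C_n})\bigr)^{1/q^*}$, which depends only on the quantities allowed in the statement. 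Applying Lemma~\ref{f(s)Bound} on $[a,\infty)$ then completes the proof.

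The step I expect to require the most care is controlling the constants in the recursion so that the coefficient of $f(t)$ stays $\le\tfrac13$; this is precisely why one must use the smallness of $\norm{\vec b-\vec c}_{L^n}$ on the thin level sets $E_t$ (the quantity $\tilde r_{\vec b-\vec c}$) rather than a fixed‑height splitting $\vec b-\vec c=(\vec b-\vec c)_{(s)}+(\vec b-\vec c)^{(s)}$, since the latter would leave a term with coefficient of order $r_{\vec b-\vec c}(\cdot)^2\abs{\Omega}^{2/n}$ that cannot be made small. A secondary point is the bootstrap to a uniform starting level $a$, which is why $q$ is fixed depending only on $n$, so that the $W_0^{1,q}$ bound for the $g_x^k$ comes with a constant of the admissible type.
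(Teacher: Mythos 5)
Your proof is correct and reaches the same recursion $f(2t)\le c_1/t+\tfrac13 f(t)$ that the paper derives, finishes via Lemma~\ref{f(s)Bound}, and controls the starting level via the $W_0^{1,q}$ bound from Corollary~\ref{MuBound} and Chebyshev. The one genuine departure is the choice of test function: the paper follows Gr\"uter--Widman and tests against $\phi=(1/s-1/g_x^k)^+$, passing to the logarithmic variable $w=(\ln(g_x^k/s))^+$ with $\nabla w=\nabla g_x^k/g_x^k$, and then treats the $(\vec b-\vec c)$ term with a direct Cauchy--Schwarz estimate $\int|\vec b-\vec c|\,|\nabla w|\le\frac{1}{2\lambda}\int|\vec b-\vec c|^2+\frac{\lambda}{2}\int|\nabla w|^2$; you instead test against the truncation difference $v_t=\min(g,2t)-\min(g,t)$ and treat the drift term with a three-factor $L^n\times L^2\times L^{2^*}$ H\"older estimate, using $g=v_t+t$ on $E_t$. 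Both approaches exploit precisely the same mechanism — smallness of $\norm{\vec b-\vec c}_{L^n}$ on the thin superlevel sets, quantified by $\tilde r_{\vec b-\vec c}(\tfrac{\lambda}{3C_n})$ — to obtain the contraction factor $\tfrac13$. Your version is slightly more economical since it avoids introducing the logarithm (and the attendant $\ln^2 2$ constants), at the cost of the somewhat more delicate H\"older/Young bookkeeping on $E_t$; the paper's version keeps the $(\vec b-\vec c)$ estimate in the simplest possible form at the cost of the change of variable. One small point to tidy: with $a=(C/\tilde r_{\vec b-\vec c}(\tfrac{\lambda}{3C_n}))^{1/q^*}$ you only get $\abs{A_a}\le\tilde r_{\vec b-\vec c}(\tfrac{\lambda}{3C_n})$ rather than strict inequality, so you should enlarge $a$ by a harmless fixed factor (the paper's choice $s_0=2C/t_0$ does exactly this) before invoking the definition of $\tilde r$.
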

\begin{proof}
We adapt an argument used in Gr\"uter and Widman \cite{Gruter}.
By Lemma~\ref{Presymmetry}, we have $g_x^k\ge 0$ and thus $g_x\ge 0$.
For $s>0$, let
\[
\phi=\left(\frac{1}{s}-\frac{1}{g_x^k}\right)^+\quad \text{and}\quad 
\Omega_s^k=\set{y\in \Omega: g_x^k(y) \ge s}.
\]
Then, using $\phi$ as a test function, we obtain
\[
\int_{\Omega_s^k}\mathbf{A}\tran\nabla g_x^k \cdot\nabla\phi+\vec c\cdot \nabla\phi\, g_x^k+\vec b\cdot\nabla g_x^k\,\phi+dg_x^k\phi=\fint_{B_{1/k}(x)}\phi. 
\]
Therefore, we have
\begin{align*}
\int_{\Omega_s^k}\mathbf{A}\tran\nabla g_x^k\cdot\nabla\phi&=\fint_{B_{1/k}(x)}\phi-\int_{\Omega_s^k}\vec c \cdot\nabla\phi\, g_x^k+\vec b \cdot \nabla g_x^k\,\phi+d g_x^k\phi\\
&=\fint_{B_{1/k}(x)}\phi-\int_{\Omega_s^k}\vec b\cdot \nabla(g_x^k\phi)+d g_x^k\phi+\int_{\Omega_s^k}(\vec b-\vec c)\cdot\nabla \phi\, g_x^k.
\end{align*}
Then, using $0\le\phi\le \frac{1}{s}$ and $g_x^k\ge 0$ together with the assumption $d\ge\dv\vec b$, we get
\begin{equation*}
\int_{\Omega_s^k}\mathbf{A}\tran\nabla g_x^k \cdot\nabla\phi\le\frac{1}{s}+\int_{\Omega_s^k}(\vec b-\vec c) \cdot \nabla\phi\, g_x^k.
\end{equation*}
Since $\nabla\phi=\nabla g_x^k/(g_x^k)^2$ in $\Omega^k_s$, we thus obtain
\[
\int_{\Omega_s^k}\mathbf{A}\tran\nabla g_x^k \cdot \nabla g_x^k\, (g_x^k)^{-2} \le\frac{1}{s}+\int_{\Omega_s^k} \abs{\vec b-\vec c}\,\abs{\nabla g_x^k}\,(g_x^k)^{-1}.
\]
Set $w=\left(\ln\, (g_x^k/s)\right)^+$ so that $\nabla w=\nabla g_x^k/g_x^k$.
Then, we get
\[
\int_{\Omega_s^k}\mathbf{A}\tran\nabla w \cdot \nabla w \le\frac{1}{s}+\int_{\Omega_s^k} \abs{\vec b-\vec c}\,\abs{\nabla w}\le\frac{1}{s}+\frac{1}{2\lambda}\int_{\Omega_s^k} \abs{\vec b-\vec c}^2+\frac{\lambda}{2}\int_{\Omega_s^k} \abs{\nabla w}^2,
\]
and thus, using ellipticity of $\mathbf{A}$, we obtain
\begin{equation*}
\frac{\lambda}{2}\int_{\Omega_s^k} \abs{\nabla w}^2\le\frac{1}{s}+\frac{1}{2\lambda}\int_{\Omega_s^k} \abs{\vec b-\vec c}^2.
\end{equation*}
Since $\Omega^k_{2s} \subset \Omega^k_s$, by the Sobolev inequality and H{\"o}lder's inequality, we then get
\[
\frac{\lambda}{2} (\ln^2 2)\, \abs{\Omega_{2s}^k}^{\frac{n-2}{n}}\le\frac{\lambda}{2}\left(\int_{\Omega_{2s}^k} \abs{w}^{\frac{2n}{n-2}}\right)^{\frac{n-2}{n}}\le\frac{C_n^2}{s}+\frac{C_n^2}{2\lambda}\left(\int_{\Omega_s^k} \abs{\vec b-\vec c}^n\right)^{2/n} \abs{\Omega_s^k}^{\frac{n-2}{n}}.
\]
Therefore, if we set $f(s)=\abs{\Omega_s^k}^{\frac{n-2}{n}}$, then
\begin{equation}\label{eq:f(2s)}
f(2s)\le\frac{2C_n^2}{\lambda\ln^2 2}\,\frac{1}{s}+\frac{C_n^2}{\lambda^2\ln^2 2}\left(\int_{\Omega_s^k}\abs{\vec b-\vec c}^n\right)^{2/n}f(s).
\end{equation}
We now use Chebyshev's inequality, to obtain that
\begin{equation}\label{eq:dfnOfC}
\abs{\Omega_s^k}=\int_{\Omega_s^k} 1\,dx\le\frac{1}{s}\int_{\Omega_s^k}g_x^k\le\frac{1}{s} \norm{g_x^k}_{L^1(\Omega)}\le\frac{C}{s},
\end{equation}
where $C$ is a constant obtained from \eqref{eq:g_x^kBound} by fixing $q \in (1, \frac{n}{n-1})$; hence it depends only on $n$, $\lambda$, $r_{\vec b-\vec c}\left(\frac{\lambda}{3C_n}\right)$, and $\abs{\Omega}$.
Now, let
\[
\textstyle t_0=\tilde{r}_{\vec b-\vec c} \left(\frac{\lambda}{3C_n}\right)\quad \text{and}\quad  s_0= 2C/t_0,
\]
where $\tilde{r}_{\vec b-\vec c}$ is as defined in \eqref{eq:R_f2} and $C$ is the same constant as in \eqref{eq:dfnOfC}.
Then, we have $C/s_0 =t_0/2 <t_0$ and thus, from \eqref{eq:dfnOfC}, we see that $\abs{\Omega_s^k}<t_0$ whenever $s \ge s_0$.
Note also that $3 \ln^2 2 >1$.
Therefore, for all $s \ge s_0$, we have
\[
\frac{C_n^2}{\lambda^2\ln^2 2}\left(\int_{\Omega_s^k} \abs{\vec b-\vec c}^n\right)^{2/n}\le\frac{C_n^2}{\lambda^2\ln^2 2}\left(\tilde{R}_{\vec b-\vec c}(t_0)\right)^2\le\frac{C_n^2}{\lambda^2\ln^2 2}\left(\frac{\lambda}{3C_n}\right)^2<\frac{1}{3}.
\]
Therefore, plugging in \eqref{eq:f(2s)}, we obtain that
\[
f(2s)\le\frac{c_1}{s}+\frac{1}{3}f(s),\quad \forall s\ge s_0,
\]
where $c_1=2C_n^2/\lambda \ln^2 2$.
Therefore, from Lemma \ref{f(s)Bound}, we obtain that
\[
f(s)\le\frac{6c_1+2s_0f(s_0)}{s},\quad \forall s \ge s_0.
\]
Since $f(s_0)=\abs{\Omega_{s_0}^k}^{\frac{n-2}{n}}\le\abs{\Omega}^{\frac{n-2}{n}}$, we finally obtain that
\[
s\abs{\Omega_s^k}^{\frac{n-2}{n}}\le 6c_1+2s_0f(s_0)\le 6c_1+2s_0 \abs{\Omega}^{\frac{n-2}{2}}, \quad \forall s \ge s_0.
\]
On the other hand, if $s\le s_0$, then
\[
s\abs{\Omega_s^k}^{\frac{n-2}{n}}\le s_0 \abs{\Omega}^{\frac{n-2}{2}} \le 6c_1+2s_0 \abs{\Omega}^{\frac{n-2}{2}},
\]
which shows that the previous estimate holds for all $s\ge 0$.
Therefore, we have
\[
\norm{g_x^k}_{L^{\frac{n}{n-2},\infty}(\Omega)}\le C
\]
uniformly in $k$, where $C$ depends only on $n$, $\lambda$, $r_{\vec b-\vec c}\left(\frac{\lambda}{3C_n}\right)$, $\tilde{r}_{\vec b-\vec c}\left(\frac{\lambda}{3C_n}\right)$, and $\abs{\Omega}$.
Since $g_x^{i_k}\to g_x$ almost everywhere in $\Omega$, the last estimate also reveals that
\[
\norm{g_x}_{L^{\frac{n}{n-2},\infty}(\Omega)}\le C.\qedhere
\]
\end{proof}

\section{The subcritical case}			\label{sec6}

In this section we will treat the subcritical case, in which we will assume that $\vec b-\vec c\in L^p$ for some $p>n$.

\subsection{A preliminary bound}
We first show a pointwise bound on $g$ in the case that $\vec b,\vec c,d$ are Lipschitz functions.

\begin{proposition}\label{GreenConstructionLipschitzSubcritical}
Let $\Omega\subset \mathbb R^n$ be a domain with $\abs{\Omega} <+\infty$.
Let $\mathbf{A}$ be bounded and satisfy the uniform ellipticity condition \eqref{ellipticity} and $\vec b$, $\vec c$, $d\in \Lip(\Omega)\cap L^{\infty}(\Omega)$, with $d \ge \dv \vec b$.
Let $p>n$.
Then, for almost every $x\in\Omega$, the function $g_x$ in Proposition~\ref{g_x} satisfies the bound
\[
0\le g_x(y) \le C\abs{x-y}^{2-n},\quad \forall y \neq x,
\]
where $C$ depends only on $n$, $p$, $\lambda$, $\norm{\mathbf A}_{\infty}$, $\norm{\vec b-\vec c}_p$, and $\abs{\Omega}$.
\end{proposition}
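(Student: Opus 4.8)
The plan is to establish the pointwise bound first for the approximating functions $g_x^k=Sf_k$ of \eqref{eq:g_x^kBound}, uniformly in $k$, and then pass to the limit $g_x^{i_k}\to g_x$. Recall from Lemma~\ref{Presymmetry} that $g_x^k\ge 0$. Since $g_x^k$ solves $L\tran g_x^k=f_k$ in $\Omega$, it solves $L\tran g_x^k=0$ in $\Omega\setminus\overline{B_{1/k}(x)}$, and, exactly as at the beginning of the proof of Lemma~\ref{LocalBoundForp}, the assumption $d\ge\dv\vec b$ together with $g_x^k\ge 0$ shows that $g_x^k$ is a nonnegative $W_0^{1,2}(\Omega)$ subsolution of $-\dv(\mathbf A\tran\nabla u+(\vec c-\vec b)u)=0$ in $\Omega\setminus\overline{B_{1/k}(x)}$. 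Since $\vec b,\vec c\in\Lip(\Omega)\cap L^\infty(\Omega)$ and $\abs\Omega<+\infty$, we have $\vec b-\vec c\in L^p(\Omega)$, and by Lemmas~\ref{RfBound} and \ref{RfBound2} the quantities $r_{\vec b-\vec c}(\tfrac{\lambda}{3C_n})$ and $\tilde r_{\vec b-\vec c}(\tfrac{\lambda}{3C_n})$ are controlled by $n$, $p$, $\lambda$, and $\norm{\vec b-\vec c}_p$; hence Proposition~\ref{GreenConstructionLipschitz} applies and gives the uniform bound $\norm{g_x^k}_{L^{\frac{n}{n-2},\infty}(\Omega)}\le C_0$, with $C_0$ depending only on $n$, $p$, $\lambda$, $\norm{\mathbf A}_\infty$, $\norm{\vec b-\vec c}_p$, and $\abs\Omega$. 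By the rescaling of Remark~\ref{rmk1344sat} we may also assume that $\abs\Omega$ is as small as we please, at the expense of letting the final constant depend on $\abs\Omega$.

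Next I would fix $y\in\Omega$ with $y\ne x$, write $R=\abs{x-y}$, and take $k$ so large that $1/k<R/2$; then $B_{R/2}(y)$ is disjoint from $\overline{B_{1/k}(x)}$, so on $B_{R/2}(y)\cap\Omega$ the function $g_x^k$ is a nonnegative subsolution of the reduced equation above which vanishes on $\partial\Omega$ in the $W_0^{1,2}$ sense. The local boundedness estimate up to the boundary — proved exactly as Lemma~\ref{LocalBoundForp}, with cutoffs supported in $B_{R/2}(y)$ and truncations $\min(g_x^k,m)$, which remain in $W_0^{1,2}(\Omega)$ because $g_x^k$ does, and where the smallness of $\abs\Omega$ makes the lower order terms absorbable uniformly in $R$ — then yields
\[
g_x^k(y)\le\sup_{B_{R/4}(y)\cap\Omega}g_x^k\le\frac{C_1}{\abs{B_{R/2}}}\int_{B_{R/2}(y)\cap\Omega}g_x^k,
\]
with $C_1$ depending only on $n$, $p$, $\lambda$, $\norm{\mathbf A}_\infty$, and $\norm{\vec b-\vec c}_p$. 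Bounding the average by H\"older's inequality for Lorentz norms exactly as in the proof of Proposition~\ref{GeneralLorentzBound},
\[
\int_{B_{R/2}(y)\cap\Omega}g_x^k\le\norm{g_x^k}_{L^{\frac{n}{n-2},\infty}(\Omega)}\,\bignorm{\chi_{B_{R/2}(y)\cap\Omega}}_{L^{\frac{n}{2},1}}\le C\,C_0\,\abs{B_{R/2}(y)\cap\Omega}^{2/n}\le C\,C_0\,R^2,
\]
and combining the last two displays gives $0\le g_x^k(y)\le C\abs{x-y}^{2-n}$, uniformly in all $k$ with $1/k<R/2$, with $C$ depending only on $n$, $p$, $\lambda$, $\norm{\mathbf A}_\infty$, $\norm{\vec b-\vec c}_p$, and $\abs\Omega$.

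It then remains to pass to the limit. Since a subsequence $g_x^{i_k}\to g_x$ almost everywhere in $\Omega$, the uniform bound gives $0\le g_x(y)\le C\abs{x-y}^{2-n}$ for almost every $y$. To get it for every $y\ne x$, I would use that, by the estimate just proved, the $g_x^{i_k}$ are locally uniformly bounded on $\Omega\setminus\set{x}$ and, being solutions of $L\tran g_x^{i_k}=0$ there, locally uniformly H\"older continuous (by the De Giorgi--Nash estimate, cf.\ \cite[Theorem 7.1]{StampacchiaDirichlet}); hence by the Arzel\`a--Ascoli theorem a further subsequence converges locally uniformly on $\Omega\setminus\set{x}$ to a continuous function that agrees with $g_x$ almost everywhere, and replacing $g_x$ by this representative the bound holds at every $y\ne x$. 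Positivity is inherited from $g_x^k\ge 0$.

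The step I expect to be the main obstacle is the local boundedness estimate up to the boundary of $\Omega$: since $\Omega$ is only a domain of finite measure, with no boundary regularity, one cannot invoke a boundary De Giorgi--Nash--Moser theorem directly, and must instead rerun the Moser iteration, using that the test functions $(g_x^k-t)^+\eta^2$ and $\min(g_x^k,m)\eta^2$ belong to $W_0^{1,2}(\Omega)$ — equivalently to $W_0^{1,2}(\Omega\setminus\overline{B_{1/k}(x)})$, as $\eta$ vanishes near $\overline{B_{1/k}(x)}$ — precisely because $g_x^k\in W_0^{1,2}(\Omega)$, together with the Poincar\'e inequality $\norm{v}_{L^2}\le C_n\abs\Omega^{1/n}\norm{\nabla v}_{L^2}$ for $v$ supported in $\Omega$ (valid since $\abs\Omega$ is small) to absorb the first order term at every scale. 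Once this is available, the remainder is just an assembly of Lemma~\ref{Presymmetry}, Proposition~\ref{GreenConstructionLipschitz}, and the Lorentz H\"older inequality.
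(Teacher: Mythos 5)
Your argument is correct, but your handling of the boundary case is genuinely different from the paper's. You and the paper agree on the interior case: both start from the uniform weak-$L^{\frac{n}{n-2}}$ bound of Proposition~\ref{GreenConstructionLipschitz} (controlled through Lemmas~\ref{RfBound} and \ref{RfBound2} since $\vec b-\vec c\in L^p$), and both reduce the pointwise bound near $y$ to a local $L^\infty$--$L^1$ estimate for the nonnegative subsolution $g_x^k$ combined with the Lorentz H\"older inequality. The difference is how each handles a ball $B_{2r}(y)$ that is not compactly contained in $\Omega$. The paper does \emph{not} do a boundary Moser iteration: it switches to the approximating Green's functions $G_y^m$ (the other symmetry factor), compares them via the weak maximum principle with Green's functions $\widetilde G_y^m$ for the drift-only operator $\widetilde L=-\dv(\mathbf A\nabla \cdot)+(\vec c-\vec b)\cdot\nabla$, then extends the coefficients to $\Omega\cup B_{2r}$ (via Whitney extension) and compares again with Green's functions $\overline G_y^m$ of the extended operator for which $B_{2r}$ is interior, finally transporting the resulting interior bound back through the symmetry relation $g_x^k(y)=\lim_m\fint_{B_{1/k}(x)}G_y^m$ of Lemma~\ref{Presymmetry}. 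You instead keep working with $g_x^k$ and prove a boundary-version of Lemma~\ref{LocalBoundForp} directly, exploiting that $g_x^k\in W_0^{1,2}(\Omega)$ so that test functions of the form $H(\bar u)H'(\bar u)\eta^2$ are admissible in $W_0^{1,2}(\Omega\setminus\overline{B_{1/k}(x)})$ even when the cutoff $\eta$ does not vanish on $\partial\Omega$; this is a standard and sound device for subsolutions with zero trace. The trade-off is essentially the following: the paper avoids any new boundary estimate at the cost of the coefficient-extension and a double application of the maximum principle, while you avoid the comparison machinery at the cost of carrying the Moser iteration (and the Giaquinta-type $L^\infty$--$L^1$ improvement of \cite[pp.~80--82]{Gi93}) up to $\partial\Omega$. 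Both yield constants depending only on the allowed quantities. One small stylistic remark: you do not actually need to invoke Remark~\ref{rmk1344sat} to shrink $\abs\Omega$ for the absorption to be uniform in $R$ — the smallness of $\norm{\vec b-\vec c}_{L^n(B_R\cap\Omega)}\lesssim R^{1-n/p}\norm{\vec b-\vec c}_p$ already does the job at small scales, which is exactly what makes Lemma~\ref{LocalBoundForp} scale-uniform — but it does no harm. Your concluding step, upgrading the almost-everywhere bound to all $y\neq x$ via De Giorgi--Nash interior continuity and Arzel\`a--Ascoli, is also fine and in fact makes explicit a point the paper leaves implicit.
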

\begin{proof}
Note that, from Proposition \ref{GreenConstructionLipschitz}, we have
\[
\norm{g_x^k}_{L^{\frac{n}{n-2},\infty}(\Omega)}		\le C,
\]
where $C$ depends only on $n$, $\lambda$, $\norm{\mathbf A}_{\infty}$, $\abs{\Omega}$, $r_{\vec b-\vec c}\left(\frac{\lambda}{3C_n}\right)$, and $\tilde{r}_{\vec b-\vec c}\left(\frac{\lambda}{3C_n}\right)$.
Since $p>n$, by Lemmas \ref{RfBound} and \ref{RfBound2}, we find that $C$ depends only on $n$, $p$, $\lambda$, $\norm{\mathbf A}_{\infty}$, $\norm{\vec b-\vec c}_p$, and $\abs{\Omega}$.

Now, let $y\in\Omega$, set $r=\frac{1}{4} \abs{x-y}$, and consider $B_{2r}=B_{2r}(y)$.
We have the following two alternatives:
\begin{enumerate}[{\bf i.}]
\item	
$\boldsymbol{B_{2r} \subset \Omega:}$ 
Consider $k \ge k_0 >1/r$.
Then, the functions $g_x^k$ are $W^{1,2}(B_{2r})$ solutions of the equation $L\tran g_x^k=0$ in $B_{2r}$.
Hence, from Lemma~\ref{LocalBoundForp} and the embedding $L^1(B_r)\hookrightarrow L^{\frac{n}{n-2},\infty}(B_r)$, we obtain that
\begin{equation}\label{eq:NotForCritical!}
\sup_{B_{r/2}}g_x^k\le C\fint_{B_r}g_x^k\le\frac{C}{r^n} \norm{g_x^k}_{L^1(B_r)}\le\frac{C}{r^n}r^2 \norm{g_x^k}_{L^{\frac{n}{n-2},\infty}(\Omega)}\le Cr^{2-n},
\end{equation}
where $C$ depends only on $n$, $p$, $\lambda$, $\norm{\mathbf A}_{\infty}$, $\norm{\vec b-\vec c}_p$, and $\abs{\Omega}$.

\item
$\boldsymbol{B_{2r} \not\subset \Omega:}$
Consider the ``approximate'' Green's functions $\widetilde G_y^m$ for the operator
\[
\widetilde{L}u=-\dv(\mathbf{A}\nabla u)+(\vec c-\vec b) \cdot\nabla u;
\]
that is, let $\widetilde{G}_y^m\in W_0^{1,2}(\Omega)$ be the solution to the equation
\[
\widetilde{L}\widetilde{G}_y^m:=-\dv\left(\mathbf{A}\nabla\widetilde{G}_y^m\right)+(\vec c-\vec b)\cdot \nabla \widetilde{G}_y^m=\abs{B_{1/m}(y)}^{-1}\chi_{B_{1/m}(y)}=h_m\;\text{ in }\;\Omega,
\]
which exists by Proposition \ref{G_y}.
Then, $\widetilde{v}:=\widetilde{G}_y^m-G_y^m\in W_0^{1,2}(\Omega)$ and  we find
\begin{align*}
-\dv(\mathbf{A}\nabla\widetilde{v}+\vec b\widetilde{v})+\vec c \cdot\nabla\widetilde{v}+d\widetilde{v}&=-\dv(\mathbf{A}\nabla\widetilde{G}_y^m+\vec b\widetilde{G}_y^m)+\vec c \cdot\nabla\widetilde{G}_y^m+d\widetilde{G}_y^m-h^m\\
&=\widetilde{L}\widetilde{G}_y^m-\dv(\vec b\widetilde{G}_y^m)+\vec b\cdot\nabla\widetilde{G}_y^m+d\widetilde{G}_y^m-h_m\\
&=(d-\dv\vec b)\widetilde{G}_y^m.
\end{align*}
Since $d\ge\dv\vec b$ and $\widetilde{G}_y^m\ge 0$, the last quantity is nonnegative.
Therefore, the maximum principle (Proposition \ref{BoundForCriticalSubsolutions}) implies that $\widetilde{v}\ge 0$, and thus we have
\begin{equation}\label{eq:gxk1}
G_y^m\le\widetilde{G}_y^m.
\end{equation}
We now extend $\mathbf{A}$ to $\overline{\mathbf{A}}$, which is equal to $\lambda\mathbf{I}$ outside $\Omega$.
Moreover, we extend $\vec c-\vec b$ to $\overline{\vec c}\in\Lip(\mathbb R^n)$, with
\[
\norm{\overline{\vec c}}_{L^p(\mathbb R^n)} \le 2 \norm{\vec b-\vec c}_p.
\]
Consider now the functions $\overline G_y^m\in W_0^{1,2}(\Omega \cup B_{2r})$, that solve the equations
\[
\overline{L}\overline{G}_y^m:=-\dv\left(\overline{\mathbf{A}}\nabla\overline{G}_y^m\right)+\overline{\vec c} \cdot \nabla\overline{G}_y^m=\abs{B_{1/m}(y)}^{-1}\chi_{B_{1/m}(y)}=h_m\;\text{ in }\; \Omega\cup B_{2r}.
\]
Then, the function $\overline{v}=\overline{G}_y^m-\widetilde{G}_y^m$ solves the equation $\widetilde{L}\overline{v}=0$ in $\Omega$.
Since $\widetilde{G}_y^m$ vanishes on $\partial\Omega$ and $\overline{G}_y^m\ge 0$ in $\Omega\cup B_{2r}$, we see that $\overline{v}\ge 0$ on $\partial\Omega$.
Hence, the maximum principle shows that $\overline{v}\ge 0$ in $\Omega$ and thus, we have $\widetilde{G}_y^m\le\overline{G}_y^m$ in $\Omega$.
Combining with \eqref{eq:gxk1}, we obtain that
\begin{equation}\label{eq:overlineG}
G_y^m\le\overline{G}_y^m \quad\text{in}\quad \Omega.
\end{equation}
Since $B_{2r}\subset \Omega\cup B_{2r}$, we are back to the first case:
if $\overline{g}_x^k$ solves the equation
\[
-\dv(\overline{\mathbf{A}}\tran\nabla\overline{g}_x^k+\overline{\vec c}\,\overline{g}_x^k)=\abs{B_{1/k}(x)}^{-1}\chi_{B_{1/k}(x)}\quad\text{in}\quad \Omega \cup B_{2r},
\]
then we have
\[
\sup_{B_{r/2}}\,\overline{g}_x^k\le Cr^{2-n},
\]
where $C$ depends only on $n$, $p$, $\lambda$, $\norm{\vec b-\vec c}_p$, and $\abs{\Omega}$.
Therefore, using Lemma~\ref{Presymmetry} and \eqref{eq:overlineG}, we obtain
\[
g_x^k(y)=\lim_{m\to\infty}\fint_{B_{1/k}(x)}G_y^m\le\lim_{m\to\infty}\fint_{B_{1/k}(x)}\overline{G}_y^m=\overline{g}_x^k(y)\le Cr^{2-n}.
\]
\end{enumerate}
Therefore, in both cases, we have $g_x^k(y)\le Cr^{2-n}$.
Since a subsequence of $g_x^k$ converges to $g_x$ almost everywhere in $\Omega$, we get
\[
g_x(y)\le Cr^{2-n}\le C\abs{x-y}^{2-n}.\qedhere
\]
\end{proof}

We also show the next lemma, which will be useful to us later.

\begin{lemma}\label{W^{1,2}Outside}
Let $\Omega\subset \mathbb R^n$ be a domain with $\abs{\Omega} <+\infty$.
Let $\mathbf{A}$ be bounded and satisfy the uniform ellipticity condition \eqref{ellipticity} and $\vec b$, $\vec c$, $d\in \Lip(\Omega)\cap L^{\infty}(\Omega)$, and $d \ge \dv \vec b$.
Let $p>n$ and $r>0$. Then, for almost every $x\in\Omega$ and all $k\in\mathbb N$, the functions $g_x^k$ belong to $W^{1,2}(\Omega\setminus B_r(x))$ and $\norm{g_x^k}_{W^{1,2}(\Omega\setminus B_r(x))}$ are bounded uniformly in $k$.
\end{lemma}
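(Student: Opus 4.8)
The plan is to reduce to the approximate adjoint equation satisfied by $g_x^k$ away from its pole $x$, apply a Caccioppoli estimate there, and control the right-hand side by the uniform pointwise bound for $g_x^k$ obtained in the proof of Proposition~\ref{GreenConstructionLipschitzSubcritical}. Fix $x\in\Omega$ for which the conclusions of Propositions~\ref{g_x} and \ref{GreenConstructionLipschitzSubcritical} hold. For the (finitely many) $k$ below a threshold depending on $r$ we simply use that $g_x^k=Sf_k\in W_0^{1,2}(\Omega)\subset W^{1,2}(\Omega\setminus B_r(x))$, with finite norm; so it suffices to bound $\norm{g_x^k}_{W^{1,2}(\Omega\setminus B_r(x))}$ uniformly over those $k$ large enough that $B_{1/k}(x)\subset B_{r/2}(x)$ and $1/k<\tfrac13$. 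For such $k$, recall that $g_x^k\in W_0^{1,2}(\Omega)$ solves $L\tran g_x^k=f_k$ with $f_k$ supported in $B_{1/k}(x)$, hence $L\tran g_x^k=0$ in $\Omega\setminus\overline{B_{1/k}(x)}$ and $g_x^k=0$ on $\partial\Omega$. Written in divergence form, $L\tran$ is an operator of the same type as $L$ with $(\mathbf A,\vec b,\vec c,d)$ replaced by $(\mathbf A\tran,\vec c,\vec b,d)$, and for this operator the hypothesis $d\ge\dv\vec b$ is precisely the ``$d\ge\dv\vec c$'' alternative appearing in the hypotheses of Lemmas~\ref{Caccioppoli} and \ref{2By1}. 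Consequently the estimate \eqref{eq:In2^*Bound} — which, as recorded at the start of the proof of Lemma~\ref{2By1}, holds for $g_x^k$ and any smooth nonnegative $\phi$ vanishing on $B_{1/k}(x)$ — is available, with a constant depending only on $n$, $\lambda$, $\norm{\mathbf A}_\infty$, and $r_{\vec b-\vec c}\left(\frac{\lambda}{3C_n}\right)$.

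Next I would choose $\eta\in C_c^\infty(\mathbb R^n)$ with $0\le\eta\le 1$, $\eta\equiv 1$ on $\mathbb R^n\setminus B_r(x)$, $\eta\equiv 0$ on $B_{r/2}(x)\supset B_{1/k}(x)$, and $\abs{\nabla\eta}\le Cr^{-1}$, and apply \eqref{eq:In2^*Bound} with $\phi=\eta$ to obtain
\[
\int_{\Omega\setminus B_r(x)}\abs{\nabla g_x^k}^2\le\norm{\eta\nabla g_x^k}_2^2\le C\norm{\eta g_x^k}_2^2+C\norm{g_x^k\nabla\eta}_2^2\le C\bigl(1+r^{-2}\bigr)\int_{\Omega\setminus B_{r/2}(x)}\abs{g_x^k}^2.
\]
Everything is thus reduced to bounding $\int_{\Omega\setminus B_{r/2}(x)}\abs{g_x^k}^2$ uniformly in $k$. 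For this I would invoke the pointwise bound extracted from the proof of Proposition~\ref{GreenConstructionLipschitzSubcritical}: for a.e.\ $x$ there is a constant $C$, depending only on $n$, $p$, $\lambda$, $\norm{\mathbf A}_\infty$, $\norm{\vec b-\vec c}_p$, and $\abs{\Omega}$, such that $0\le g_x^k(y)\le C\abs{x-y}^{2-n}$ for all $y$ with $\abs{x-y}\ge r/2$ and all sufficiently large $k$. Since $\abs{\Omega}<+\infty$, this gives $\int_{\Omega\setminus B_{r/2}(x)}\abs{g_x^k}^2\le C\,(r/2)^{2(2-n)}\abs{\Omega}$ and, in the same way, $\int_{\Omega\setminus B_r(x)}\abs{g_x^k}^2\le C\,r^{2(2-n)}\abs{\Omega}$. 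Combining these with the displayed Caccioppoli estimate yields $\norm{g_x^k}_{W^{1,2}(\Omega\setminus B_r(x))}\le C$ uniformly in $k$, as claimed.

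The argument is essentially bookkeeping rather than a new idea, and the one place where care is needed is the transfer of the Caccioppoli computation of Lemma~\ref{Caccioppoli} to the adjoint equation: one must check that $d\ge\dv\vec b$ is indeed the correct sign condition for $L\tran$ (so that the terms involving $d$ and the drift can be discarded), and that $\eta^2 g_x^k$ is an admissible test function — it lies in $W_0^{1,2}(\Omega)$ and vanishes on $B_{1/k}(x)$, so the source $f_k$ contributes nothing — together with the routine verification that the pointwise bound borrowed from Proposition~\ref{GreenConstructionLipschitzSubcritical} is uniform in $k$ on the set $\set{\abs{x-y}\ge r/2}$. Once these points are settled, the conclusion is immediate.
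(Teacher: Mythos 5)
Your proof is correct and follows essentially the same route as the paper's: use the uniform-in-$k$ pointwise bound $0\le g_x^k(y)\le C\abs{x-y}^{2-n}$ extracted from the proof of Proposition~\ref{GreenConstructionLipschitzSubcritical} to control $g_x^k$ in $L^2(\Omega\setminus B_{r/2}(x))$, and then apply the Caccioppoli estimate from Lemma~\ref{Caccioppoli} (in the form \eqref{eq:In2^*Bound}, for the adjoint operator) to control the gradient. The extra care you take with the cutoff, with small $k$, and with the transfer of the sign condition $d\ge\dv\vec b$ to the adjoint operator is just the fleshing out of the paper's terse argument, not a different method.
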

\begin{proof}
From Proposition~\ref{GreenConstructionLipschitzSubcritical}, we have $0\le g_x^k(y)\le C\abs{x-y}^{2-n}$, uniformly in $k$. This shows that $g_x^k$ is uniformly bounded in $\Omega\setminus B_r(x)$.
Hence, using Lemma~\ref{Caccioppoli}, we obtain that $g_x^k\in W^{1,2}(\Omega\setminus B_r(x))$ and $\norm{g_x^k}_{W^{1,2}(\Omega\setminus B_r(x))}$ are bounded uniformly in $k$.
\end{proof}

As a corollary, we also obtain the next estimate; we will be able to weaken the assumptions later.

\begin{corollary}\label{TSBound}
Let $\Omega$ be a domain with $\abs{\Omega} <+\infty$.
Let $\mathbf{A}$ be bounded and satisfy the uniform ellipticity condition \eqref{ellipticity} and $\vec b$, $\vec c$, $d\in \Lip(\Omega)\cap L^{\infty}(\Omega)$, and $d\ge\dv\vec b$.
Then the operators
\[
T, S:W^{-1,2}(\Omega)\to W_0^{1,2}(\Omega),
\]
defined in \eqref{eq:T}, \eqref{eq:S}, are bounded.
Moreover, for $p>n$, the operator norms of $T$ and $S$ are bounded by a constant depending only on $n$, $p$, $\lambda$, $\norm{\mathbf A}_{\infty}$, $\norm{\vec b-\vec c}_p$, and $\abs{\Omega}$.
\end{corollary}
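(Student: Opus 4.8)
The plan is to note first that the qualitative statement is essentially automatic, and then to trace the constants. As maps between the Banach spaces $W^{-1,2}(\Omega)$ and $W_0^{1,2}(\Omega)$ one has $T=L^{-1}$ and $S=(L\tran)^{-1}$: Lemmas~\ref{W_0^{1,2}Solvability} and~\ref{W_0^{1,2}SolvabilityAdjoint} give bijectivity, and $L,L\tran$ are bounded from $W_0^{1,2}(\Omega)$ to $W^{-1,2}(\Omega)$ since $\vec b,\vec c\in L^n(\Omega)$, $d\in L^{n/2}(\Omega)$ (Sobolev embedding), so the open mapping theorem already yields boundedness of $T$ and $S$. To obtain operator norms controlled by $\norm{\vec b-\vec c}_p$ for $p>n$, I would climb up in two stages: an $L^2\to L^2$ bound coming from the pointwise bound on Green's function, and then, via the energy inequality of Lemma~\ref{Gamma} and a duality loop between $T$ and $S$, the full $W^{-1,2}\to W_0^{1,2}$ bound.

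\emph{Step 1: an $L^2\to L^2$ bound.} For $f\in L^\infty(\Omega)$, Proposition~\ref{g_x} represents $Tf(x)=\int_\Omega g(y,x)f(y)\,dy$, while Proposition~\ref{GreenConstructionLipschitzSubcritical} gives $0\le g(y,x)=g_x(y)\le C\abs{x-y}^{2-n}$, where, after replacing $r_{\vec b-\vec c}(\lambda/3C_n)$ and $\tilde r_{\vec b-\vec c}(\lambda/3C_n)$ by $\norm{\vec b-\vec c}_p$ via Lemmas~\ref{RfBound} and~\ref{RfBound2}, the constant $C$ depends only on $n,p,\lambda,\norm{\mathbf A}_\infty,\norm{\vec b-\vec c}_p,\abs{\Omega}$. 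Since $\int_\Omega\abs{x-z}^{2-n}\,dz\le C\abs{\Omega}^{2/n}$ uniformly in $x$ (the ball centred at $x$ being extremal), Schur's test applied to the kernel $(x,y)\mapsto g(y,x)$ yields $\norm{Tf}_{L^2(\Omega)}\le C\abs{\Omega}^{2/n}\norm{f}_{L^2(\Omega)}$ for $f\in L^\infty(\Omega)$; this extends to all $f\in L^2(\Omega)$ by density, using that $T\colon L^2(\Omega)\to W_0^{1,2}(\Omega)$ is continuous (e.g.\ by the closed graph theorem). The adjoint identity~\eqref{eq:AdjointRelation} then gives $\norm{Sf}_{L^2}=\sup_{\norm{h}_2\le1}\abs{\int_\Omega h\,Sf}=\sup_{\norm{h}_2\le1}\abs{\int_\Omega (Th)f}\le C\norm{f}_{L^2}$ as well.

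\emph{Step 2: bootstrap to $W^{-1,2}\to W_0^{1,2}$.} For $\psi\in L^2(\Omega)$, testing the equation for $u=T\psi$ against $u$ and invoking Lemma~\ref{Gamma} (as in Lemma~\ref{BoundOnU}) gives $\tfrac{\lambda}{3}\norm{\nabla u}_2^2\le\int_\Omega\psi u+\gamma\norm{u}_2^2$; by Step~1 one has $\norm{u}_2\le C\norm{\psi}_2$, so $\norm{T\psi}_{W_0^{1,2}}\le C\norm{\psi}_{L^2}$, and the same computation (note $\alpha\tran(v,v)=\alpha(v,v)$, so Lemma~\ref{Gamma} applies verbatim) gives $\norm{S\psi}_{W_0^{1,2}}\le C\norm{\psi}_{L^2}$. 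Now take $F\in W^{-1,2}(\Omega)$. Since $S=T^*$, for $\psi\in L^2(\Omega)$ we have $\int_\Omega (TF)\psi=\ip{F,S\psi}\le\norm{F}_{W^{-1,2}}\norm{S\psi}_{W_0^{1,2}}\le C\norm{F}_{W^{-1,2}}\norm{\psi}_2$, hence $\norm{TF}_{L^2}\le C\norm{F}_{W^{-1,2}}$; symmetrically, using $T=S^*$, $\norm{SF}_{L^2}\le C\norm{F}_{W^{-1,2}}$. Finally, testing $LTF=F$ against $TF$ and using Lemma~\ref{Gamma}, the Cauchy--Schwarz and Young inequalities, and the $L^2$ bound just obtained, all terms can be absorbed, yielding $\norm{TF}_{W_0^{1,2}}\le C\norm{F}_{W^{-1,2}}$; the identical computation for $L\tran SF=F$ gives $\norm{SF}_{W_0^{1,2}}\le C\norm{F}_{W^{-1,2}}$, with $C=C(n,p,\lambda,\norm{\mathbf A}_\infty,\norm{\vec b-\vec c}_p,\abs{\Omega})$.

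\emph{The main obstacle} is that no single energy inequality closes on its own: the constant $\gamma$ of Lemma~\ref{Gamma} may be large, so $\gamma\norm{u}_2^2$ cannot be absorbed by coercivity alone. The resolution is that the already-established pointwise bound on $g$ turns this into an $L^2$ gain through Schur's test, after which the $T\leftrightarrow S$ duality loop lifts the estimate from $L^2$ data to $W^{-1,2}$ data while keeping the dependence on $\norm{\vec b-\vec c}_p$ intact; one must also take care that every constant is traced back, via Lemmas~\ref{RfBound} and~\ref{RfBound2}, to $\norm{\vec b-\vec c}_p$ rather than to the $L^n$ and $L^{n/2}$ norms of the lower-order coefficients.
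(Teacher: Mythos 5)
Your proof is correct and rests on the same two technical pillars as the paper's proof — the pointwise Green's function bound from Proposition~\ref{GreenConstructionLipschitzSubcritical} (with constants traced to $\norm{\vec b-\vec c}_p$ via Lemmas~\ref{RfBound} and~\ref{RfBound2}) and the energy inequality of Lemma~\ref{Gamma} as packaged in Lemma~\ref{BoundOnU} — but it is organized quite differently. The paper instead makes the decomposition $TF=u+v$, where $u$ solves the coercive shifted problem $\alpha_\gamma(u,\cdot)=\ip{F,\cdot}$ (and is immediately estimated in $W_0^{1,2}$ with a constant depending only on $n$, $\lambda$, $\abs{\Omega}$), and the corrector $v=T(\gamma u)$ is bounded in $L^2$ by Cauchy--Schwarz and Fubini against the kernel $g_y(x)$ and then in $W^{1,2}$ by Lemma~\ref{BoundOnU}; adding the two pieces finishes in one stroke. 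You instead run a four-stage bootstrap: $T\colon L^2\to L^2$ by Schur's test (the same $L^2$ kernel estimate, just framed symmetrically); then $T\colon L^2\to W_0^{1,2}$ by the energy inequality plus the $L^2$ bound; then $T\colon W^{-1,2}\to L^2$ by duality against $S\colon L^2\to W_0^{1,2}$; and finally $T\colon W^{-1,2}\to W_0^{1,2}$ by another energy step. What your route buys is a cleaner conceptual structure (no shifted-problem splitting and no explicit corrector), and it makes the qualitative boundedness via the open mapping theorem visible up front; what it costs is one extra duality passage. Both arguments are sound and trace the constants to $(n,p,\lambda,\norm{\mathbf A}_\infty,\norm{\vec b-\vec c}_p,\abs{\Omega})$.
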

\begin{proof}
It suffices to show that $T$ is bounded, since $S=T^*$.
To show this, note that, from Lemma \ref{Gamma}, there exists $\gamma>0$  such that the bilinear form
\[
\alpha_{\gamma}(u,v)=\int_{\Omega}\mathbf{A}\nabla u \cdot\nabla v+\vec c\cdot \nabla v\, u+\vec b \cdot \nabla u\, v+(d+\gamma)uv
\]
is coercive. Therefore, for every $F\in W^{-1,2}(\Omega)$, there exists $u\in W_0^{1,2}(\Omega)$ such that
\[
\alpha_{\gamma}(u,v)=\ip{F,v},\quad \forall v \in W_0^{1,2}(\Omega).
\]
Moreover, by Lemma~\ref{Gamma}, we have that
\[
\frac{\lambda}{3}\int_{\Omega}|\nabla u|^2\leq\alpha_{\gamma}(u,u)=\ip{F,u}\leq\norm{F}_{W^{-1,2}(\Omega)}\norm{u}_{W_0^{1,2}(\Omega)},
\]
and the Sobolev inequality shows that $\norm{u}_{W_0^{1,2}(\Omega)}\le C\norm{F}_{W^{-1,2}(\Omega)}$, where $C$ depends only on $n$, $\lambda$, and $\abs{\Omega}$.

Let a sequence $\set{u_n} \in C_c^{\infty}(\Omega)$ converge to $u$ in $W_0^{1,2}(\Omega)$, and set
\[
v_n(y)=\gamma\int_{\Omega}g_y(x)u_n(x)\,dx.
\]
Then, since $u_n\in L^{\infty}(\Omega)$ for all $n$, Proposition \ref{g_x} shows that $v_n$ is the solution to the equation $Lv_n=\gamma u_n$. Consider also the solution $v\in W_0^{1,2}(\Omega)$ to the equation $Lv=\gamma u$. Then, from \eqref{eq:T}, $v_n=T(\gamma u_n)$ and $v=T(\gamma u)$, therefore continuity of $T$ shows that
\begin{equation}\label{eq:BoundOnV}
\|v\|_{W_0^{1,2}(\Omega)}=\|T(\gamma u)\|_{W_0^{1,2}(\Omega)}=\lim_{n\to\infty}\|T(\gamma u_n)\|_{W_0^{1,2}(\Omega)}=\lim_{n\to\infty}\|v_n\|_{W_0^{1,2}(\Omega)}.
\end{equation}
Now, from Proposition~\ref{GreenConstructionLipschitzSubcritical} and the Cauchy-Schwartz inequality, we obtain that
\[
\abs{v_n(y)} \le C\gamma\int_{\Omega} \abs{x-y}^{2-n} \abs{u_n(x)}\,dx\le C\left(\int_{\Omega} \abs{x-y}^{2-n}\,\abs{u_n(x)}^2\,dx\right)^{1/2},
\]
where $C$ depends (via Lemma \ref{RfBound}) only on $n$, $p$, $\lambda$, $\norm{\mathbf A}_{\infty}$, $\norm{\vec b-\vec c}_p$, and $\abs{\Omega}$.
Here, we used (see the calculation before \cite[Lemma 7.12]{Gilbarg})
\[
\int_\Omega \abs{x-y}^{2-n} \,dx \le \frac{n}{2}\, \abs{B_1}^{\frac{n-2}{n}} \abs{\Omega}^{\frac{2}{n}} = C(n)\, \abs{\Omega}^{\frac{2}{n}}.
\]
Therefore, by Fubini's theorem, we get
\begin{equation}\label{eq:w}
\int_{\Omega} \abs{v_n(y)}^2\,dy \le C\int_{\Omega}\int_{\Omega} \abs{x-y}^{2-n}\, \abs{u_n(x)}^2\,dy\,dx\le C\int_{\Omega} \abs{u_n(x)}^2\,dx.
\end{equation}
Moreover, Lemma~\ref{BoundOnU} shows that (recall $Lv_n=\gamma u_n$)
\[
\int_{\Omega} \abs{\nabla v_n}^2\le C\int_{\Omega} \abs{\gamma u_n}^2+C\int_{\Omega} \abs{v_n}^2\le C\int_{\Omega} \abs{u_n}^2,
\]
where we also used \eqref{eq:w}. Combining with \eqref{eq:w}, we obtain that
\[
\norm{v_n}_{W_0^{1,2}(\Omega)}\le C\norm{u_n}_{L^2(\Omega)},
\]
and thus, plugging in \eqref{eq:BoundOnV}, we finally find that
\[
\norm{v}_{W_0^{1,2}(\Omega)}=\lim_{n\to\infty} \norm{v_n}_{W_0^{1,2}(\Omega)}\leq C\lim_{n\to\infty} \norm{u_n}_{L^2(\Omega)}=C \norm{u}_{L^2(\Omega)}.
\]
Finally, setting $w=u-v$, we see that $w$ is the solution to the equation $Lw=F$ and
\[
\norm{w}_{W_0^{1,2}(\Omega)}\le \norm{u}_{W_0^{1,2}(\Omega)}+\norm{v}_{W_0^{1,2}(\Omega)}\le C\norm{u}_{W_0^{1,2}(\Omega)}\le C\norm{F}_{W^{-1,2}(\Omega)},
\]
where $C$ depends only on $n$, $p$, $\lambda$, $\norm{\mathbf A}_{\infty}$, $\norm{\vec b-\vec c}_p$, and $\abs{\Omega}$.
\end{proof}

\subsection{The good estimates}

Using an approximation argument, we drop the assumption $\vec b$, $\vec c$, $d \in \Lip(\Omega)$.
We first show the next lemma.

\begin{lemma}\label{DivergenceApproximation}
Let $\vec b\in L^n(\Omega)$, $d\in L^{n/2}(\Omega)$, with $d\ge\dv\vec b$.
Let $\psi_m(x)=m^n\psi(mx)$ be a mollifier, with $\psi$ supported in the unit ball, and let
\[
\vec b_m=(\vec b \chi_\Omega) \ast \psi_m,\quad d_m=(d \chi_\Omega) \ast \psi_m.
\]
Then $\vec b_m,d_m\in\Lip(\Omega_m)\cap L^{\infty}(\Omega_m)$ and $d_m\ge\dv\vec b_m$ in $\Omega_m:=\Set{x\in\Omega: {\rm dist}(x,\partial\Omega)>\frac{1}{m}}$.
\end{lemma}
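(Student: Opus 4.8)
The plan is to verify the two claims in turn; neither is deep, and the intention is to keep both arguments short.

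For the regularity statement, I would first note that $\vec b\chi_\Omega\in L^1_{\loc}(\bR^n)$ and $\psi_m\in C_c^\infty(\bR^n)$, so $\vec b_m$ is $C^\infty$ on $\bR^n$, with $\nabla\vec b_m(x)=\int \vec b(y)\chi_\Omega(y)(\nabla\psi_m)(x-y)\,dy$ obtained by differentiating under the integral sign. Hölder's inequality with the conjugate exponent $n'=n/(n-1)$ then gives the uniform bounds $\norm{\vec b_m}_{L^\infty(\bR^n)}\le\norm{\psi_m}_{L^{n'}(\bR^n)}\norm{\vec b}_n$ and $\norm{\nabla\vec b_m}_{L^\infty(\bR^n)}\le\norm{\nabla\psi_m}_{L^{n'}(\bR^n)}\norm{\vec b}_n$; since $\vec b_m$ is smooth with bounded gradient it belongs to $\Lip(\Omega_m)\cap L^\infty(\Omega_m)$. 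The identical argument, with the conjugate exponent of $n/2$ replacing $n'$, handles $d_m$.

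For the divergence inequality, the key observation is that $d-\dv\vec b\ge0$ in the distributional sense means
\[
\int_\Omega d\,\varphi+\int_\Omega\vec b\cdot\nabla\varphi\ \ge\ 0\qquad\text{whenever }\varphi\in C_c^\infty(\Omega),\ \varphi\ge0,
\]
and the plan is to apply this to the translated mollifier $\varphi_x(y):=\psi_m(x-y)$ for a fixed $x\in\Omega_m$. The restriction to $\Omega_m$ is exactly what makes $\varphi_x$ admissible: since $\psi$ is supported in the unit ball and ${\rm dist}(x,\partial\Omega)>1/m$, the support $\overline{B_{1/m}(x)}$ of $\varphi_x$ lies in $\Omega$, so $\varphi_x\in C_c^\infty(\Omega)$, and $\varphi_x\ge0$ because the mollifier is nonnegative. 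The first integral is then $d_m(x)$ by definition; for the second, using $\nabla_y\varphi_x(y)=-(\nabla\psi_m)(x-y)$ together with $\dv\vec b_m(x)=\int_\Omega\vec b(y)\cdot(\nabla\psi_m)(x-y)\,dy$ (valid because for $x\in\Omega_m$ the integrand is supported inside $\Omega$, so $\vec b_m(x)$ coincides there with the convolution of $\vec b\chi_\Omega$), one recognizes it as $-\dv\vec b_m(x)$. Thus the distributional inequality becomes the pointwise inequality $d_m(x)-\dv\vec b_m(x)\ge0$ on $\Omega_m$, both sides being continuous functions there.

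I do not expect any real obstacle: the only things requiring a moment's care are the elementary geometric bookkeeping that guarantees $\varphi_x\in C_c^\infty(\Omega)$ (which is what dictates the definition of $\Omega_m$) and the standard justification of differentiating the convolution under the integral sign.
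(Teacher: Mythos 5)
Your proposal is correct, and the two parts split cleanly. For the regularity claim, your H\"older-based bounds on $\norm{\vec b_m}_{\infty}$, $\norm{\nabla\vec b_m}_\infty$, etc.\ (via the conjugate exponents $n'$ and $(n/2)'$) are the same in spirit as the paper's estimates, just written slightly more compactly; no issue there.

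For the divergence inequality your route is genuinely different from, and in some ways cleaner than, the paper's. The paper starts from the candidate conclusion: it takes an arbitrary nonnegative $\phi\in C_c^\infty(\Omega_m)$, writes $\int_{\Omega_m}(d_m\phi+\vec b_m\cdot\nabla\phi)$ as a double integral, applies Fubini, and recognizes the inner integral (against the translated test function $\phi_y(z)=\phi(z+y)$, which lies in $C_c^\infty(\Omega)$ because $y\in B_{1/m}$) as something controlled by the hypothesis $d-\dv\vec b\ge0$; positivity of $\psi_m$ then yields the distributional inequality $d_m\ge\dv\vec b_m$ in $\Omega_m$. You instead start from the hypothesis: you test $d-\dv\vec b\ge0$ directly against the admissible nonnegative test function $\varphi_x(y)=\psi_m(x-y)\in C_c^\infty(\Omega)$, identify the two resulting terms as $d_m(x)$ and $-\dv\vec b_m(x)$, and obtain the \emph{pointwise} inequality $d_m(x)\ge\dv\vec b_m(x)$ for every $x\in\Omega_m$. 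Your argument is essentially the Fubini computation "read in reverse": rather than reducing one test integral to a family of inner integrals, you choose the test function that collapses the computation to a single application of the hypothesis. The gain is that you land on the pointwise statement directly, avoiding the Fubini bookkeeping; the two conclusions are of course equivalent here because $d_m$ and $\dv\vec b_m$ are smooth. Your side remarks — that the definition of $\Omega_m$ is exactly what makes $\varphi_x$ supported in $\Omega$, and that differentiation under the integral is the only technical point to justify — are both the right points of care and handled appropriately.
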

	
\begin{proof}
To show that $d_m \in L^\infty(\Omega_m)$, we estimate for $x \in\Omega_m$ that
\[
\abs{d_m(x)}=\Abs{\int_{B_{1/m}(x)} d(z)\psi_m(x-z)\,dz} \le \norm{\psi_m}_\infty \int_{B_{1/m}(x)} \abs{d} \le  \norm{\psi_m}_\infty \norm{d}_{n/2} \abs{B_{1/m}}^{1-2/n}\leq K_m \norm{d}_{n/2},
\]
where $K_m$ is a constant depending on $m$.
To see that $d_m \in\Lip(\Omega_m)$, note that  for $x$, $y\in\Omega_m$, we have
\begin{align*}
\abs{d_m(x)-d_m(y)}&=\Abs{\int_{B_{1/m}(x)\cup B_{1/m}(y)} d(z)\left(\psi_m(x-z)-\psi_m(y-z)\right)dz}\\
&\leq \norm{\nabla \psi_m}_\infty \abs{x-y} \int_{B_{1/m}(x)\cup B_{1/m}(y)} \abs{d} \le K_m \norm{d}_{n/2} \abs{x-y}.
\end{align*}
Similarly, we obtain that $\vec b_m \in\Lip(\Omega_m) \cap L^\infty(\Omega_m)$.

Let $\phi\in C_c^{\infty}(\Omega_m)$ be nonnegative, and set $\phi_y(z)=\phi(z+y)$. We then compute
\begin{align*}
\int_{\Omega_m}d_m\phi+\vec b_m \cdot\nabla\phi&=\int_{\Omega_m}\left(\int_{B_{1/m}}\left(d(x-y)\psi_m(y)\phi(x)+\vec b(x-y)\psi_m(y) \cdot \nabla\phi(x)\right)\,dy\right)\,dx\\
&=\int_{B_{1/m}}\left(\int_{\Omega_m}\left(d(x-y)\phi(x)+\vec b(x-y) \cdot\nabla\phi(x)\right)\,dx\right)\psi_m(y)\,dy\\
&=\int_{B_{1/m}}\left(\int_{\Omega_m-y}\left(d(z)\phi_y(z)+\vec b(z) \cdot\nabla\phi_y(z)\right)\,dz\right)\psi_m(y)\,dy.
\end{align*}
Since $\phi\in C_c^{\infty}(\Omega_m)$, we obtain that $\phi_y\in C_c^{\infty}(\Omega_m-y)$.
Moreover, $\Omega_m-y\subset\Omega$ whenever $y\in B_{1/m}$.
Therefore, if we extend $\phi_y$ by zero outside $\Omega_m-y$, we obtain that $\phi_y\in C_c^{\infty}(\Omega)$, therefore the last inner integral is equal to
\[
\int_{\Omega_m-y}\left(d(z)\phi_y(z)+\vec b(z) \cdot\nabla\phi_y(z)\right)\,dz=\int_{\Omega}\left(d(z)\phi_y(z)+\vec b(z) \cdot \nabla\phi_y(z)\right)\,dz\ge 0,
\]
since $d\ge\dv\vec b$ in $\Omega$.
Therefore, we obtain that
\[
\int_{\Omega_m}d_m\phi+\vec b_m \cdot \nabla\phi\ge 0,
\]
hence $d_m\ge\dv\vec b_m$ in $\Omega_m$.
\end{proof}

\begin{theorem}\label{GreenConstructionAdjoint}
Let $\Omega\subset \mathbb R^n$ be a domain with $\abs{\Omega} <+\infty$.
Let $\mathbf{A}$ be bounded and satisfy the uniform ellipticity condition \eqref{ellipticity} and $\vec b$, $\vec c\in L^n(\Omega)$, $\vec b-\vec c \in L^p(\Omega)$ for some $p>n$, $d\in L^{n/2}(\Omega)$. Assume also that $d\ge\dv\vec b$. For almost every $x\in\Omega$, the function $g_x$ in Proposition \ref{g_x}   satisfies the bounds
\[
0 \le g_x(y)\le C \abs{x-y}^{2-n},\quad\text{for }\;x \neq y,
\]
where $C>0$ depends on $n$, $p$, $\lambda$, $\norm{\mathbf A}_{\infty}$, $\norm{\vec b-\vec c}_p$, and $\abs{\Omega}$.
\end{theorem}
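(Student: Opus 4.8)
The plan is to reduce the statement to Proposition~\ref{GreenConstructionLipschitzSubcritical} by approximating the lower order coefficients with Lipschitz ones. Fix $x\in\Omega$ (outside a null set). For $m\in\mathbb N$ let $\psi_m$ be a standard mollifier supported in $B_{1/m}$, put $\vec b_m=(\vec b\chi_\Omega)\ast\psi_m$, $\vec c_m=(\vec c\chi_\Omega)\ast\psi_m$, $d_m=(d\chi_\Omega)\ast\psi_m$, and let $\Omega_m=\{z\in\Omega:\operatorname{dist}(z,\partial\Omega)>1/m\}$, so that $\Omega_m\uparrow\Omega$ and $x\in\Omega_m$ for $m$ large (after passing, if one wishes to keep $\Omega_m$ a domain, to the component containing $x$, which still exhausts $\Omega$). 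By Lemma~\ref{DivergenceApproximation}, $\vec b_m,d_m\in\Lip(\Omega_m)\cap L^\infty(\Omega_m)$ with $d_m\ge\dv\vec b_m$ in $\Omega_m$, and likewise $\vec c_m\in\Lip(\Omega_m)\cap L^\infty(\Omega_m)$; moreover $\vec b_m-\vec c_m=((\vec b-\vec c)\chi_\Omega)\ast\psi_m$, whence $\norm{\vec b_m-\vec c_m}_{L^p(\Omega_m)}\le\norm{\vec b-\vec c}_{L^p(\Omega)}$, while $\abs{\Omega_m}\le\abs{\Omega}$ and $\mathbf A$ is unchanged. Writing $L_m$ for the operator with coefficients $\mathbf A,\vec b_m,\vec c_m,d_m$ and $g_x^m$ for the Green's function of $L_m\tran$ in $\Omega_m$ produced by Proposition~\ref{g_x}, Proposition~\ref{GreenConstructionLipschitzSubcritical} applies, and since the constant there depends only on $n,p,\lambda,\norm{\mathbf A}_\infty,\norm{\vec b_m-\vec c_m}_p,\abs{\Omega_m}$, it yields
\[
0\le g_x^m(y)\le C\abs{x-y}^{2-n}\qquad(y\ne x)
\]
with $C$ independent of $m$.

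Next I would pass to the limit $m\to\infty$. By the $W_0^{1,q}$ bound of Proposition~\ref{g_x} applied to $L_m\tran$, together with Lemma~\ref{RfBound} (which bounds $r_{\vec b_m-\vec c_m}(\lambda/3C_n)$ in terms of $\norm{\vec b-\vec c}_p$), the norms $\norm{g_x^m}_{W_0^{1,q}(\Omega_m)}$ are bounded uniformly in $m$ for each $q\in[1,\tfrac{n}{n-1})$. Extending each $g_x^m$ by zero to $\Omega$ and using the compact embedding $W_0^{1,q}(\Omega)\hookrightarrow L^q(\Omega)$ (valid as $\abs{\Omega}<\infty$), a subsequence converges weakly in $W_0^{1,q}(\Omega)$ and a.e.\ in $\Omega$ to some $h_x\in W_0^{1,q}(\Omega)$, which inherits the bound $0\le h_x(y)\le C\abs{x-y}^{2-n}$.

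It remains to identify $h_x$ with the Green's function $g_x$ of Proposition~\ref{g_x}. For $\phi\in L^\infty(\Omega)$, Definition~\ref{GreenDefinition} gives $\int_{\Omega_m}g_x^m\phi=T_m\phi(x)$, where $T_m\phi$ denotes the $W_0^{1,2}(\Omega_m)$ solution of $L_m u=\phi$. On the left, the pointwise bound above provides the dominating function $C\abs{x-y}^{2-n}\norm{\phi}_\infty\in L^1(\Omega)$ (using $\abs{\Omega}<\infty$), so $\int_{\Omega_m}g_x^m\phi\to\int_\Omega h_x\phi$ by dominated convergence. On the right, $u_m:=T_m\phi$ is bounded in $W_0^{1,2}(\Omega)$ uniformly in $m$ by Corollary~\ref{TSBound} (its norm bound again depends only on the uniformly controlled data), so a further subsequence converges weakly in $W_0^{1,2}(\Omega)$ and strongly in $L^2(\Omega)$ to some $u^\ast$; passing to the limit in the weak formulation tested against a fixed $\varphi\in C_c^\infty(\Omega)$, and using $\vec b_m\to\vec b$, $\vec c_m\to\vec c$ in $L^n$ and $d_m\to d$ in $L^{n/2}$ together with the Sobolev embedding $W_0^{1,2}\hookrightarrow L^{\frac{2n}{n-2}}$ to pass to the limit in the lower order terms, shows $Lu^\ast=\phi$, hence $u^\ast=T\phi$ by uniqueness (Lemma~\ref{W_0^{1,2}Solvability}) and $T_m\phi(x)\to T\phi(x)$ for a.e.\ $x$ along a subsequence. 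Combining the two sides (and letting $\phi$ range over a countable family dense in $C_c^\infty(\Omega)$), we obtain $\int_\Omega h_x\phi=T\phi(x)$ for all $\phi\in L^\infty(\Omega)$ and a.e.\ $x$; since $\int_\Omega g_x\phi=T\phi(x)$ as well, the uniqueness of Green's function noted after Definition~\ref{GreenDefinition} forces $h_x=g_x$ a.e., and the asserted bound for $g_x$ follows.

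I expect the main difficulty to lie in this last paragraph: making the stability $T_m\phi\to T\phi$ rigorous — in particular handling the terms $\int\vec b_m\cdot\nabla\varphi\,u_m$, $\int\vec c_m\cdot\nabla u_m\,\varphi$ and $\int d_m u_m\varphi$, where one factor converges only in an $L^n$ or $L^{n/2}$ sense and $u_m$ (or $\nabla u_m$) only weakly — and in consistently bookkeeping the exhausting domains $\Omega_m$ so that Proposition~\ref{GreenConstructionLipschitzSubcritical}, Corollary~\ref{TSBound}, Proposition~\ref{g_x} and the weak formulations are each invoked on the correct domain. By contrast, the pointwise estimate itself is essentially automatic once the Lipschitz case and the uniform control of all relevant constants (via Lemmas~\ref{RfBound} and~\ref{RfBound2}) are in hand.
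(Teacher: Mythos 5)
Your proposal is correct and follows essentially the same approach as the paper's proof: mollify $\vec b,\vec c,d$ on the exhausting domains $\Omega_m$ via Lemma~\ref{DivergenceApproximation}, apply Proposition~\ref{GreenConstructionLipschitzSubcritical} to obtain $m$-uniform pointwise bounds for the approximate Green's functions, extract a subsequence converging weakly in $W_0^{1,q}(\Omega)$ and a.e., and then identify the limit with $g_x$ by passing to the limit in the representation $\int_{\Omega_m}g_x^m\phi=T_m\phi(x)$ using the uniform $W_0^{1,2}$ bound from Corollary~\ref{TSBound} and the $L^n$, $L^{n/2}$ convergence of the mollified coefficients. Your bookkeeping in the identification step (using the solution operator for $L_m$ rather than $L_m\tran$, and $L^n$ rather than $L^p$ convergence of the individual coefficients) is in fact cleaner than the paper's own wording, which contains some notational slips at that point.
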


\begin{proof}
Let $\vec b_m$, $\vec c_m$, $d_m$ be mollifications of $\vec b$, $\vec c$, $d$, as in Lemma \ref{DivergenceApproximation}.
Consider the operators
\[
L_m\tran=-\dv(\mathbf{A}\tran\nabla u+\vec c_mu)+\vec b_m \cdot \nabla u+d_mu \quad \text{in}\quad \Omega_m
\]
as in Lemma~\ref{DivergenceApproximation}, then $d_m\ge\dv\vec b_m$ in $\Omega_m$.

Let $g_x^m$ be Green's function for the operator $L_m\tran$ in $\Omega_m$, as in Proposition~\ref{g_x}.
Note that $\abs{\Omega_m}\leq \abs{\Omega}$, and also $\norm{\vec b_m-\vec c_m}_{L^p(\Omega_m)} \le \norm{\vec b-\vec c}_{L^p(\Omega)}$.
Moreover, by the proof of Lemma~\ref{DivergenceApproximation} we have that $\vec b_m,\vec c_m, d_m\in\Lip(\Omega_m)\cap L^{\infty}(\Omega_m)$, therefore Proposition~\ref{GreenConstructionLipschitzSubcritical} is applicable, and we have
\begin{equation}	\label{eq:gxm}
g_x^m(y)\le C \abs{x-y}^{2-n},
\end{equation}
where $C$ depends only on $n$, $p$, $\lambda$, $\norm{\mathbf A}_{\infty}$, $\norm{\vec b_m-\vec c_m}_p$, and $\abs{\Omega_m}$. Hence, $C$ depends only on $n$, $p$, $\lambda$, $\norm{\mathbf A}_{\infty}$, $\norm{\vec b-\vec c}_p$, and $\abs{\Omega}$; in particular, it can be taken independent of $m$.
By extending $g_x^m$ by zero outside $\Omega_m$, we can assume that $g_x^m\in W_0^{1,q}(\Omega)$ for some $q\in\left(1,\frac{n}{n-1}\right)$ and by Proposition~\ref{g_x}, we have
\[
\norm{g_x^m}_{W_0^{1,q}(\Omega_m)}=\norm{g_x^m}_{W_0^{1,q}(\Omega)}\le C_q
\]
uniformly in $x$.
Therefore there exists a subsequence $\set{g_x^{k_m}}$ that converges to a function $g_x\in W_0^{1,q}(\Omega)$, weakly in $W_0^{1,q}(\Omega)$, strongly in $L^q(\Omega)$, and almost everywhere in $\Omega$.
In particular, from \eqref{eq:gxm}, we obtain that 
\[
g_x(y)=\lim_{m\to\infty}g_x^{k_m}(y)\le C\abs{x-y}^{2-n}.
\]
Finally, to show that $g_x$ is Green's function for $L\tran$ in $\Omega$, we let $S_m$ be the operator that sends $F\in W^{-1,2}(\Omega_m)$ to the solution $u_m\in W_0^{1,2}(\Omega_m)$ of $L_m\tran u_m=F$ in $\Omega_m$ and let $S$ be the corresponding operator for $L\tran$ in $\Omega$.
Then, for $\phi\in L^{\infty}(\Omega)$, we have
\[
\int_{\Omega}g_x\phi=\lim_{m\to\infty}\int_{\Omega}g_x^{k_m}\phi=\lim_{m\to\infty}\int_{\Omega_{k_m}}g_x^{k_m}\phi=\lim_{m\to\infty}S_{k_m}\phi(x)
\]
for almost every $x$.
Note that
\[
\norm{S_m\phi}_{W_0^{1,2}(\Omega)}=\norm{S_m\phi}_{W_0^{1,2}(\Omega_m)}\le \norm{S_m} \, \norm{\phi}_{W_0^{1,2}(\Omega)}\le C \norm{\phi}_{W_0^{1,2}(\Omega)},
\]
independent of $m$, where we used Corollary~\ref{TSBound}. Therefore $\set{S_{k_m}\phi}$ has a subsequence, still denoted by $\set{S_{k_m}\phi}$ that converges weakly and almost everywhere to some $u\in W_0^{1,2}(\Omega)$. If now $\psi\in C_c^{\infty}(\Omega)$, then $\psi\in C_c^{\infty}(\Omega_m)$ for $m$ large enough, therefore
\[
\int_{\Omega_m}\mathbf{A}\tran\nabla S_m\phi \cdot\nabla\psi+\vec c_m \cdot \nabla\psi\, S_m\phi+\vec b_m \cdot \nabla S_m\phi\,\psi+d_mS_m\phi\,\psi=\int_{\Omega_m}\phi\,\psi.
\]
Hence, we compute
\[
\int_{\Omega}\phi\psi=\lim_{m\to\infty}\int_{\Omega_{k_m}}\phi\psi=\int_{\Omega}\mathbf{A}\tran\nabla u \cdot \nabla\phi+\vec c \cdot \nabla\psi\,  u+\vec b \cdot \nabla u\,\psi+du\psi,
\]
where we used $b_{k_m}\to b$, $c_{k_m}\to c$ in $L^p$, $d_{k_m}\to d$ in $L^{n/2}$, and $S_{k_m}\phi \rightharpoonup u$ weakly in $W_0^{1,2}(\Omega)$. Therefore, $S_{k_m}\phi$ converges almost everywhere to the $W^{1,2}_0$ solution of $Lu=\phi$ in $\Omega$, which completes the proof.
\end{proof}

In Section~\ref{sec7}, we will show that $\nabla g_x\in L^{\frac{n}{n-1},\infty}(\Omega)$, even in the critical case where $\vec b,\vec c\in L^n(\Omega)$ and $d\in L^{n/2}(\Omega)$.

Finally, we show the analogous results for the function $G_y$.

\begin{theorem}\label{GreenConstruction}
Let $\Omega\subset\mathbb R^n$ be a domain with $\abs{\Omega} <+\infty$.
Let $\mathbf{A}$ be bounded and satisfy the uniform ellipticity condition \eqref{ellipticity} and $\vec b$, $\vec c\in L^n(\Omega)$, $\vec b-\vec c \in L^p(\Omega)$ for some $p>n$, $d\in L^{n/2}(\Omega)$, with $d\ge \dv\vec b$.
Then there exists a function $G(\cdot, \cdot)$ on $\Omega \times \Omega$
such that the weak solution $u\in W_0^{1,2}(\Omega)$ of the equation $L\tran u=\phi$, for $\phi\in L^{\infty}(\Omega)$, is given by
\[
u(y)=\int_{\Omega}G(x,y)\phi(x)\,dx.
\]
Moreover, we have
\[
\norm{\nabla G(\cdot,y)}_{L^{\frac{n}{n-1},\infty}(\Omega)}\le C,
\]
and for every $x,y\in\Omega$ with $x\neq y$, we have
\[
G(x,y)\le C \abs{x-y}^{2-n},
\]
where $C$ depends only on $n$, $p$, $\lambda$, $\norm{\mathbf A}_{\infty}$, $\norm{\vec b-\vec c}_p$, and $\abs{\Omega}$.
\end{theorem}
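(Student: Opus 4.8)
The plan is to obtain $G$ from the adjoint Green's function $g$ of Theorem~\ref{GreenConstructionAdjoint} through the duality \eqref{eq:AdjointRelation}, and then to read off the Lorentz gradient bound from Proposition~\ref{GeneralLorentzBound}.

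First I would set $G(x,y):=g_x(y)$ with $g_x$ as in Theorem~\ref{GreenConstructionAdjoint}, and verify the representation formula. Since $T\phi(x)=\int_{\Omega}g_x(y)\phi(y)\,dy$ for $\phi\in L^{\infty}(\Omega)$ by Theorem~\ref{GreenConstructionAdjoint}, and $\int_{\Omega}(T\phi)\psi=\int_{\Omega}\phi\,(S\psi)$ for $\phi,\psi\in L^{2}(\Omega)$ by \eqref{eq:AdjointRelation}, Fubini's theorem — legitimate because $\int_{\Omega}\int_{\Omega}\abs{x-y}^{2-n}\abs{\phi(y)}\abs{\psi(x)}\,dy\,dx<\infty$ thanks to the pointwise bound $0\le g_x(y)\le C\abs{x-y}^{2-n}$, $\abs{\Omega}<\infty$, and boundedness of $\phi,\psi$ — gives $S\psi(y)=\int_{\Omega}G(x,y)\psi(x)\,dx$ for a.e.\ $y$, for all $\psi\in L^{\infty}\cap L^{2}(\Omega)=L^{\infty}(\Omega)$. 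The bound $G(x,y)\le C\abs{x-y}^{2-n}$ with the stated dependence of $C$ is then immediate from Theorem~\ref{GreenConstructionAdjoint}.

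The remaining point is the gradient bound, for which I must know that $G(\cdot,y)\colon x\mapsto g_x(y)$ lies in $W_0^{1,q}(\Omega)$ for some $q\in(1,\tfrac{n}{n-1})$, lies in $W^{1,2}(\Omega\setminus B_s(y))$ for all $s>0$, and solves $LG(\cdot,y)=0$ in $\Omega\setminus\{y\}$; this is not automatic, since the $W_0^{1,q}$ control of $g_x$ is in its pole-free variable. To supply it I would re-run the mollification scheme of Theorem~\ref{GreenConstructionAdjoint}: with $\vec b_m,\vec c_m,d_m$ as in Lemma~\ref{DivergenceApproximation} and $L_m$ the resulting operators on $\Omega_m$, let $G_y^m$ be Green's function for $L_m$ (available by Proposition~\ref{G_y}, as $\vec c_m,d_m\in\Lip\cap L^{\infty}$) and $g_x^m$ Green's function for $L_m\tran$ (Proposition~\ref{g_x}). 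The adjoint relation for $L_m$ forces $G_y^m(x)=g_x^m(y)$ for $x\neq y$, so Proposition~\ref{GreenConstructionLipschitzSubcritical} applied to $L_m\tran$ yields $0\le G_y^m(x)\le C\abs{x-y}^{2-n}$ with $C$ independent of $m$, hence a uniform bound $\norm{G_y^m}_{L^{\frac{n}{n-2},\infty}(\Omega_m)}\le C$; Lemma~\ref{Caccioppoli} together with this bound controls $G_y^m$ in $W^{1,2}$ away from $y$, uniformly. Re-running the proof of Proposition~\ref{GeneralLorentzBound} on each $G_y^m$ — valid because $L_mG_y^m=0$ off $B_{1/m}(y)$ and that argument uses the equation only on the annuli $r\in[4\tau,2]$, which avoid $B_{1/m}(y)$ once $m$ is large, while the central ball $B_{2\tau}$ and the range of small $t$ are handled trivially — gives $\norm{\nabla G_y^m}_{L^{\frac{n}{n-1},\infty}(\Omega_m)}\le C$ uniformly, hence a uniform $W_0^{1,q}(\Omega_m)$ bound (using $\abs{\Omega_m}<\infty$, $q<\tfrac{n}{n-1}$, and the pointwise bound for the $L^q$ control of $G_y^m$ itself). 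Extracting a subsequence $G_y^{m_j}\rightharpoonup G_y$ weakly in $W_0^{1,q}(\Omega)$, strongly in $L^q$ and a.e., and locally in $W^{1,2}(\Omega\setminus\{y\})$ (from the uniform Caccioppoli bounds), I would check that $G_y$ represents $S$ — either by passing to the limit in $S_{m_j}\phi(y)=\int_{\Omega}G_y^{m_j}\phi$ as in Theorem~\ref{GreenConstructionAdjoint}, or by noting that $G_y$ and $x\mapsto g_x(y)$ both represent $S$ and using a.e.\ uniqueness — so that $G(\cdot,y)=G_y$ has the needed regularity, solves $LG(\cdot,y)=0$ in $\Omega\setminus\{y\}$, and satisfies $\norm{G(\cdot,y)}_{L^{\frac{n}{n-2},\infty}(\Omega)}\le C$.

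Finally, Proposition~\ref{GeneralLorentzBound} with $u=G(\cdot,y)$ and $x_0=y$ gives $\norm{\nabla G(\cdot,y)}_{L^{\frac{n}{n-1},\infty}(\Omega)}\le C\norm{G(\cdot,y)}_{L^{\frac{n}{n-2},\infty}(\Omega)}^{2}+C\le C$; since $\vec b-\vec c\in L^p$ with $p>n$, Lemma~\ref{RfBound} bounds $r_{\vec b-\vec c}\!\left(\tfrac{\lambda}{3C_n}\right)$ in terms of $\norm{\vec b-\vec c}_p$, so $C$ depends only on $n,p,\lambda,\norm{\mathbf A}_{\infty},\norm{\vec b-\vec c}_p,\abs{\Omega}$, as claimed. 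I expect the main obstacle to be the uniform-in-$m$ weak-$L^{\frac{n}{n-1}}$ gradient estimate on the approximants $G_y^m$ — that is, checking that the argument of Proposition~\ref{GeneralLorentzBound} survives when the pole is replaced by a shrinking ball and with constants blind to the mollification — together with the bookkeeping needed to identify the limit with the function representing $S$.
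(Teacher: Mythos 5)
Your proposal takes essentially the same route as the paper: bound $G(\cdot,y)$ pointwise via its relation to $g$, establish membership in $W_0^{1,q}(\Omega)$ and $W^{1,2}$ away from the pole via the mollification scheme of Lemma~\ref{DivergenceApproximation}, and then apply Proposition~\ref{GeneralLorentzBound}. The paper organizes this slightly differently — it first proves the theorem cleanly under the extra assumption $\vec b,\vec c,d\in\Lip\cap L^\infty$ (where Lemma~\ref{Presymmetry} and Proposition~\ref{GreenConstructionLipschitzSubcritical} supply the pointwise bound, a Lemma~\ref{W^{1,2}Outside}-type argument supplies the $W^{1,2}$ regularity away from the pole, and Proposition~\ref{GeneralLorentzBound} is applied directly to $G_y$), and then mollifies, passing the uniform $L^{\frac{n}{n-2},\infty}$ and $L^{\frac{n}{n-1},\infty}$ bounds on $G_y^m$ to the weak* limit via Banach--Alaoglu; you instead first set $G(x,y)=g_x(y)$ via duality and then return to the mollification to supply regularity, which makes you apply Proposition~\ref{GeneralLorentzBound} twice (once on the approximants and once at the end), a harmless redundancy. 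One small confusion in your write-up is that you conflate the two approximation indices: you declare $G_y^m$ to be the actual Green's function for $L_m$ in $\Omega_m$ (whence $L_mG_y^m=0$ in $\Omega_m\setminus\{y\}$ and Proposition~\ref{GeneralLorentzBound} applies directly), but then worry about $L_mG_y^m$ failing to vanish on $B_{1/m}(y)$ as if $G_y^m$ were the approximate-identity solution — the paper sidesteps this by applying Proposition~\ref{GeneralLorentzBound} only to genuine Green's functions, first in the Lipschitz case and then to the mollified ones. None of this is a substantive gap, and the dependence of the constants via Lemma~\ref{RfBound} is handled correctly.
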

\begin{proof}
We first assume that $\vec b$, $\vec c$, $d\in \Lip(\Omega)\cap L^{\infty}(\Omega)$, and we will show that
\[
\norm{\nabla G_y}_{L^{\frac{n}{n-1},\infty}(\Omega)}\le C,
\]
where $C$ depends on $n$, $p$, $\lambda$, $\norm{\mathbf A}_{\infty}$, $\norm{\vec b-\vec c}_p$, and $\abs{\Omega}$. Note first that, from the first relation in Lemma~\ref{Presymmetry} and Proposition~\ref{GreenConstructionLipschitzSubcritical}, we have
\[
\abs{G_y(x)} \le C\abs{x-y}^{2-n}.
\]
Using a procedure similar to Lemma~\ref{W^{1,2}Outside}, we can show that $G_y$ is a $W^{1,2}(\Omega\setminus B_{\tau}(y))$ solution of the equation $Lu=0$ in $\Omega\setminus B_{\tau}(y)$ provided the last set is nonempty. Moreover, by construction, $G_y\in W_0^{1,q}(\Omega)$, for $1<q<\frac{n}{n-1}$. Therefore the conditions of Proposition~\ref{GeneralLorentzBound} are satisfied, hence
\[
\norm{\nabla G_y}_{L^{\frac{n}{n-1},\infty}(\Omega)}\le C\norm{G_y}_{L^{\frac{n}{n-2},\infty}(\Omega)}^2+C\leq C,
\]
where $C$ depends on $n$, $\lambda$, $\norm{\vec b-\vec c}_p$, and $\abs{\Omega}$. This completes the proof in the case where $\vec b,\vec c,d$ are bounded Lipschitz functions.

In the general case, we use an approximation argument as in Theorem ~\ref{GreenConstructionAdjoint}.
Let $\vec b_m$, $\vec c_m$, $d_m$, and $\Omega_m$ be as in the proof of Theorem~\ref{GreenConstructionAdjoint}.
Consider the operator
\[
L_m u=-\dv (\mathbf{A}\nabla u+\vec b_m u)+\vec c_m\cdot\nabla u+d_m u \quad\text{in}\quad \Omega_m
\]
and let $G_y^m$ be Green's function for $L_m$ in $\Omega_m$.
Then, by the above estimate and the pointwise bound, we find that
\[
\norm{G_y^m}_{L^{\frac{n}{n-2},\infty}(\Omega)}\le C\quad\text{and}\quad \norm{\nabla G_y^m}_{L^{\frac{n}{n-1},\infty}(\Omega)}\le C,
\]
for some $C$ that depends only on $n$, $\lambda$, $\norm{\mathbf A}_{\infty}$, $\norm{\vec b-\vec c}_p$, and $\abs{\Omega}$; the fact that $C$ does not depend on $m$ follows from an argument similar to the one in the proof of Theorem~\ref{GreenConstructionAdjoint}.
Since the space $L^{\frac{n}{n-1},\infty}(\Omega)$ is the dual of $L^{n,1}(\Omega)$ and $L^{\frac{n}{n-2},\infty}(\Omega)$ is the dual of $L^{\frac{n}{2},1}(\Omega)$, the Banach-Alaoglu theorem implies that there exists $\tilde{G}_y\in L^{\frac{n}{n-2},\infty}(\Omega)$, with $\nabla\tilde{G}_y\in L^{\frac{n}{n-1},\infty}(\Omega)$, and a subsequence $G_y^{i_m}$ such that
\[
G_y^{i_m}\rightharpoonup \tilde{G}_y\;\text{ weak* \, in }\;L^{\frac{n}{n-2},\infty}(\Omega)\quad\text{and}\quad \nabla G_y^{i_m}\rightharpoonup \nabla\tilde{G}_y\;\text{ weak*  \,in }\; L^{\frac{n}{n-1},\infty}(\Omega).
\]
Therefore, we have
\[
\norm{\tilde{G}_y}_{L^{\frac{n}{n-2},\infty}(\Omega)}\le\limsup_{m\to\infty}\, \norm{G_y^{i_m}}_{L^{\frac{n}{n-2},\infty}(\Omega)}\le C,
\]
and also
\[
\norm{\nabla\tilde{G}_y}_{L^{\frac{n}{n-1},\infty}(\Omega)}\le\limsup_{m\to\infty}\, \norm{\nabla G_y^{i_m}}_{L^{\frac{n}{n-1},\infty}(\Omega)}\le C.
\]
Moreover, using an argument similar to the proof of Theorem~\ref{GreenConstructionAdjoint}, we can show that a subsequence of $G_y^{i_m}$ converges to $G_y$ weakly in $W_0^{1,q}(\Omega)$ for all $q\in (1,\frac{n}{n-1})$.
This implies that $G_y=\tilde{G}_y$, which completes the proof.
\end{proof}

We now show the next symmetry relation between $g_x$ and $G_y$.

\begin{proposition}\label{Symmetry}
Let $\Omega\subset \mathbb R^n$ be a domain with $\abs{\Omega} <+\infty$.
Let $\mathbf{A}$ be bounded and satisfy the uniform ellipticity condition \eqref{ellipticity} and $\vec b$, $\vec c \in L^n(\Omega)$, $\vec b-\vec c \in L^p(\Omega)$ for $p>n$, $d\in L^{n/2}(\Omega)$, and $d\ge\dv\vec b$. Then, the functions $g$ and $G$ from Theorems \ref{GreenConstructionAdjoint} and \ref{GreenConstruction} satisfy the relation
\[
G(x,y)=g(y,x)
\]
for almost all $x,y\in\Omega$ with $x\neq y$.
\end{proposition}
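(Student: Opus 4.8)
The plan is to read off the symmetry from the adjoint relation \eqref{eq:AdjointRelation} between the solution operators $T$ and $S$, combined with the integral representations furnished by Proposition~\ref{g_x} and Theorem~\ref{GreenConstruction}. Fix $\phi_1,\phi_2\in L^\infty(\Omega)$; since $\abs{\Omega}<+\infty$ we have $\phi_1,\phi_2\in L^2(\Omega)$, so \eqref{eq:AdjointRelation} gives
\[
\int_\Omega T\phi_1\,\phi_2 = \int_\Omega \phi_1\,S\phi_2 .
\]
By Proposition~\ref{g_x}, $T\phi_1(x)=\int_\Omega g(y,x)\,\phi_1(y)\,dy$ for a.e.\ $x\in\Omega$, while by Theorem~\ref{GreenConstruction}, $S\phi_2(y)=\int_\Omega G(x,y)\,\phi_2(x)\,dx$ for a.e.\ $y\in\Omega$. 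Substituting these and relabelling the integration variables, the identity above becomes
\[
\int_\Omega\!\int_\Omega g(y,x)\,\phi_1(y)\,\phi_2(x)\,dy\,dx
= \int_\Omega\!\int_\Omega G(x,y)\,\phi_1(y)\,\phi_2(x)\,dx\,dy .
\]

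To make these manipulations rigorous I first record that $g,G\in L^1(\Omega\times\Omega)$. Indeed $g_x\ge 0$ by Theorem~\ref{GreenConstructionAdjoint}, and $G_y\ge 0$ as well, since $G_y$ is constructed as a limit of the nonnegative approximate Green's functions $G_y^m$ (cf.\ the last part of the proof of Lemma~\ref{Presymmetry}, where Proposition~\ref{BoundForCriticalSubsolutions} applied to the subsolution $-G_y^m$ of $Lu=0$ yields $G_y^m\ge 0$). Combining this with the pointwise bounds $g(y,x),\,G(x,y)\le C\abs{x-y}^{2-n}$ from Theorems~\ref{GreenConstructionAdjoint} and \ref{GreenConstruction} and with $\int_\Omega\abs{x-y}^{2-n}\,dx\le C(n)\,\abs{\Omega}^{2/n}$ (the computation preceding \cite[Lemma~7.12]{Gilbarg}), we obtain $g,G\in L^1(\Omega\times\Omega)$. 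As $\phi_1,\phi_2\in L^\infty(\Omega)$, Fubini's theorem now justifies all of the above and shows that
\[
\int_\Omega\!\int_\Omega \bigl(g(y,x)-G(x,y)\bigr)\,\phi_1(y)\,\phi_2(x)\,dy\,dx = 0
\qquad\text{for all }\ \phi_1,\phi_2\in L^\infty(\Omega).
\]

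It remains to conclude that $h(x,y):=g(y,x)-G(x,y)$ vanishes a.e.\ on $\Omega\times\Omega$. Taking $\phi_1=\chi_E$ and $\phi_2=\chi_F$ for arbitrary measurable $E,F\subset\Omega$ shows that the finite signed measure with density $h$ with respect to Lebesgue measure on $\Omega\times\Omega$ vanishes on every measurable rectangle $F\times E$, as well as on $\Omega\times\Omega$ itself; since the rectangles form a $\pi$-system generating the product $\sigma$-algebra, this signed measure is identically zero, whence $h=0$ a.e. Thus $G(x,y)=g(y,x)$ for a.e.\ $(x,y)\in\Omega\times\Omega$, in particular for a.e.\ pair with $x\neq y$, which is the assertion. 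The whole argument is really just bookkeeping on top of \eqref{eq:AdjointRelation} and the representation formulas; the only points that need even a moment's attention are the global $L^1(\Omega\times\Omega)$ integrability of $g$ and $G$ required for Fubini — supplied by the pointwise Green's function bounds together with the nonnegativity of $G_y$ — and the elementary measure-theoretic step from orthogonality against all product test functions to vanishing almost everywhere. Neither is a substantial obstacle.
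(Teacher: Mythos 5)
Your proof is correct and takes a genuinely different, more functional-analytic route than the paper. The paper argues in two stages: first for bounded Lipschitz lower-order coefficients, by mimicking Theorem~1.3 of \cite{Gruter} and using the pointwise relation between approximate Green's functions in Lemma~\ref{Presymmetry} together with the continuity result Proposition~7.1 of \cite{StampacchiaDirichlet}; then for general coefficients via an approximation argument as in Theorem~\ref{GreenConstructionAdjoint}. You bypass both stages for the symmetry step itself: once the duality relation \eqref{eq:AdjointRelation} and the integral representations from Proposition~\ref{g_x} and Theorem~\ref{GreenConstruction} are in hand, the identity $G(x,y)=g(y,x)$ falls out from a Fubini argument plus the standard $\pi$-system uniqueness-of-measures step. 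This is a cleaner route for the a.e.\ statement actually claimed, and it avoids any appeal to continuity of approximate Green's functions away from the pole or to re-running the mollification. (Of course, the approximation machinery is still doing work in the background — it is what produces the representation formulas and the pointwise bound used to get $g,G\in L^1(\Omega\times\Omega)$ — but not in the symmetry argument itself.) Your justification that $G_y\ge 0$, obtained by tracing the nonnegativity of the approximate Green's functions $G_y^m$ from Lemma~\ref{Presymmetry} through the weak* limits in the proof of Theorem~\ref{GreenConstruction}, is sound; alternatively, one can get $g,G\in L^1(\Omega\times\Omega)$ without invoking nonnegativity, directly from the uniform $W_0^{1,q}(\Omega)$ bounds ($1<q<\frac{n}{n-1}$) on $g_x$ and $G_y$ that come out of Proposition~\ref{g_x} and the construction in Theorem~\ref{GreenConstruction}.
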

\begin{proof}
In the case where $\vec b$, $\vec c$, $d\in \Lip(\Omega)\cap L^{\infty}(\Omega)$, we mimic the proof of Theorem 1.3 in \cite{Gruter}, using the first relation in Lemma~\ref{Presymmetry} and Proposition 7.1 in \cite{StampacchiaDirichlet}.
Then, for the general case, we use an approximation argument as in Theorem~\ref{GreenConstructionAdjoint}.
\end{proof}

We finish this section with the next proposition.

\begin{proposition}
Let $\Omega\subset \mathbb R^n$ be a domain with $\abs{\Omega} <+\infty$.
Let $\mathbf{A}$ be bounded and satisfy the uniform ellipticity condition \eqref{ellipticity} and $\vec b$, $\vec c\in L^n(\Omega)$, $\vec b-\vec c \in L^p(\Omega)$ for $p>n$, $d\in L^{n/2}(\Omega)$, and $d\ge \dv\vec b$. Then the operators
\[
T, \,S:W^{-1,2}(\Omega)\to W_0^{1,2}(\Omega),
\]
defined in \eqref{eq:T} and \eqref{eq:S}, are bounded.
Moreover, the operator norms $\norm{T}$ and $\norm{S}$ are bounded by a constant depending on $n$, $p$, $\lambda$, $\norm{\mathbf A}_{\infty}$, $\norm{\vec b-\vec c}_p$, and $\abs{\Omega}$.
\end{proposition}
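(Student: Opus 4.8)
The plan is to reduce the statement to Corollary~\ref{TSBound} by a mollification argument, exactly in the spirit of the proofs of Theorems~\ref{GreenConstructionAdjoint} and \ref{GreenConstruction}. Since $S$ is the $W^{-1,2}(\Omega)\to W_0^{1,2}(\Omega)$ adjoint of $T$ (see the discussion following \eqref{eq:S}), one has $\norm{S}=\norm{T}$, so it suffices to bound $T$ with the asserted dependence on $n$, $p$, $\lambda$, $\norm{\mathbf A}_\infty$, $\norm{\vec b-\vec c}_p$, and $\abs{\Omega}$.

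\textbf{Mollify and get a uniform bound for the approximants.} First I would take $\vec b_m$, $\vec c_m$, $d_m$, and $\Omega_m$ as in Lemma~\ref{DivergenceApproximation} (the same argument yields $\vec c_m\in\Lip(\Omega_m)\cap L^\infty(\Omega_m)$), so that $\vec b_m,\vec c_m,d_m\in\Lip(\Omega_m)\cap L^\infty(\Omega_m)$ and $d_m\ge\dv\vec b_m$ in $\Omega_m$. Young's inequality for convolutions gives $\norm{\vec b_m-\vec c_m}_{L^p(\Omega_m)}\le\norm{((\vec b-\vec c)\chi_\Omega)*\psi_m}_{L^p(\mathbb R^n)}\le\norm{\vec b-\vec c}_{L^p(\Omega)}$, while $\abs{\Omega_m}\le\abs{\Omega}$. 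Letting $T_m:W^{-1,2}(\Omega_m)\to W_0^{1,2}(\Omega_m)$ be the solution operator for $L_mu=-\dv(\mathbf A\nabla u+\vec b_mu)+\vec c_m\cdot\nabla u+d_mu$ in $\Omega_m$, Corollary~\ref{TSBound} applies (with $p>n$) and bounds $\norm{T_m}$ by a constant depending only on $n$, $p$, $\lambda$, $\norm{\mathbf A}_\infty$, $\norm{\vec b_m-\vec c_m}_{L^p(\Omega_m)}$, and $\abs{\Omega_m}$; by the previous estimates this constant may be taken independent of $m$.

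\textbf{Pass to the limit.} Fix $F\in W^{-1,2}(\Omega)$. Since $C_c^\infty(\Omega_m)\subset C_c^\infty(\Omega)$ and zero-extension embeds $W_0^{1,2}(\Omega_m)$ isometrically into $W_0^{1,2}(\Omega)$, the restriction of $F$ to $\Omega_m$ lies in $W^{-1,2}(\Omega_m)$ with norm at most $\norm{F}_{W^{-1,2}(\Omega)}$. Set $u_m=T_m(F|_{\Omega_m})$ and extend it by zero to $\Omega$; by the uniform bound, $\norm{u_m}_{W_0^{1,2}(\Omega)}\le C\norm{F}_{W^{-1,2}(\Omega)}$ for all $m$. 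Along a subsequence, $u_m\rightharpoonup u$ weakly in $W_0^{1,2}(\Omega)$ and, by Rellich compactness (valid because $\abs{\Omega}<+\infty$) interpolated against the uniform $L^{2n/(n-2)}(\Omega)$ bound, strongly in $L^r(\Omega)$ for every $r<\tfrac{2n}{n-2}$. Exploiting $\vec b_m\to\vec b$, $\vec c_m\to\vec c$ in $L^n(\mathbb R^n)$ and $d_m\to d$ in $L^{n/2}(\mathbb R^n)$, a routine passage to the limit in the weak formulation against any $\psi\in C_c^\infty(\Omega)$ (which lies in $C_c^\infty(\Omega_m)$ once $m$ is large) identifies $u$ as a weak solution of $Lu=F$ in $\Omega$; by the uniqueness in Lemma~\ref{W_0^{1,2}Solvability}, $u=TF$, and weak lower semicontinuity of the norm yields $\norm{TF}_{W_0^{1,2}(\Omega)}\le\liminf_{m}\norm{u_m}_{W_0^{1,2}(\Omega)}\le C\norm{F}_{W^{-1,2}(\Omega)}$. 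This proves the bound for $T$, hence for $S=T^*$.

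The main thing to be careful about is the uniformity in $m$ of the constant in the previous paragraph: it relies on the fact that the constant in Corollary~\ref{TSBound} was tracked to depend only on the six listed quantities, each of which is controlled under mollification. The convergence of the lower-order terms in the weak formulation is routine once one uses that each test function has compact support (so $L^n$ and $L^{n/2}$ convergence of the coefficients suffices) together with the strong $L^r$ convergence and weak $W_0^{1,2}$ convergence of $u_m$; no genuine obstacle arises. As an alternative to the approximation, one could instead rerun the proof of Corollary~\ref{TSBound} verbatim, replacing the appeal to Proposition~\ref{GreenConstructionLipschitzSubcritical} by the pointwise bound of Theorem~\ref{GreenConstructionAdjoint} together with Proposition~\ref{Symmetry}, since the remaining ingredients (Lemma~\ref{Gamma}, Lemma~\ref{BoundOnU}, Proposition~\ref{g_x}) are already available without the Lipschitz hypothesis.
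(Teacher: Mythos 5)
Your main argument (mollify, apply Corollary~\ref{TSBound} to the regularized operator $T_m$ on $\Omega_m$, pass to the limit using uniform bounds and weak lower semicontinuity) is correct, but it takes a genuinely different route from the paper. The paper's proof is one line: rerun the proof of Corollary~\ref{TSBound} verbatim, using Theorem~\ref{GreenConstructionAdjoint} in place of Proposition~\ref{GreenConstructionLipschitzSubcritical}. Since the only place in that proof where the Lipschitz hypothesis on $\vec b,\vec c,d$ actually entered was the pointwise bound $\abs{g_y(x)}\le C\abs{x-y}^{2-n}$ (the remaining ingredients, namely Lemma~\ref{Gamma}, Lemma~\ref{BoundOnU}, Proposition~\ref{g_x}, and the representation $v_n(y)=\gamma\int_\Omega g_y(x)\,u_n(x)\,dx$, already hold under the weaker hypotheses), substituting Theorem~\ref{GreenConstructionAdjoint} yields the statement immediately — this is exactly the alternative you sketch in your final paragraph, except that you do not in fact need Proposition~\ref{Symmetry}: the bound in Theorem~\ref{GreenConstructionAdjoint} already reads $g_x(y)\le C\abs{x-y}^{2-n}$ for all $x\ne y$, which upon relabeling gives $g_y(x)\le C\abs{x-y}^{2-n}$ directly. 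Your approximation route buys you the ability to use Corollary~\ref{TSBound} as a black box without reading its proof, but at the cost of having to redo the limit passage and track the restriction $F|_{\Omega_m}$, which is slightly more work than the paper's invocation; the paper's route is economical precisely because Theorem~\ref{GreenConstructionAdjoint} already absorbed the mollification argument. Both arguments are sound and rest on the same underlying estimates, so this is a difference in organization rather than substance.
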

\begin{proof}
The proof is identical to the proof of Corollary~\ref{TSBound}, using Theorem~\ref{GreenConstructionAdjoint}.
\end{proof}

\section{The critical case}			\label{sec7}	

At this point we return to the critical case.
We show that the gradient of Green's function for the adjoint equation belongs to the Lorentz space $L^{\frac{n}{n-1},\infty}(\Omega)$ using our standard assumptions.
Moreover, strengthening our assumptions, we will deduce a pointwise bound.

\subsection{The Lorentz bounds}

\begin{proposition}\label{PreliminaryLorentz}
Let $\Omega\subset \mathbb R^n$ be a domain with $\abs{\Omega}<+\infty$.
Let $\mathbf{A}$ be bounded and satisfy the uniform ellipticity condition \eqref{ellipticity} and $\vec b$, $\vec c$, $d\in \Lip(\Omega)\cap L^{\infty}(\Omega)$, and $d \ge \dv \vec b$.
Then, for almost every $x\in\Omega$, we have
\[
\norm{\nabla g_x}_{L^{\frac{n}{n-1},\infty}(\Omega)}\le C,	
\]
where $C$ depends on $n$, $\lambda$, $\norm{\mathbf A}_{\infty}$, $r_{\vec b-\vec c}\left(\frac{\lambda}{3C_n}\right)$, $\tilde{r}_{\vec b-\vec c}\left(\frac{\lambda}{3C_n}\right)$, and $\abs{\Omega}$.
\end{proposition}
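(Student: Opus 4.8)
The plan is to reduce the statement to Proposition~\ref{GeneralLorentzBound} applied to the adjoint operator $L\tran$, with the Lorentz size of $g_x$ supplied by Proposition~\ref{GreenConstructionLipschitz}. Note that $L\tran u=-\dv(\mathbf A\tran\nabla u+\vec c u)+\vec b\cdot\nabla u+du$ has the same structure as the operator in Proposition~\ref{GeneralLorentzBound} with $\mathbf A$, $\vec b$, $\vec c$ replaced by $\mathbf A\tran$, $\vec c$, $\vec b$; since $\norm{\mathbf A\tran}_\infty=\norm{\mathbf A}_\infty$, since $r_{\vec c-\vec b}=r_{\vec b-\vec c}$ (as $R_f$ depends only on $\abs f$), and since the required divergence condition ``$d\ge\dv\vec c$ or $d\ge\dv\vec b$'' holds by our hypothesis $d\ge\dv\vec b$, Proposition~\ref{GeneralLorentzBound} applies verbatim to $L\tran$ (its proof uses only Lemmas~\ref{Caccioppoli} and \ref{2By1}, which are available for $L\tran$ under exactly these hypotheses) provided its remaining, qualitative, hypotheses on $u=g_x$ with pole $x_0=x$ are checked.

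So the first step is to record the qualitative properties of $g_x$. By Proposition~\ref{g_x}, $g_x\in W_0^{1,q}(\Omega)$ for every $q\in[1,\tfrac{n}{n-1})$, uniformly in $x$. Since $\abs{\Omega}<+\infty$ and $\vec b,\vec c\in L^\infty(\Omega)$, we have $\vec b-\vec c\in L^p(\Omega)$ for every $p>n$, so Proposition~\ref{GreenConstructionLipschitzSubcritical} and Lemma~\ref{W^{1,2}Outside} are available; I would use them only qualitatively, so that no dependence on $\norm{\vec b-\vec c}_p$ enters. From Lemma~\ref{W^{1,2}Outside}, for each $s>0$ the approximating functions $g_x^k$ belong to $W^{1,2}(\Omega\setminus B_s(x))$ with norms bounded uniformly in $k$; since $g_x^k\to g_x$ weakly in $W_0^{1,q}(\Omega)$ and a.e.\ in $\Omega$, extracting a weak-$W^{1,2}(\Omega\setminus B_s(x))$ limit (Banach--Alaoglu) and identifying it with $g_x$ shows $g_x\in W^{1,2}(\Omega\setminus B_s(x))$ for all $s>0$. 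Next, for $\phi\in C_c^\infty(\Omega\setminus\set{x})$, taking $k$ so large that $B_{1/k}(x)\cap\operatorname{supp}\phi=\emptyset$ gives
\[
\int_\Omega \mathbf A\tran\nabla g_x^k\cdot\nabla\phi+\vec c\cdot\nabla\phi\,g_x^k+\vec b\cdot\nabla g_x^k\,\phi+dg_x^k\phi=0,
\]
and letting $k\to\infty$ — using that $\vec b,\vec c,d$ are fixed bounded functions, $\nabla g_x^k\rightharpoonup\nabla g_x$ weakly in $L^2(\operatorname{supp}\phi)$, and $g_x^k\to g_x$ strongly in $L^2(\operatorname{supp}\phi)$ — yields $L\tran g_x=0$ weakly in $\Omega\setminus\set{x}$. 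Finally, Proposition~\ref{GreenConstructionLipschitz} gives $\norm{g_x}_{L^{\frac{n}{n-2},\infty}(\Omega)}\le C_0$, with $C_0$ depending only on $n$, $\lambda$, $\norm{\mathbf A}_\infty$, $r_{\vec b-\vec c}(\tfrac{\lambda}{3C_n})$, $\tilde r_{\vec b-\vec c}(\tfrac{\lambda}{3C_n})$, and $\abs{\Omega}$.

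With all hypotheses of Proposition~\ref{GeneralLorentzBound} verified, it follows that
\[
\norm{\nabla g_x}_{L^{\frac{n}{n-1},\infty}(\Omega)}\le C\norm{g_x}_{L^{\frac{n}{n-2},\infty}(\Omega)}^2+C\le CC_0^2+C,
\]
and since neither Proposition~\ref{GeneralLorentzBound} nor Proposition~\ref{GreenConstructionLipschitz} introduces a dependence on $\norm{\vec b-\vec c}_p$, the resulting constant depends only on $n$, $\lambda$, $\norm{\mathbf A}_\infty$, $r_{\vec b-\vec c}(\tfrac{\lambda}{3C_n})$, $\tilde r_{\vec b-\vec c}(\tfrac{\lambda}{3C_n})$, and $\abs{\Omega}$, as claimed. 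The only genuinely nonroutine step is the limiting argument in the previous paragraph — confirming that $g_x$ is an honest $W^{1,2}$-solution of $L\tran u=0$ away from the pole — together with the bookkeeping that casts $L\tran$ as an instance of the operator handled in Proposition~\ref{GeneralLorentzBound}; all the quantitative work is inherited from the earlier sections.
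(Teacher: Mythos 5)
Your proof is correct and takes essentially the same route as the paper: both reduce to Proposition~\ref{GeneralLorentzBound} applied to $L\tran$, supplying the $L^{\frac{n}{n-2},\infty}$ bound from Proposition~\ref{GreenConstructionLipschitz} and verifying the qualitative hypotheses ($g_x\in W^{1,2}(\Omega\setminus B_\tau(x))$ and $L\tran g_x=0$ away from the pole) by passing to the limit in the approximating sequence $g_x^k$ via Lemma~\ref{W^{1,2}Outside} and Proposition~\ref{g_x}. You have simply written out the limiting argument and the bookkeeping (swapping $\mathbf A\leftrightarrow\mathbf A\tran$, $\vec b\leftrightarrow\vec c$, and noting $r_{\vec c-\vec b}=r_{\vec b-\vec c}$) more explicitly than the paper's terse reference to the proof of Theorem~\ref{GreenConstruction}.
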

\begin{proof}
The proof is similar to the proof of Theorem~\ref{GreenConstruction}. Since $\vec b$, $\vec c$, $d$ are Lipschitz and bounded, Lemma~\ref{W^{1,2}Outside} shows that for any $\tau>0$ fixed the functions $g_x^k$ belong to $W^{1,2}(\Omega\setminus B_{\tau})$, where $B_\tau=B_\tau(x)$, uniformly in $k$.
Moreover, $g_x^k$ satisfies the equation $L\tran g_x^k=0$ in $\Omega\setminus B_{\tau}$. Since a subsequence of $g_x^k$ converges to $g_x$ weakly in $W_0^{1,q}(\Omega)$ for $1<q <\frac{n}{n-1}$, we find that $g_x\in W^{1,2}(\Omega\setminus B_{\tau})$ and $g_x^k$ is a solution to the equation $L\tran g_x^k=0$ in $\Omega\setminus B_{\tau}$. Therefore, the conditions of Proposition~\ref{GeneralLorentzBound} are satisfied, hence
\[
\|\nabla g_x\|_{L^{\frac{n}{n-1},\infty}(\Omega)}\leq C\|g_x\|_{L^{\frac{n}{n-2},\infty}(\Omega)}^2+C\leq C,
\]
where we also used Proposition~\ref{GreenConstructionLipschitz} in the last inequality.
This completes the proof.
\end{proof}

We can now show good Lorentz bounds for Green's function for the adjoint equation.

\begin{theorem}			\label{GoodLorentzForGreenAdoint}
Let $\Omega\subset \mathbb R^n$ be a domain with $\abs{\Omega}<+\infty$. 
Let $\mathbf{A}$ be bounded and satisfy the uniform ellipticity condition \eqref{ellipticity} and $\vec b,\vec c\in L^n(\Omega)$, $d\in L^{n/2}(\Omega)$, with $d \ge \dv \vec b$. Then, for almost every $x\in\Omega$, there exists a function $g_x(y)=g(y,x)$ such that, for any $\phi\in L^{\infty}(\Omega)$, the solution to the equation $Lu=\phi$ is given by
\[
u(y)=\int_{\Omega}g(y,x)\phi(x)\,dx.
\]
Moreover, the function $g_x$ satisfies the bounds
\[
\norm{g_x}_{L^{\frac{n}{n-2},\infty}(\Omega)}\le C \quad\text{and}\quad \norm{\nabla g_x}_{L^{\frac{n}{n-1},\infty}(\Omega)}\le C,
\]
where $C$ depends only on $n$, $\lambda$, $\norm{\mathbf A}_{\infty}$, $r_{\vec b-\vec c}\left(\frac{\lambda}{9C_n}\right)$, $\tilde{r}_{\vec b-\vec c}\left(\frac{\lambda}{9C_n}\right)$, and $\abs{\Omega}$.
\end{theorem}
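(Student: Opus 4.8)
The plan is to reduce to the Lipschitz situation already handled in Propositions~\ref{GreenConstructionLipschitz} and \ref{PreliminaryLorentz} by a mollification argument in the spirit of the proof of Theorem~\ref{GreenConstructionAdjoint}, the new point being that all constants must be controlled uniformly in the approximation parameter. Let $\vec b_m,\vec c_m,d_m$ be the mollifications from Lemma~\ref{DivergenceApproximation}, so that $\vec b_m,\vec c_m,d_m\in\Lip(\Omega_m)\cap L^\infty(\Omega_m)$, $d_m\ge\dv\vec b_m$ in $\Omega_m$, and $\abs{\Omega_m}\le\abs{\Omega}$; let $g_x^m$ be Green's function for $L_m^\top u=-\dv(\mathbf A^\top\nabla u+\vec c_m u)+\vec b_m\cdot\nabla u+d_m u$ in $\Omega_m$, obtained from Proposition~\ref{g_x}. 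Since $\vec b_m-\vec c_m\to\vec b-\vec c$ in $L^n(\Omega)$, Lemmas~\ref{UniformSplit} and \ref{UniformSplit2} applied with $\varepsilon=\lambda/(3C_n)$ yield, along a common subsequence which I relabel,
\[
r_{\vec b_m-\vec c_m}\!\left(\tfrac{\lambda}{3C_n}\right)\le 2\,r_{\vec b-\vec c}\!\left(\tfrac{\lambda}{9C_n}\right),
\qquad
\tilde r_{\vec b_m-\vec c_m}\!\left(\tfrac{\lambda}{3C_n}\right)\ge\tilde r_{\vec b-\vec c}\!\left(\tfrac{\lambda}{9C_n}\right).
\]
This is precisely where the denominator $9C_n$ in the statement comes from. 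The constants in Propositions~\ref{g_x}, \ref{GreenConstructionLipschitz} and \ref{PreliminaryLorentz} depend on the quantities $r_{\vec b-\vec c}(\lambda/(3C_n))$ and $\tilde r_{\vec b-\vec c}(\lambda/(3C_n))$ only through expressions that are monotone in the appropriate direction (non-decreasing in the former, non-increasing in the latter), so applying those results to $L_m^\top$ on $\Omega_m$ gives, uniformly in $m$, a bound $\norm{g_x^m}_{W_0^{1,q}(\Omega_m)}\le C_q$ together with $\norm{g_x^m}_{L^{\frac n{n-2},\infty}(\Omega_m)}\le C$ and $\norm{\nabla g_x^m}_{L^{\frac n{n-1},\infty}(\Omega_m)}\le C$, where $C$ depends only on $n$, $\lambda$, $\norm{\mathbf A}_\infty$, $r_{\vec b-\vec c}(\lambda/(9C_n))$, $\tilde r_{\vec b-\vec c}(\lambda/(9C_n))$ and $\abs{\Omega}$.

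Next, extending each $g_x^m$ by zero outside $\Omega_m$, for a.e.\ $x$ I pass to a further subsequence along which $g_x^m\rightharpoonup g_x$ weakly in $W_0^{1,q}(\Omega)$, strongly in $L^q(\Omega)$, and a.e.; lower semicontinuity of the $L^{\frac n{n-2},\infty}$ and $L^{\frac n{n-1},\infty}$ quasinorms under a.e.\ (equivalently weak-$*$) convergence transfers the two Lorentz bounds to $g_x$ and $\nabla g_x$. It then remains to identify $g_x$ with Green's function for $L^\top$ in $\Omega$ in the sense of Definition~\ref{GreenDefinition}, i.e.\ to show $\int_\Omega g_x\phi=(T\phi)(x)$ for a.e.\ $x$ and every $\phi\in L^\infty(\Omega)$. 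By Proposition~\ref{g_x} applied to $L_m$ on $\Omega_m$ one has $\int_{\Omega_m}g_x^m\phi=u_m(x)$, where $u_m=T_m\phi\in W_0^{1,2}(\Omega_m)$ solves $L_m u_m=\phi$; the left-hand side converges to $\int_\Omega g_x\phi$ by $L^q$ convergence of $g_x^m$ and $\phi\in L^\infty$. For the right-hand side, Corollary~\ref{BoundForCritical} — which is valid in the critical regime — gives $\norm{u_m}_{L^\infty(\Omega_m)}\le C\norm{\phi}_{W^{-1,q'}(\Omega_m)}\le C\norm{\phi}_\infty$ uniformly, hence $\norm{u_m}_2\le C$; feeding this into the coercivity estimate of Lemma~\ref{Gamma} applied to $L_m$ (with $\gamma_m=\tfrac{3}{4\lambda}r_{\vec b_m-\vec c_m}(\lambda/(3C_n))^2$ uniformly bounded) and testing the equation against $u_m$ gives $\norm{\nabla u_m}_2\le C$. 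Extracting a weakly convergent (and, after one more subsequence, a.e.) limit $u_m\rightharpoonup u$ in $W_0^{1,2}(\Omega)$ and passing to the limit in the weak formulation of $L_m u_m=\phi$, using $\vec b_m\to\vec b$, $\vec c_m\to\vec c$ in $L^n(\Omega)$ and $d_m\to d$ in $L^{n/2}(\Omega)$, shows $Lu=\phi$ in $\Omega$, i.e.\ $u=T\phi$. Therefore $\int_\Omega g_x\phi=\lim_m u_m(x)=(T\phi)(x)$ a.e., which is the asserted representation.

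I expect the main difficulty to be twofold. First, the uniform control of constants under mollification in the second step: one must check that every constant produced by Propositions~\ref{g_x}, \ref{GreenConstructionLipschitz}, \ref{PreliminaryLorentz} (and the a priori estimates they rest on) depends on $r_{\vec b-\vec c}$ and $\tilde r_{\vec b-\vec c}$ monotonically in the right direction, so that the comparison above is legitimate — this is the mechanism that forces the shift from $\lambda/(3C_n)$ to $\lambda/(9C_n)$. Second, the identification in the last step, where Corollary~\ref{TSBound} (used for the same purpose in the subcritical Theorem~\ref{GreenConstructionAdjoint}) is not available, so the uniform $W_0^{1,2}$ bound on $u_m$ must instead be produced from the critical-case $L^\infty$ bound of Corollary~\ref{BoundForCritical} combined with the energy estimate of Lemma~\ref{Gamma}.
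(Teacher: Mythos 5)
Your proof follows the paper's strategy: mollify the lower order coefficients, invoke Lemmas~\ref{UniformSplit} and \ref{UniformSplit2} together with the monotone dependence of the constants in Propositions~\ref{g_x}, \ref{GreenConstructionLipschitz}, \ref{PreliminaryLorentz} on $r_{\vec b-\vec c}$ and $\tilde r_{\vec b-\vec c}$ to get Lorentz bounds uniform in $m$ (at the price of $3C_n\mapsto 9C_n$), extract a weak/weak* limit, transfer the Lorentz bounds by lower semicontinuity, and identify the limit with Green's function.

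The one place where you genuinely diverge from the paper — and improve on it — is the identification step. The paper dismisses this with ``using an argument similar to the proof of Theorem~\ref{GreenConstructionAdjoint},'' but in that proof the uniform $W_0^{1,2}$ bound on the approximating solutions comes from the quantitative part of Corollary~\ref{TSBound}, whose constant depends on $\norm{\vec b_m-\vec c_m}_{L^p}$ for some $p>n$. In the critical case that quantity is not controlled uniformly in $m$, so the argument does not transfer verbatim; Proposition~\ref{TSBoundCritical} cannot be used either, since it needs $d\ge\dv\vec c$, which the present theorem does not assume. Your replacement — get $\norm{T_m\phi}_{L^\infty}\le C$ uniformly from Corollary~\ref{BoundForCritical} (whose constant is governed by $r_{\vec b_m-\vec c_m}(\lambda/(3C_n))$, hence uniform after Lemma~\ref{UniformSplit}), and then $\norm{\nabla T_m\phi}_2\le C$ from Lemma~\ref{BoundOnU} — is exactly the right fix and closes the gap cleanly.

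One minor imprecision to note: you write that lower semicontinuity of the weak Lorentz quasinorms ``under a.e.\ (equivalently weak-$*$) convergence'' transfers the bounds. These are not equivalent modes of convergence, and for $\nabla g_x$ you only have weak convergence in $L^q$, not a.e.\ convergence, so the weak* argument (as the paper does, via Banach--Alaoglu with $L^{\frac{n}{n-1},\infty}=(L^{n,1})^*$ and identifying the weak* limit with $\nabla g_x$ through the $L^q$ weak limit) is what is actually needed there; a.e.\ lower semicontinuity suffices for $g_x$ itself. Since you have both tools at hand this is a phrasing issue rather than a gap.
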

\begin{proof}
Let $\vec b_m$, $\vec c_m$, $d_m$, $\Omega_m$, $L\tran_m$, and $g_x^m$ be as in the proof of Theorem~\ref{GreenConstructionAdjoint}. 
Since $(\vec b_m-\vec c_m)\chi_{\Omega_m}$ converges to $\vec b-\vec c$ in $L^n(\Omega)$, Lemmas \ref{UniformSplit} and \ref{UniformSplit2} show that, for a subsequence $\vec b_{k_m}-\vec c_{k_m}$, we have
\[
\textstyle
r_{(\vec b_{k_m}-\vec c_{k_m})\chi_{\Omega_{k_m}}}\left(\frac{\lambda}{3C_n}\right)\le 2 r_{\vec b-\vec c}\left(\frac{\lambda}{9C_n}\right)\quad\text{and}\quad \tilde{r}_{(\vec b_{k_m}-\vec c_{k_m})\chi_{\Omega_{k_m}}}\left(\frac{\lambda}{3C_n}\right)\ge \tilde{r}_{\vec b-\vec c}\left(\frac{\lambda}{9C_n}\right).
\]
We note that the constants $C$ in Proposition~\ref{GreenConstructionLipschitz} and Proposition~\ref{PreliminaryLorentz} depend on the numbers $r_{\vec b-\vec c}\left(\frac{\lambda}{3C_n}\right)$ and $\tilde{r}_{\vec b-\vec c}\left(\frac{\lambda}{3C_n}\right)$ in such a way that they increase as  $r_{\vec b-\vec c}\left(\frac{\lambda}{3C_n}\right)$ increases and also as $\tilde{r}_{\vec b-\vec c}\left(\frac{\lambda}{3C_n}\right)$ decreases.
Therefore, by Proposition~\ref{GreenConstructionLipschitz} and Proposition~\ref{PreliminaryLorentz}, we see that
\[
\norm{g_x^m}_{L^{\frac{n}{n-2},\infty}(\Omega)}\le C,\quad
\norm{\nabla g_x^m}_{L^{\frac{n}{n-1},\infty}(\Omega)}\le C,
\]
for some $C$ that depends only on $n$, $\lambda$, $\norm{\mathbf A}_{\infty}$, $r_{\vec b-\vec c}\left(\frac{\lambda}{9C_n}\right)$, $\tilde{r}_{\vec b-\vec c}\left(\frac{\lambda}{9C_n}\right)$, and $\abs{\Omega}$.
Since $L^{\frac{n}{n-1},\infty}(\Omega)$ is the dual of $L^{n,1}(\Omega)$ and $L^{\frac{n}{n-2},\infty}(\Omega)$ is the dual of $L^{\frac{n}{2},1}(\Omega)$, the Banach-Alaoglu theorem implies that there exists $\tilde{g}_x\in L^{\frac{n}{n-2},\infty}(\Omega)$, with $\nabla\tilde{g}_x\in L^{\frac{n}{n-1},\infty}(\Omega)$, and a subsequence $g_x^{i_m}$ such that
\[
g_x^{i_m}\rightharpoonup\tilde{g}\;\text { weak*\, in }\; L^{\frac{n}{n-2},\infty}(\Omega),\quad \nabla g_x^{i_m}\rightharpoonup\nabla\tilde{g}\;\text{ weak*\, in }\;L^{\frac{n}{n-1},\infty}(\Omega).
\]
Therefore
\[
\norm{\tilde{g}_x}_{L^{\frac{n}{n-2},\infty}(\Omega)}\le\limsup_{m\to\infty}\, \norm{g_x^{i_m}}_{L^{\frac{n}{n-2},\infty}(\Omega)}\le C,
\]
and also
\[
\norm{\nabla \tilde{g}_x}_{L^{\frac{n}{n-1},\infty}(\Omega)}\le\limsup_{m\to\infty}\, \norm{\nabla g_x^{i_m}}_{L^{\frac{n}{n-1},\infty}(\Omega)}\le C.
\]
Moreover, using an argument similar to the proof of Theorem~\ref{GreenConstructionAdjoint}, we can show that a subsequence of $g_x^{i_m}$ converges to $g_x$ weakly in $W_0^{1,q}(\Omega)$, for all $q\in (1,\frac{n}{n-1})$. 
This implies that $g_x=\tilde{g}_x$, which completes the proof.
\end{proof}

\subsection{The pointwise bounds}
We now turn to the pointwise bounds for Green's function in the critical case. In order to obtain those bounds, we will need to strengthen our assumptions on the coefficients of the equation, since an analog of Theorem~\ref{GreenConstruction} for the adjoint equation does not hold in general; we now turn to this fact.

Consider the ball $B=B_{1/e}(0)\subset \mathbb R^n$ and set
\begin{equation}\label{eq:b}
\vec b(x)= - \frac{x}{\abs{x}^2\, \ln \abs{x}\,}
\end{equation}
for $x\in B$.
A simple calculation shows that $\vec b\in L^n(B)$, but $\vec b\notin L^p(B)$, for any $p>n$.

\begin{lemma}			\label{ToCounterexample}
Consider the function $\vec b$ in \eqref{eq:b}.
Then, for any $\delta>0$, there exists a bounded function $f_{\delta}: B\to\mathbb R$ such that, the solution $u_{\delta}\in W_0^{1,2}(B)$ to the equation
\[
-\Delta u_{\delta}-\dv(\delta\vec bu_{\delta})=f_{\delta}\quad\text{in}\quad B
\]
satisfies the estimate
\[
\abs{u_{\delta}(x)} \ge C_{\delta}\,\abs{ \ln \abs{x}}^{\delta}
\]
near $x=0$ for some $C_{\delta}>0$.
\end{lemma}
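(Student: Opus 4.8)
The plan is to exploit that $\vec b$ is a rotation‑equivariant vector field: writing $\vec b(x)=\beta(|x|)\,x$ with $\beta(s)=-(s^2\ln s)^{-1}$, the operator $L_\delta u:=-\Delta u-\dv(\delta\vec b u)$ commutes with rotations, so with a radial right‑hand side the solution is radial and the PDE collapses to a first–order ODE that can be integrated explicitly. I would take $f_\delta\equiv 1$ (certainly bounded). Writing $u=u(r)$, $r=|x|$, and using that $\ln r<0$ on $B$, so that $\vec b(x)=\frac{1}{r(-\ln r)}\hat r$, one gets $\nabla u+\delta\vec b u=\Phi(r)\hat r$ with $\Phi(r)=u'(r)+\frac{\delta}{r(-\ln r)}\,u(r)$ and $L_\delta u=-r^{1-n}\bigl(r^{n-1}\Phi(r)\bigr)'$. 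Hence $u$ is a weak $W^{1,2}$ solution of $L_\delta u=1$ on $B\setminus\{0\}$ exactly when $r^{n-1}\Phi(r)=C-r^n/n$ for a constant $C$.

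Next I would argue that the $W_0^{1,2}$ solution corresponds to $C=0$: integrating $u'+\frac{\delta}{r(-\ln r)}u=C r^{1-n}-r/n$ with the integrating factor $\mu(r)=(-\ln r)^{-\delta}$ (note $\mu'/\mu=\frac{\delta}{r(-\ln r)}$) shows that for $C\neq0$ one has $u(r)\sim\frac{C}{2-n}r^{2-n}$ as $r\to0$, whose gradient is $\sim C r^{1-n}$ and therefore not in $L^2(B)$. Taking $C=0$, i.e. $\Phi(r)=-r/n$, and imposing the boundary condition $u_\delta(1/e)=0$ (note $\mu(1/e)=1$) yields the explicit formula
\[
u_\delta(r)=\frac{(-\ln r)^{\delta}}{n}\int_r^{1/e} s\,(-\ln s)^{-\delta}\,ds ,\qquad 0<r\le 1/e .
\]
It remains to verify that this is the solution in the statement and to read off the bound. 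Clearly $u_\delta\ge0$ and $u_\delta(1/e)=0$; near the origin $u_\delta(r)=\frac{(-\ln r)^\delta}{n}\bigl(\int_0^{1/e}s(-\ln s)^{-\delta}\,ds+o(1)\bigr)$ and $u_\delta'(r)=-\frac{\delta(-\ln r)^{\delta-1}}{nr}\int_r^{1/e}s(-\ln s)^{-\delta}\,ds-\frac rn=O\bigl((-\ln r)^{\delta-1}/r\bigr)$, so that $\nabla u_\delta\in L^2(B)$ because $\int_0(-\ln r)^{2\delta-2}r^{n-3}\,dr<\infty$ for $n\ge3$ (substitute $r=e^{-t}$); together with $u_\delta\in L^2(B)$ and $u_\delta=0$ on $\partial B$ this gives $u_\delta\in W_0^{1,2}(B)$. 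Since $r^{n-1}\Phi(r)=-r^n/n\to0$ as $r\to0$, the radial construction produces no distributional mass at the origin, so $u_\delta$ is a weak solution of $-\Delta u_\delta-\dv(\delta\vec b u_\delta)=1$ on all of $B$ (uniqueness of the $W_0^{1,2}$ solution is standard and in any case irrelevant to the estimate). Finally, for $|x|\le 1/(2e)$,
\[
\abs{u_\delta(x)}=u_\delta(|x|)\ge \frac{(-\ln|x|)^\delta}{n}\int_{1/(2e)}^{1/e}s\,(-\ln s)^{-\delta}\,ds=C_\delta\,\abs{\ln|x|}^{\delta},\qquad C_\delta:=\frac1n\int_{1/(2e)}^{1/e}s\,(-\ln s)^{-\delta}\,ds>0 ,
\]
which is the claim.

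The one genuinely delicate step is the selection of the $W^{1,2}$‑admissible branch (equivalently, that the integration constant must vanish): one has to exclude the $|x|^{2-n}$‑type homogeneous solution by the energy computation above, and to keep careful track of the logarithmic weights when checking $\nabla u_\delta\in L^2$ near the origin. This is exactly where $n\ge3$ is used, and where the fact (noted just before the statement) that $\vec b\notin\bigcup_{p>n}L^p(B)$ enters — it is the failure of this integrability, together with the failure of the structural sign condition for $L_\delta$, that keeps the unbounded solution consistent with the local boundedness results such as Lemma~\ref{LocalBoundForp}.
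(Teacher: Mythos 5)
Your proof is correct, and it takes a genuinely different route from the paper's. The paper observes directly that $v_\delta(x)=\abs{\ln\abs{x}}^{\delta}$ satisfies $\nabla v_\delta+\delta\vec b\,v_\delta=0$, so $v_\delta$ is an exact homogeneous solution, and then produces $u_\delta$ by subtracting the smooth comparison function $w(x)=e^2\abs{x}^2$, which matches $v_\delta$ on $\partial B$; the bounded right-hand side is simply $f_\delta=-\Delta w-\dv(\delta\vec b\,w)=2ne^2-\delta ne^2/\ln\abs{x}+\delta e^2/\ln^2\abs{x}$. This avoids solving any inhomogeneous equation and avoids the delicate points you flag (branch selection, absence of a point mass at the origin), since the correction $w$ is explicit and smooth. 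Your approach instead fixes $f_\delta\equiv 1$, reduces to the radial first-order ODE $u'+\tfrac{\delta}{r(-\ln r)}u=Cr^{1-n}-r/n$, integrates with the factor $\mu(r)=(-\ln r)^{-\delta}$ (which encodes exactly the same observation, since $\mu^{-1}=v_\delta$), rules out $C\neq 0$ by an $L^2$-energy computation, and reads off the $\abs{\ln\abs{x}}^{\delta}$ lower bound from the explicit integral formula. Both arguments are sound; yours is more systematic but requires the additional verifications of $W^{1,2}_0$-membership and of the absence of distributional mass at the origin, which the paper's subtraction trick sidesteps entirely. One minor caveat: uniqueness of the $W_0^{1,2}$ solution is not quite ``irrelevant'' to the statement as written (``the solution''); it does hold here because the operator is $L\tran$ for the choice $\vec b_{\mathrm{paper}}=\vec 0$, $\vec c_{\mathrm{paper}}=\delta\vec b$, $d=0$, so $d-\dv\vec b_{\mathrm{paper}}=0\ge0$ and Lemma~\ref{W_0^{1,2}SolvabilityAdjoint} applies, but it is worth saying.
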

\begin{proof}
Set $v_{\delta}(x)=\abs{ \ln \abs{x}}^{\delta}$, for $\delta>0$. Then, simple calculations show that
\[
v_{\delta}\in W^{1,2}(B),\quad v_{\delta} \vert_{\partial B}=1,\quad-\Delta v_{\delta}-\dv(\delta\vec bv_{\delta})=0.
\]
We now set $w(x)=e^2 \abs{x}^2$, and $u_{\delta}=v_{\delta}-w$. Note then that $u_{\delta}\in W_0^{1,2}(\Omega)$, and
\[
\Delta w+\dv(\delta\vec bw)=2ne^2 -\frac{\delta ne^2}{\ln \abs{x}}+\frac{\delta e^2}{\ln^2 \abs{x}}=-f_{\delta}(x),
\]
where $f_{\delta}$ is bounded in $B$. Therefore, $u_{\delta}\in W_0^{1,2}(B)$ is the solution to the equation
\[
-\Delta u_{\delta}-\dv(\delta \vec bu_{\delta})=f_{\delta}.
\]
However,
\[
\abs{u_{\delta}(x)} \ge  \abs{\ln \abs{x}}^{\delta}-e^2 \abs{x}^2 \ge C_{\delta}\, \abs{\ln \abs{x}}^{\delta}\quad\text{near }\;x=0.	\qedhere
\]
\end{proof}

Note that the previous lemma shows that a solution $u\in W^{1,2}(\Omega)$ to the equation
\[
-\Delta u-\dv(\vec bu)=0
\]
is not necessarily bounded if we just assume that $\vec b\in L^n(\Omega)$, or even if we assume that $\vec b$ has small $L^n(\Omega)$ norm.
Hence, Lemma~\ref{LocalBoundForp} cannot hold if we just assume that $\vec c\in L^n(\Omega)$.

\begin{proposition}\label{Counterexample}
If Green's function $G(\cdot,y)=G_y(\cdot)$, for the operator $-\Delta u+\vec b\cdot \nabla u$ in $B$, exists, then it cannot belong to $L^1(B)$ uniformly in $y$. In particular, the bounds
\[
G(x,y)\le C\abs{x-y}^{2-n},\quad \|G(\cdot,y)\|_{L^{\frac{n}{n-2},\infty}(\Omega)}\leq C
\]
cannot hold.
\end{proposition}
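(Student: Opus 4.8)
The plan is to derive a contradiction from Lemma~\ref{ToCounterexample} by reading off from $G$ the $L^\infty\!\to\!L^\infty$ boundedness of the solution operator $S=(L\tran)^{-1}$. First I would match $L=-\Delta+\vec b\cdot\nabla$ with the operator of the paper by taking $\mathbf A=\mathbf I$, divergence‑form drift $\vec 0$, non‑divergence drift $\vec b$, and zero‑order coefficient $0$. Since the divergence‑form drift vanishes, the condition $d-\dv\vec 0\ge 0$ holds trivially; also $B=B_{1/e}(0)$ has finite measure and $\vec b\in L^n(B)$ (the remark following \eqref{eq:b}). Hence all the standing hypotheses are in force: $L\tran u=-\Delta u-\dv(\vec b u)$, the operator $S$ of \eqref{eq:S} is well defined and characterized by uniqueness of $W_0^{1,2}$ solutions (Lemma~\ref{W_0^{1,2}SolvabilityAdjoint}), and, by Definition~\ref{GreenDefinition}, if $G_y=G(\cdot,y)$ exists then $S\phi(y)=\int_B G(x,y)\phi(x)\,dx$ for every $\phi\in L^\infty(B)$ and almost every $y\in B$.

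Next I would argue by contradiction. Suppose $M:=\sup_y\norm{G(\cdot,y)}_{L^1(B)}<+\infty$. Apply Lemma~\ref{ToCounterexample} with $\delta=1$ to obtain a bounded $f_1$ on $B$ for which the solution $u_1\in W_0^{1,2}(B)$ of $-\Delta u_1-\dv(\vec b u_1)=f_1$ satisfies $\abs{u_1(x)}\ge C_1\abs{\ln\abs{x}}$ near the origin, so $u_1\notin L^\infty(B)$. Since $f_1\in L^2(B)\subset W^{-1,2}(B)$ and $W_0^{1,2}$‑solutions of $L\tran u=f_1$ are unique (Lemma~\ref{W_0^{1,2}SolvabilityAdjoint}), we have $Sf_1=u_1$; hence by Definition~\ref{GreenDefinition} (with $\phi=f_1\in L^\infty(B)$), $\abs{u_1(y)}=\bigabs{\int_B G(x,y)f_1(x)\,dx}\le M\norm{f_1}_\infty$ for almost every $y$, contradicting $u_1\notin L^\infty(B)$. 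This is exactly the assertion that $\norm{G(\cdot,y)}_{L^1(B)}$ cannot be bounded uniformly in $y$.

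For the ``in particular'' part I would observe that each of the two displayed bounds, if it held with a constant independent of $y$, would place $G(\cdot,y)$ in $L^1(B)$ with norm bounded uniformly in $y$, contradicting the previous step. The Lorentz bound gives this at once by Hölder's inequality for Lorentz norms (as in the proof of Proposition~\ref{GeneralLorentzBound}): $\norm{G(\cdot,y)}_{L^1(B)}\le C\norm{G(\cdot,y)}_{L^{\frac{n}{n-2},\infty}(B)}\norm{\chi_B}_{L^{\frac n2,1}}\le C\abs{B}^{2/n}\norm{G(\cdot,y)}_{L^{\frac{n}{n-2},\infty}(B)}$. The pointwise bound gives it once one uses $G_y\ge 0$ — the Green's function of $L$ is nonnegative because $L$ obeys the maximum principle (Proposition~\ref{BoundForCriticalSubsolutions}, applicable since the divergence‑form drift of $L$ is $\vec 0$), so that $G_y$ is an almost‑everywhere limit of nonnegative approximate Green's functions; alternatively one reads the bound as one on $\abs{G(x,y)}$ in the sense of \eqref{eq2109sat} — giving $\norm{G(\cdot,y)}_{L^1(B)}\le C\int_B\abs{x-y}^{2-n}\,dx\le C(n)\abs{B}^{2/n}$.

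I do not expect any serious obstacle here beyond Lemma~\ref{ToCounterexample} itself, which already contains the analytic content. The only point requiring care is the bookkeeping that identifies the paper's abstract Green's function — a representation kernel for $S=(L\tran)^{-1}$ — with the explicit barrier $u_1=v_1-e^2\abs{x}^2$ of that lemma; this is settled by the uniqueness of $W_0^{1,2}$ solutions together with the representation formula in Definition~\ref{GreenDefinition}.
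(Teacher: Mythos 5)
Your proposal is correct and takes essentially the same approach as the paper: both apply Lemma~\ref{ToCounterexample} with $\delta=1$, identify $u_1=Sf_1$ via uniqueness of $W_0^{1,2}$ solutions, and use the representation formula of Definition~\ref{GreenDefinition} with $\phi=f_1$ to contradict the unboundedness of $u_1$ near the origin. You also spell out the reduction of the two displayed bounds to the uniform $L^1$ bound, which the paper leaves implicit; that elaboration is accurate (and you correctly flag the minor point about nonnegativity of $G_y$ when passing from the one-sided pointwise bound to an $L^1$ bound).
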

\begin{proof}
Consider the functions $u$ and $f$ from Lemma~\ref{ToCounterexample} for $\delta=1$, and assume that $\norm{G_y}_1\le C$ uniformly in $y$.
Then, since $u$ solves the equation $-\Delta u-\dv(\vec bu)=0$ in $B$, we conclude from Definition~\ref{GreenDefinition} that
\[
\abs{u(y)}= \Abs{\int_B G(x,y) f(x)\,dx} \le \norm{G_y}_1\,\norm{f}_{\infty}\le C \norm{f}_{\infty}
\]
for almost every $y\in B$.
But then, by Lemma~\ref{ToCounterexample}, we have
\[
C_1 \abs{\ln \abs{y}} \le \abs{u(y)} \le C \norm{f}_{\infty}\quad\text{near }\;y=0,
\]
which is a contradiction.
\end{proof}

Using the solutions from Lemma~\ref{ToCounterexample} with $\delta>0$ small, we can show that Proposition~\ref{Counterexample} remains true, even if we assume that $b\in L^n(B)$ has arbitrarily small norm.

If we only assume that $d\ge\dv\vec b$, the point in which the proof of the bounds for the subcritical case cannot be adjusted to the critical case is the first estimate in \eqref{eq:NotForCritical!}. A way to make this estimate work is to assume that $d\ge\dv\vec c$ in the critical case. Then, the maximum principle holds for both $L$ and $L\tran$.

The following lemma is used to extend Lipschitz functions to larger domains. 

\begin{lemma}\label{GoodExtension}
Let $\Omega\subset \mathbb R^n$ be a domain, and let $f\in\Lip(\Omega)$. Let also $\varepsilon_1$, $\varepsilon_2>0$, and $r>0$.
Then, for any ball $B_r=B_r(x)$ with $x \in \Omega$, there exists a function $\overline{f}\in\Lip(\Omega\cup B_r)$, which is equal to $f$ in $\Omega$ and  satisfies
\[
r_{\overline{f}}(2\varepsilon_1)\le r_f(\varepsilon_1)\quad \text{and}\quad \tilde{r}_{\overline{f}}(2\varepsilon_2)\ge\tilde{r}_f(\varepsilon_2).
\]
Moreover, if $f$ is bounded, then $\overline{f}$ is also bounded.
\end{lemma}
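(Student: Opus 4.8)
The plan is to produce $\overline f$ as a Lipschitz extension of $f$ that is \emph{switched off} outside a thin collar of $\Omega$, so that the new region $B_r\setminus\Omega$ contributes to $R_{\overline f}$ and $\tilde R_{\overline f}$ only through a set of arbitrarily small measure. Concretely, I would fix any Lipschitz extension $\tilde f$ of $f$ to $\mathbb R^n$ with $\Lip(\tilde f)=L:=\Lip(f)$ (for instance the McShane extension $\tilde f(y)=\inf_{z\in\Omega}(f(z)+L\abs{y-z})$). Since $x\in\Omega$ we have $\tilde f(x)=f(x)$, hence $\abs{\tilde f(y)}\le\abs{f(x)}+L\abs{y-x}$; in particular $M:=\sup_{B_r}\abs{\tilde f}\le\abs{f(x)}+Lr<\infty$, and $\abs{\overline f}\le\abs{\tilde f}$ is bounded on $\Omega\cup B_r$ whenever $f$ is bounded on $\Omega$. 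Then for $\delta>0$ put $\varphi_\delta(y)=\max(0,\,1-\delta^{-1}\operatorname{dist}(y,\Omega))$, which is $\delta^{-1}$-Lipschitz, equals $1$ on $\overline\Omega$, and is supported where $\operatorname{dist}(\cdot,\Omega)<\delta$, and set $\overline f:=\varphi_\delta\,\tilde f$ on $\Omega\cup B_r$, so that $\overline f=f$ on $\Omega$.

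To check that $\overline f$ is Lipschitz on $\Omega\cup B_r$, I would note that for $y,y'\in\Omega\cup B_r$ with $\varphi_\delta(y)=\varphi_\delta(y')$ one trivially has $\abs{\overline f(y)-\overline f(y')}\le L\abs{y-y'}$, while if $\varphi_\delta(y)\neq\varphi_\delta(y')$ then at least one of the two points lies outside $\overline\Omega$, hence in $B_r$, where $\abs{\tilde f}\le M$; writing $\overline f(y)-\overline f(y')=\varphi_\delta(y')(\tilde f(y)-\tilde f(y'))+\tilde f(y)(\varphi_\delta(y)-\varphi_\delta(y'))$ with $y$ chosen to be that point gives $\abs{\overline f(y)-\overline f(y')}\le(L+M/\delta)\abs{y-y'}$. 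The decisive structural fact is that on $B_r\setminus\Omega$ the function $\overline f$ is supported in $T_\delta:=\{y\in B_r\setminus\Omega:\operatorname{dist}(y,\Omega)<\delta\}$ and bounded there by $M$. Since $\bigcap_{\delta>0}T_\delta=B_r\cap\partial\Omega$ has measure zero (for the domains under consideration, e.g.\ whenever $\abs{\partial\Omega}=0$), one has $\abs{T_\delta}\to0$ as $\delta\to0^+$, so we may fix $\delta$ small enough that $M^n\abs{T_\delta}<(2^n-1)\min(\varepsilon_1^n,\varepsilon_2^n)$.

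With this choice the two inequalities follow by simply splitting the integrals over $\Omega\cup B_r=\Omega\sqcup(B_r\setminus\Omega)$. For every $t>r_f(\varepsilon_1)$ (so that $R_f(t)<\varepsilon_1$),
\[
R_{\overline f}(t)^n=R_f(t)^n+\int_{(B_r\setminus\Omega)\cap\{\abs{\overline f}\ge t\}}\abs{\overline f}^n\le R_f(t)^n+M^n\abs{T_\delta}<\varepsilon_1^n+(2^n-1)\varepsilon_1^n=(2\varepsilon_1)^n,
\]
whence $r_{\overline f}(2\varepsilon_1)\le r_f(\varepsilon_1)$. Likewise, for every $t<\tilde r_f(\varepsilon_2)$ and every $A\subseteq\Omega\cup B_r$ with $\abs{A}<t$, the bound $\abs{A\cap\Omega}\le\abs{A}<t<\tilde r_f(\varepsilon_2)$ gives $\norm{f}_{L^n(A\cap\Omega)}<\varepsilon_2$, so
\[
\norm{\overline f}_{L^n(A)}^n=\norm{f}_{L^n(A\cap\Omega)}^n+\int_{A\cap(B_r\setminus\Omega)}\abs{\overline f}^n<\varepsilon_2^n+M^n\abs{T_\delta}<\varepsilon_2^n+(2^n-1)\varepsilon_2^n=(2\varepsilon_2)^n,
\]
hence $\tilde R_{\overline f}(t)<2\varepsilon_2$ for all such $t$, i.e.\ $\tilde r_{\overline f}(2\varepsilon_2)\ge\tilde r_f(\varepsilon_2)$. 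The boundedness claim is immediate from $\abs{\overline f}\le\abs{\tilde f}\le\max(\sup_\Omega\abs f,\ \abs{f(x)}+Lr)$.

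I expect the only genuine subtlety to be the assertion $\abs{T_\delta}\to0$, i.e.\ that the part of $\partial\Omega$ inside $B_r$ is Lebesgue negligible; this is the single place where a mild regularity of $\Omega$ is used and it is automatic once $\abs{\partial\Omega}=0$. Everything else is bookkeeping, and the factor $2$ in the statement is not an artifact: it is exactly the slack $(2^n-1)\varepsilon^n$ reserved from the outset to absorb the collar term $M^n\abs{T_\delta}$.
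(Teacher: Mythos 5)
Your proof is correct and follows the same strategy as the paper: Lipschitz-extend $f$, multiply by a cutoff supported in a thin collar of $\Omega$, and exploit the smallness of the collar's measure to absorb the new contribution into the slack $(2^n-1)\varepsilon^n$. The only differences are technical: you use the McShane extension and an explicit piecewise-linear $\varphi_\delta$ while the paper invokes Stein's extension theorem and a smooth cutoff, and your bound on $M$ via $\tilde f(x)=f(x)$ and the Lipschitz seminorm is a bit cleaner than the paper's $M:=\norm{f}_{\Lip(\Omega)}$ (a Lipschitz function need not be bounded a priori). Your direction $t>r_f(\varepsilon_1)\Rightarrow R_f(t)<\varepsilon_1$ is also the correct one; the paper's ``if $t<r_f(\varepsilon_1)$'' is a typo, since $R_f$ is non-increasing.

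One point worth flagging: the requirement $\abs{B_r\cap\partial\Omega}=0$, which you call out explicitly, is also present but hidden in the paper's argument. The paper defines $E_\delta$ with the strict inequality $0<{\rm dist}(x,\Omega)$, so $\abs{E_\delta}\to 0$ holds unconditionally, but the cutoff $\phi$ equals $1$ on $\overline{\Omega}$ by continuity, so $\overline f$ is supported on $\Omega\cup E_\delta\cup(B_r\cap\partial\Omega)$ and the paper's computation $\int_{\Omega\cup E_\delta}=\int_\Omega+\int_{E_\delta}$ silently drops the boundary piece. Your $T_\delta=\set{y\in B_r\setminus\Omega:{\rm dist}(y,\Omega)<\delta}$ is exactly the set that must be controlled, and you correctly identify that $\abs{T_\delta}\to0$ needs $\partial\Omega\cap B_r$ Lebesgue-null. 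Both arguments therefore rely tacitly on this; it is harmless for the applications in the paper but deserves to be stated.
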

\begin{proof}
By Theorem 3 in \cite[p. 174]{SteinSingular}, there exists a function $g\in\Lip(\mathbb R^n)$ which is equal to $f$ in $\Omega$ and $\abs{g}\le M:=\norm{f}_{\Lip(\Omega)}$.
Now, let $\delta>0$ and set
\[
E_{\delta}=\set{x\in B_r : 0<{\rm dist}(x,\Omega)<\delta}\quad \text{and}
\quad  F_{\delta}=\set{x\in B_r : {\rm dist}(x,\Omega)\ge\delta}.
\]
Note that if $x\in\overline{\Omega}$, then $\chi_{E_{\delta}}(x)=0$ and if $x\notin\overline{\Omega}$, then $\chi_{E_{\delta}}(x)=0$ for $\delta<\frac{1}{2}{\rm dist}(x,\Omega)$.
Therefore, $\chi_{E_{\delta}}\to 0$ pointwise in $B_r$ and thus, the dominated convergence theorem shows that
\[
\lim_{\delta \to 0} \, \abs{E_{\delta}} =0.
\]
Let us choose $\delta>0$ such that
\[
M^n \abs{E_{\delta}}<\varepsilon_1^n \quad \text{and} \quad  M^n \abs{E_{\delta}}<\varepsilon_2^n.
\]
Let $\phi$ be a smooth cutoff such that  $\phi=0$ in $ F_{\delta}$, $\phi \equiv 1$ in $\Omega$, and $0\le\phi\le 1$.
Set $\overline{f}=g\phi$.
Then, for every $t>0$, we compute
\[
\int_{\Omega\cup E_{\delta}} \Abs{\,\bigabs{\overline f}^{(t)}}^n=\int_{\Omega} \Abs{\,\bigabs{\overline f}^{(t)}}^n+\int_{E_{\delta}} \Abs{\,\bigabs{\overline f}^{(t)}}^n\le\int_{\Omega} \Abs{\,\abs{f}^{(t)}}^n+M^n \Abs{E_{\delta}}\le(R_f(t))^n+\varepsilon_1^n,
\]
from our choice of $\delta$, which shows that
\[
(R_{\overline{f}}(t))^n<(R_f(t))^n+\varepsilon_1^n.
\]
Hence, if $t<r_f(\varepsilon_1)$, then $R_{\overline{f}}(t)<2\varepsilon_1$, which shows the first claim.
	
For the second claim, let $t>0$ and $A\subseteq\overline{\Omega}\cup E_{\delta}$ with $\abs{A}<t$. Then,
\[
\int_A \bigabs{\overline f}^n=\int_{A\cap\Omega} \abs{f}^n+\int_{A\setminus\Omega} \bigabs{\overline f}^n\le(\tilde{R}_f(t))^n+\int_{E_{\delta}} \bigabs{\overline f}^n\le(\tilde{R}_f(t))^n+M^n \Abs{E_{\delta}}<(\tilde{R}_f(t))^n+\varepsilon_2^n,
\]
from our choice of $\delta$.
This shows that
\[
(\tilde{R}_{\overline{f}}(t))^n<(\tilde{R}_f(t))^n+\varepsilon_2^n,\quad \forall t>0.
\]
Hence, if $t<\tilde{r}_f(\varepsilon_2)$, then $\tilde{R}_{\overline{f}}(t)<2\varepsilon_2$, which shows the second claim.
It should be clear from the construction that $\overline{f}$ is also bounded if $f$ is bounded.
\end{proof}

\begin{proposition}\label{GreenConstructionLipschitzCritical}
Let $\Omega\subset \mathbb R^n$ be a domain with $\abs{\Omega} <+\infty$.
Let $\mathbf{A}$ be bounded and satisfy the uniform ellipticity condition \eqref{ellipticity} and $\vec b$, $\vec c$, $d\in\Lip(\Omega)\cap L^{\infty}(\Omega)$.
Assume also that $d\ge\dv\vec b$ and $d\ge\dv\vec c$. Then, the functions $g_x$ and $G_y$ satisfy the bounds
\[
 0\le g_x(z) \le C\abs{z-x}^{2-n}\quad\text{and}\quad 0 \le G_y(z) \le C\abs{z-y}^{2-n},
\]
where $C$ depends on $n$, $\lambda$, $\norm{\mathbf A}_{\infty}$, $r_{\vec b-\vec c}\left(\frac{\lambda}{6C_n}\right)$, $\tilde{r}_{\vec b-\vec c}\left(\frac{\lambda}{6C_n}\right)$, and $\abs{\Omega}$.
\end{proposition}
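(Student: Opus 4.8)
The plan is to rerun the proof of Proposition~\ref{GreenConstructionLipschitzSubcritical}, replacing the subcritical interior estimate (Lemma~\ref{LocalBoundForp}) by the critical one (Lemma~\ref{LocalBoundForn}) — now available since we assume $d\ge\dv\vec c$ — and keeping the constants produced by the domain‑extension step under control by means of Lemma~\ref{GoodExtension}. I work with the approximate Green's functions $g_x^k$ of Proposition~\ref{g_x}. Since $d\ge\dv\vec b$, Lemma~\ref{Presymmetry} gives $g_x^k\ge0$ and $g_x^k(z)=\lim_{m}\fint_{B_{1/k}(x)}G_z^m$ for $z\neq x$ and $k$ large, while Proposition~\ref{GreenConstructionLipschitz} gives $\norm{g_x^k}_{L^{\frac n{n-2},\infty}(\Omega)}\le C$ uniformly in $k$. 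Fix $z\neq x$ and set $r=\tfrac14\abs{x-z}$. If $B_{2r}(z)\subset\Omega$, then $g_x^k$ solves $L\tran g_x^k=0$ in $B_{2r}(z)$, and Lemma~\ref{LocalBoundForn} together with H\"older's inequality for Lorentz norms (exactly as in \eqref{eq:NotForCritical!}) gives $\sup_{B_{r/2}(z)}g_x^k\le C\fint_{B_r(z)}g_x^k\le Cr^{2-n}\norm{g_x^k}_{L^{\frac n{n-2},\infty}(\Omega)}\le Cr^{2-n}$.

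If $B_{2r}(z)\not\subset\Omega$, I would argue by comparison and extension. First, $d\ge\dv\vec b$ lets one dominate $G_z^m\le\widetilde G_z^m$, where $\widetilde G_z^m$ is the approximate Green's function in $\Omega$ for $\widetilde Lu=-\dv(\mathbf A\nabla u)+(\vec c-\vec b)\cdot\nabla u$; this $\widetilde L$ has no zeroth‑order term and no divergence‑form drift, so it trivially satisfies $d\ge\dv\vec b$ and the maximum principle applies to it (in particular $\widetilde G_z^m\ge0$). Next, Lemma~\ref{GoodExtension}, applied to $\vec c-\vec b$ with the ball $B_{2r}(z)$ and $\varepsilon_1=\varepsilon_2=\tfrac\lambda{6C_n}$, produces $\overline{\vec c}\in\Lip(\Omega\cup B_{2r}(z))\cap L^\infty$ extending $\vec c-\vec b$ with $r_{\overline{\vec c}}(\tfrac\lambda{3C_n})\le r_{\vec b-\vec c}(\tfrac\lambda{6C_n})$ and $\tilde r_{\overline{\vec c}}(\tfrac\lambda{3C_n})\ge\tilde r_{\vec b-\vec c}(\tfrac\lambda{6C_n})$; extending $\mathbf A$ by $\lambda\mathbf I$ and setting $\overline Lu=-\dv(\overline{\mathbf A}\nabla u)+\overline{\vec c}\cdot\nabla u$ on $\Omega\cup B_{2r}(z)$, the maximum principle for $\widetilde L$ yields $\widetilde G_z^m\le\overline G_z^m$, so $G_z^m\le\overline G_z^m$ in $\Omega$. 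Since $\overline L$ also satisfies $d\ge\dv\vec b$ trivially, Lemma~\ref{Presymmetry} applies to $\overline L$ and gives $g_x^k(z)\le\lim_m\fint_{B_{1/k}(x)}\overline G_z^m=\overline g_x^k(z)$. Finally $\overline g_x^k$ is a nonnegative solution of $-\dv(\overline{\mathbf A}\tran\nabla u+\overline{\vec c}\,u)=0$ in $B_{2r}(z)$, an equation with \emph{no} zeroth‑order term; the Moser iteration (as in the proof of \cite[Theorem 8.17]{Gilbarg}, with the $L^n$‑splitting of $\overline{\vec c}$ used in \eqref{eq:b-cBound}, and needing no sign hypothesis because there is no zeroth‑order term to absorb) gives $\sup_{B_{r/2}(z)}\overline g_x^k\le C\fint_{B_r(z)}\overline g_x^k$, while Proposition~\ref{GreenConstructionLipschitz} applied to $\overline L$ bounds $\norm{\overline g_x^k}_{L^{\frac n{n-2},\infty}(\Omega\cup B_{2r}(z))}$ by a constant that, via Lemma~\ref{GoodExtension}, depends only on $n$, $\lambda$, $\norm{\mathbf A}_\infty$, $r_{\vec b-\vec c}(\tfrac\lambda{6C_n})$, $\tilde r_{\vec b-\vec c}(\tfrac\lambda{6C_n})$ and $\abs\Omega$ (the measure of the enlarged domain being absorbed into the $\abs\Omega$‑dependence exactly as in the subcritical proof). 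Hence $g_x^k(z)\le\overline g_x^k(z)\le Cr^{2-n}$. Letting $k\to\infty$ along the subsequence of Proposition~\ref{g_x} gives $0\le g_x(z)\le C\abs{x-z}^{2-n}$ in both cases, and the bound for $G_y$ follows by applying the whole argument to $L\tran$ in place of $L$: the substitution $\vec b\leftrightarrow\vec c$, $\mathbf A\leftrightarrow\mathbf A\tran$ interchanges $d\ge\dv\vec b$ and $d\ge\dv\vec c$ (both staying in force), leaves $r_{\vec b-\vec c}$ and $\tilde r_{\vec b-\vec c}$ unchanged, and identifies the adjoint Green's function of $L\tran$ with $G_y$.

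The main obstacle is the extension step. Lemma~\ref{LocalBoundForp} places its divergence hypothesis on the gradient‑form drift, which is vacuous for the extended adjoint operator, so in the subcritical case the interior estimate for $\overline g_x^k$ came for free; Lemma~\ref{LocalBoundForn} places it on the divergence‑form drift, and the extension $\overline{\vec c}$ carries no sign control on $\dv\overline{\vec c}$. The way around is to build the comparison chain so that $\overline L$ carries its drift in gradient form: then $\overline g_x^k$ solves a divergence‑drift equation with no zeroth‑order term, for which the Moser iteration is unconditional, while the $L^{\frac n{n-2},\infty}$ bound for $\overline g_x^k$ still falls under Proposition~\ref{GreenConstructionLipschitz} precisely because $\overline L$ has vanishing divergence‑form drift. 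Making this work while checking that all constants depend only on $r_{\vec b-\vec c}(\tfrac\lambda{6C_n})$ and $\tilde r_{\vec b-\vec c}(\tfrac\lambda{6C_n})$, and not on the Lipschitz norms of $\vec b,\vec c,d$, is the heart of the argument.
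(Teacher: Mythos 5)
Your plan reproduces the structure of the paper's proof of Proposition~\ref{GreenConstructionLipschitzSubcritical} and correctly handles the interior case via Lemma~\ref{LocalBoundForn}. The boundary case, however, contains a genuine error, and it is exactly at the step you yourself flag as ``the heart of the argument.''

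You assert that $\overline g_x^k$, a nonnegative solution of $-\dv(\overline{\mathbf A}\tran\nabla u+\overline{\vec c}\,u)=0$ in $B_{2r}(z)$ with $\overline{\vec c}\in L^n$, satisfies a local sub-mean-value estimate $\sup_{B_{r/2}}\overline g_x^k\le C\fint_{B_r}\overline g_x^k$ by Moser iteration ``with no sign hypothesis because there is no zeroth-order term to absorb,'' with $C$ controlled by the $L^n$-smallness quantities $r_{\overline{\vec c}},\tilde r_{\overline{\vec c}}$. This is false, and the paper's own Lemma~\ref{ToCounterexample} is the counterexample: $v_\delta(x)=\abs{\ln\abs{x}}^\delta$ is a nonnegative, unbounded solution of $-\Delta v_\delta-\dv(\delta\vec b\, v_\delta)=0$ in $B_{1/e}(0)$, an equation of exactly the above form with no zeroth-order term, and $\delta\vec b\in L^n$ with $\norm{\delta\vec b}_{L^n}$ arbitrarily small. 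Structurally, if you write $w=u^\beta$ in the iteration, the diffusion term contributes $\frac{2\beta-1}{\beta^2}\int\overline{\mathbf A}\tran\nabla w\cdot\nabla w\,\eta^2$ while the divergence-form drift contributes $\frac{2\beta-1}{\beta}\int\overline{\vec c}\cdot\nabla w\,w\,\eta^2$; the drift-to-diffusion ratio grows like $\beta$, so absorption needs $R_{\overline{\vec c}}(t)<\lambda/(\beta C_n)$ at level $\beta$, and the iteration does not close. (Gradient-form drift gives a bounded ratio, which is why the subcritical case and the reduction inside Lemma~\ref{LocalBoundForn} do work.) The sign condition $d\ge\dv\vec c$ in Lemma~\ref{LocalBoundForn} is precisely what kills the divergence-form drift and reduces to gradient form; you cannot simply drop it, and, as you yourself note, the extension $\overline{\vec c}$ carries no control on $\dv\overline{\vec c}$.

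Your ``main obstacle'' paragraph actually identifies the difficulty correctly, but then tries to evade it by claiming the divergence-drift Moser iteration is unconditional when $d=0$, which is the opposite of what the counterexample shows. Consequently the bound $g_x^k(z)\le Cr^{2-n}$ in the case $B_{2r}(z)\not\subset\Omega$ is not established, and neither is the bound on $G_y$. To repair the boundary case you must avoid ever invoking an interior bound for a nonnegative solution of a pure divergence-drift equation with only $L^n$ control; for instance, one must arrange the comparison so that the final local estimate is applied to a function solving the \emph{direct} gradient-drift equation $-\dv(\overline{\mathbf A}\nabla u)+\overline{\vec c}\cdot\nabla u=0$ (for which the condition $d\ge\dv\vec b$ is vacuous and Moser iteration does close), together with a matching $L^{\frac{n}{n-2},\infty}$ estimate on the same function. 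The paper's proof at this point is extremely terse (``the rest of the proof runs without changes''), so I do not fault you for not reproducing a fully detailed fix, but the explicit claim you make to bridge the gap is contradicted by Lemma~\ref{ToCounterexample} and must be removed.
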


\begin{proof}
We mimic the proof of Proposition~\ref{GreenConstructionLipschitzSubcritical}.
In the case $B_{2r}=B_{2r}(x)\subset\Omega$, we use Lemma~\ref{LocalBoundForn} instead of Lemma~\ref{LocalBoundForp}, to get \eqref{eq:NotForCritical!}, which reads
\[
\sup_{B_{r/2} }g_x^k\le Cr^{2-n},
\]
where $C$ depends only on $n$, $\lambda$, $\norm{\mathbf A}_{\infty}$, $r_{\vec b-\vec c}\left(\frac{\lambda}{3C_n}\right)$, and $\abs{\Omega}$.

In the case when $B_{2r}(x)\not\subset \Omega$, we use Lemma~\ref{GoodExtension} to extend the function $\vec c-\vec b$ to a function $\overline{\vec c}\in\Lip(\Omega\cup B_{2r}(x))$, with
\[
\textstyle r_{\overline{\vec c}}\left(\frac{\lambda}{3C_n}\right)\le r_{\vec b-\vec c} \left(\frac{\lambda}{6C_n}\right)\quad\text{and}\quad \tilde{r}_{\overline{\vec c}}\left(\frac{\lambda}{3C_n}\right)\ge\tilde{r}_{\vec b-\vec c}\left(\frac{\lambda}{6C_n}\right).
\]
The rest of the proof runs without changes.

To show the bounds on $G_y^m$, note that our assumption $d\ge\dv\vec b$ makes the previous arguments applicable for $G_y^m$ in place of $g_x^k$.
\end{proof}

The next proposition is an analogue of Corollary~\ref{TSBound}.

\begin{proposition}\label{TSBoundCritical}
Let $\Omega \subset \bR^n$ be a domain with $\abs{\Omega} <+\infty$.
Let $\mathbf{A}$ be bounded and satisfy the uniform ellipticity condition \eqref{ellipticity} and $\vec b$, $\vec c\in L^n(\Omega)$, $d\in L^{n/2}(\Omega)$.
Also, assume that $d\ge\dv\vec b$ and $d\ge\dv\vec c$.
Then, the operators
\[
T,\,S:W^{-1,2}(\Omega)\to W_0^{1,2}(\Omega),
\]
defined in \eqref{eq:T} and  \eqref{eq:S}, are bounded.
Moreover, their norms are bounded by a constant that depends on $n$ and $\lambda$ only.
\end{proposition}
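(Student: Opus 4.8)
The plan is to prove the proposition by a single energy estimate, exploiting the fact that under the two-sided structural hypothesis $d\ge\dv\vec b$ \emph{and} $d\ge\dv\vec c$ the bilinear form of $L$ is already coercive on $W_0^{1,2}(\Omega)$ with coercivity constant $\lambda$, so that the shift by a parameter $\gamma$ (as in Lemma~\ref{Gamma}) — which is what forces the dependence on $\norm{\mathbf A}_\infty$, on the lower order coefficients, and on $\abs{\Omega}$ in Corollary~\ref{TSBound} — is simply not needed here. Consequently the whole construction of $TF$ from a shifted problem together with the Green's function bounds used in the subcritical case can be bypassed, and the constant in the resulting estimate involves only $n$ and $\lambda$.

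First I would record the coercivity. Let
\[
\alpha(u,v)=\int_\Omega \mathbf A\nabla u\cdot\nabla v+\vec b\cdot\nabla v\,u+\vec c\cdot\nabla u\,v+duv
\]
be the bilinear form associated to $L$ as in the proof of Lemma~\ref{Gamma}; it is bounded on $W_0^{1,2}(\Omega)\times W_0^{1,2}(\Omega)$ because $\vec b,\vec c\in L^n(\Omega)$, $d\in L^{n/2}(\Omega)$ and $W_0^{1,2}(\Omega)\hookrightarrow L^{2n/(n-2)}(\Omega)$. For $u\in W_0^{1,2}(\Omega)$ I would split the lower order terms as
\[
\vec b\cdot\nabla u\,u+\vec c\cdot\nabla u\,u+du^2=\tfrac12\bigl(\vec b\cdot\nabla(u^2)+du^2\bigr)+\tfrac12\bigl(\vec c\cdot\nabla(u^2)+du^2\bigr).
\]
Exactly as in the proof of Lemma~\ref{Gamma}, the integrability of each term is guaranteed by the membership of $\vec b,\vec c,d$ in the stated Lebesgue classes together with the Sobolev embedding, and testing the distributional inequalities $d\ge\dv\vec b$ and $d\ge\dv\vec c$ against $u^2\ge0$ gives $\int_\Omega \vec b\cdot\nabla(u^2)+du^2\ge0$ and $\int_\Omega \vec c\cdot\nabla(u^2)+du^2\ge0$. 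Hence $\int_\Omega \vec b\cdot\nabla u\,u+\vec c\cdot\nabla u\,u+du^2\ge0$, and by the ellipticity condition \eqref{ellipticity},
\[
\alpha(u,u)\ge\int_\Omega\mathbf A\nabla u\cdot\nabla u\ge\lambda\int_\Omega\abs{\nabla u}^2,\qquad\forall\,u\in W_0^{1,2}(\Omega).
\]

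To conclude, given $F\in W^{-1,2}(\Omega)$, Lemma~\ref{W_0^{1,2}Solvability} provides a unique $u=TF\in W_0^{1,2}(\Omega)$ with $\alpha(u,v)=\ip{F,v}$ for all $v\in W_0^{1,2}(\Omega)$; choosing $v=u$ and using the coercivity just established,
\[
\lambda\norm{\nabla u}_{L^2(\Omega)}^2\le\alpha(u,u)=\ip{F,u}\le\norm{F}_{W^{-1,2}(\Omega)}\,\norm{u}_{W_0^{1,2}(\Omega)},
\]
which bounds $\norm{TF}_{W_0^{1,2}(\Omega)}$ by $\lambda^{-1}\norm{F}_{W^{-1,2}(\Omega)}$, i.e.\ $\norm{T}$ depends only on $n$ and $\lambda$. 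For $S$ one may either repeat the argument verbatim for $L\tran$ — the bilinear form $\alpha\tran$ of $L\tran$ satisfies $\alpha\tran(u,u)=\alpha(u,u)$ by the identity $\alpha\tran(v,u)=\alpha(u,v)$ from the proof of Lemma~\ref{W_0^{1,2}SolvabilityAdjoint}, so it is coercive with the same constant, and existence and uniqueness of $SF$ come from Lemma~\ref{W_0^{1,2}SolvabilityAdjoint} — or simply invoke $S=T^*$, so that $\norm{S}=\norm{T}$. There is no real obstacle in this argument: the only delicate point is the passage from the distributional inequalities $d\ge\dv\vec b$, $d\ge\dv\vec c$ to the manipulation with the test function $u^2$, and this is handled precisely as in the proof of Lemma~\ref{Gamma}; the substance of the statement is the observation that adding the second structural condition upgrades the almost-coercivity of Lemma~\ref{Gamma} to genuine coercivity with constant $\lambda$.
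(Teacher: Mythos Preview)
Your proof is correct and is essentially the same as the paper's: you split the lower order part of $\alpha(u,u)$ as $\tfrac12\bigl(\int du^2+\vec b\cdot\nabla(u^2)\bigr)+\tfrac12\bigl(\int du^2+\vec c\cdot\nabla(u^2)\bigr)\ge0$, deduce coercivity with constant $\lambda$, and then test the equation with $u=TF$ itself. The paper does exactly this in three lines; the only cosmetic difference is that it finishes with ``apply the Sobolev inequality'' rather than writing out the last step, and it does not bother to discuss $S$ separately.
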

\begin{proof}
Note that our assumptions imply that, for any $u\in W_0^{1,2}(\Omega)$,
\[
\int_{\Omega}(\vec b+\vec c)\cdot\nabla u\,u+du^2=\frac{1}{2}\int_{\Omega}du^2+\vec b\cdot\nabla(u^2)+\frac{1}{2}\int_{\Omega}du^2+\vec c\cdot\nabla(u^2)\ge 0,
\]
therefore, if $u=TF$ for some $F\in W^{-1,2}(\Omega)$,
\[
\lambda\int_{\Omega} \abs{\nabla u}^2\le\int_{\Omega}\mathbf{A}\nabla u\cdot\nabla u+(\vec b+\vec c)\cdot\nabla u\,u+du^2=\ip{F,u}.
\]
We now apply the Sobolev inequality to conclude the proof.
\end{proof}

The next theorem is an analogue of Theorem~\ref{GreenConstructionAdjoint}.

\begin{theorem}\label{GreenConstructionAdjointCritical}
Let $\Omega\subset \mathbb R^n$ be a domain with $\abs{\Omega} <+\infty$.
Let $\mathbf{A}$ be bounded and satisfy the uniform ellipticity condition \eqref{ellipticity} and $\vec b$, $\vec c\in L^n(\Omega)$, $d\in L^{n/2}(\Omega)$.
Also, assume that $d\ge \dv\vec b$ and $d\ge \dv \vec c$.
Then there exists a function $G(\cdot, \cdot)$ on $\Omega\times \Omega$
such that the solution $u\in W_0^{1,2}(\Omega)$ of the equation $Lu=\phi$, for $\phi\in L^{\infty}(\Omega)$, is given by
\[
u(y)=\int_{\Omega}G(x,y)\phi(x)\,dx
\]
for almost every $y\in\Omega$. 
Moreover, we have
\[
\norm{\nabla G(\cdot,y)}_{L^{\frac{n}{n-1},\infty}(\Omega)}\le C,
\]
and for every $x,y\in\Omega$ with $x\neq y$, we have
\[
G(x,y)\le C \abs{x-y}^{2-n},
\]
where $C$ depends only on $n$, $\lambda$, $\norm{\mathbf A}_{\infty}$, $r_{\vec b-\vec c}\left(\frac{\lambda}{18C_n}\right)$, $\tilde{r}_{\vec b-\vec c}\left(\frac{\lambda}{18C_n}\right)$, and $\abs{\Omega}$.
\end{theorem}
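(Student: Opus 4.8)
The plan is to follow the proof of Theorem~\ref{GreenConstruction} line by line, replacing each subcritical ingredient by the critical analogue established earlier in this section. I would first treat the case $\vec b$, $\vec c$, $d \in \Lip(\Omega) \cap L^{\infty}(\Omega)$. Since now both $d \ge \dv\vec b$ and $d \ge \dv\vec c$ hold, Proposition~\ref{GreenConstructionLipschitzCritical} supplies directly the pointwise bound $0 \le G_y(x) \le C\abs{x-y}^{2-n}$ for $x \ne y$, with $C$ depending only on $n$, $\lambda$, $\norm{\mathbf A}_{\infty}$, $r_{\vec b-\vec c}\left(\frac{\lambda}{6C_n}\right)$, $\tilde r_{\vec b-\vec c}\left(\frac{\lambda}{6C_n}\right)$, and $\abs{\Omega}$. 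Because $\abs{x-y}^{2-n}$ lies in $L^{\frac{n}{n-2},\infty}(\bR^n)$ with a purely dimensional Lorentz norm, this gives $\norm{G_y}_{L^{\frac{n}{n-2},\infty}(\Omega)} \le C$. Arguing as in Lemma~\ref{W^{1,2}Outside} (combining the pointwise bound with Lemma~\ref{Caccioppoli}), $G_y \in W^{1,2}(\Omega \setminus B_s(y))$ and solves $Lu = 0$ there for every $s>0$, while $G_y \in W_0^{1,q}(\Omega)$ for $1<q<\frac{n}{n-1}$ by Proposition~\ref{G_y}. Proposition~\ref{GeneralLorentzBound}, whose hypotheses require only $d \ge \dv\vec b$ or $d \ge \dv\vec c$, then yields $\norm{\nabla G_y}_{L^{\frac{n}{n-1},\infty}(\Omega)} \le C\norm{G_y}_{L^{\frac{n}{n-2},\infty}(\Omega)}^2 + C \le C$, which finishes the Lipschitz case.

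For general coefficients I would mollify as in Lemma~\ref{DivergenceApproximation}, applying it to $\vec b$ and also to $\vec c$, so that $\vec b_m$, $\vec c_m$, $d_m \in \Lip(\Omega_m)\cap L^{\infty}(\Omega_m)$ and both $d_m \ge \dv\vec b_m$ and $d_m \ge \dv\vec c_m$ hold in $\Omega_m$. Let $G_y^m$ be Green's function for $L_m$ in $\Omega_m$, extended by zero to $\Omega$; by Proposition~\ref{G_y} it is bounded in $W_0^{1,q}(\Omega)$ uniformly in $m$ and $y$. The Lipschitz case applied in $\Omega_m$ gives $\norm{G_y^m}_{L^{\frac{n}{n-2},\infty}(\Omega)} \le C$ and $\norm{\nabla G_y^m}_{L^{\frac{n}{n-1},\infty}(\Omega)} \le C$, but a priori $C$ depends on $r_{\vec b_m-\vec c_m}\left(\frac{\lambda}{6C_n}\right)$ and $\tilde r_{\vec b_m-\vec c_m}\left(\frac{\lambda}{6C_n}\right)$. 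Since $(\vec b_m-\vec c_m)\chi_{\Omega_m} \to \vec b-\vec c$ in $L^n(\Omega)$, Lemmas~\ref{UniformSplit} and~\ref{UniformSplit2} produce a subsequence along which $r_{(\vec b_{k_m}-\vec c_{k_m})\chi_{\Omega_{k_m}}}\left(\frac{\lambda}{6C_n}\right) \le 2\,r_{\vec b-\vec c}\left(\frac{\lambda}{18C_n}\right)$ and $\tilde r_{(\vec b_{k_m}-\vec c_{k_m})\chi_{\Omega_{k_m}}}\left(\frac{\lambda}{6C_n}\right) \ge \tilde r_{\vec b-\vec c}\left(\frac{\lambda}{18C_n}\right)$. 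As the constant in Proposition~\ref{GreenConstructionLipschitzCritical} increases with $r_{\vec b-\vec c}$ and decreases with $\tilde r_{\vec b-\vec c}$ — the same monotonicity noted in the proof of Theorem~\ref{GoodLorentzForGreenAdoint} — the bounds become uniform in $m$, with $C$ depending only on $n$, $\lambda$, $\norm{\mathbf A}_{\infty}$, $r_{\vec b-\vec c}\left(\frac{\lambda}{18C_n}\right)$, $\tilde r_{\vec b-\vec c}\left(\frac{\lambda}{18C_n}\right)$, and $\abs{\Omega}$.

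Finally, since $L^{\frac{n}{n-2},\infty}(\Omega)$ and $L^{\frac{n}{n-1},\infty}(\Omega)$ are the duals of $L^{\frac{n}{2},1}(\Omega)$ and $L^{n,1}(\Omega)$, the Banach--Alaoglu theorem extracts a further subsequence with $G_y^{i_m} \rightharpoonup \tilde G_y$ and $\nabla G_y^{i_m} \rightharpoonup \nabla\tilde G_y$ weak-$*$, and lower semicontinuity of the norms transfers the two Lorentz bounds to $\tilde G_y$. Using the uniform $W_0^{1,q}$ bound and the uniform operator bounds of Proposition~\ref{TSBoundCritical}, I would then repeat the argument of Theorem~\ref{GreenConstructionAdjoint}: a subsequence of $G_y^{i_m}$ converges to the genuine Green's function $G_y$ weakly in $W_0^{1,q}(\Omega)$ for all $q\in(1,\frac{n}{n-1})$, hence a.e.\ (so the pointwise bound $G_y^m(x) \le C\abs{x-y}^{2-n}$ survives the limit), while passing to the limit in the equations defining $G_y^m$, using $\vec b_{k_m}\to\vec b$, $\vec c_{k_m}\to\vec c$ in $L^n$ and $d_{k_m}\to d$ in $L^{n/2}$, shows $\tilde G_y = G_y$ and yields the claimed integral representation.

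The main obstacle is not an isolated estimate but the discontinuity of $r_{\vec b-\vec c}$ and $\tilde r_{\vec b-\vec c}$ under $L^n$-convergence of the mollifications: keeping the constant uniform in $m$ is exactly what forces the use of Lemmas~\ref{UniformSplit}--\ref{UniformSplit2} and the careful tracking of the thresholds (from $\frac{\lambda}{6C_n}$ down to $\frac{\lambda}{18C_n}$), which is where the argument goes beyond that of Theorem~\ref{GreenConstruction}. The other, more structural, point is the indispensability of $d \ge \dv\vec c$: it is what makes Lemma~\ref{LocalBoundForn}, and hence Proposition~\ref{GreenConstructionLipschitzCritical}, available, and without it the very first step of the scheme — the local boundedness estimate corresponding to \eqref{eq:NotForCritical!} — fails, as Proposition~\ref{Counterexample} shows.
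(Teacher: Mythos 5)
Your proposal is correct and follows essentially the same path as the paper's proof: mollify, invoke Proposition~\ref{GreenConstructionLipschitzCritical} (in place of Proposition~\ref{GreenConstructionLipschitzSubcritical}) and Proposition~\ref{TSBoundCritical} (in place of Corollary~\ref{TSBound}), keep the constants uniform in $m$ via Lemmas~\ref{UniformSplit} and~\ref{UniformSplit2} with the threshold dropping from $\frac{\lambda}{6C_n}$ to $\frac{\lambda}{18C_n}$, and extract via Banach--Alaoglu and the $W_0^{1,q}$ convergence as in Theorem~\ref{GreenConstructionAdjoint}. The one cosmetic deviation is that you re-derive the Lorentz gradient bound in the Lipschitz case directly from Proposition~\ref{GeneralLorentzBound} (mimicking the computation inside Theorem~\ref{GreenConstruction}), whereas the paper simply cites Theorem~\ref{GoodLorentzForGreenAdoint} applied to $L\tran$ (legitimate here since both $d\ge\dv\vec b$ and $d\ge\dv\vec c$ make $L$ and $L\tran$ symmetric under the hypotheses, and $r_f(\lambda/9C_n)\le r_f(\lambda/18C_n)$ keeps the stated constant valid); the two routes are equivalent. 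One small imprecision worth flagging: your sentence that ``by Proposition~\ref{G_y} it is bounded in $W_0^{1,q}(\Omega)$ uniformly in $m$ and $y$'' overstates what that proposition gives, since the $W_0^{1,q}$ constant there comes through Corollary~\ref{MuBoundAdjoint} and Lemma~\ref{BeforeMuBoundAdjoint}, whose dependence on the (mollified) coefficients is not tracked; the $m$-uniform $W_0^{1,q}$ control should instead be read off from the uniform Lorentz bounds together with $\abs{\Omega}<\infty$, or handled as the paper does by working directly with the Lorentz dualities.
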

\begin{proof}
Note first that the Lorentz bound follows from Theorem~\ref{GoodLorentzForGreenAdoint}.

For the proof of the pointwise bound, we follow the proof of Theorem~\ref{GreenConstructionAdjoint}.
We begin by considering mollifications $\vec b_m$, $\vec c_m$, $d_m$ of $\vec b$, $\vec c$, $d$, respectively.
Since $(\vec b_m-\vec c_m)\chi_{\Omega_m}\to\vec b-\vec c$ in $L^n(\Omega)$, Lemmas \ref{UniformSplit} and \ref{UniformSplit2} show that, for a subsequence $\vec c_{k_m}-\vec b_{k_m}$, we have
\[
\textstyle
r_{(\vec b_{k_m}-\vec c_{k_m})\chi_{\Omega_{k_m}}}\left(\frac{\lambda}{6C_n}\right)\le 2r_{\vec b-\vec c}\left(\frac{\lambda}{18C_n}\right)\quad \text{and}\quad \tilde{r}_{(\vec b_{k_m}-\vec c_{k_m})\chi_{\Omega_{k_m}}}\left(\frac{\lambda}{6C_n}\right)\ge \tilde{r}_{\vec b-\vec c}\left(\frac{\lambda}{18C_n}\right).
\]
Then, continuing as in the proof of Theorem~\ref{GreenConstructionAdjoint}, we use Proposition~\ref{GreenConstructionLipschitzCritical} instead of Proposition~\ref{GreenConstructionLipschitzSubcritical}, and Proposition~\ref{TSBoundCritical} instead of Corollary~\ref{TSBound} to complete the proof.
\end{proof}

\begin{remark}
If $L$ satisfies the assumptions of Theorem~\ref{GreenConstructionAdjointCritical}, then the adjoint operator $L\tran$ satisfies the same assumptions.
Hence, Theorem~\ref{GreenConstructionAdjointCritical} is also an analogue of Theorem~\ref{GreenConstruction}.
\end{remark}
Finally, by replicating the proof of Proposition~\ref{Symmetry}, we obtain the following proposition.

\begin{proposition}
Let $\Omega\subset \mathbb R^n$ be a domain with $\abs{\Omega} <+\infty$.
Let $\mathbf{A}$ be bounded and satisfy the uniform ellipticity condition \eqref{ellipticity} and $\vec b$, $\vec c\in L^n(\Omega)$, $d\in L^{n/2}(\Omega)$.
Assume also that $d\ge\dv\vec b$ and $d\ge\dv\vec c$.
If $g(\cdot, \cdot)$ is Green's function for the operator $L\tran$ in  Theorem~\ref{GreenConstructionAdjointCritical}, and $G(\cdot, \cdot)$ is Green's function for the operator $L$, then we have
\[
G(x,y)=g(y,x)
\]
for almost all $x,y\in\Omega$ with $x\neq y$.
\end{proposition}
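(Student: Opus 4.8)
The plan is to read the symmetry off the adjoint relation \eqref{eq:AdjointRelation} directly; this is quicker than redoing the Gr\"uter--Widman argument, though equivalent to it. Recall that, by Definition~\ref{GreenDefinition}, $G$ and $g$ are the integral kernels of $S=(L\tran)^{-1}$ and $T=L^{-1}$ respectively, whose existence in the present setting is furnished by Theorem~\ref{GreenConstructionAdjointCritical} and Theorem~\ref{GoodLorentzForGreenAdoint}: for $\phi,\psi\in L^\infty(\Omega)$ --- which lie in $L^2(\Omega)$ since $\abs{\Omega}<+\infty$ --- the weak solutions $u=S\phi$, $v=T\psi$ in $W_0^{1,2}(\Omega)$ satisfy, for almost every point,
\[
u(y)=\int_\Omega G(x,y)\phi(x)\,dx,\qquad v(x)=\int_\Omega g(y,x)\psi(y)\,dy .
\]
Moreover both $\norm{G(\cdot,y)}_{L^1(\Omega)}$ and $\norm{g(\cdot,x)}_{L^1(\Omega)}$ are bounded uniformly, either from the $W_0^{1,q}$ bounds in Propositions~\ref{G_y} and \ref{g_x} together with $\abs{\Omega}<+\infty$, or directly from the pointwise bound $\,\le C\abs{x-y}^{2-n}$, which is integrable over $\Omega\times\Omega$.

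First I would take arbitrary $f_1,f_2\in L^\infty(\Omega)$ and substitute the two representations into \eqref{eq:AdjointRelation}, $\int_\Omega Tf_1\,f_2=\int_\Omega f_1\,Sf_2$, obtaining
\[
\int_\Omega\Big(\int_\Omega g(y,a)f_1(y)\,dy\Big)f_2(a)\,da=\int_\Omega f_1(a)\Big(\int_\Omega G(x,a)f_2(x)\,dx\Big)\,da .
\]
The uniform $L^1$ bounds above justify Fubini's theorem on $\Omega\times\Omega$, so rewriting both sides as double integrals and relabelling the integration variables on the right yields
\[
\int_\Omega\int_\Omega\big(g(y,x)-G(x,y)\big)f_1(y)f_2(x)\,dy\,dx=0 .
\]
Since $f_1,f_2\in L^\infty(\Omega)$ are arbitrary, $g(y,x)=G(x,y)$ for almost every $(x,y)\in\Omega\times\Omega$, in particular for almost every pair with $x\neq y$, which is the claim.

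The only points requiring care are measure-theoretic and routine: joint measurability of $(x,y)\mapsto G(x,y)$ and $(x,y)\mapsto g(y,x)$ (immediate, since the approximate Green's functions depend continuously on the base point in $W_0^{1,q}(\Omega)$), and the fact that the functions supplied by Theorems~\ref{GreenConstructionAdjointCritical} and \ref{GoodLorentzForGreenAdoint} are genuinely the kernels of $S$ and $T$ in the sense of Definition~\ref{GreenDefinition} --- true by construction, as they are obtained by testing $S$ and $T$ against approximate identities. Alternatively, one may replicate the proof of Proposition~\ref{Symmetry} verbatim: for $\vec b,\vec c,d\in\Lip(\Omega)\cap L^\infty(\Omega)$ one starts from the identity \eqref{eq:BasSym} in Lemma~\ref{Presymmetry} (which needs only $d\ge\dv\vec b$), sends $k\to\infty$ using the interior continuity of solutions (Theorem~7.1 in \cite{StampacchiaDirichlet}, exactly as in Lemma~\ref{Presymmetry}) and then $m\to\infty$, and removes the Lipschitz hypothesis by the mollification scheme of Theorem~\ref{GreenConstructionAdjointCritical}, where the extra standing assumption $d\ge\dv\vec c$ is precisely what makes Lemma~\ref{DivergenceApproximation} applied to $\vec c$ (giving $d_m\ge\dv\vec c_m$), Proposition~\ref{GreenConstructionLipschitzCritical}, and Proposition~\ref{TSBoundCritical} available. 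In that second route the main obstacle is the passage $m\to\infty$: the $G_y^m$ are produced only as weak $W_0^{1,q}$ limits, so one must upgrade to almost-everywhere --- indeed locally uniform away from the pole --- convergence, which is supplied by the uniform pointwise bound $0\le G_y^m\le C\abs{x-y}^{2-n}$ of Proposition~\ref{GreenConstructionLipschitzCritical} together with interior De Giorgi--Nash estimates for the equation $LG_y^m=0$ off the pole (using Lemma~\ref{LocalBoundForn} and Lemma~\ref{Caccioppoli} for the uniform local bounds).
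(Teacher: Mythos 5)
Your proof is correct, and the primary argument you give is a genuinely different route from the paper's.

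The paper dispatches this proposition in one line --- ``replicate the proof of Proposition~\ref{Symmetry}'' --- which in turn mimics Gr\"uter--Widman \cite{Gruter}: one first proves symmetry for Lipschitz lower order coefficients, via the bilinear-form identity \eqref{eq:BasSym} between the approximate Green's functions $G_y^m$ and $g_x^k$ in Lemma~\ref{Presymmetry}, then passes $k\to\infty$ using interior continuity of solutions away from the pole (Stampacchia, Theorem~7.1), then $m\to\infty$, and finally removes the Lipschitz hypothesis by the mollification scheme of Theorem~\ref{GreenConstructionAdjointCritical}. Your ``route 2'' is essentially this argument, and your observation that the $m\to\infty$ passage needs upgrading from weak $W_0^{1,q}$ convergence to local uniform convergence off the pole --- supplied by the uniform bound $0\le G_y^m\le C\abs{x-y}^{2-n}$ of Proposition~\ref{GreenConstructionLipschitzCritical} together with De Giorgi--Nash estimates --- is precisely the point one has to check when ``replicating'' the earlier proof in the critical case.

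Your ``route 1,'' however, works at the level of the solution operators rather than the approximate Green's functions: you feed the two kernel representations from Definition~\ref{GreenDefinition} into the adjoint identity \eqref{eq:AdjointRelation}, Fubinize (legitimately, by the uniform $L^1$ bounds on $G(\cdot,y)$ and $g(\cdot,x)$ that follow from the pointwise bound $\lesssim\abs{x-y}^{2-n}$, or from the $W_0^{1,q}$/$L^{\frac{n}{n-2},\infty}$ bounds and $\abs{\Omega}<\infty$), and conclude $g(y,x)=G(x,y)$ a.e.\ by density of simple tensors in the dual of $L^1(\Omega\times\Omega)$. This is not the paper's proof, but it is correct: \eqref{eq:AdjointRelation} is established in Section~\ref{sec4} from the variational relation $\alpha(u,v)=\alpha\tran(v,u)$ before any Green's function is constructed, so there is no circularity, and all hypotheses used ($d\ge\dv\vec b$ for solvability and the existence of both kernels as in Theorems~\ref{GoodLorentzForGreenAdoint} and \ref{GreenConstructionAdjointCritical}) are in force here. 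What the duality route buys you is the elimination of both limit passages and the pointwise identification at the level of approximate kernels; the cost is that it yields only an a.e.\ statement, which is all the proposition claims, whereas the Gr\"uter--Widman route in Lemma~\ref{Presymmetry} gives the identity for \emph{every} $x\neq y$ at the Lipschitz stage. The measurability caveat you flag is indeed routine and is implicit in the paper as well (both statements produce a single function on $\Omega\times\Omega$); no extra work is required beyond what the construction already provides.
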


\section{The gradient estimates}					\label{sec8}
This is an independent section dealing with the gradient bounds for the Green's function.
We begin with some definitions.
We shall denote
\[
\Omega_r(x):= \Omega \cap B_r(x).
\]
For a locally integrable function $g$ on $\Omega$, we shall say that $g$ is uniformly Dini continuous if the function $\varrho_g: \bR_+ \to \bR$ defined by
\[
\varrho_g(t):=\sup \Set{\,\abs{g(x)-g(y)}: x, y \in \Omega, \; \abs{x-y} \le t\,}.
\]
satisfies the Dini condition
\[
\int_0^1 \frac{\varrho_g(t)}{t} \,dt <+\infty.
\]
We shall say that $g$ is of \emph{Dini mean oscillation}
(in $\Omega$) if the function $\omega_g: \bR_+ \to \bR$ defined by
\[
\omega_g(r):=\sup_{x\in \overline{\Omega}} \fint_{\Omega_r(x)} \,\abs{g(y)-\bar {g}_{\Omega_r(x)}}\,dy \quad \left(\;\bar g_{\Omega_r(x)} :=\fint_{\Omega_r(x)} g\;\right)
\]
satisfies the Dini condition
\[
\int_0^1 \frac{\omega_g(t)}{t} \,dt <+\infty.
\]
It should be clear that if $g$ is uniformly Dini continuous, then it is of Dini mean oscillation and $\omega_g(r) \le \varrho_g(r)$.
It is worthwhile to note that if $\Omega$ is a Lipschitz domain and if $g$ is of Dini mean oscillation, then $g$ is uniformly continuous with a modulus of continuity determined by $\omega_g$.

For $k=1, 2, \ldots$, we say $\partial \Omega$ is $C^{k,\rm{Dini}}$ if for each point $x_0 \in \partial\Omega$, there exist $r>0$ independent of $x_0$ and a $C^{k, Dini}$ function (i.e., $C^k$ function whose $k$th derivatives are uniformly Dini continuous) $\gamma: \bR^{n-1} \to \bR$ such that (upon relabeling and reorienting the coordinates axes if necessary) in a new coordinate system $(x',x^n)=(x^1,\ldots,x^{n-1},x^n)$, $x_0$ becomes the origin and
\[
\Omega \cap B_r(0)=\set{ x \in B_r(0) : x^n > \gamma(x^1, \ldots, x^{n-1})},\quad  \gamma(0')=0.
\]

Now we state the main result of this section.
Recall that we consider the operator $L$ defined by
\[
Lu=-\dv(\mathbf{A}\nabla u+\vec bu)+\vec c \cdot \nabla u+du.
\]

\begin{theorem} \label{GradientBounds}
Let $\Omega$ be a bounded $C^{1,\rm{Dini}}$ domain and the coefficients of $L$ satisfy the hypotheses of Theorem \ref{GreenConstruction}.
Assume further that the coefficients $\mathbf{A}$ and $\vec b$ are of Dini mean oscillation in $\Omega$ and that $\vec c$, $d \in L^p$ with $p>n$.
Then the Green's function $G(x,y)$ satisfies the following gradient bound:
\begin{equation}					\label{eq0956f}
\abs{\nabla_x G(x,y)} \le C \abs{x-y}^{1-n},
\end{equation}
where $C$ depends on $n$, $\lambda$, $\omega_{\mathbf A}$, $\norm{\mathbf A}_{L^\infty}$, $\omega_{\vec b}$, $\norm{\vec b}_{L^\infty}$, $p$, $\norm{\vec c}_{L^p}$, $\norm{d}_{L^p}$, and $\partial \Omega$.
\end{theorem}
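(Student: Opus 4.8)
The plan is to reduce the pointwise gradient bound \eqref{eq0956f} to a scale‑invariant local $C^1$‑estimate for $L$‑harmonic functions vanishing on a boundary portion, and then feed in the pointwise bound $G(x,y)\le C|x-y|^{2-n}$ already established in Theorem~\ref{GreenConstruction}. Fix $x\neq y$ in $\Omega$ and put $r=\tfrac14|x-y|$. Since $|x-y|=4r>3r$, the function $u:=G(\cdot,y)$ solves $Lu=0$ in $\Omega_{3r}(x)$ and vanishes on $\partial\Omega\cap B_{3r}(x)$; moreover $u$ is bounded on $\Omega_{3r}(x)$ by the pointwise bound, so Caccioppoli's inequality (Lemma~\ref{Caccioppoli}, suitably localized) gives $u\in W^{1,2}(\Omega_{3r}(x))$. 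For every $z\in B_{2r}(x)$ one has $2r\le|z-y|\le 6r$, hence $\|u\|_{L^\infty(\Omega_{2r}(x))}\le C r^{2-n}$ by Theorem~\ref{GreenConstruction}.

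First I would establish the following local estimate: there is $\rho_0>0$ depending on $\partial\Omega$ such that, whenever $\rho\le\rho_0$, $x_0\in\overline{\Omega}$, and $v\in W^{1,2}(\Omega_{2\rho}(x_0))$ solves $Lv=0$ in $\Omega_{2\rho}(x_0)$ with $v=0$ on $\partial\Omega\cap B_{2\rho}(x_0)$, then
\[
\sup_{\Omega_\rho(x_0)}|\nabla v|\le\frac{C}{\rho}\,\|v\|_{L^\infty(\Omega_{2\rho}(x_0))},
\]
with $C$ as in the theorem. Rewriting the equation as $-\dv(\mathbf{A}\nabla v+\vec b v)=-\vec c\cdot\nabla v-d v$, one first notes that $v$ is locally bounded and Hölder continuous by De Giorgi--Nash--Moser theory (available since $\vec c,d\in L^p$ with $p>n$ and $\vec b\in L^n$), so that the flux term $\vec b v$ is again of Dini mean oscillation, with modulus controlled by $\omega_{\vec b}$, $\|\vec b\|_{L^\infty}$, and the Hölder modulus of $v$. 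Then a Campanato‑type iteration in the spirit of Gr\"uter--Widman and of the $C^1$‑theory for divergence‑form equations with Dini‑mean‑oscillation coefficients applies: at each scale one freezes the principal part, using that $\omega_{\mathbf A}$ and $\omega_{\vec b}$ are Dini and that the oscillation of $v$ is Dini, while the genuinely lower‑order right‑hand side $-\vec c\cdot\nabla v-d v$ is controlled because the rescaled norms of $\vec c,d$ become arbitrarily small as the scale shrinks (their $L^p$‑norms carry a positive power of the dilation factor). At a boundary point one flattens $\partial\Omega$ by a $C^{1,\mathrm{Dini}}$ change of variables; this preserves the Dini mean oscillation of the principal coefficients and reduces matters to the half‑space version of the estimate.

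Granting this, the conclusion is immediate. If $r\le\rho_0$, apply the estimate with $x_0=x$ and $\rho=r$ (the interior version when $B_{2r}(x)\subset\Omega$, the boundary version otherwise) to get
\[
|\nabla_x G(x,y)|\le\frac{C}{r}\,\|G(\cdot,y)\|_{L^\infty(\Omega_{2r}(x))}\le\frac{C}{r}\,r^{2-n}=C\,|x-y|^{1-n}.
\]
If instead $r>\rho_0$, then $|x-y|$ is comparable to $\diam\Omega$; since $\|G(\cdot,y)\|_{L^\infty(\Omega\setminus B_{\rho_0/2}(y))}\le C\rho_0^{2-n}$ and $G(\cdot,y)\in W^{1,2}(\Omega\setminus B_{\rho_0/4}(y))$ by Caccioppoli, covering a neighborhood of $x$ by balls of radius $\rho_0/4$ and applying the local estimate on each gives $|\nabla_x G(x,y)|\le C\rho_0^{1-n}\le C(\diam\Omega)^{n-1}|x-y|^{1-n}$. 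Combining the two regimes yields \eqref{eq0956f}.

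The main obstacle is the local gradient estimate itself, especially its boundary version. The delicate points are: (i) checking that $\vec b v$ inherits the Dini‑mean‑oscillation property from $\vec b$ once $v$ is known to be Hölder (or merely of Dini mean oscillation), with a quantitative modulus; (ii) absorbing the term $\vec c\cdot\nabla v$, which is not of divergence form and only barely subcritical, into the freezing iteration; and (iii) verifying that the $C^{1,\mathrm{Dini}}$ flattening of $\partial\Omega$ keeps the transformed coefficients of Dini mean oscillation and does not spoil the structure needed for the perturbation argument. Everything else is the two‑scale Green's‑function bookkeeping already used in Sections~\ref{sec6}--\ref{sec7}.
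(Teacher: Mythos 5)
Your reduction is exactly the one the paper uses: a scale-invariant local gradient estimate for solutions of $Lu=0$ that vanish on a boundary portion, combined with the pointwise bound $G(x,y)\le C|x-y|^{2-n}$ from Theorem~\ref{GreenConstruction}. The only difference is that the paper delegates the local estimate entirely to Dong--Escauriaza--Kim \cite[Theorem 1.8 and Lemma 2.41]{DEK17} (obtaining first $\sup_{\Omega_r(x)}|\nabla u|\le Cr^{-n}\int_{\Omega_{2r}(x)}|\nabla u|$ and then converting to the $L^2$-average of $u$ via H\"older and Caccioppoli), whereas you sketch a self-contained Campanato-type freezing iteration; what you outline is in substance the content of \cite{DEK17}, and the delicate points you flag (Dini mean oscillation of the flux $\vec b v$, absorbing $\vec c\cdot\nabla v$ via the gain in scale of the $L^p$-norms, stability of Dini mean oscillation under a $C^{1,\mathrm{Dini}}$ flattening) are exactly the issues that reference handles. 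So the argument is sound and essentially identical in strategy; filling in the local estimate as you propose would reproduce \cite{DEK17} rather than shorten the proof.
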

\begin{proof}
For fixed $x, y\in \Omega$ with $x \neq y$, set $r=\frac14 \abs{x-y}$ and note that $u=G(\cdot,y)$ is a weak solution of $Lu=0$ in $\Omega_{2r}(x)$ and $u=0$ on $\partial \Omega \cap B_{2r}(x)$.
By the proof of \cite[Theorem~1.8]{DEK17}, we see that $\nabla u$ is continuous in $\overline{\Omega_r}(x)$ and satisfies the estimate
\begin{equation}				\label{eq0940f}
\sup_{\Omega_r(x)} \,\abs{\nabla u} \le \frac{C}{r^{1+\frac{n}{2}}}\left( \int_{\Omega_{3r}(x)} \abs{u}^2  \right)^{\frac12}.
\end{equation}
Indeed, by  Lemma~2.41 in \cite{DEK17} (see also the paragraph before Proposition~2.14 in \cite{DEK17}) with proper scaling, we have
\[
\sup_{\Omega_r(x)}\, \abs{\nabla u} \le \frac{C}{r^{n}} \int_{\Omega_{2r}(x)} \abs{\nabla u}.
\]
Then by H\"older's inequality and Caccioppoli's inequality, we get \eqref{eq0940f}.
Now, \eqref{eq0956f} follows from \eqref{eq0940f} and the pointwise bound of $G$.
\end{proof}

We remark that, for the previous theorem, more restrictive assumptions on the coefficients of the equation are imposed, compared to the assumptions considered in the previous sections. This is due to the fact that we rely on the the results in \cite{DEK17}.



\begin{thebibliography}{m}
\bibitem{Adams}
Adams, Robert; Fournier, John.
\textit{Sobolev Spaces}.
Pure and Applied Mathematics Series \textbf{140},
Academic Press, 2003


\bibitem{MayborodaGreen}
Davey, Blair; Hill, Jonathan; Mayboroda, Svitlana.
\textit{Fundamental matrices and green matrices for non-homogeneous elliptic systems}.
Publ. Mat. \textbf{62} (2018), no. 2, 537-614.

\bibitem{DEK17}
Dong, Hongjie; Escauriaza, Luis; Kim, Seick.
\textit{On $C^1$, $C^2$, and weak type-$(1,1)$ estimates for linear elliptic operators: part II}. 
Math. Ann. \textbf{370} (2018), no. 1-2, 447-489.

\bibitem{Gi93}
Giaquinta, Mariano.
\textit{Introduction to regularity theory for nonlinear elliptic systems}.
Birkh\"auser Verlag, Basel, 1993.


\bibitem{Gilbarg}
Gilbarg, David; Trudinger, Neil S.
\textit{Elliptic partial differential equations of second order}.
Reprint of the 1998 edition.
Classics in Mathematics. Springer-Verlag, Berlin, 2001.

\bibitem{Grafakos}
Grafakos, Loukas.
\textit{Classical Fourier Analysis}.
Graduate texts in Mathematics \textbf{249},
Springer, 2008

\bibitem{Gruter}
Gr\"uter, Michael; Widman, Kjell-Ove.
\textit{The Green function for uniformly elliptic equations}. 
Manuscripta Math. \textbf{37} (1982), no. 3, 303--342. 

\bibitem{HK07}
Hofmann, Steve; Kim, Seick.
\textit{The Green function estimates for strongly elliptic systems of second order}.
Manuscripta Math. \textbf{124} (2007), no. 2, 139--172.

\bibitem{HofmannLewis}
Hofmann, S; Lewis, J.
\textit{The Dirichlet problem for parabolic operators with singular drift terms}.
Mem. Amer. Math. Soc. \textbf{151} (2001), no. 719, viii+113.

\bibitem{LU68}
Ladyzhenskaya, Olga A.; Ural'tseva, Nina N.
\textit{Linear and quasilinear elliptic equations}.
Translated from the Russian by Scripta Technica, Inc.
Academic Press, New York-London, 1968.

\bibitem{Littman}
Littman, W.; Stampacchia, G.; Weinberger, H. F.
\textit{Regular points for elliptic equations with discontinuous coefficients}.
Ann. Scuola Norm. Sup. Pisa (3) \textbf{17} (1963), 43--77. 



\bibitem{SakellarisThesis}
Sakellaris, G.
\textit{Boundary Value Problems in Lipschitz Domains for Equations with Drifts}.
Thesis (Ph.D.) - The University of Chicago, ProQuest LLC, Ann Arbor, MI, 2017.



\bibitem{StampacchiaDirichlet}
Stampacchia, Guido.
\textit{Le probl\'eme de Dirichlet pour les \`equations elliptiques	du second ordre \`a coefficients discontinus}.
Ann. Inst. Fourier (Grenoble) \textbf{15} (1965), no. 1, 189--258.


\bibitem{SteinSingular}
Stein, Elias M.
\textit{Singular integrals and differentiability properties of functions},
Princeton Mathematical Series, No. 30,
Princeton University Press, Princeton, N.J.,
1970.


\bibitem{Tartar}
Tartar, Luc.
\textit{An Introduction to Sobolev Spaces and Interpolation Spaces}.
Lecture Notes of the Unione Matematica Italiana,
Springer-Verlag Berlin Heidelberg, 2007

\end{thebibliography}
\end{document}